\DeclareMathAlphabet{\pazocal}{OMS}{zplm}{m}{n}
\let\mathcal\pazocal
\let\remark\relax
\spnewtheorem{theorem}{Theorem}{\bfseries}{\itshape}
\spnewtheorem{corollary}[theorem]{Corollary}{\bfseries}{\itshape}
\spnewtheorem{lemma}[theorem]{Lemma}{\bfseries}{\itshape}
\spnewtheorem{proposition}[theorem]{Proposition}{\bfseries}{\itshape}
\spnewtheorem{definition}[theorem]{Definition}{\bfseries}{\itshape}
\spnewtheorem{remark}[theorem]{Remark}{\bfseries}{\upshape}
\spnewtheorem{assumption}[theorem]{Assumption}{\bfseries}{\itshape}
\def \R{\mathbb{R}}               
\def \1{{\bf 1}}                
\def \0{{\bf 0}}
\def \State{X}
\def \admiss{\mathcal{A}}
\newcommand{\Cov}{\operatorname{Cov}}
\renewcommand{\paragraph}[1]{{\smallskip\noindent\textbf{#1.}}}
\def \texteuro{EUR}
\def \texteuro{EUR}
\def \State{X}
\def \Statespace{\mathcal{\State}}
\def \admiss{\mathcal{A}}
\newcommand{\diff}{\mathop{}\!\mathrm{d}} 
\def \texteuro{EUR}
\def \controlset{U}
\def \seasonality{\mu_{R}}
\def \meanZ{m_{Z}(n,z)}
\def \meanX{m_{X}(n, x,a)}
\def \meanQ{m_{Q}(n,z, q,a)}
\def \meanG{m_{G}(n,z,g,a)}
\def \ConVarZ{\Sigma^2_{Z}(n)}
\def \ConVarX{\Sigma^2_{X}}
\def \ConVarQ{ \Sigma^2_{Q}(n,a)}
\def \ConVarG{ \Sigma^2_{G}(n,a)}
\def \CondevZ{\Sigma_{Z}(n)}
\def \CondevQ{ \Sigma_{Q}(n,a)}
\def \CondevG{ \Sigma_{G}(n,a)}
\def \CovZG{\Sigma_{Z G}(n,a)}
\def \CovZQ{\Sigma_{Z Q}(n,a)}
\def \CorrZQ{\rho_{Q}(n,a)}
\def \CorrZG{\rho_{G}(n,a)}
\def \Timehorizon{T}
\def \desresdemand{Z}
\def \SoC{Q}
\def \soC{q}
\def \Fuellevel{G}
\def \fuellevel{g}
\def \Concontrol{u}
\def \State{X}
\def \capacitybat{C_Q}
\def \capacityfuel{C_G}
\def \efficiency{\eta_E}
\def \stZ{\mathcal{Z}}
\def \stQ{\mathcal{Q}}
\def \stG{\mathcal{G}}
\def \discstZ{\widetilde{\mathcal{Z}}}
\def \discstQ{\widetilde{\mathcal{Q}}}
\def \discstG{\widetilde{\mathcal{G}}}
\def \consQ{\mathcal{U}_Q}
\def \consG{\mathcal{U}_G}
\def \charging{u^C}
\def \discharging{u^D}
\def \ecodischarge{u^{DL}}
\def \fueluse{u^F}
\def \ecofueluse{u^{FL}}
\def \Waiting{u^W}
\def \overspilling{u^O}
\def \meanRn{m_{R}(t_n)}
\def \muRn{\mu_{R,n}}
\def \termcostD{\phi^D_N}
\def \Statespace{\mathcal{\State}}
\def \admiss{\mathcal{A}}
\newcommand{\neu}{\color{blue} }
\begin{document}
	
	\title{ Stochastic Optimal Control Problems for the Cost-Optimal Management of a Standalone Microgrid
	}		
	
	\titlerunning{Stochastic Optimal Control Problems of a Standalone Microgrid}        
	

    \author{Paul Honore Takam \and  Nathalie Fruiba}  
	\authorrunning{P. H. Takam, N. Fruiba} 
	
	\institute{
 Paul Honore Takam    \at
		Brandenburg University of Technology Cottbus-Senftenberg, Institute of Mathematics\\
		\emph{P.O. Box 101344, 03013 Cottbus, Germany} \\\email{\texttt{takam@b-tu.de} }  \\\\
        	 Nathalie Fruiba \at
             University of Luxembourg, Kirchberg Campus\\
             \emph{ 6 Rue Richard Coudenhove-Kalergi, 1359 Kirchberg Luxembourg}\\
             \email{\texttt{nathalie.fruiba@aims-cameroon.org}}
	}
	
	
	

	\maketitle
	
	\begin{abstract}
    In this paper, we consider a domestic standalone microgrid equipped with local renewable energy generation such as photovoltaic panels, consumption units, and battery storage to balance supply and demand and investigate the stochastic optimal control problem for its cost-optimal management. 
As a special feature, the manager does not have access to the power grid but has a local generator, making it possible to produce energy using fuel when needed. Such systems are very important for rural electrification, particularly in developing countries. However, these systems are very complex to control due to uncertainties in the weather and environmental conditions, which affect the energy generation and the energy demand. In addition, we assume that the battery and the fuel tank have limited capacities and that the fuel tank can only be filled once at the beginning of the planning period. This leads us to the so-called finite fuel problem. In addition, we allow the energy demand to not always be satisfied, and we impose penalties on unsatisfied demand, the so-called discomfort cost. The main goal is to minimize the expected aggregated cost of generating power using the generator and operating the system. This leads to a mathematical optimization problem. The problem is formulated as a discrete-time stochastic control problem and solved numerically using methods from the theory of Markov decision processes.

		\keywords{Stochastic optimal control\and Microgrids  \and Markov decision process \and Renewable energy \and Markov chain \and Numerical simulations}
		%
		\subclass {93E20  
			\and 	60J10  	
			\and 91G80           
			\and 65K05  	
}
	\end{abstract}	
  \newpage
 \setcounter{tocdepth}{3}
\tableofcontents   
 \newpage
		\section{Introduction}
   \textbf{ Motivation.}
   The increasing demand for reliable and sustainable energy solutions has led to a significant interest in the development and deployment of standalone microgrids. These systems, particularly those incorporating renewable energy sources like solar photovoltaics, offer a promising avenue for rural electrification and energy independence, especially in regions with limited or nonexistent grid infrastructure. It is essential to develop knowledge about the optimal management and expansion of microgrids to promote efficient, reliable, and sustainable energy distribution worldwide without dependence on the electricity grid. Unlike traditional grid-connected systems, these microgrids must balance intermittent renewable generation, fluctuating demand, and limited storage capacity. However, the inherent intermittency of renewable energy, coupled with fluctuating load demands, presents significant challenges to the efficient and cost-effective operation of these energy systems.\\
 In this work, we consider a microgrid, as shown in Fig.~\ref{modl}, equipped with photovoltaic panels to locally produce electricity to satisfy the building's demand. However, because of factors such as fluctuating temperatures, seasonal variations, and unpredictable weather conditions, the solar energy supply can often fall short of meeting the building's energy demand. To address this imbalance, the model incorporates a battery storage system that helps balance supply and demand. A special feature is that the microgrid is not connected to the grid. Instead, we have access to a generator, which makes it possible to generate electricity by consuming fuel when the battery cannot satisfy the demand.  This is essential to ensure a reliable power supply of electricity whenever required. 
\begin{center}
\begin{figure}[htbp!]
	\includegraphics[width=1\textwidth]{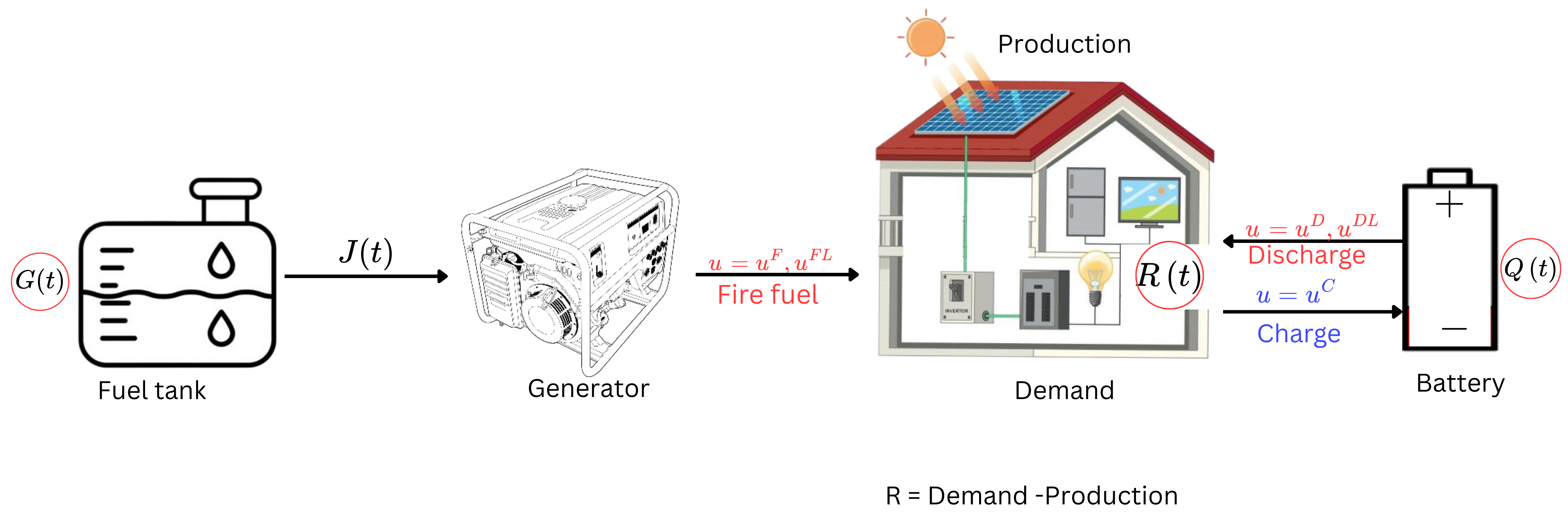}
	\caption{Simplified model of a microgrid that use solar panels, a battery and fuel driven generator to satisfy the electricity demand of a building. }
	\label{modl}
\end{figure}
\end{center}
The manager's decision consists of charging and discharging the battery and generating electricity using the generator by firing fuel. In addition, we include another feature, the power safe mode, in which demand can only be satisfied up to a certain threshold. Therefore, it must be accepted that demand may not always be fully met. We assign a penalty to the unsatisfied demand, the so-called discomfort cost. It is assumed that for a given finite planning horizon, the fuel tank cannot be refilled. This leads to a mathematical optimization problem, the finite-fuel problem.  Within a planning horizon, the aim of the manager is to minimize the expected aggregated cost, which includes the fuel cost for generating power
using the generator, the injection and withdrawal cost (degradation of the cost of the battery), and the discomfort cost for unsatisfied demand. Such a system is complex to control because, on the one hand, the renewable production and demand of energy are affected by uncertainties in the weather and environmental conditions, and on the other hand, the fuel tank cannot be refilled during the planning horizon. Therefore, the manager must make optimal decisions at all times in a context of uncertainty and take into account the constraints imposed by limited fuel and battery capacities.
To formulate the problem, we set up the mathematical model in continuous time for a multidimensional controlled process, whose dynamics is described by a system of random ordinary differential equations (ODEs) and stochastic differential equations (SDEs). However, closed-form solutions for the associated optimal control problems are generally not available, and traditional numerical methods suffer from the curse of dimensionality, especially for higher-dimensional state spaces. To overcome these challenges and facilitate practical implementation, we transform the state and performance criterion into discrete time and formulate the continuous-state Markov decision process (MDP) with a finite time horizon and a finite action space. The optimal strategy is found using discrete-time dynamic programming. This leads to the Bellman equation, which is solved numerically using a backward recursion.

\paragraph{Literature review on microgrids}
The foundation for understanding standalone microgrid management lies in the broader literature on microgrid systems. Shahgholian \cite{shahgholian2021brief} provided a review of operational frameworks and modeling aspects, crucial to understanding the behavior of these systems. Abbasi et al. \cite{abbasi2023review} extended the setting by discussing structural designs, communication systems, and advanced control methodologies that are applicable to both grid-connected and standalone microgrids. The works mentioned above highlight the ability of microgrids to operate autonomously, a key characteristic of standalone systems. Hatziargyriou \cite{ray2020microgrid}, Li and Zhou \cite{tabatabaei2019microgrid}, and Anvari-Moghaddam et al. \cite{anvari2021microgrids} presented a comprehensive review of microgrid systems, including control architectures and operational strategies relevant for standalone applications. The integration of battery energy storage systems is particularly critical in standalone microgrids to ensure a stable power supply, as explored by Zhao et al. \cite{zhao2013operation}. There, they emphasized the importance of considering battery life in the optimization process.\\
Optimal cost management is essential for the viability of stand-alone microgrids. Research in this area focuses on minimizing operating costs, particularly those related to fuel, maintenance, and battery degradation.  Zhao et al. \cite{zhao2013operation} investigated standalone microgrid optimization that directly addresses economic considerations, highlighting the need to take into account the characteristics of battery lifetime. Furthermore, \cite{cavus2024energy,devidas2022cost,hu2021model} highlighted the importance of improving reliability and efficiency in the face of variable environmental conditions to ensure cost-effectiveness. Advanced frameworks for cost-optimal energy management of microgrids have been proposed, along with hybrid optimization models designed to handle the complexities of cost functions. In Riou et al. \cite{riou2021multi}, the authors demonstrated that multi-objective optimization approaches that balance cost with reliability and environmental goals are particularly relevant for standalone microgrids.

\paragraph{Literature review on stochastic optimal control} 
  Due to the intermittent nature of renewable energy sources faced by standalone microgrids, stochastic optimal control is an essential tool to manage them cost-effectively. This approach enables optimal decision-making in situations of uncertainty. A considerable range of literature investigates techniques for solving stochastic optimal control problems, among which are Fleming and Soner \cite{fleming2006control}, Pham \cite{pham2009c}, and Øksendal and Sulem \cite{oksendal2013stochastic}. 
Kriett et al.  \cite{kriett2012optimal} and Khodabakhsh et al. \cite{khodabakhsh2016optimal} applied stochastic approaches to optimize energy storage operations by minimizing conditional value-at-risk. Liu et al.  \cite{liu2017adaptive} provided a broader perspective on adaptive dynamic programming applications in energy systems, highlighting the role of optimal control in managing microgrid dynamics. General methodologies for stochastic control in discrete-time systems are investigated in Bertsekas \cite{bertsekas2012dynamic}.  Takam and Wunderlich \cite{takam2025cost} demonstrated the importance of geothermal storage in the optimal management of residential heating systems while taking into account the uncertainty in renewable thermal energy production. Ganet Som\'{e} \cite{some2025stochastic} investigated the optimal control problem of prosumers in a district heating system, which also accounts for the uncertainty in the production of renewable thermal energy. Specific to electrical microgids, Qin et al. \cite{qin2018stochastic} proposed a stochastic control scheme to extend the battery's lifetime in standalone microgrids. Belloni et al. \cite{belloni2016stochastic} presented a stochastic solution for energy management in microgrids with renewable storage, emphasizing the interplay between uncertainty and optimization. In addition, Pacaud et al. \cite{pacaud2018stochastic} extended this analysis to domestic microgrids equipped with solar panels and batteries, showcasing practical implementations of stochastic control strategies. Model predictive control, often integrated with optimization algorithms, represents another significant approach to stochastic optimal control in microgrids, with the aim of minimizing electricity costs and optimizing battery usage \cite{hu2021model}. 

\paragraph{Literature review on Markov decision processes} Markov decision processes (MDP) provide a mathematical framework for a sequence of decision-making under uncertainty, making them well-suited to real-world challenges, particularly in the scientific field. For example, Puterman's book \cite{puterman2014markov} demonstrates the application of MDPs to the problem of engine maintenance and replacement. The work of B\"{a}uerle and Rieder \cite{bauerle2011markov} makes an important contribution to the field of MDP for finite, random, and infinite time horizons, as well as their application in the field of finance. Hu and Yue \cite{Hu} investigated the optimization of resource allocation using state-based decision models using MDP. McAuliffe \cite{mcauliffe2012markov} explored the applications of MDPs in financial systems, demonstrating their versatility in all domains. For microgrid applications, Jain et al. \cite{jain2018battery} investigated using MDP for battery optimization in a microgrid integrated with load and solar forecasting. In addition, Xiong et al. \cite{xiong2018microgrid} proposed a wavelet packet-fuzzy control policy within the MDP framework to improve microgrid power management, while Vergine et al. \cite{vergine2022markov} used Markov processes to effectively manage microgrid operations under uncertainty. 
Denardo \cite{denardo2012dynamic} provided detailed models and applications, emphasizing the computational advantages of dynamic programming in large-scale systems. Kennedy \cite{kennedy2012dynamic} explored applications in agriculture and natural resources, demonstrating its relevance to broader optimization problems. In microgrids, dynamic programming is employed to optimize system operations under varying conditions. For example, Bouman et al. \cite{bouman2018dynamic} used dynamic programming to address routing and scheduling problems, while Kappen \cite{kappen2011optimal} examines the role of the linear Bellman equation in optimal control theory.  The traditional method for solving the Bellman equation is to use a backward recursion. However, the backward recursion algorithm becomes computationally inefficient for higher-dimensional state space; therefore, the solution to the Bellman equation requires more efficient approximation techniques, such as optimal quantization (see Pages et al.~\cite{pages2004optimal}) and reinforcement learning methods, such as Q-learning (see Powell \cite{powell2007approximate}) and Sutton and Barto \cite{sutton2018reinforcement}.  For the application in energy storage, we refer to Pilling et al.\cite{pilling2024stochastic}, where the authors used the Q-learning method to solve the stochastic optimal control problem of an industrial heating system.

\paragraph{Our contribution} This paper investigates the cost-optimal management of a microgrid equipped with photovoltaic panels, a battery storage device, and a generator. In particular, 
\begin{itemize}
	\item We set up a continuous-time mathematical model of the energy system described above using appropriate differential equations for the dynamics of residual demand, the state of charge of the battery, and the fuel tank level. 
 \item We formulate mathematically a performance criterion that reflects the economic objectives of the system manager and that also takes into account the discomfort cost (cost of not fully satisfying demand) through appropriate quantification and expression in monetary units.
    
	\item We investigate the time-discretization of the state variables and the running cost, which leads to a continuous-state MDP.  
   The transition operator and kernel ensure that the discrete-time state dynamics closely preserve the distribution of the continuous-time state process at sampled time points. This enables the use of longer time intervals, reflecting the reality that decisions in control of energy systems can only be changed after certain periods have elapsed.
    \item  We use MDP theory techniques, such as backward recursion, to solve the Bellman equation for the control problem. To approximate the value function and optimal decision rule efficiently, we discretize the continuous-state MDP, transforming it into a finite-state Markov chain. 

     \item Finally, we calibrate some key parameters, perform numerical simulations, and discuss the results. Based on numerical experiments, we investigate the properties of the value function and the optimal control. This study is helpful for the practical implementation of such systems and also for future solution approaches for MDPs with a higher-dimensional state using approximate solution techniques such as quantization.
\end{itemize}	
\paragraph{Paper Organization} 
This paper is structured as follows: Section \ref{model} introduces a mathematical model of a microgrid, which includes a battery energy storage system and a generator. In particular, Subsection \ref{Control-S} introduces the control and state variables, and in Subsection \ref{Dyn-st}, we formulate the mathematical model of the energy system, detailing the dynamics of the deseasonalized residual demand, battery level, and fuel level. In addition, Subsection \ref{Stoc-OPtCont} presents a continuous-time performance criterion for cost-optimal management of microgrids, which takes into account fuel cost, battery degradation cost, and discomfort cost. Since the continuous-time problem cannot be solved directly, we transform the state and performance criterion into discrete-time and formulate a stochastic optimal control problem in Section \ref{SOCP}. This transformation enables the derivation of closed-form solutions for state variables given in Subsection \ref{time-discrete} and their marginal and joint distributions described in Subsection \ref{cond-distrib}. Subsequently, we present the state-dependent control constraints in Subsection \ref{state-depend}, formulate the Markov decision process in Subsection \ref{MDP}, and investigate it using dynamic programming. This leads to the so-called Bellman equation, which is solved using a backward recursion.  Section \ref{Num-rsult} presents the numerical results, where we discuss the properties of the value function and the optimal decision rule. In Section \ref{conclus}, we summarize the main findings and present some future directions of the current work. Finally, an appendix provides the nomenclature and calibration of key parameters used in the numerical results, as well as proofs of the lemmas, theorems, and propositions stated in the text.

\section{Mathematical Model of a Microgrid}
\label{model}
\subsection{Management of a  Microgrid}
\label{Control-S}
A microgrid is a small-scale electrical network consisting of loads, controllers, and dispersed energy resources. One of its main advantages is that it can operate in both grid-connected and stand-alone modes, allowing it to generate, distribute, and control the flow of electricity to nearby users \cite{mohammed2019ac}. Here, we consider an autonomous microgrid equipped with a local renewable energy generation unit, such as photovoltaic panels, a consumption unit, and a battery energy storage system to balance supply and demand, as well as a generator to produce energy using fuel when needed. The system considered is continuously managed in a time interval $[0,T]$, where $T>0$ is a finite time horizon.

 \paragraph{Residual demand} 
 The system is equipped with solar panels to produce electricity and several consumption units. The energy produced may not meet the demand for the building; this imbalance is represented by the residual demand, here denoted by $(R(t))_{t \in [0,T]}$, where $T > 0$ denotes a finite time horizon. This residual demand is simply the difference between the building's energy demand and the available solar power. Consequently, when the building's demand exceeds the solar power production, the residual demand is positive (\(R>0\)), and in cases of excess production, it is negative (\(R<0\)).
Given the inherent uncertainty in predicting future weather conditions and temperatures, which significantly affects both energy production and energy supply, we split into two components, $R(t)= \mu_R(t)+Z(t)$, where \(\mu_R\) denotes the seasonality function and \(Z\) is the deseasonalized residual demand modeled by a stochastic differential equation (SDE) detailed in Section \ref{Dyn-st}.

\paragraph{Battery state of charge} When there is overproduction (\(R < 0\)), excess energy is stored in the battery, provided that it is not already fully charged. In contrast, when the building's energy demand exceeds the energy produced (\(R > 0\)), the battery can be discharged to meet this unsatisfied demand.
We assume that the battery has a maximum level of $C_Q~ [kWh]$ and a minimum level of $0$.
We denote the relative battery level or battery state of charge (SoC) at time \(t \in [0,T ]\) by \(\SoC(t)\). The values of \(\SoC(t)\) are in the interval \([0,1]\), where \(\SoC(t) = 1\) indicates a fully charged battery and \(\SoC(t) = 0\) indicates an empty battery. To capture the continuous nature of the charging and discharging processes, the evolution of \(\SoC(t)\) of a battery over time will be modeled using an ordinary differential equation (ODE); see Equation~\eqref{dyn-Q}. 

\paragraph{Fuel tank level} In scenarios where the battery's state of charge is inadequate to fulfill the positive residual demand, additional electricity can be generated using fuel through a generator. We denote by $C_G ~[\ell]$ the maximum fuel tank capacity and by \(\Fuellevel(t)\) the relative fuel tank level at time \(t\), which also ranges from 0 to 1. Here, \(\Fuellevel(t) = 1\) represents a full fuel tank, while \(\Fuellevel(t) = 0\) indicates that the tank is empty. Similarly to the battery's SoC, the dynamics of the fuel tank level \(\Fuellevel(t)\) will be described by an ODE (see Equation~\eqref{dyn-G}), reflecting the continuous fuel consumption. We formulate the following important assumption.

\begin{assumption}\label{ass}
\phantom{x}
\begin{enumerate}
    \item The fuel tank is filled only once throughout the time horizon \([0,\Timehorizon]\), typically at the initial time, and thereafter no refilling is possible.

    \item The microgrid is equipped with a battery-saving mode that can be activated when the residual demand is above a certain threshold, $R_{Q0}$, and a generator with a fuel-saving mode that can be activated when the residual demand is above a certain threshold, $R_{G0}$.
    
    \item The battery and generator cannot operate at the same time; specifically, the battery cannot be charged or discharged while the generator is in operation.
\end{enumerate}
    \end{assumption}

    \begin{remark}\label{remark2}
\phantom{x}
\begin{enumerate}
\item The first assumption leads to the so-called finite fuel problem. In this case, the fuel tank is only refilled once, and the generator must operate optimally to avoid running out of fuel before the terminal time. We denote $F_0$ as the fixed fuel price, which we pay at the initial time.

\item  The second assumption allows the manager to extend battery and fuel usage by satisfying a strongly positive residual demand only up to the thresholds $R_{Q0}>0$ (battery) and $R_{G0}>0$ (generator). However, a small penalty is applied to the remaining unmet demand.
\end{enumerate}
\end{remark}
\subsection{Control System}
This subsection describes the control and state processes that regulate the microgrid energy management system.

\paragraph{State process}  The state process at time \(t \in [0,\Timehorizon]\) is given by \(\State = (\desresdemand,\SoC,\Fuellevel)^\top\), taking values in \(\mathcal{X} \space \subset \mathbb{R}^3\). Here, $\desresdemand(t)$ [kW] is the deseasonalized residual demand, $\SoC(t)$ is the battery level that takes values in $[0,1]$, and $\Fuellevel(t)$ is the fuel tank level, which also takes values in $[0,1]$.

 \paragraph{Control process} We define controls as actions that determine how energy is distributed, stored, and generated within the microgrid. These actions are designed to optimize the use of available resources while meeting the stochastic energy demand of the building. \\
The control process primarily manages three key components: the solar panels, the battery, and the generator. When the electricity produced by solar panels exceeds the demand, resulting in a negative residual demand, excess energy can be stored in the battery provided that it is not fully charged. We denote by $\charging$ the control action of charging the battery. If the battery is full, the excess energy is dissipated. We materialize this situation by the control action of over-spilling denoted by $\overspilling$. When the residual demand exactly matches the production, we do nothing; that is, we have to wait. During this period, no action is taken: the battery is neither charged nor discharged, and the generator remains inactive. We denote this control action by $\Waiting$. Conversely, when the demand exceeds the production, resulting in a positive residual demand, we can either wait and pay a penalty or discharge the battery in full mode using the control input $\discharging$ or in a limited mode using the control action $\ecodischarge$, or generate electricity using the generator in full with control $\fueluse$ or in a limited mode using $\ecofueluse$. In summary, the control process denoted by $u$ takes  the following values:
\begin{align}
    \controlset= \{\overspilling, \charging,  \Waiting, \ecodischarge, \discharging,  \ecofueluse,\fueluse\}.
    \label{Controlset}
\end{align}
\subsection{Dynamics of the State Variables}
\label{Dyn-st}
This subsection outlines the mathematical models that govern the dynamics of state variables within our microgrid system over a continuous-time horizon \(t \in [0,\Timehorizon]\). We start with the exogenous state variable, that is, the residual demand, which is modeled by an SDE. Then, we derive the random ODEs that describe the evolution of energy in the battery and the evolution of the fuel tank level. 

\paragraph{Residual demand}
The uncertainties in the residual demand are modeled by a standard Wiener process $W$ on \([0,\Timehorizon]\), defined in a filtered probability space \((\Omega,\mathcal{F},\mathbb{F},\mathbb{P})\). Here, filtration \(\mathbb{F}\) is generated by the process \((W(t))_{t \in [0,T]}\), that is, \(\mathbb{F} = \mathbb{F}^W = (\mathcal{F}^W(t))_{t \in [0,T]}\) with $\sigma$-algebras $\mathcal{F}^W(t)=\sigma\{W(s), s\le t\}$, augmented by the $\mathbb{P}$-null sets, so that $\mathbb{F}$ satisfies the usual assumptions   of right-continuity and completeness.  
The residual demand is modeled as a mean-reverting stochastic process with seasonality as follows:
\[
R(t) = \seasonality(t) + \desresdemand(t),
\]
where $\desresdemand(t)$, measured in kW, represents the deseasonalized residual demand at time $t \in [0,T]$, capturing unpredictable deviations of the residual demand from the average trends while excluding seasonal variations.  This decomposition of $R$ allows us to focus on the deseasonalized residual demand $\desresdemand(t)$, which captures the uncertainty in $R$. We assume that $\desresdemand(t) \in \stZ \subset \mathbb{R}$, allowing it to take both positive and negative values. The dynamics of $\desresdemand(t)$ is described by an Ornstein-Uhlenbeck process mean-reverting to zero of the form
\begin{equation}{\label{dyn-Z}}
    \diff\desresdemand(t) = - \beta_R \desresdemand(t) \diff t + \sigma_R\diff W(t),\quad \desresdemand(0) = z_0\in \stZ,
\end{equation}
where $\beta_R>0$ is a constant mean-reversion speed and $\sigma_R>0$ is a nonnegative volatility. \\
The function  $\mu_R:[0,T]\longrightarrow \mathbb{R}$ is a bounded deterministic function that describes the seasonality pattern; see \cite{takam2025cost}. Typical examples include functions of the  form:
\begin{equation}\label{val-Mu}
    \seasonality(t) = \mu_0^R + \kappa_1^{R} \cos\left(\frac{2\pi(t-t_1^R)}{\delta_1}\right) +  \kappa_2^{R} \cos\left(\frac{2\pi(t-t_2^R)}{\delta_2}\right),
\end{equation} 
where $\mu_0^R > 0$ is a constant long-term mean, $k^R_i > 0, i=1,2$ are constants representing the amplitude of seasonality, $\delta_1 = 365$ days (annual seasonality) with the time shift parameter $t^R_1$, and $\delta_2 = 1$ day (daily seasonality) with time shift parameter $t^R_2$.\\
 The left panel of Fig. \ref{residual} shows the residual demand over and  the right panel a zoom in to 7 day-periods. These plots demonstrate that the residual demand fluctuates around a mean value. The right plot shows short-term variations, while the left plot reveals a similar pattern maintained over a longer duration, suggesting that energy consumption follows a relatively stable trend over long periods.
\begin{figure}[htbp!]
    \centering     \includegraphics[width=0.49\linewidth,height=0.3\linewidth]{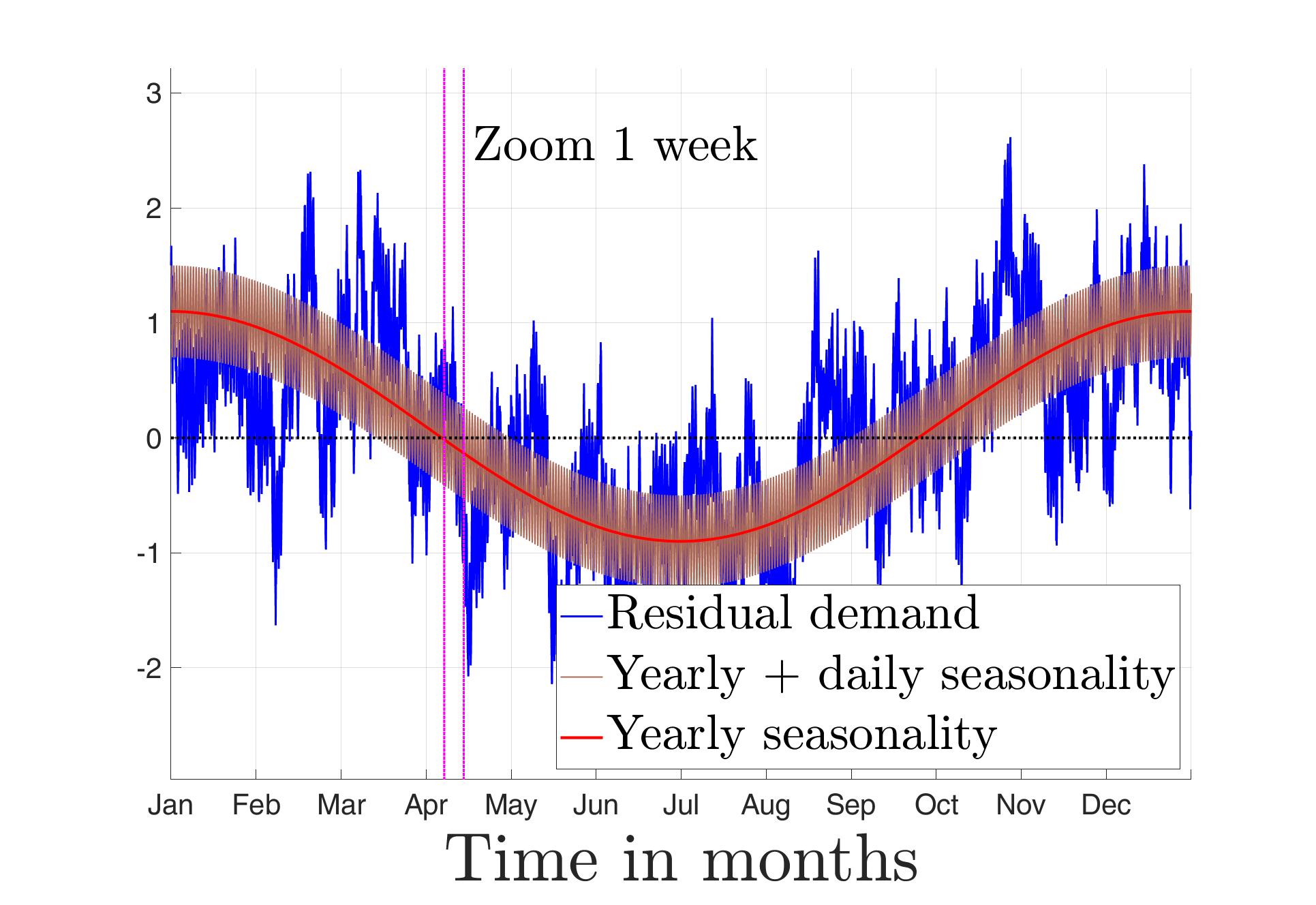}
     \includegraphics[width=0.49\linewidth,height=0.3\linewidth]{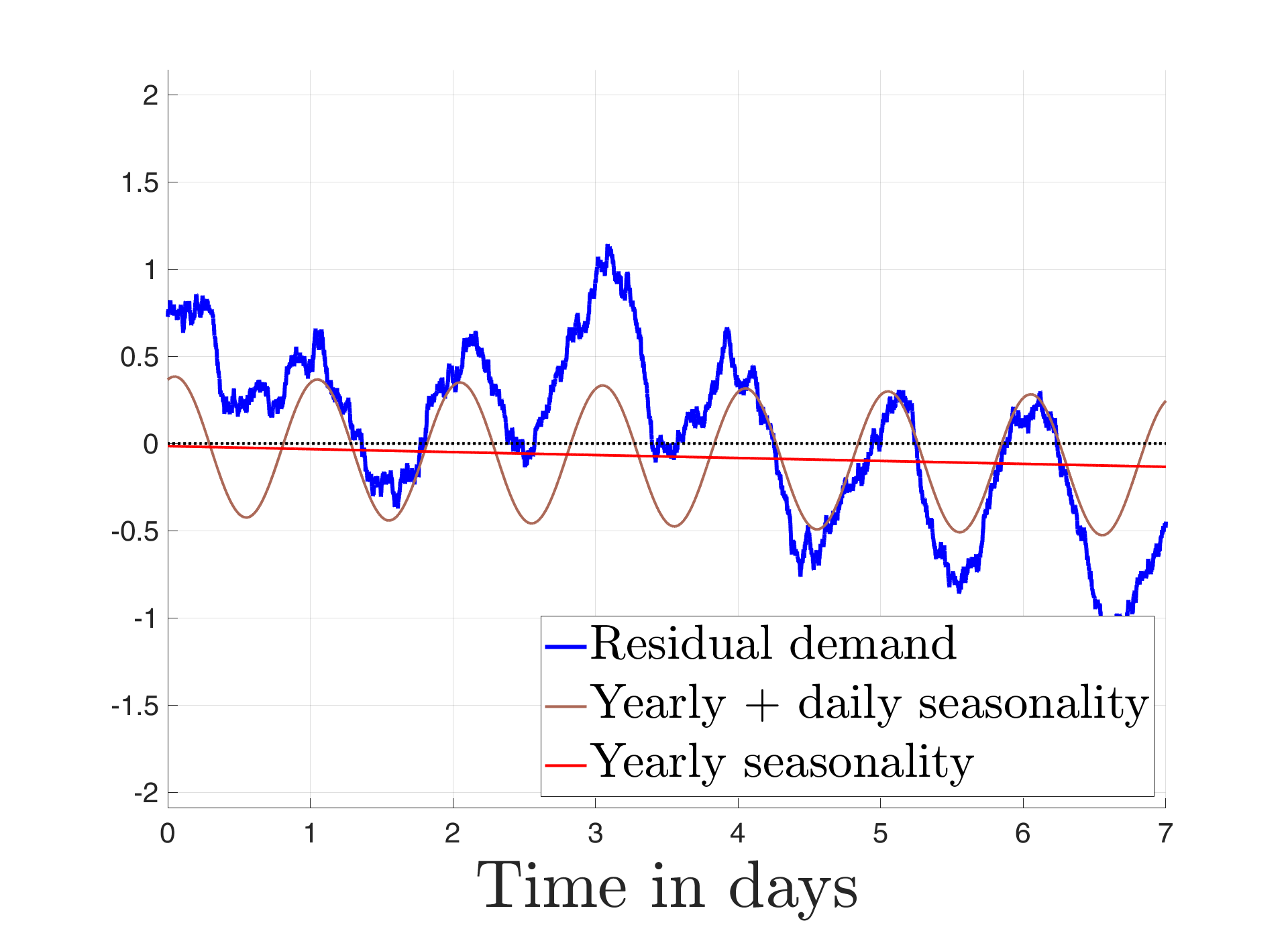}
    \caption{Residual demand with daily and yearly seasonal patterns over a period of one year (left) and a zoom into a week (right) with parameters $\beta_R = 0.2, \sigma_R = 0.45, \kappa_1^R = 1,\kappa_2^R =0.4, \mu_0^R = 0.1, \delta_1= 365, \text{ and } \delta_2 = 1$. }
    \label{residual} 
\end{figure}

\paragraph{State of charge of the battery}
 The state of charge (SoC) of the battery evolves continuously in time in response to the charging or discharging processes constrained by its capacity and efficiency. During charging, the battery's level increases, while during discharging, it decreases. The two cycles are affected by the residual demand and the current state of the battery. Note that even in idle mode, the battery gradually loses energy; this is called self-discharge. In addition, we assume that the battery is equipped with a battery-saving mode that can be activated anytime. However, when the economical or limited mode is activated, the battery can only satisfy the positive residual demand up to a certain threshold $R_{Q0}$.
Therefore, the dynamics of $\SoC=\SoC^u$ can then be described by the following ODE
\begin{align}
\label{dyn-Q}
    \diff\SoC(t) &= \mathcal{H}(t,Z(t),\SoC(t),u(t))\diff t, \quad \SoC(0) = q_0\in [0,1],
\end{align}
where 
\begin{equation}\label{eq_H}
		\mathcal{H}(t,z,q,\nu) = \begin{cases}
			-\frac{1}{C_Q}(\mu_{R}(t)+z ) \efficiency(t,z,q) - \eta_0q ,&\nu \in \{\charging,\discharging\},\\
			-\frac{1}{C_Q}R_{Q0}  \efficiency(t,z,q) - \eta_0q ,&\nu= \ecodischarge,\\
			-\eta_0q, & \text{otherwise}.
		\end{cases}
	\end{equation}
    represents the energy injected into or withdrawn from the battery, which depends on the control actions. Here, \(C_Q\) is the maximum capacity of the battery and $\eta_0$ the self-discharge rate of the battery, which we assume to be constant. We denote the state-dependent battery's charging/discharging efficiency function by \(\efficiency(t,z,q)\), which is defined as
\begin{equation}
    \efficiency(t,z,q) = \begin{cases}
        \efficiency^C(q), &  \qquad z + \mu_{R}(t) \leq 0  \quad\text{(charging)},\\
        \frac{1}{\efficiency^D(q)}, &  \qquad z + \mu_{R}(t) > 0 \quad \text{(discharging)},
    \end{cases}
    \label{efficiency}
\end{equation} 
with $\eta_E^C$ and $\eta_E^D \in (0,1)$ the state-dependent charging and discharging efficiencies, respectively.
The state-dependent efficiencies take into account the fact that an empty storage is less efficient to discharge while being more efficient to charge; also, a full storage is less efficient at charging while being more efficient at discharging.  To achieve this, we consider functions of the form
\begin{equation}
\eta_E^{C}(q)=C_0^{C}+C_1^{C}q^{\ell_C}(1-q)^{m_C} \qquad \text{and} \qquad\eta_E^{D}(q)=C_0^{D}+C_1^{D}q^{\ell_D}(1-q)^{m_D},
	\label{effic}
\end{equation}
where $C_0^\dagger> 0, ~C_1^\dagger\geq0, ~\ell_\dagger,~m_\dagger\geq 1,~ \dagger\in\{C,D\}$ are constants. \\ $R_{Q0}$ represents the exact amount of residual demand that the battery is able to meet in the economical mode. 
 When $r(t)=\mu_R(t)+z \geq R_{Q0}$, the limited mode can be activated; in this case, the battery meets only a positive demand $R_{Q0}$, and the remaining unsatisfied demand $r(t) -R_{Q0}$ is penalized (see Section \ref{Stoc-OPtCont}).
\begin{figure}[htbp!]
	\centering
\includegraphics[width=.49\textwidth,height=0.3\linewidth]{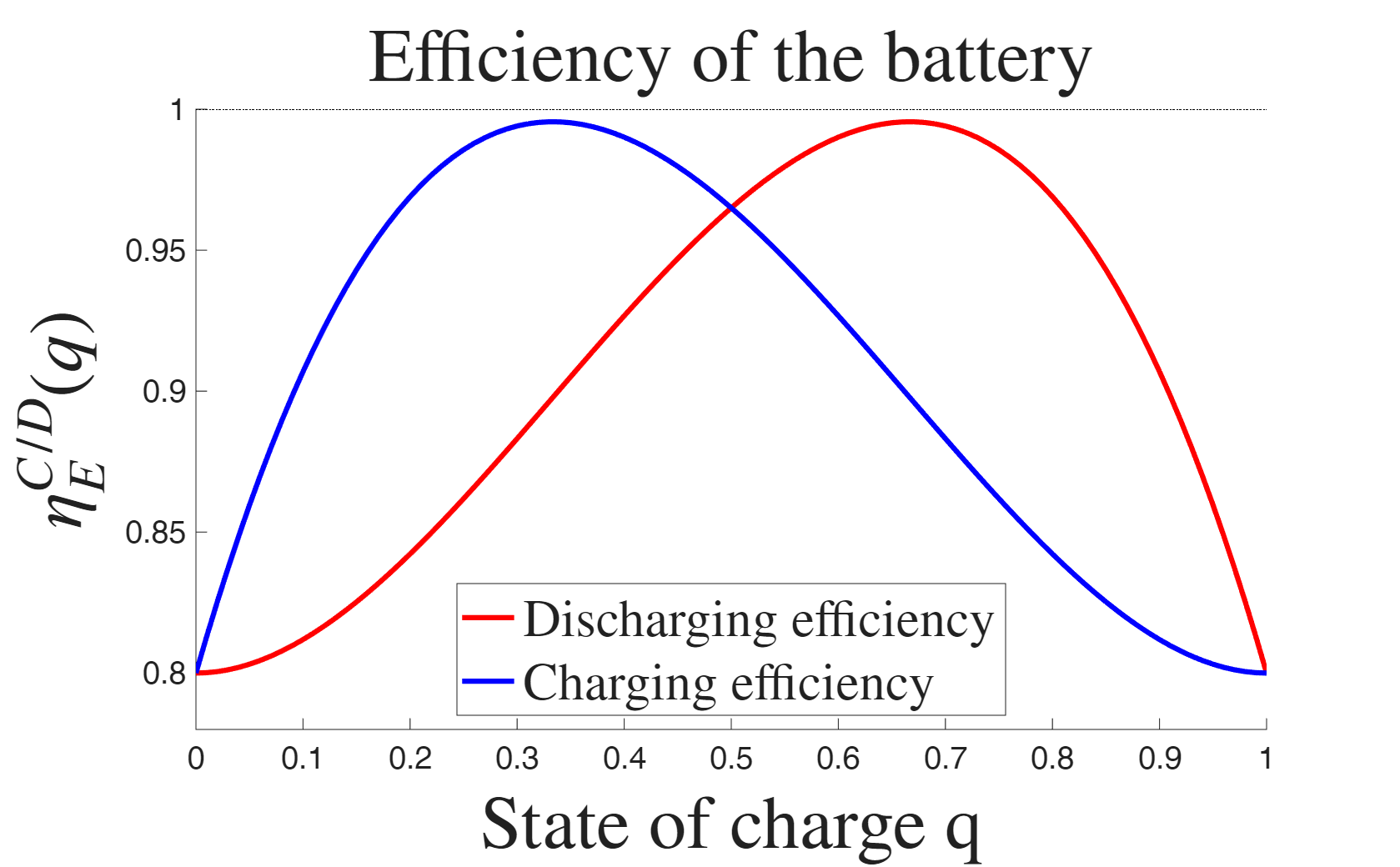}  \includegraphics[width=0.5\textwidth]{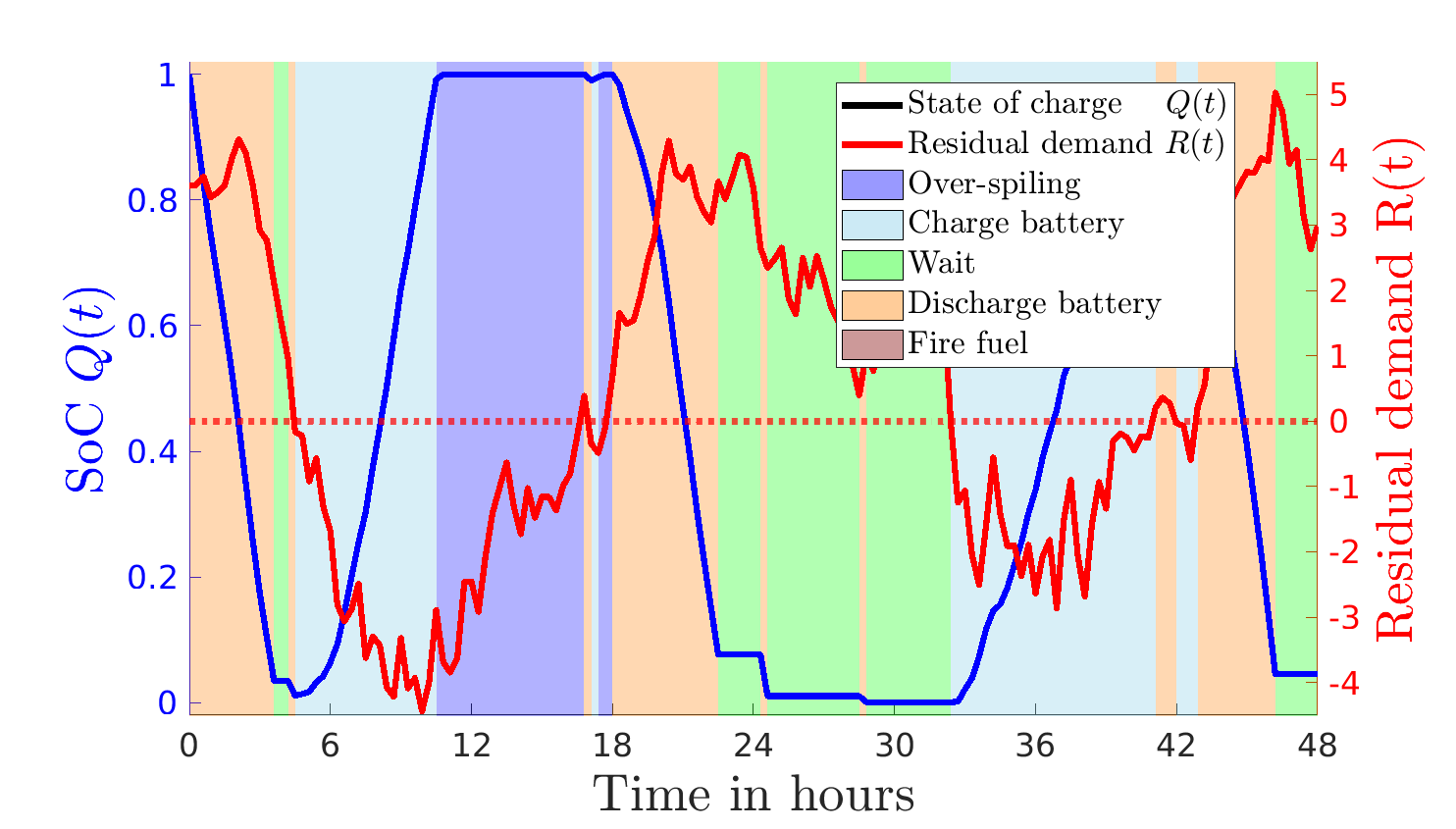}
    \caption{Left: Efficiency as a function of the state of charge $q$ with $\eta_E^C(q) = 0.8+1.32q(1-q)^2$ and $\eta_E^D(q)= 0.8+1.32q^2(1-q)$. Right: State of charge and residual demand as functions of time in response to control actions.}
	\label{eff}
\end{figure} 

 The left panel of Fig.~\ref{eff} indicates that the charging efficiency is high when the battery is empty but decreases as it approaches the maximum. Conversely, the discharging efficiency is low when the battery is empty but increases rapidly at higher battery levels. 
The right panel of Fig.~\ref{eff} illustrates the state of charge (SoC) of the battery (blue line) responding to fluctuating residual demand (red line) over a 48-hour period. In the first 7 hours, the battery \emph{discharges} due to positive residual demand. Around hour 7, the residual demand becomes significantly negative, causing the battery to \emph{charge} up to its full capacity. During periods of very high negative demand, the excess energy cannot be stored (\emph{overspill}). When the residual demand changes back to positive (around hour 17), the battery begins to discharge. Once the battery SoC reaches a low level (around hour 26, that is, 2 a.m. the next day) and the residual demand remains positive, the only operational state is to wait, as there is no more energy to discharge.

\remark \label{remark1}
 Note that this model does not account for overall battery degradation, assuming constant capacity $\capacitybat$ over the operating period ($t \in [0, \Timehorizon]$). However, to account for degradation, we penalize each energy injection into or withdrawal from the battery.

\paragraph{Fuel tank level} 
The fuel tank level changes depending on whether the generator is running or not. When active, the generator consumes fuel according to the associated load.  When the generator operates but does not produce power, it consumes some amount $c_0 \geq 0$ of fuel, where $c_0$ represents a non-negative constant. When satisfying a positive residual demand $R(t)$, that is, when the generator is in active mode, it consumes fuel according to load at a rate $C_L$, where $C_L$ is an increasing function, with $C_L(0) = 0$. In this work, we use the linear function of the form $C_L(x) = c_1 x$, where $c_1 > 0$, and $\capacityfuel$ denotes the fuel tank capacity. In addition, we assume that the generator is equipped with a saving mode, which is activated only when the residual demand $r(t)$ is positive with $r\geq R_{G0}$. When activated, it satisfies the exact amount of positive residual demand $R_{G0}$, and the remaining unsatisfied demand $r(t) -R_{G0}$ is penalized (see Section \ref{Stoc-OPtCont}).
Therefore, the dynamics of the fuel tank level $\Fuellevel=\Fuellevel^u$ is given by
\begin{equation}{\label{dyn-G}}
    d\Fuellevel(t) =-\frac{1}{\capacityfuel}\mathcal{J}(t,\desresdemand(t),\Concontrol(t)) dt, \qquad \Fuellevel(0)=G_0, 
\end{equation}
where
\begin{equation}
    \mathcal{J}(t,z,\nu) = \begin{cases}
       c_0 + c_1(\mu_{R}(t)+z) & \nu=\fueluse,\\
        c_0 + c_1 R_{G0} & \nu=\ecofueluse,\\
        0 &\text{otherwise}.
    \end{cases}
\end{equation}

\paragraph{State constraints}
Effective management of the microgrid system requires well-defined state and control constraints. These constraints ensure that battery and fuel levels remain within safe limits while guiding the operational decisions of the system manager.
As described in Remark~\ref{remark2}, both the battery level \(\SoC\) and the fuel level \(\Fuellevel\) must be constrained between \(0\) and \(1\). That is,
\begin{align}
    \State(t) \in \mathcal{K}=\Big\{(z,q,g),  ~~\soC\in [0, 1],\quad \fuellevel \in [0,1]\Big\}.
    \label{Constraint_state}
\end{align}
This ensures that neither level falls below zero nor exceeds one. This assumption on the limited capacity of the fuel tank and the battery level leads to the state-dependent control constraints described below in Subsection \ref{state-depend}.  For example, battery charging and discharging, as well as running the generator, are constrained by the state of the system. Specifically, charging is infeasible when the battery is full; discharging is infeasible when the battery is empty. However, the generator cannot be used when fuel is depleted.

\subsection{Continuous-Time Performance Criterion}
\label{Stoc-OPtCont} 
Now, we want to describe the costs that arise in the operation of the domestic microgrid and derive a performance criterion that evaluates the efficiency of the control strategy. This includes the operating cost and the terminal cost.

\paragraph{Running cost}
 Let \(x = (z,q,g)\), where \(z\), \(q\), and \(g\) are the deseasonalized residual demand, the battery level, and the fuel level, respectively.   The operating cost contains the \emph{fuel cost}, that is, the cost of using fuel to generate electricity to satisfy demand. It is represented by   \begin{equation*}
			\Psi_F(t,x,\nu)=\begin{cases}
				\left(c_0 + c_1(\mu_{R}(t)+z)\right)F_0 & \qquad \nu = u^F, \\
					\left(c_0 + c_1R_{G0}\right)F_0 & \qquad \nu = u^{FL}, \\
				0 & \qquad \text{ otherwise},
			\end{cases}
		\end{equation*}
		where \(F_0\) is a fixed fuel price and \(R_{G0}\) is the exact amount of residual demand that the generator meets in limited mode.  

\emph{Battery degradation cost}:
The lifetime of a battery is closely related to the number of charging and discharging cycles. At the end of the battery's life cycle, it must be replaced. The money needed to purchase a new battery at the end of its life span is distributed over the entire operating period. Practically, we impose a penalty for each injection into or withdrawal of energy from the battery. We call this a degradation cost, which represents the wear and tear associated with battery use.
The cost is proportional to the amount of energy injected into or withdrawn from the battery at each time, which is given by
\begin{equation*}
			\Psi_Q(t,x,\nu)=\begin{cases}
   \gamma_{deg}|\mu_{R}(t)+z| & \nu = u^{C},\\
				\gamma_{deg}(\mu_{R}(t)+z) & \nu=u^D,\\
					\gamma_{deg}R_{Q0} & \nu =u^{DL},\\
				0 & \text{otherwise}.
			\end{cases}
		\end{equation*}
 Here, $\gamma_{deg}$ is a nonnegative constant representing the degradation cost per unit of residual demand injected into or withdrawn from the battery and \(R_{Q0}\) is the exact amount of residual demand that the battery meets in limited mode. 
 The value of $\gamma_{deg}$ depends on many factors, including the battery capacity and the future price of the new battery.
        
\emph{Penalty for unsatisfied demand}: This cost captures the penalty incurred when demand exceeds supply \((Z(t)+\mu_{R}(t)>0)\) and the manager decides to do nothing \(u^W\), and when the economical mode of the battery or generator is activated. This occurs when $Z(t)+\mu_{R}(t) \geq R_{Q0}$ for the battery or $Z(t)+\mu_{R}(t) \geq R_{G0}$ for the generator. In this case, the battery or the generator meets the exact amount $R_{Q0}$ or $R_{G0}$ of residual demand and the remaining $\mu_{R}(t)+z-R_{Q0}$ or $\mu_{R}(t)+z-R_{G0}$ is penalized. This penalty represents the cost of not meeting the building's demand (discomfort cost). To emphasize the severity of the discomfort, we assume a quadratic penalty function of the form $f_P(x)=k_0 x^2$, where $k_0$ is a nonnegative constant.
This penalty, $\Psi_P(t,x,\nu)$, is given for by
 \begin{equation}
 \Psi_P(t,x,\nu) = \begin{cases}
  k_0(\mu_{R}(t)+z)^2 & \text{ if } \nu = u^W,\\
k_0(\mu_{R}(t)+z-R_{Q0})^2 & \text{ if } \nu = u^{DL},\\
 k_0(\mu_{R}(t)+z-R_{G0})^2 &\text{ if } \nu = u^{FL},
 \end{cases}
 \end{equation}
      
	In summary, the operating cost for $\nu \in \controlset$ and \(x=(z,q,g)\) is given by 
	\begin{align}
		\Psi(t,x,\nu) &= \Psi_F(t,x,\nu) + \Psi_Q(t,x,\nu) + \Psi_P(t,x,\nu). \nonumber
	\end{align}
    
\paragraph{Terminal cost} 
We consider a terminal cost depending on the state \(X\) at time \(T\) given by the function \(\phi(X(T))\).  Examples include \emph{zero cost} $\phi(X(T)) = 0$, that is, the battery and fuel tank expire worthless; \emph{penalty} if the state of charge (SoC) falls below the desired threshold (\(Q(T)< q_{\text{ref}}\)) or \emph{liquidation} of excess energy in the battery if the SoC exceeds the desired threshold (\(Q(T) \geq q_{\text{ref}}\)). We also consider the \emph{liquidation} of the remaining fuel in the tank at the terminal time if it is not empty (\( G(T)>0 \)). In this case, the profit generated from selling the leftover fuel in the tank is proportional to the fuel tank level at the terminal time. However, for the penalty or liquidation of the excess energy in the battery, charging and discharging efficiencies must be taken into  account.  Similarly to the fuel tank level, the profit generated from the sale of excess energy in the battery is proportional to the actual amount of energy above the reference threshold $q_{\text{ref}}$ sold to the market. This depends on the discharging efficiency, and the penalty is proportional to the actual amount of energy needed to reach the threshold $q_{\text{ref}}$, which depends on the charging efficiency. Note that the charging and discharging efficiencies are not constant, but state-dependent. Therefore, the actual surplus sold or the compensated battery deficit is given as the integral (of the inverse) of the efficiency function $\eta_E$ with respect to the SoC over the desired range. Summarizing, the terminal cost is given by

        \begin{align}
			\phi(X(T)) = \gamma^Q_{\text{pen}} \capacitybat\left(\int_{Q(T)}^{q_{ref}} \frac{dq}{\eta_E^C(q)}\right)^+-\gamma^Q_{\text{liq}}\capacitybat \left(\int_{q_{ref}}^{Q(T)} \eta_E^D(q)dq\right)^+ -\gamma^G_{\text{liq}}\capacityfuel G(T)
		 ,&
		\end{align}
\noindent where \(\gamma^Q_{\text{pen}}\) is the penalty cost for the battery deficit, \(\gamma^Q_{\text{liq}} \text{ and }\gamma^G_{\text{liq}}\) are the liquidation prices for the battery surplus and the remaining fuel in the tank, respectively.\\

The reward function is then defined as the expected aggregated discounted cost, given by
\begin{equation}
    \label{performance}
    J(t,x;u) =  \mathbb{E}_{t,x}\left[\int_{t}^{T} \mathrm{e}^{-\rho(s-t)}\Psi(s,X(s),u(s)) ds + \mathrm{e}^{-\rho(T-t)}\phi(X(T))\right].
\end{equation}

Here, \(\mathbb{E}_{t,x} = \mathbb{E}[.|X(t) = x]\) denotes the conditional expectation given that at time \(t\), the state \(X(t) = x = (z,q,g) \in \mathcal{X}\)   and \(\rho\) is the discount rate. The function \(\Psi\) represents the operating cost, while \(\phi\) denotes the terminal cost.

\section{Discrete-Time Stochastic Optimal Control Problem}
\label{SOCP} 
In this subsection, we consider the time discretization of the state variables and the reward function, which leads to a Markov decision process with a finite time horizon and finite action spaces. Further, we derive the associated dynamic programming equation and solve it using the backward recursion techniques.  More details on Markov decision processes can be found in the book by B{\"a}uerle and Rieder \cite{bauerle2011markov} and references therein.
We recall that the state space is denoted by \(\mathcal{X}\) and has dimension \(d = 3\). The planning horizon $[0,T]$ is subdivided into $N$ uniformly spaced sub-intervals of length $\Delta_N=T/N$, and the time grid points are defined by \(t_n = n\Delta_N\), where \(0 = t_0<  ~t_1  < \ldots <   ~t_N = T\). The state process \(X=(X_n)_{n=0,\cdots,N} \in \mathcal{X}\) is sampled at discrete times \(t_n, ~n=0,\ldots,N\).

\paragraph{Discrete-time control and state process} Here, we want to study the dynamics of the state process $X$ under the following assumption.
\begin{assumption}[Piecewise constant control] 
\label{Ass-co}
  We assume that  the control $u$ process is kept constant between two consecutive time points, that is, 
\[
u(s) =u(t_n)=:\alpha_n, ~ \text{ for }~s\in    [t_n,t_{n+1}), ~~n=0,\ldots,N-1.
\]  
\end{assumption}
 We will employ the shorthand notation \(R_n = \mu_{R}(t_n) + Z_n\) that denotes the residual demand \(R\) at time \(t_n\). The control process is defined as \(\alpha = (\alpha_0, \ldots, \alpha_{N-1})\), where \(\alpha_n = \tilde{\alpha}(n, X_n^\alpha)\) for \(n = 0,~\ldots,~N-1\). The mapping \(\tilde{\alpha} : \{0,  \ldots, N-1\} \times \mathcal{X} \longrightarrow \controlset\) represents the action taken in state \(x\) at time \(n\), where ~\(\controlset\) is given by \ref{Controlset}. \\
Next, we will derive the discrete-time approximation of the dynamics of the state variables over the time interval \([t_n, t_{n+1})\). We start with the dynamics of the continuous-time state process given by the SDE \eqref{dyn-Z} and ODEs \eqref{dyn-Q} and \eqref{dyn-G}. Since the SDE is linear, we can benefit from the availability of closed-form solutions that allow us to minimize the errors in the derivation of the discrete-time state dynamics. For the dynamics of the battery level that contains a state-dependent efficiency function, we use a combination of Euler and closed-form solution methods (semi-explicit discretization).

\subsection{Time-Discretization}
\label{time-discrete}
Let \(X=(X_n)_{n=0,\cdots,N} \in \mathcal{X}\) denote the state process. We make the following assumption on the parameters. 
\begin{assumption}[Piecewise constant model parameters]
\label{Ass-par}
 The time-varying seasonality $\mu_R$, the charging and discharging efficiencies are constant between two consecutive time points,
that is   
\[\mu_{R}(s) = \mu_{R}(t_n)=:\mu_{R,n} \text{ and}~~\eta_E(s, Z_n, Q_n)=\eta_E(t_n, Z_n, Q_n) = \eta_{E,n} \quad for~~ s\in[t_n,t_{n+1}).\]
\end{assumption}

\paragraph{Time-discretization  of the deseasonalized residual demand} 
Recall that the continuous-time dynamics of the deseasonalized residual demand is described by the SDE \eqref{dyn-Z}.
Let \(Z(t_{n}) = z\)  be the sampled value of \(Z(t)\) at time \(t_n\).
\begin{lemma}\label{lem-Z}
    The closed-form solution of the SDE \eqref{dyn-Z} at time \(t_{n+1}\) with initial condition \(Z(t_n) = z\) is given by:
    \begin{equation}\label{Rec-R}
        Z(t_{n+1}) = z \mathrm{e}^{-\beta_R\Delta_N} + \sigma_R \int_{t_n}^{t_{n+1}} \mathrm{e}^{-\beta_R(t_{n+1}-s)} dW(s).
    \end{equation}
\end{lemma}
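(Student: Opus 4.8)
The plan is to solve the linear SDE \eqref{dyn-Z} by the standard method of integrating factors, exactly as one does for the Ornstein--Uhlenbeck process. Since the equation $\diff Z(t) = -\beta_R Z(t)\,\diff t + \sigma_R\,\diff W(t)$ has a drift that is linear in $Z$ with constant coefficient, the natural integrating factor is $\mathrm{e}^{\beta_R t}$.

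First I would apply It\^o's formula to the process $Y(t) := \mathrm{e}^{\beta_R t} Z(t)$ on the interval $[t_n, t_{n+1}]$. Because $t\mapsto \mathrm{e}^{\beta_R t}$ is smooth and of finite variation, the product rule gives $\diff Y(t) = \beta_R \mathrm{e}^{\beta_R t} Z(t)\,\diff t + \mathrm{e}^{\beta_R t}\,\diff Z(t)$, and substituting the dynamics \eqref{dyn-Z} the two drift terms $\beta_R \mathrm{e}^{\beta_R t} Z(t)\,\diff t$ and $-\beta_R \mathrm{e}^{\beta_R t} Z(t)\,\diff t$ cancel, leaving $\diff Y(t) = \sigma_R \mathrm{e}^{\beta_R t}\,\diff W(t)$. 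Next I would integrate this from $t_n$ to $t_{n+1}$, obtaining $Y(t_{n+1}) - Y(t_n) = \sigma_R \int_{t_n}^{t_{n+1}} \mathrm{e}^{\beta_R s}\,\diff W(s)$, i.e.
\[
\mathrm{e}^{\beta_R t_{n+1}} Z(t_{n+1}) = \mathrm{e}^{\beta_R t_n} z + \sigma_R \int_{t_n}^{t_{n+1}} \mathrm{e}^{\beta_R s}\,\diff W(s),
\]
using the initial condition $Z(t_n) = z$. Finally, multiplying through by $\mathrm{e}^{-\beta_R t_{n+1}}$ and using $t_{n+1} - t_n = \Delta_N$ yields
\[
Z(t_{n+1}) = z\,\mathrm{e}^{-\beta_R \Delta_N} + \sigma_R \int_{t_n}^{t_{n+1}} \mathrm{e}^{-\beta_R(t_{n+1}-s)}\,\diff W(s),
\]
which is exactly \eqref{Rec-R}.

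There is no real obstacle here; the only points requiring a word of care are the justification that the stochastic integral is well defined (the integrand $\mathrm{e}^{\beta_R s}$ is deterministic, bounded and continuous on the compact interval $[t_n,t_{n+1}]$, hence square-integrable, so the It\^o integral exists and is a well-defined Gaussian random variable), and the remark that applying It\^o to the $C^1$ time change introduces no quadratic-variation correction term because $\mathrm{e}^{\beta_R t}$ has zero quadratic variation and zero cross-variation with $Z$ beyond what the product rule already accounts for. One could equally verify the formula a posteriori by applying It\^o directly to the claimed right-hand side and checking it satisfies \eqref{dyn-Z}, but the integrating-factor derivation above is cleaner and I would present that.
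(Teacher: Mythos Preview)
Your proof is correct and follows essentially the same integrating-factor approach as the paper: multiply the SDE by $\mathrm{e}^{\beta_R s}$, observe that the drift terms cancel, integrate from $t_n$ to $t_{n+1}$, and unwind. The paper's version is terser and omits the explicit invocation of It\^o's formula and the square-integrability remark, but the argument is the same.
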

\begin{proof}
    See Appendix \ref{proof-lemZ}.
\end{proof}

 \paragraph{Discrete-time approximation of the battery level} The following lemma describes the discrete-time evolution of the battery level based on its continuous-time dynamics given in \eqref{dyn-Q} and the shorthand notation of the mean of $m_{R,n} = \meanRn$. 

\begin{lemma}\label{lem-Q}
Under Assumptions \ref{Ass-co} and \ref{Ass-par}, the closed-form solution to the dynamics  of the battery level with known ${\neu q}=Q(t_n)$ and $z=Z(t_n)$ is given based on the control  $a \in U$ as follows:
 \begin{equation}\label{Rec-Q1}
	Q_{n+1} = q \mathrm{e}^{-\eta_0\Delta_N}- \frac{ \eta_E^n}{\capacitybat}\begin{cases}
		\frac{z}{\eta_0-\beta_R}\left(\mathrm{e}^{-\beta_R\Delta_N}-\mathrm{e}^{-\eta_0\Delta_N}\right)+ \frac{ \mu_{R,n}}{\eta_0 }(1-\mathrm{e}^{-\eta_0\Delta_N})+ \Upsilon_Q  &  \quad a \in \{u^C, u^D\},\\[1ex]
	 \frac{R_{Q0} }{\eta_0 }(1-\mathrm{e}^{-\eta_0\Delta_N})&  \quad a=u^{DL},\\
		0 & \quad \text{otherwise},
	\end{cases}
\end{equation}
where
\begin{align}
   \Upsilon_Q&= \sigma_{R}\displaystyle\int_{t_{n}}^{t_{n+1}}\mathrm{e}^{-\eta_0(t_{n+1}-s)}\left(\displaystyle\int_{t_n}^{s}\mathrm{e}^{-\beta_R(s-u)}dW(u)\right)ds. 
   	\label{upsilon_1}
\end{align}
\end{lemma}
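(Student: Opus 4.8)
The plan is to solve the linear ODE \eqref{dyn-Q} for the battery level on the interval $[t_n,t_{n+1})$ by the integrating-factor method, treating the $Z$-driven term via the closed-form representation from Lemma \ref{lem-Z}. Under Assumption \ref{Ass-co} the control $\alpha_n = a$ is constant on $[t_n,t_{n+1})$, and under Assumption \ref{Ass-par} the seasonality $\mu_R$ and the efficiency factor $\eta_E$ are frozen at their time-$t_n$ values $\mu_{R,n}$ and $\eta_E^n$. Hence on this subinterval \eqref{eq_H} reduces, in each of the three control regimes, to a linear ODE of the form $\diff Q(t) = \bigl(-\eta_0 Q(t) + h(t)\bigr)\diff t$, where $h\equiv 0$ in the ``otherwise'' case, $h(t) = -\tfrac{\eta_E^n}{C_Q}R_{Q0}$ in the $u^{DL}$ case, and $h(t) = -\tfrac{\eta_E^n}{C_Q}\bigl(\mu_{R,n} + Z(t)\bigr)$ in the $\{u^C,u^D\}$ case. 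Multiplying by the integrating factor $\mathrm e^{\eta_0 t}$ and integrating from $t_n$ to $t_{n+1}$ gives $Q_{n+1} = q\,\mathrm e^{-\eta_0\Delta_N} + \int_{t_n}^{t_{n+1}} \mathrm e^{-\eta_0(t_{n+1}-s)} h(s)\, \diff s$.

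The three cases are then obtained by evaluating that integral. In the ``otherwise'' case the integral is zero, giving $Q_{n+1}=q\,\mathrm e^{-\eta_0\Delta_N}$. In the $u^{DL}$ case $h$ is constant, so $\int_{t_n}^{t_{n+1}}\mathrm e^{-\eta_0(t_{n+1}-s)}\diff s = \tfrac{1}{\eta_0}\bigl(1-\mathrm e^{-\eta_0\Delta_N}\bigr)$, producing the middle line. In the $\{u^C,u^D\}$ case I split $h$ into its deterministic part $-\tfrac{\eta_E^n}{C_Q}\mu_{R,n}$ and its stochastic part $-\tfrac{\eta_E^n}{C_Q}Z(s)$. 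The deterministic part again contributes $\tfrac{\mu_{R,n}}{\eta_0}\bigl(1-\mathrm e^{-\eta_0\Delta_N}\bigr)$ up to the prefactor. For the stochastic part I substitute the closed form $Z(s) = z\,\mathrm e^{-\beta_R(s-t_n)} + \sigma_R\int_{t_n}^{s}\mathrm e^{-\beta_R(s-u)}\diff W(u)$ from Lemma \ref{lem-Z}; the first term integrates elementarily to $\tfrac{z}{\eta_0-\beta_R}\bigl(\mathrm e^{-\beta_R\Delta_N}-\mathrm e^{-\eta_0\Delta_N}\bigr)$ after the change of variable (note the integrand is $\mathrm e^{-\eta_0(t_{n+1}-s)}\mathrm e^{-\beta_R(s-t_n)}$, whose antiderivative yields exactly this expression), and the second term is precisely the iterated stochastic integral $\Upsilon_Q$ defined in \eqref{upsilon_1}. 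Collecting the prefactor $-\tfrac{\eta_E^n}{C_Q}$ in front of all three summands yields the first line of \eqref{Rec-Q1}.

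The only genuinely delicate point is the computation $\int_{t_n}^{t_{n+1}} \mathrm e^{-\eta_0(t_{n+1}-s)}\mathrm e^{-\beta_R(s-t_n)}\,\diff s = \tfrac{1}{\eta_0-\beta_R}\bigl(\mathrm e^{-\beta_R\Delta_N} - \mathrm e^{-\eta_0\Delta_N}\bigr)$, which requires $\eta_0\neq\beta_R$; I would either assume this genericity explicitly or note that the limiting case $\eta_0=\beta_R$ gives $\Delta_N\,\mathrm e^{-\eta_0\Delta_N}$ by L'Hôpital. Interchanging the time integral with the stochastic integral to recognize $\Upsilon_Q$ is justified by a stochastic Fubini theorem, since the integrand is bounded and deterministic in the relevant variables; this is routine but worth a one-line remark. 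Everything else is elementary calculus, so the write-up is short once the case split and the integrating factor are in place.
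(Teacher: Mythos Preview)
Your proposal is correct and follows essentially the same approach as the paper's own proof: case split on the control, integrating-factor solution of the linear ODE on $[t_n,t_{n+1})$, and substitution of the closed-form expression for $Z(s)$ from Lemma~\ref{lem-Z} to isolate the $z$-term, the $\mu_{R,n}$-term, and the stochastic remainder $\Upsilon_Q$. You are in fact slightly more careful than the paper in flagging the genericity condition $\eta_0\neq\beta_R$ and the stochastic Fubini step, neither of which the paper comments on.
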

\begin{proof}
	    See Appendix \ref{proof-lemQ}
\end{proof}

   \paragraph{Discrete-time dynamics of the fuel level}
   The continuous-time dynamics of the fuel level, denoted $G(t)$, is governed by the ordinary differential equation (ODE) given in \eqref{dyn-G}. The following lemma presents the discrete-time approximation over a time interval $ [t_n, t_{n+1})$.

\begin{lemma}\label{lem-G} Under Assumption \ref{Ass-par}, the discrete-time approximation  of the ODE \eqref{dyn-G} at time $t_{n+1}$ knowing ${\neu g}=G(t_n) $ and $z=Z(t_n) $is given as follows:
	  \begin{align}
		G_{n+1}= g - \frac{c_0}{C_G} \Delta_N -\frac{c_1}{C_G}\begin{cases}
			\frac{z}{\beta_R}(1-\mathrm{e}^{-\beta_R\Delta_N})+ \Upsilon_G & \quad a = u^F\\[1ex]
			R_{G0}\Delta_N & \quad a= u^{FL}\\
			0 & \quad \text{otherwise},
		\end{cases}
		\label{G_1}
	\end{align}
where \begin{align*}
	 \Upsilon_G&=\sigma_R\int_{t_n}^{t_{n+1}}\int_{t_n}^{s}\mathrm{e}^{-\beta_R(s-u)}dW(u)ds. \label{Ups-G}
\end{align*}
\end{lemma}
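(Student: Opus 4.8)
The plan is to integrate the ODE \eqref{dyn-G} directly over the interval $[t_n,t_{n+1})$, using the fact that the control is piecewise constant (Assumption~\ref{Ass-co}) and that $\mu_R$ is piecewise constant (Assumption~\ref{Ass-par}), so that on this interval the right-hand side depends on time only through the process $Z(s)$. First I would write
\[
G_{n+1} = G(t_{n+1}) = g - \frac{1}{C_G}\int_{t_n}^{t_{n+1}} \mathcal{J}(s,Z(s),a)\,ds,
\]
and then split into the three cases according to the value of the constant control $a$. In the ``otherwise'' case $\mathcal{J}\equiv 0$ and the claim is immediate; in the case $a=u^{FL}$ we have $\mathcal{J}(s,Z(s),a)=c_0+c_1 R_{G0}$, a constant, so the integral is $(c_0+c_1R_{G0})\Delta_N$, giving the stated formula.

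The only case requiring work is $a=u^F$, where $\mathcal{J}(s,Z(s),a)=c_0+c_1(\mu_{R,n}+Z(s))$. Here I would substitute the closed-form expression for $Z(s)$ on $[t_n,t_{n+1})$ from Lemma~\ref{lem-Z} (applied with $t_n$ as the base point), namely $Z(s) = z\,\mathrm{e}^{-\beta_R(s-t_n)} + \sigma_R\int_{t_n}^{s}\mathrm{e}^{-\beta_R(s-u)}\,dW(u)$, and integrate term by term. The constant term $c_0$ contributes $c_0\Delta_N$; the $c_1\mu_{R,n}$ term also gives a constant times $\Delta_N$, which combines with the $c_0$ term — note the stated formula absorbs the $c_1\mu_{R,n}\Delta_N$ contribution into the ``$-\frac{c_0}{C_G}\Delta_N$'' bookkeeping only if one reads $c_0$ there as incorporating it, so I would double-check the constant bookkeeping against \eqref{dyn-G}. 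The deterministic decaying term integrates to $\int_{t_n}^{t_{n+1}} z\,\mathrm{e}^{-\beta_R(s-t_n)}\,ds = \frac{z}{\beta_R}(1-\mathrm{e}^{-\beta_R\Delta_N})$, which is exactly the first summand in the $u^F$ branch. The stochastic term integrates to $\sigma_R\int_{t_n}^{t_{n+1}}\int_{t_n}^{s}\mathrm{e}^{-\beta_R(s-u)}\,dW(u)\,ds = \Upsilon_G$, matching the definition given.

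The main obstacle — really the only subtle point — is the interchange of the time integral $\int_{t_n}^{t_{n+1}}ds$ with the It\^o integral $\int_{t_n}^{s}dW(u)$ that appears when plugging in $Z(s)$; this is a stochastic Fubini argument, and one must check its hypotheses (the integrand $\mathrm{e}^{-\beta_R(s-u)}$ is bounded and jointly measurable on the simplex $t_n\le u\le s\le t_{n+1}$, so the $L^2$ conditions of the stochastic Fubini theorem hold). Strictly speaking no interchange is even needed to state the result, since $\Upsilon_G$ is already written in the iterated form $\int\!\int dW\,ds$; one only needs stochastic Fubini later when computing the distribution of $\Upsilon_G$ (Subsection~\ref{cond-distrib}). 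So the proof here reduces to: integrate the ODE, substitute Lemma~\ref{lem-Z}, and compute the three elementary deterministic integrals. I would also remark that the formula is exact (not an Euler approximation) precisely because \eqref{dyn-G} has no state-dependent efficiency factor, unlike \eqref{dyn-Q} — this is worth a one-line comment contrasting with Lemma~\ref{lem-Q}.
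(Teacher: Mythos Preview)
Your approach is essentially identical to the paper's: integrate \eqref{dyn-G} over $[t_n,t_{n+1})$, split on the constant control $a$, and in the $u^F$ case substitute the closed-form solution for $Z(s)$ from Lemma~\ref{lem-Z} and integrate term by term. Your suspicion about the constant bookkeeping is well founded --- the paper's own proof in Appendix~\ref{proof-lemg} in fact derives $G_{n+1}=g-\tfrac{c_0}{C_G}\Delta_N-\tfrac{c_1}{C_G}\bigl(\mu_{R,n}\Delta_N+\tfrac{z}{\beta_R}(1-\mathrm{e}^{-\beta_R\Delta_N})+\Upsilon_G\bigr)$ for $a=u^F$, so the $\mu_{R,n}\Delta_N$ term is indeed missing from the statement of the lemma (and the ``otherwise'' branch should give $G_{n+1}=g$, not $g-\tfrac{c_0}{C_G}\Delta_N$).
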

\begin{proof}
    See Appendix \ref{proof-lemg}.
\end{proof}

\noindent It is important to note that closed-form solutions for \(Z_{n+1}:=Z(t_{n+1})\), \(Q_{n+1}:=Q(t_{n+1})\), and \(G_{n+1}:=G(t_{n+1})\)  given in Lemma \ref{lem-Z}, \ref{lem-Q}, and \ref{lem-G}, respectively, involve integrals with respect to a Wiener process. Consequently,  
these variables are Gaussian random variables influenced by the same Wiener process. 
Therefore, for $a \in \{u^C,u^D,u^{DL}\}$, \(Z_{n+1}\) and \(Q_{n+1}\) are correlated, as are \(Z_{n+1}\) and \(G_{n+1}\) for $a \in \{u^F,u^{FL}\}$. Next, we study the marginal and joint distributions of the state variables. 

\subsection{Conditional Distributions of State Variables}
\label{cond-distrib}
In this Subsection, we analyze the conditional marginal and joint distributions of  the process \(X_{n+1}=(Z_{n+1},Q_{n+1}, G_{n+1})\) given \(X_n=(Z_n, Q_n, G_n) = (z, q, g)\) and the action \(\alpha_n = a\), using the discrete-time approximation derived above.\\
We denote by
$\meanZ$,
$\meanQ$, and 
$\meanG$ 
the conditional means 
 and by $\ConVarZ$, 
$\ConVarQ$, and 
$\ConVarG$ 
the conditional variance of $Z_{n+1}$, $Q_{n+1}$, and $G_{n+1}$  given $X_n=x= (z, q, g)$ and $\alpha_n=a$, respectively.
Since $Z_{n+1}$ and $Q_{n+1}$ are correlated, we denote by
$\CovZQ$ and 
$\CorrZQ$ 
the conditional covariance and correlation between $Z_{n+1}$ and $Q_{n+1}$, respectively. $\CovZG$ and 
$\CorrZG$ 
represent the conditional covariance and correlation between $Z_{n+1}$ and $G_{n+1}$.

\paragraph{Conditional distribution of \(Z_{n+1}\) given \(Z_n = z\)}
We recall that at time $t_{n+1}$, the random variable \(Z_{n+1}\) is Gaussian. The following proposition provides its conditional mean and variance.
\begin{proposition}{\label{prop-Z}}
    \textbf{(Conditional Mean and Variance of \(Z_{n+1}\))} The conditional distribution of \(Z_{n+1}\) given \(Z_n=z\) is Gaussian with mean and variance given by
    \begin{equation}\label{Cond_mean_Z}
       \meanZ= z \mathrm{e}^{-\beta_R \Delta_N} \quad \text{and} \quad \ConVarZ = \frac{\sigma_R^2}{2\beta_R} \left(1 - \mathrm{e}^{-2\beta_R \Delta_N}\right),
    \end{equation}
   respectively.
\end{proposition}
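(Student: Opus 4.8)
The plan is to obtain both assertions directly from the closed-form representation in Lemma~\ref{lem-Z}. Conditionally on $Z_n = z$, equation~\eqref{Rec-R} expresses $Z_{n+1}$ as the sum of the deterministic number $z\,\mathrm{e}^{-\beta_R\Delta_N}$ and the Wiener integral $I_n := \sigma_R\int_{t_n}^{t_{n+1}}\mathrm{e}^{-\beta_R(t_{n+1}-s)}\,dW(s)$. The integrand $s\mapsto\sigma_R\mathrm{e}^{-\beta_R(t_{n+1}-s)}$ is deterministic and square-integrable on $[t_n,t_{n+1}]$, so I would first invoke the standard fact that such a Wiener integral is a centered Gaussian random variable, independent of $\mathcal{F}(t_n)$ (it is the $L^2$-limit of finite linear combinations of independent Gaussian Brownian increments, and the Gaussian family is closed under $L^2$ convergence). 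Consequently $Z_{n+1}$, conditionally on $Z_n=z$, is an affine image of a centered Gaussian variable, hence Gaussian.

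For the mean, taking the conditional expectation in~\eqref{Rec-R} removes the term $I_n$ and leaves $\meanZ = z\,\mathrm{e}^{-\beta_R\Delta_N}$. For the variance, I would apply the It\^o isometry to $I_n$, obtaining $\ConVarZ = \sigma_R^2\int_{t_n}^{t_{n+1}}\mathrm{e}^{-2\beta_R(t_{n+1}-s)}\,ds$; the substitution $r=t_{n+1}-s$ reduces this to $\sigma_R^2\int_0^{\Delta_N}\mathrm{e}^{-2\beta_R r}\,dr = \frac{\sigma_R^2}{2\beta_R}\bigl(1-\mathrm{e}^{-2\beta_R\Delta_N}\bigr)$, which is the claimed expression.

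There is no genuinely hard step here: the proposition is essentially a corollary of Lemma~\ref{lem-Z} combined with the elementary properties of Wiener integrals (Gaussianity, zero mean, It\^o isometry). The only point deserving a line of justification is the passage from the unconditional distribution of $I_n$ to the conditional statement about $Z_{n+1}$ given $Z_n=z$, which is legitimate because $I_n$ depends only on the Brownian increments over $[t_n,t_{n+1}]$ and is therefore independent of $Z_n$ (equivalently, of $\mathcal{F}(t_n)$).
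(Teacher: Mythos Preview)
Your proposal is correct and follows essentially the same approach as the paper: both start from the closed-form representation in Lemma~\ref{lem-Z} and compute the mean and variance of the Wiener integral via the martingale property and It\^o isometry. The only minor difference is that the paper derives the full two-time conditional covariance $\operatorname{Cov}(Z(t),Z(s))$ before specializing to $s=t=t_{n+1}$, a formula it reuses in later proofs, whereas you compute the variance directly; your argument is also more explicit about the Gaussianity and the independence of $I_n$ from $\mathcal{F}(t_n)$, which the paper leaves implicit.
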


\begin{proof}
    See Appendix \ref{proof-propZ}.
\end{proof}

\paragraph{Conditional distribution of $Q_{n+1}$ given $Q_n$ and $Z_n$}
 For $a \in \{u^C,u^D,u^{DL}\}$, the random variable $Q_{n+1}$, defined by the recursion in Equation~\eqref{Rec-Q1} is  Gaussian as an integral functional of the Wiener process with a deterministic integrand. The following proposition provides its conditional mean and variance.

\begin{proposition}\textbf{(Conditional Mean and Variance of \(Q_{n+1}\))} \label{prop-Q}

The conditional distribution of the battery level $Q_{n+1}$ given $Q_n = q$ and $Z_n = z$ is Gaussian.
\begin{enumerate}
    \item  The conditional mean is given by:
\begin{equation}
    \meanQ= q \mathrm{e}^{-\eta_0\Delta_N}- \frac{ \eta_E^n}{\capacitybat}H(n,a),
\end{equation}
where 
\begin{equation}
H(n,a)=\begin{cases}
		\frac{z}{\eta_0-\beta_R}\left(\mathrm{e}^{-\beta_R\Delta_N}-\mathrm{e}^{-\eta_0\Delta_N}\right)+ \frac{ \mu_{R,n}}{\eta_0 }(1-\mathrm{e}^{-\eta_0\Delta_N})  &   a \in \{u^C, u^D\},\\[1ex]
		\frac{R_{Q0} }{\eta_0 }(1-\mathrm{e}^{-\eta_0\Delta_N})&   a=u^{DL},\\
		0 &  \text{otherwise},
	\end{cases}
\end{equation}
    \item The conditional variance is given by 
\begin{equation}
    \ConVarQ = \begin{cases} \frac{\eta_{E,n}^2 \sigma_R^2}{2\beta_R C_Q^2}  I_1
          & \qquad a \in \{u^C, u^D\},\\
        0& \qquad \text{otherwise},
        \end{cases}  
\end{equation}
\end{enumerate}
where
\begin{align}
I_1 &= 
 \frac{1}{\eta_0^2 - \beta_R^2} \left(
\frac{\beta_R}{\eta_0} \left( 1-\mathrm{e}^{-2\eta_0 \Delta_N} \right)+ 1- 2\mathrm{e}^{-(\eta_0 +\beta_R)\Delta_N}+\mathrm{e}^{-2\eta_0 \Delta_N} \right) -\frac{\bigg(\mathrm{e}^{-\eta_0 \Delta_N}-\mathrm{e}^{-\beta_R \Delta_N} \bigg)^2 }{(\eta_0 - \beta_R)^2}. \label{cond_covQ}
\end{align}
\end{proposition}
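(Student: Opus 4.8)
The plan is to read off $Q_{n+1}$ from the closed-form recursion of Lemma~\ref{lem-Q} and to exploit that, given $X_n$, it is an affine function of a single Gaussian functional of the Wiener increments on $[t_n,t_{n+1}]$. For every control $a\notin\{u^C,u^D,u^{DL}\}$ the recursion~\eqref{Rec-Q1} reduces to $Q_{n+1}=q\,\mathrm{e}^{-\eta_0\Delta_N}$, and for $a=u^{DL}$ it is still a deterministic function of the known data $(n,z,q)$ (recall $\eta_{E,n}=\eta_E(t_n,z,q)$ is determined by $X_n$); in these cases $Q_{n+1}$ is degenerate given $X_n$, hence trivially Gaussian with the asserted mean and zero conditional variance. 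The substance of the proof is therefore concentrated in the case $a\in\{u^C,u^D\}$, where
\[
 Q_{n+1}=q\,\mathrm{e}^{-\eta_0\Delta_N}-\frac{\eta_{E,n}}{\capacitybat}\bigl(H(n,a)+\Upsilon_Q\bigr),
\]
with $H(n,a)$ the deterministic bracket appearing in the statement and $\Upsilon_Q$ the iterated Itô integral in~\eqref{upsilon_1}.

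For the conditional mean I would note that, given $X_n=(z,q,g)$, the only stochastic term is $\Upsilon_Q$, which depends only on the Wiener increments after $t_n$ and has mean zero: the inner integral $s\mapsto\int_{t_n}^{s}\mathrm{e}^{-\beta_R(s-u)}\,dW(u)$ is a centred Itô integral, and integrating it against $ds$ over $[t_n,t_{n+1}]$ preserves the zero mean (conditional/stochastic Fubini). Since $\eta_{E,n}$ and $H(n,a)$ are deterministic functions of $(n,z,q,a)$, this gives part~(1), namely $\meanQ=q\,\mathrm{e}^{-\eta_0\Delta_N}-\frac{\eta_{E,n}}{\capacitybat}H(n,a)$.

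For the conditional variance and for Gaussianity, the key step is to collapse $\Upsilon_Q$ into a single Itô integral by the stochastic Fubini theorem: interchanging the $ds$- and $dW(u)$-integrations and evaluating the elementary inner $ds$-integral yields
\[
 \Upsilon_Q=\frac{\sigma_R}{\eta_0-\beta_R}\int_{t_n}^{t_{n+1}}\bigl(\mathrm{e}^{-\beta_R(t_{n+1}-u)}-\mathrm{e}^{-\eta_0(t_{n+1}-u)}\bigr)\,dW(u),
\]
which displays $\Upsilon_Q$, and hence $Q_{n+1}$, as conditionally Gaussian. The Itô isometry then reduces $\Var(\Upsilon_Q)$ to the deterministic integral $\frac{\sigma_R^2}{(\eta_0-\beta_R)^2}\int_{t_n}^{t_{n+1}}\bigl(\mathrm{e}^{-\beta_R(t_{n+1}-u)}-\mathrm{e}^{-\eta_0(t_{n+1}-u)}\bigr)^2\,du$; the substitution $v=t_{n+1}-u$, expansion of the square and termwise integration of $\mathrm{e}^{-2\beta_R v}$, $\mathrm{e}^{-(\eta_0+\beta_R)v}$, $\mathrm{e}^{-2\eta_0 v}$ over $[0,\Delta_N]$, followed by multiplication by $(\eta_{E,n}/\capacitybat)^2$, gives a combination of $1-\mathrm{e}^{-2\beta_R\Delta_N}$, $1-\mathrm{e}^{-(\eta_0+\beta_R)\Delta_N}$ and $1-\mathrm{e}^{-2\eta_0\Delta_N}$ with denominators $2\beta_R$, $\eta_0+\beta_R$, $2\eta_0$. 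Regrouping over the common factor $\tfrac1{2\beta_R}$ and the denominator $\eta_0^2-\beta_R^2$, and isolating the perfect square $(\mathrm{e}^{-\eta_0\Delta_N}-\mathrm{e}^{-\beta_R\Delta_N})^2$, yields precisely $\ConVarQ=\frac{\eta_{E,n}^2\sigma_R^2}{2\beta_R\capacitybat^2}I_1$ with $I_1$ as in~\eqref{cond_covQ}.

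The analytic ingredients are routine: the stochastic Fubini interchange is justified because the kernel $(u,s)\mapsto\mathrm{e}^{-\eta_0(t_{n+1}-s)}\mathrm{e}^{-\beta_R(s-u)}\mathbf{1}_{\{t_n\le u\le s\le t_{n+1}\}}$ is bounded and deterministic on a compact set, so the integrability hypotheses hold trivially, and the isometry is immediate. I expect the only real obstacle to be purely bookkeeping, namely the final algebraic rearrangement into the exact form of $I_1$: the natural output of the integration has three separate denominators, and bringing it to the two-term expression of~\eqref{cond_covQ} takes some care. The degenerate parameter choice $\eta_0=\beta_R$, at which the representation above has a removable singularity, would be treated by a separate limiting argument (or simply excluded as non-generic).
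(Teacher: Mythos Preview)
Your argument is correct and complete, but it follows a different route from the paper. The paper computes the conditional variance by expressing $Q(t)$ as $q\mathrm{e}^{-\eta_0(t-t_n)}-\frac{\eta_E^n}{C_Q}\int_{t_n}^t \mathrm{e}^{-\eta_0(t-s)}(\mu_R(s)+Z(s))\,ds$ and then evaluating the double integral $\Cov(Q(t),Q(s))=\frac{(\eta_E^n)^2}{C_Q^2}\int_{t_n}^t\int_{t_n}^s \mathrm{e}^{-\eta_0(t-u)}\mathrm{e}^{-\eta_0(s-v)}\Cov(Z(u),Z(v))\,dv\,du$ via the known covariance of $Z$; this produces two double integrals $J_1,J_2$, one of which contains the absolute value $|u-v|$ and is handled by splitting the integration domain into three triangles before setting $s=t=t_{n+1}$. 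Your approach instead first applies the stochastic Fubini theorem to rewrite $\Upsilon_Q$ as a single It\^o integral with a deterministic integrand, so Gaussianity is immediate and the variance is a one-dimensional integral by the isometry. Your method is shorter and avoids the domain decomposition; the paper's method has the side benefit of delivering the full two-time covariance $\Cov(Q(t),Q(s))$, which aligns with how it treats the cross-covariances $\Sigma_{ZQ}$ and $\Sigma_{ZG}$ elsewhere.
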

\begin{proof}
See Appendix \ref{proof-propQ}.
\end{proof}

\paragraph{Conditional distribution of $G_{n+1}$ given $G_n$} Similarly, at time $t_{n+1}$, $G_{n+1}$ is an integral function of the Wiener process with deterministic integrand. Therefore, $G_{n+1}$ is Gaussian and the following proposition characterizes its conditional mean and variance.
 \begin{proposition}\label{prop-G}\textbf{(Conditional Mean and Variance of \(G_{n+1}\))} 
 
The conditional distribution of the fuel tank level at time $t_{n+1}$, $G_{n+1}$  given $G_n = g$ and $Z_n = z$ is Gaussian.
\begin{enumerate}
     \item  The conditional mean is given by:
\begin{equation}
    \meanG =  g - \frac{c_0}{C_G} \Delta_N -\frac{c_1}{C_G}\begin{cases}
    	\frac{z}{\beta_R}(1-\mathrm{e}^{-\beta_R\Delta_N})& \quad a = u^F\\[1ex]
    	R_{G0}\Delta_N & \quad a= u^{FL}\\
    	0 & \quad \text{otherwise},
    \end{cases}
\end{equation}
\item The conditional variance is given by
 \begin{align}
    \ConVarG 
    =\frac{ 1}{\beta_R^3}\left(\frac{c_1 \sigma_R}{\sqrt{2}C_G}\right)^2 \begin{cases}
       2\beta_R \Delta_N -3+4\mathrm{e}^{-\beta_R\Delta_N} -\mathrm{e}^{-2\beta_R\Delta_N}  & \quad a = u^F,\\
       0 & \quad  \text{otherwise.}
    \end{cases} 
\end{align}
\end{enumerate}
\end{proposition}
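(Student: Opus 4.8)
\textbf{Proof proposal for Proposition \ref{prop-G}.}
The plan is to start from the closed-form representation of $G_{n+1}$ given in Lemma \ref{lem-G}, namely $G_{n+1} = g - \frac{c_0}{C_G}\Delta_N - \frac{c_1}{C_G}\,\xi_a$, where $\xi_a$ equals $\frac{z}{\beta_R}(1-\mathrm{e}^{-\beta_R\Delta_N}) + \Upsilon_G$ for $a=u^F$, equals $R_{G0}\Delta_N$ for $a=u^{FL}$, and equals $0$ otherwise, with $\Upsilon_G = \sigma_R\int_{t_n}^{t_{n+1}}\int_{t_n}^{s}\mathrm{e}^{-\beta_R(s-u)}\,dW(u)\,ds$. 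Conditionally on $Z_n=z$ and $G_n=g$, everything but $\Upsilon_G$ is deterministic, and $\Upsilon_G$ is (by the stochastic Fubini theorem) an Itô integral of a deterministic integrand, hence centered Gaussian. This immediately gives the conditional mean: $\Erws[G_{n+1}\mid X_n=x,\alpha_n=a]$ is obtained by dropping $\Upsilon_G$, which yields exactly the piecewise expression stated for $\meanG$, and shows that the conditional law is Gaussian.

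For the variance, note that only the case $a=u^F$ contributes a random term, so $\ConVarG = \frac{c_1^2}{C_G^2}\Var(\Upsilon_G)$ for $a=u^F$ and $0$ otherwise. To compute $\Var(\Upsilon_G)$, I would first apply the stochastic Fubini theorem to interchange the $ds$ and $dW(u)$ integrals:
\begin{equation}
\Upsilon_G = \sigma_R\int_{t_n}^{t_{n+1}}\!\left(\int_{u}^{t_{n+1}}\mathrm{e}^{-\beta_R(s-u)}\,ds\right)dW(u) = \frac{\sigma_R}{\beta_R}\int_{t_n}^{t_{n+1}}\!\left(1-\mathrm{e}^{-\beta_R(t_{n+1}-u)}\right)dW(u).
\end{equation}
Then the Itô isometry gives $\Var(\Upsilon_G) = \frac{\sigma_R^2}{\beta_R^2}\int_{t_n}^{t_{n+1}}\bigl(1-\mathrm{e}^{-\beta_R(t_{n+1}-u)}\bigr)^2 du$, and after the substitution $v=t_{n+1}-u$ this becomes $\frac{\sigma_R^2}{\beta_R^2}\int_0^{\Delta_N}\bigl(1-2\mathrm{e}^{-\beta_R v}+\mathrm{e}^{-2\beta_R v}\bigr)dv$.

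The remaining step is to evaluate the three elementary integrals, $\int_0^{\Delta_N}1\,dv=\Delta_N$, $\int_0^{\Delta_N}\mathrm{e}^{-\beta_R v}\,dv=\frac{1}{\beta_R}(1-\mathrm{e}^{-\beta_R\Delta_N})$, and $\int_0^{\Delta_N}\mathrm{e}^{-2\beta_R v}\,dv=\frac{1}{2\beta_R}(1-\mathrm{e}^{-2\beta_R\Delta_N})$, and then collect terms. Multiplying the result by $\frac{c_1^2}{C_G^2}$, factoring $\frac{1}{\beta_R^3}$ out of the bracket, and clearing the resulting factor $\tfrac12$ into the prefactor $\bigl(\tfrac{c_1\sigma_R}{\sqrt2\,C_G}\bigr)^2$ yields precisely the stated formula $\ConVarG = \frac{1}{\beta_R^3}\bigl(\frac{c_1\sigma_R}{\sqrt 2\,C_G}\bigr)^2\bigl(2\beta_R\Delta_N-3+4\mathrm{e}^{-\beta_R\Delta_N}-\mathrm{e}^{-2\beta_R\Delta_N}\bigr)$. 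There is no genuine obstacle here; the only care needed is in the bookkeeping of the exponential terms and signs when assembling the final closed form, and in citing the stochastic Fubini theorem to justify the interchange of the time and stochastic integrals (its hypotheses are trivially met since the integrand $\mathrm{e}^{-\beta_R(s-u)}\mathbf 1_{\{u\le s\}}$ is bounded and deterministic on $[t_n,t_{n+1}]^2$).
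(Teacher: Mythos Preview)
Your proof is correct but follows a different route from the paper. The paper does not use the stochastic Fubini/It\^o-isometry shortcut on $\Upsilon_G$; instead it writes $G(t)=g-(\tilde c_0+\tilde c_1\mu_{R,n})(t-t_n)-\tilde c_1\int_{t_n}^{t}Z(s)\,ds$ and computes the full covariance $\Cov(G(t),G(s))=\tilde c_1^2\int_{t_n}^{t}\int_{t_n}^{s}\Cov(Z(u),Z(v))\,dv\,du$ using the closed-form $\Cov(Z(u),Z(v))$ established earlier in the proof of Proposition~\ref{prop-Z}. This produces two double integrals $J_1(t,s)=\int\!\!\int \mathrm{e}^{-\beta_R|u-v|}\,dv\,du$ and $J_2(t,s)=\int\!\!\int \mathrm{e}^{-\beta_R(u+v-2t_n)}\,dv\,du$, which are evaluated and then specialised to $t=s=t_{n+1}$ to get $\ConVarG$. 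Your approach is shorter and avoids the absolute-value split in $J_1$: rewriting $\Upsilon_G$ as a single It\^o integral via Fubini collapses the problem to one ordinary integral $\int_0^{\Delta_N}(1-\mathrm{e}^{-\beta_R v})^2\,dv$. The paper's route buys a little more, namely the covariance $\Cov(G(t),G(s))$ at arbitrary times, but that extra generality is not used elsewhere, so your argument is the cleaner way to reach the stated result.
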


\begin{proof}
    See Appendix \ref{proof-propG}.
\end{proof}

\paragraph{Joint conditional distribution of \(Z_{n+1}\) and \(Q_{n+1}\)}
 Recall that at time \(t= t_{n+1}\), the residual demand \(Z_{n+1}\) and the battery level \(Q_{n+1}\), defined by the recursions \eqref{lem-Z} and \eqref{lem-Q}, respectively, are correlated and normally distributed random variables.  Therefore, their joint distribution is bivariate normal. The subsequent lemma derives their covariance and correlation coefficient.

\begin{lemma}[Conditional covariance of \(Z_{n+1}\) and \(Q_{n+1}\)]\label{CovZQ}

The conditional covariance \(\CovZQ\) of \(Z_{n+1}\) and \(Q_{n+1}\) is given by
\begin{align}
    \CovZQ = 
    -\frac{\eta_E^n \sigma_{R}^2}{2\beta_RC_Q}\begin{cases}
         \frac{1 - \mathrm{e}^{-(\eta_0+\beta_R)\Delta_N}}{\eta_0+\beta_R}  - \frac{\mathrm{e}^{-2\beta_R\Delta_N} - \mathrm{e}^{-(\eta_0+\beta_R)\Delta_N}}{\eta_0-\beta_R} & \qquad a\in\{u^D,u^C\},\\
        0 & \qquad \text{otherwise}.\end{cases} 
\end{align}
  \end{lemma}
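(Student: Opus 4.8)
The plan is to reduce everything to the covariance of two It\^o integrals driven by $W$ over $[t_n,t_{n+1}]$ and then apply the It\^o isometry. By Lemma~\ref{lem-Z}, $Z_{n+1}=z\,\mathrm{e}^{-\beta_R\Delta_N}+M_Z$ with $M_Z:=\sigma_R\int_{t_n}^{t_{n+1}}\mathrm{e}^{-\beta_R(t_{n+1}-s)}dW(s)$, and by Lemma~\ref{lem-Q}, for $a\in\{u^C,u^D\}$ the right-hand side of \eqref{Rec-Q1} equals, given the state $X_n=(z,q,g)$ and action $\alpha_n=a$, a deterministic number plus the random term $-\tfrac{\eta_E^n}{C_Q}\Upsilon_Q$, with $\Upsilon_Q$ as in \eqref{upsilon_1}. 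Since $W$ has independent increments, the increments of $W$ on $[t_n,t_{n+1}]$ are independent of $\mathcal{F}^W(t_n)$, so the conditional covariance $\CovZQ$ of $Z_{n+1}$ and $Q_{n+1}$ equals $-\tfrac{\eta_E^n}{C_Q}\Cov(M_Z,\Upsilon_Q)$ (with $z$ held fixed), additive constants being irrelevant. For $a\notin\{u^C,u^D\}$ the corresponding branch of \eqref{Rec-Q1} is deterministic under the conditioning, so the covariance vanishes; this is the second case.

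For the first case I would begin by collapsing the iterated integral in $\Upsilon_Q$ to a single It\^o integral via the stochastic Fubini theorem. Interchanging the order of integration over the triangle $t_n\le u\le s\le t_{n+1}$ and evaluating the elementary inner $ds$-integral gives
\begin{align*}
  \Upsilon_Q&=\sigma_R\int_{t_n}^{t_{n+1}}\Big(\int_u^{t_{n+1}}\mathrm{e}^{-\eta_0(t_{n+1}-s)-\beta_R(s-u)}ds\Big)dW(u)\\
  &=\frac{\sigma_R}{\eta_0-\beta_R}\int_{t_n}^{t_{n+1}}\big(\mathrm{e}^{-\beta_R(t_{n+1}-u)}-\mathrm{e}^{-\eta_0(t_{n+1}-u)}\big)dW(u).
\end{align*}
Then I would use the bilinear form of the It\^o isometry, $\Cov\big(\int_{t_n}^{t_{n+1}}f\,dW,\int_{t_n}^{t_{n+1}}g\,dW\big)=\int_{t_n}^{t_{n+1}}fg\,du$ for deterministic $f,g\in L^2[t_n,t_{n+1}]$, with $f(u)=\sigma_R\mathrm{e}^{-\beta_R(t_{n+1}-u)}$ and $g$ the integrand displayed above; the substitution $v=t_{n+1}-u$ turns this into an elementary deterministic integral,
\begin{align*}
  \Cov(M_Z,\Upsilon_Q)&=\frac{\sigma_R^2}{\eta_0-\beta_R}\int_0^{\Delta_N}\big(\mathrm{e}^{-2\beta_R v}-\mathrm{e}^{-(\eta_0+\beta_R)v}\big)dv\\
  &=\frac{\sigma_R^2}{\eta_0-\beta_R}\Big(\frac{1-\mathrm{e}^{-2\beta_R\Delta_N}}{2\beta_R}-\frac{1-\mathrm{e}^{-(\eta_0+\beta_R)\Delta_N}}{\eta_0+\beta_R}\Big).
\end{align*}

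Multiplying by $-\eta_E^n/C_Q$ already gives the stated covariance, once the bracket is written over the common prefactor $2\beta_R$ via the elementary identity
\begin{align*}
  &\frac{1}{\eta_0-\beta_R}\Big(\frac{1-\mathrm{e}^{-2\beta_R\Delta_N}}{2\beta_R}-\frac{1-\mathrm{e}^{-(\eta_0+\beta_R)\Delta_N}}{\eta_0+\beta_R}\Big)\\
  &\qquad=\frac{1}{2\beta_R}\Big(\frac{1-\mathrm{e}^{-(\eta_0+\beta_R)\Delta_N}}{\eta_0+\beta_R}-\frac{\mathrm{e}^{-2\beta_R\Delta_N}-\mathrm{e}^{-(\eta_0+\beta_R)\Delta_N}}{\eta_0-\beta_R}\Big),
\end{align*}
which follows by clearing denominators and using $(\eta_0+\beta_R)-(\eta_0-\beta_R)=2\beta_R$. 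I do not anticipate a conceptual obstacle; the only genuinely technical points are verifying the hypotheses of the stochastic Fubini theorem --- the kernel $(s,u)\mapsto\mathbf{1}_{\{u\le s\}}\mathrm{e}^{-\eta_0(t_{n+1}-s)-\beta_R(s-u)}$ is bounded and jointly measurable on the bounded triangle, hence square-integrable, so the interchange is licit --- and carrying the partial-fraction rearrangement through without slips. As elsewhere in the paper we assume $\eta_0\neq\beta_R$; the degenerate case $\eta_0=\beta_R$ is recovered by letting $\eta_0\to\beta_R$.
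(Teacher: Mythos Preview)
Your argument is correct. The computation via stochastic Fubini and the bilinear It\^o isometry is clean, and the algebraic identity you state to match the form in the lemma checks out (with $A=1-\mathrm{e}^{-(\eta_0+\beta_R)\Delta_N}$, $B=1-\mathrm{e}^{-2\beta_R\Delta_N}$, $C=\mathrm{e}^{-2\beta_R\Delta_N}-\mathrm{e}^{-(\eta_0+\beta_R)\Delta_N}$ one has $A=B+C$, and both sides reduce to the common numerator $B(\eta_0+\beta_R)-2\beta_R A$).

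The paper takes a slightly different route: instead of collapsing $\Upsilon_Q$ to a single Wiener integral, it keeps the stochastic part of $Q_{n+1}$ in the form $-\tfrac{\eta_E^n}{C_Q}\int_{t_n}^{t_{n+1}}\mathrm{e}^{-\eta_0(t_{n+1}-u')}Z(u')\,du'$ and then pulls the covariance inside the $du'$-integral by bilinearity, plugging in the already-derived OU covariance $\Cov(Z(t_{n+1}),Z(u'))=\tfrac{\sigma_R^2}{2\beta_R}\big(\mathrm{e}^{-\beta_R(t_{n+1}-u')}-\mathrm{e}^{-\beta_R(t_{n+1}+u'-2t_n)}\big)$ from the proof of Proposition~\ref{prop-Z}. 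This is more modular (it reuses an earlier result and avoids invoking stochastic Fubini), whereas your approach is more self-contained and works directly at the level of Wiener integrals. Both lead to the same elementary $du'$-integral, so neither is materially shorter; the paper's version has the advantage that no interchange-of-integration argument needs to be justified.
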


    \begin{proof}
    See Appendix \ref{proof-CovZQ}.
\end{proof}

		\begin{proposition}{\label{Corr- ZQ}}(\textbf{Conditional correlation between \(Z_{n+1}\) and \(Q_{n+1}\)})
        The conditional correlation coefficient of \(Z_{n+1}\) and \(Q_{n+1}\) is given by 
		\begin{equation}
			\CorrZQ = \frac{\CovZQ}{\CondevZ\CondevQ}.
		\end{equation}
		Here, $\CondevZ$ and $\CondevQ$ are given by Propositions \ref{prop-Z} and \ref{prop-Q}, respectively, and $\CovZQ$ is given by Lemma \ref{CovZQ}.
\end{proposition}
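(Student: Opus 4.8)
The plan is to obtain the formula as a direct consequence of the definition of the Pearson correlation coefficient for a jointly Gaussian pair, together with the conditional moments already computed. The only genuinely substantive point is to confirm that, conditional on $X_n = x$ and $\alpha_n = a$, the pair $(Z_{n+1}, Q_{n+1})$ really is \emph{bivariate} normal, not merely a pair of random variables each of which happens to be normal.

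First I would recall, from Lemma~\ref{lem-Z} and Lemma~\ref{lem-Q}, that for $a \in \{u^C, u^D\}$ both $Z_{n+1}$ and $Q_{n+1}$ can be written as a deterministic constant (depending on $z$, $q$, $\mu_{R,n}$, $\Delta_N$) plus a stochastic integral $\int_{t_n}^{t_{n+1}} f(s)\, dW(s)$ with a deterministic integrand $f$ — explicitly $f(s) = \sigma_R \mathrm{e}^{-\beta_R(t_{n+1}-s)}$ for $Z_{n+1}$, and for $Q_{n+1}$ the integrand obtained from $\Upsilon_Q$ after applying the stochastic Fubini theorem to rewrite the iterated integral as a single Wiener–Itô integral (this rewriting is exactly what is carried out in the proof of Lemma~\ref{CovZQ}). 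Consequently any linear combination $\lambda Z_{n+1} + \nu Q_{n+1}$ is again of the form constant plus Wiener–Itô integral of a deterministic integrand, hence Gaussian; by the Cramér–Wold device $(Z_{n+1}, Q_{n+1})$ is bivariate normal. Its law is therefore determined entirely by $\meanZ$, $\meanQ$, $\ConVarZ$, $\ConVarQ$ and $\CovZQ$, which are furnished by Propositions~\ref{prop-Z}, \ref{prop-Q} and Lemma~\ref{CovZQ}. The correlation coefficient is then, by definition,
\[
\CorrZQ \;=\; \frac{\Cov(Z_{n+1}, Q_{n+1}\mid X_n = x, \alpha_n = a)}{\sqrt{\Var(Z_{n+1}\mid \cdot)}\,\sqrt{\Var(Q_{n+1}\mid \cdot)}} \;=\; \frac{\CovZQ}{\CondevZ\,\CondevQ},
\]
which is the asserted identity.

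It remains only to note when this ratio is well-defined. For $a \in \{u^C, u^D\}$ one has $\beta_R > 0$, $\eta_0 > 0$ and $\Delta_N > 0$, so $\ConVarZ > 0$ by Proposition~\ref{prop-Z}, and $\ConVarQ > 0$ because $I_1$ in~\eqref{cond_covQ} is strictly positive (it is the variance of a nondegenerate Wiener–Itô integral, its integrand not being a.e.\ zero); hence $\CondevZ\CondevQ > 0$ and the quotient makes sense. For $a \notin \{u^C, u^D\}$, equation~\eqref{Rec-Q1} shows that $Q_{n+1}$ is a deterministic function of $(q,z)$, so $\ConVarQ = 0$ and $\CovZQ = 0$ by Lemma~\ref{CovZQ}; in that degenerate case one adopts the convention $\CorrZQ = 0$, consistent with the formula interpreted as a limit. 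I do not expect any real obstacle here — the statement is essentially definitional — the one place demanding a little care is the stochastic-Fubini step used to exhibit $Q_{n+1}$ as a single Itô integral of a deterministic integrand, but that computation is already available from the proof of Lemma~\ref{CovZQ}.
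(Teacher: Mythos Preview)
Your proof is correct and in fact more thorough than the paper's own treatment: the paper offers no proof at all for this proposition, evidently regarding the identity $\CorrZQ = \CovZQ / (\CondevZ\CondevQ)$ as nothing more than the definition of a correlation coefficient once the ingredients $\CondevZ$, $\CondevQ$, $\CovZQ$ have been computed. Your additional care in verifying bivariate normality via the Cram\'er--Wold argument and in checking that the denominator is nonzero for $a \in \{u^C, u^D\}$ (and handling the degenerate case $a \notin \{u^C, u^D\}$ by convention) goes beyond what the paper does, but is entirely sound.
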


\paragraph{Joint conditional distribution of \(Z_{n+1}\) and \(G_{n+1}\)}  Recall that at time \(t= t_{n+1}\), the residual demand \(Z_{n+1}\) and the fuel tank level \(G_{n+1}\), defined by Lemma \eqref{lem-Z} and Lemma \eqref{lem-G}, respectively, are correlated and normally distributed random variables.  Hence, their joint distribution is bivariate normal. The following Lemma derives their covariance and correlation coefficient.

\begin{lemma}[Conditional covariance of \(Z_{n+1}\) and \(G_{n+1}\).]\label{CovZG}
    
    The conditional covariance of \(Z_{n+1}\) and \(G_{n+1}\) is given by
\begin{align}
    \CovZG =  -\frac{c_1 \sigma_R^2}{2 C_G \beta_R^2}
    \begin{cases}
 (1-\mathrm{e}^{-\beta_R\Delta_N})^2 & \qquad a=u^F,\\
        0 &\qquad \text{otherwise}.        
    \end{cases}
\end{align}
    \end{lemma}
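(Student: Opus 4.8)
\textbf{Proof plan for Lemma \ref{CovZG}.}
The plan is to compute the conditional covariance directly from the closed-form representations of $Z_{n+1}$ and $G_{n+1}$ as stochastic integrals against the same Wiener process $W$. Only the case $a = u^F$ needs work; in all other cases the fuel-level recursion \eqref{G_1} has a deterministic right-hand side (no $\Upsilon_G$ term), so $G_{n+1}$ is deterministic given $G_n$ and $Z_n$, and the covariance vanishes. Thus I fix $a = u^F$ for the rest of the argument.

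First I would write down the two centered (mean-subtracted) random parts. From Lemma \ref{lem-Z}, $Z_{n+1} - \meanZ = \sigma_R \int_{t_n}^{t_{n+1}} \mathrm{e}^{-\beta_R(t_{n+1}-s)} \, dW(s)$. From Lemma \ref{lem-G} with $a = u^F$, the only stochastic contribution is $-\tfrac{c_1}{C_G}\Upsilon_G$ with $\Upsilon_G = \sigma_R \int_{t_n}^{t_{n+1}} \int_{t_n}^{s} \mathrm{e}^{-\beta_R(s-u)}\,dW(u)\,ds$. I would apply the stochastic Fubini theorem to swap the order of integration in $\Upsilon_G$, turning it into a single Itô integral $\sigma_R \int_{t_n}^{t_{n+1}} \big(\int_{u}^{t_{n+1}} \mathrm{e}^{-\beta_R(s-u)}\,ds\big)\,dW(u) = \tfrac{\sigma_R}{\beta_R}\int_{t_n}^{t_{n+1}} \big(1 - \mathrm{e}^{-\beta_R(t_{n+1}-u)}\big)\,dW(u)$. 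Then both centered quantities are Itô integrals over $[t_n,t_{n+1}]$ with deterministic integrands.

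Next I would apply the Itô isometry to the product: $\CovZG = \mathbb{E}_{t,x}[(Z_{n+1}-\meanZ)(G_{n+1}-\meanG)]$ equals $-\tfrac{c_1}{C_G}\cdot\tfrac{\sigma_R^2}{\beta_R}\int_{t_n}^{t_{n+1}} \mathrm{e}^{-\beta_R(t_{n+1}-s)}\big(1 - \mathrm{e}^{-\beta_R(t_{n+1}-s)}\big)\,ds$. Substituting $r = t_{n+1}-s$ reduces this to $-\tfrac{c_1\sigma_R^2}{C_G\beta_R}\int_0^{\Delta_N}\big(\mathrm{e}^{-\beta_R r} - \mathrm{e}^{-2\beta_R r}\big)\,dr = -\tfrac{c_1\sigma_R^2}{C_G\beta_R}\big(\tfrac{1-\mathrm{e}^{-\beta_R\Delta_N}}{\beta_R} - \tfrac{1-\mathrm{e}^{-2\beta_R\Delta_N}}{2\beta_R}\big)$. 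A short algebraic simplification, using $1 - \mathrm{e}^{-2\beta_R\Delta_N} = (1-\mathrm{e}^{-\beta_R\Delta_N})(1+\mathrm{e}^{-\beta_R\Delta_N})$, collapses the bracket to $\tfrac{1}{2\beta_R}(1-\mathrm{e}^{-\beta_R\Delta_N})^2$, yielding exactly $\CovZG = -\tfrac{c_1\sigma_R^2}{2C_G\beta_R^2}(1-\mathrm{e}^{-\beta_R\Delta_N})^2$ as claimed.

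The only genuinely delicate point is the justification of the stochastic Fubini exchange; the integrand $(u,s)\mapsto \mathrm{e}^{-\beta_R(s-u)}\mathds{1}_{\{u\le s\}}$ is bounded and deterministic on the finite rectangle, so the standard hypotheses are trivially met, and everything after that is elementary calculus. I would relegate the full computation to the appendix, as the paper does for the analogous Lemma \ref{CovZQ}.
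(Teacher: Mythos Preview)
Your proof is correct. The route differs slightly from the paper's: in the appendix the authors exploit the bilinearity of covariance together with the closed-form expression $\Cov(Z(t),Z(s))=\tfrac{\sigma_R^2}{2\beta_R}\big(\mathrm{e}^{-\beta_R|t-s|}-\mathrm{e}^{-\beta_R(t+s-2t_n)}\big)$ already established in the proof of Proposition~\ref{prop-Z}, writing $\CovZG=-\tfrac{c_1}{C_G}\int_{t_n}^{t_{n+1}}\Cov(Z(t_{n+1}),Z(u))\,du$ and then integrating. You instead apply stochastic Fubini to collapse $\Upsilon_G$ into a single It\^o integral and then invoke the It\^o isometry once. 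Both lead to the same elementary one-dimensional integral; the paper's version avoids citing stochastic Fubini at the price of relying on an auxiliary covariance formula, while yours is more self-contained but needs the (here trivial) Fubini justification you correctly flag.
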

   \begin{proof}
    The proof of this theorem can be found in Appendix \ref{proof-CovZG}.
\end{proof}  
  
\begin{theorem}{\label{Corr-ZG}}(\textbf{Conditional correlation between \(Z_{n+1}\) and \(G_{n+1}\)}).
The correlation coefficient between the random variables \(Z_{n+1}\) and \(G_{n+1}\) is given by 
	\begin{equation}
		\CorrZG = \frac{\CovZG}{\CondevZ \CondevG},
	\end{equation}	
	where $\Sigma_Z$ and $\Sigma_G$ denote the conditional standard deviation of $Z_{n+1}$ and $G_{n+1}$ given by \eqref{prop-Z} and \eqref{prop-G}, respectively. Here,  $\Sigma_{ZG}$ is the conditional covariance of $Z_{n+1}$ and $G_{n+1}$,  \(\sigma_{ZG},\) is given by Lemma \ref{CovZG}
\end{theorem}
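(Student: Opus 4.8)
The plan is to obtain the stated identity directly from the definition of the Pearson correlation coefficient, once the joint law of $(Z_{n+1},G_{n+1})$ has been identified as bivariate normal.

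First I would use the closed-form expressions from Lemmas~\ref{lem-Z} and~\ref{lem-G}: conditionally on $X_n=(z,q,g)$ and $\alpha_n=a$, each of $Z_{n+1}$ and $G_{n+1}$ equals a deterministic quantity plus a stochastic integral against the \emph{same} Brownian motion $W$ over the \emph{same} interval $[t_n,t_{n+1}]$. For $Z_{n+1}$ the integrand $\sigma_R\,\mathrm{e}^{-\beta_R(t_{n+1}-\cdot)}$ is already deterministic. For $G_{n+1}$ the stochastic term $\Upsilon_G$ is an iterated integral, but the stochastic Fubini theorem rewrites it, when $a=u^F$, as $\tfrac{\sigma_R}{\beta_R}\int_{t_n}^{t_{n+1}}\bigl(1-\mathrm{e}^{-\beta_R(t_{n+1}-u)}\bigr)\,dW(u)$, while the whole stochastic part of $G_{n+1}$ vanishes for $a\neq u^F$. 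In either case every linear combination $\lambda Z_{n+1}+\eta G_{n+1}$ is a Wiener integral of a deterministic integrand plus a constant, hence Gaussian; therefore $(Z_{n+1},G_{n+1})$ is jointly Gaussian, i.e.\ bivariate normal, as asserted in the text preceding the theorem.

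A bivariate normal law is completely determined by its mean vector and covariance matrix, whose entries are already available: $\Var(Z_{n+1}\mid X_n=x,\alpha_n=a)=\Sigma_Z^2(n)$ from Proposition~\ref{prop-Z}, $\Var(G_{n+1}\mid X_n=x,\alpha_n=a)=\Sigma_G^2(n,a)$ from Proposition~\ref{prop-G}, and $\Cov(Z_{n+1},G_{n+1}\mid X_n=x,\alpha_n=a)=\Sigma_{ZG}(n,a)$ from Lemma~\ref{CovZG}. Hence, by the definition of the correlation coefficient of two square-integrable random variables with positive variances,
\[
\rho_G(n,a)=\frac{\Cov(Z_{n+1},G_{n+1}\mid X_n=x,\alpha_n=a)}{\sqrt{\Var(Z_{n+1}\mid X_n=x,\alpha_n=a)}\,\sqrt{\Var(G_{n+1}\mid X_n=x,\alpha_n=a)}}=\frac{\Sigma_{ZG}(n,a)}{\Sigma_Z(n)\,\Sigma_G(n,a)},
\]
which is the claim. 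For $a\neq u^F$ one has $\Sigma_G(n,a)=0$ and $\Sigma_{ZG}(n,a)=0$, so $G_{n+1}$ is conditionally degenerate and the correlation is set to $0$ by convention; the formula is read accordingly.

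The only step requiring genuine care — and hence the main, though modest, obstacle — is the joint-Gaussianity argument: one must check that the two stochastic integrals are driven by the same Wiener process over the same time window, so that the Gaussian-process property applies to every linear combination simultaneously. Once the stochastic Fubini reduction of $\Upsilon_G$ is in hand this is immediate, and nothing beyond Propositions~\ref{prop-Z} and~\ref{prop-G} and Lemma~\ref{CovZG} is needed.
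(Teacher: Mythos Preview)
Your proposal is correct and in fact more detailed than the paper's own treatment: the paper states this theorem without proof, since the identity $\rho_G=\Sigma_{ZG}/(\Sigma_Z\Sigma_G)$ is simply the definition of the correlation coefficient once the covariance and the two standard deviations have been computed in Lemma~\ref{CovZG} and Propositions~\ref{prop-Z} and~\ref{prop-G}. Your additional joint-Gaussianity argument via stochastic Fubini is a welcome justification of the bivariate-normal claim made informally in the text preceding the theorem, but the paper itself does not spell this out.
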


 \begin{remark} \label{Rem-Tkernel}
	\phantom{x} 
	Note that according to Assumption \ref{ass}, the battery and generator are never used at the same time. Hence,  there is no correlation between \(Q_{n+1}\) and \(G_{n+1}\).
    Further, for $\nu \in \controlset\backslash \{ u^C,u^D,u^{DL}\}$, $\Sigma_Q(n,\nu)=0$ and $\nu \in \controlset\backslash \{u^F,u^{FL}\}$, $\Sigma_G(n,\nu)=0$. Then~$\CovZG=0$ for $\nu \in \controlset\backslash \{u^F,u^{FL}\}$ and $\CovZQ=0$ for $\nu \in \controlset\backslash \{ u^C,u^D,u^{DL}\}$. Therefore, the correlation between $Z_{n+1}$ and \(Q_{n+1}\) and the correlation between $Z_{n+1}$ and \(G_{n+1}\) never happens simultaneously, that is, $\CorrZQ \CorrZG=0$ for all $a \in \controlset$ and $n=0, \ldots, N-1$. 
\end{remark}

\paragraph{Transition kernel}
Based on the above closed-form expressions, the marginal and joint distributions of the state variables, we can define the recursion for the linear transition operator associated with the MDP.
\begin{proposition}[Transition operator] Let $x=(z,q,g) \in \Statespace$, $a \in \controlset$, and for all  $n = 0, \dots, N-1$, let $\mathcal{T}_n : \Statespace \times \controlset \times \mathcal{E} \rightarrow \Statespace$ denote the transition operator at time step $n$.\\ 
Then, there exists a sequence of independent standard normally distributed random vectors $(\mathcal{E}_n)_{n =1,\dots,N}$ with $\mathcal{E}_n = (\mathcal{E}_n^Z,\mathcal{E}_n^Q,\mathcal{E}_n^G)^T \in \mathcal{N}(0_3,\mathds{I}_3)$ such that the state process $X=(X_n)_{n=0,\ldots N}$ satisfies the recursion
\begin{equation}
X_{n+1} = \mathcal{T}_n(X_n, \alpha_n, \mathcal{E}_{n+1}),~~ X_0=X(0)=x_0, \quad \text{ for } n = 0, \dots, N-1.
\label{Trans_operator}
\end{equation}
Here, $\mathds{I}_3$ is a $3 \times 3$-identity matrix and the transition operator $\mathcal{T}_n$ is defined for all $n=0, \ldots,N-1$  and for $\epsilon=(\epsilon^Z,\epsilon^Q, \epsilon^G) \in \R^3$ by  \(\mathcal{T}_n = (\mathcal{T}_n^Z,\mathcal{T}_n^Q,\mathcal{T}_n^G)\) with
	\begin{align*}
			\mathcal{T}_n^Z(x,a,\epsilon )&= \meanZ + \CondevZ \epsilon^Z,\\
		\mathcal{T}_n^Q(x,a,\epsilon) &= \meanQ+\CondevQ \left(\sqrt{1-\rho_{Q}^2(n,a)}\epsilon^Q + \rho_{Q}(n,a)\epsilon^Z \right), \\
\mathcal{T}_n^G(x,a,\epsilon) &= \meanG + \CondevG \left(\sqrt{1-\rho_{G}^2(n,a)}\epsilon^G + \rho_{G}(n,a)\epsilon^Z \right),	
	\end{align*}
	where, $m_Z$, $m_Q$, and $m_G$, 
 are the conditional means and $\Sigma_Z$, $\Sigma_Q$, and $\Sigma_G$ are the standard deviations of  $Z_{n+1}$, $Q_{n+1}$, and $G_{n+1}$, respectively. 
\end{proposition}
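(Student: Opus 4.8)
The plan is to obtain $(\mathcal{E}_n)_{n=1,\dots,N}$ explicitly as the \emph{whitened} (Cholesky/Gram--Schmidt) version of the Gaussian triple $(Z_{n+1},Q_{n+1},G_{n+1})$, and then to read off $\mathcal{T}_n$ by inverting that transformation. Fix $n\in\{0,\dots,N-1\}$ and condition on $\mathcal{F}^W(t_n)$; since the feedback control $\alpha_n=\tilde{\alpha}(n,X_n)$ and the state $X_n=(z,q,g)$ are $\mathcal{F}^W(t_n)$-measurable, conditioning on $\mathcal{F}^W(t_n)$ fixes both $x$ and $a$. By Lemmas~\ref{lem-Z}, \ref{lem-Q} and \ref{lem-G}, each of $Z_{n+1},Q_{n+1},G_{n+1}$ is an affine function, with $\mathcal{F}^W(t_n)$-measurable coefficients, of the Itô integrals $\int_{t_n}^{t_{n+1}} \mathrm{e}^{-\beta_R(t_{n+1}-s)}dW(s)$, $\Upsilon_Q$ and $\Upsilon_G$; these integrals are measurable with respect to $\sigma\{W(s)-W(t_n):t_n\le s\le t_{n+1}\}$, which is independent of $\mathcal{F}^W(t_n)$. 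Hence, conditionally on $\mathcal{F}^W(t_n)$, the vector $(Z_{n+1},Q_{n+1},G_{n+1})$ is Gaussian with the means and (co)variances computed in Propositions~\ref{prop-Z}--\ref{prop-G} and Lemmas~\ref{CovZQ}, \ref{CovZG}, and by Remark~\ref{Rem-Tkernel} its conditional covariance matrix is ``arrow--shaped'': $Q_{n+1}$ and $G_{n+1}$ are conditionally uncorrelated and $\rho_{Q}(n,a)\rho_{G}(n,a)=0$.

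Next I would set $\mathcal{E}_{n+1}^{Z}:=\big(Z_{n+1}-\meanZ\big)/\CondevZ$ (well defined since $\CondevZ>0$ for $\beta_R,\sigma_R,\Delta_N>0$) and, whenever $\CondevQ>0$ and $\CondevG>0$,
\[
\mathcal{E}_{n+1}^{Q}:=\frac{1}{\sqrt{1-\rho_{Q}^{2}(n,a)}}\left(\frac{Q_{n+1}-\meanQ}{\CondevQ}-\rho_{Q}(n,a)\,\mathcal{E}_{n+1}^{Z}\right),\qquad
\mathcal{E}_{n+1}^{G}:=\frac{1}{\sqrt{1-\rho_{G}^{2}(n,a)}}\left(\frac{G_{n+1}-\meanG}{\CondevG}-\rho_{G}(n,a)\,\mathcal{E}_{n+1}^{Z}\right),
\]
while on the (control--dependent) events $\{\CondevQ=0\}$ or $\{\CondevG=0\}$ the corresponding component is replaced by an auxiliary standard normal variable drawn independently of everything else. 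A direct second--moment computation from Propositions~\ref{prop-Z}--\ref{prop-G}, Lemmas~\ref{CovZQ}, \ref{CovZG} and Remark~\ref{Rem-Tkernel} shows that, conditionally on $\mathcal{F}^W(t_n)$, the vector $\mathcal{E}_{n+1}=(\mathcal{E}_{n+1}^{Z},\mathcal{E}_{n+1}^{Q},\mathcal{E}_{n+1}^{G})$ is centred Gaussian with identity covariance; hence $\mathcal{E}_{n+1}\sim\mathcal{N}(0_3,\mathds{I}_3)$ and $\mathcal{E}_{n+1}$ is independent of $\mathcal{F}^W(t_n)$, in particular of $X_n$ and of $\alpha_n$. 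Solving the three displayed identities for $Z_{n+1},Q_{n+1},G_{n+1}$ returns exactly
\[
Z_{n+1}=\meanZ+\CondevZ\,\mathcal{E}_{n+1}^{Z},\qquad
Q_{n+1}=\meanQ+\CondevQ\!\left(\sqrt{1-\rho_{Q}^{2}(n,a)}\,\mathcal{E}_{n+1}^{Q}+\rho_{Q}(n,a)\,\mathcal{E}_{n+1}^{Z}\right),
\]
and the analogous formula for $G_{n+1}$, i.e.\ $X_{n+1}=\mathcal{T}_n(X_n,\alpha_n,\mathcal{E}_{n+1})$; in the degenerate cases the vanishing prefactor $\CondevQ$ (resp.\ $\CondevG$) annihilates the arbitrarily chosen component, so the identity still holds. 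Finally, since $\mathcal{E}_{n+1}$ is built from $W|_{[t_n,t_{n+1}]}$ (plus an independent auxiliary variable) whereas every $\mathcal{E}_k$ with $k\le n$ and every $X_j$ with $j\le n$ is $\mathcal{F}^W(t_n)$-measurable, the whole family $(\mathcal{E}_n)_{n=1,\dots,N}$ is independent.

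The second--moment bookkeeping and the inversion are routine. The step I expect to require the most care is the control--dependence of the conditional covariance structure: for $a\notin\{u^C,u^D,u^{DL}\}$ the variable $Q_{n+1}$ is a deterministic function of $(q,z)$ (so $\rho_{Q}(n,a)$ is not even defined), and likewise $G_{n+1}$ for $a\notin\{u^F,u^{FL}\}$. One must therefore define the whitening map piecewise in $a$, verify that $\rho_{Q}^{2}(n,a)<1$ (resp.\ $\rho_{G}^{2}(n,a)<1$) whenever the relevant variance is positive, and confirm that a single family $(\mathcal{E}_n)$ constructed from the realised feedback actions still works and remains independent of the past. Keeping the filtration argument clean --- separating the $\mathcal{F}^W(t_n)$-measurable coefficients from the independent Wiener increments on $[t_n,t_{n+1}]$ --- is what makes both the conditional Gaussianity and the temporal independence go through.
\qed
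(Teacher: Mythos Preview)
Your construction is correct and is precisely the standard Cholesky/Gram--Schmidt whitening of the conditionally Gaussian vector $(Z_{n+1},Q_{n+1},G_{n+1})$; the paper itself does not give an argument here but simply cites \cite[Proposition~6.3]{takam2025cost}, which carries out the same construction in an analogous setting. Your explicit treatment of the degenerate components (padding with auxiliary independent standard normals when $\CondevQ=0$ or $\CondevG=0$) and your filtration argument for the mutual independence of the $(\mathcal{E}_n)$ fill in exactly what the citation is meant to cover.

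Two minor points you may wish to tidy. First, by Proposition~\ref{prop-Q} the conditional variance $\ConVarQ$ already vanishes for $a=u^{DL}$ (not only for $a\notin\{u^C,u^D,u^{DL}\}$), so the degenerate-$Q$ branch is entered for every action except $u^C,u^D$; this does not affect your argument because your degenerate-case handling absorbs it automatically. Second, the strict inequality $\rho_Q^2(n,a)<1$ (and likewise for $\rho_G$) that you flag as needing care follows from Cauchy--Schwarz once one observes that, after rewriting $\Upsilon_Q$ as a single It\^o integral via the stochastic Fubini theorem, its deterministic integrand is not proportional to $s\mapsto \mathrm{e}^{-\beta_R(t_{n+1}-s)}$ for $\eta_0\neq 0$; so the whitening denominator $\sqrt{1-\rho_Q^2}$ is indeed well defined whenever $\CondevQ>0$.
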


\begin{proof}
The proof follows from  \cite[Proposition 6.3]{takam2025cost}.    
\end{proof}
The closed-form solutions of the deseasonalized residual demand given in Lemma~\(\ref{lem-Z}\), the approximations of the battery level given in Lemma \(~\ref{lem-Q}\), and the fuel tank level given in Lemma~\ref{lem-G} show that \(Z_{n+1}\) is normally distributed, and for $a \in \{ u^C,u^D,u^{DL}\}$, the discrete-time state random variable \(Q_{n+1}\) is normally distributed, and for $\nu \in  \{u^F,u^{FL}\}$, \(G_{n+1}\) is normally distributed. Therefore, as explained in Remark \ref{Rem-Tkernel}, the conditional distribution of the process \(X_{n+1} = X(t_{n+1})\) is a multivariate Gaussian (degenerated) with the mean \(\meanX\) and the covariance matrix $\ConVarX$ given by
\begin{align*}
	\meanX = \begin{pmatrix}
				\meanZ\\
		\meanQ\\
		\meanG
	\end{pmatrix},~~~~~~~~~~~ & \ConVarX = \begin{pmatrix}
	\ConVarZ&\CovZQ& \CovZG\\
	\CovZQ&\ConVarQ&0\\
	\CovZG&0&\ConVarG
	\end{pmatrix},
\end{align*}
where \(\Sigma_{ZG}=\rho_{Q}\Sigma_{Z}\Sigma_{Q}\) and $\Sigma_{ZG}=\rho_{G}\Sigma_{Z}\Sigma_{G}$ are  given in Lemmas \ref{CovZQ} and \ref{CovZG}, respectively.

\subsection{State-Dependent Control Constraints}
\label{state-depend}
The microgrid considered in this paper is subject to various constraints, including box constraints to the battery state of charge and to the fuel tank level given by Equation \eqref{Constraint_state}, that is, the battery level $Q(t) \in [0,1]$, for all $t\in[0,T]$, where $Q(t)=0$ and $Q(t)=1$ represent an empty and a full battery level, respectively. In addition to that, the fuel tank level $G(t)$ is required to take values in a bounded set, that is, $G(t) \in [0,1]$, for all $t\in[0,T]$, where $G(t)=0$ and $G(t)=1$ represent an empty and a full fuel tank, respectively.  

In a continuous-time model, where the controls continuously change over time, such restrictions mean that discharging the battery or starting the generators is no longer permitted when the battery or the fuel tank is empty, while charging the battery can no longer be selected for full storage. However, due to Assumption \ref{Ass-co}, we are limited to constant controls between two consecutive time points; that is, a control chosen at time step $n$ can only be changed at time step $n+1$. Hence, contrary to continuous time, here charging or discharging is no longer allowed when the battery is “almost” full or empty. Similarly, starting the generator is no longer possible when the fuel tank is “almost” empty. 
Therefore, for each time step $n \in \{0,\ldots, N-1\}$, one has to derive a subset of the set of all feasible actions $\mathcal{U} \subset \controlset$ given in \eqref{Controlset}, which contains the feasible actions available to the controller depending on the state $X_n$ at time $n$. This should be defined in such a way that the battery does not become full or empty within the next period $  [ t_n,t_{n+1} ) $. In addition, the control must be chosen so that the fuel tank does not become empty in the next period $  [t_n,t_{n+1} )$. 
This leads to the following implicit definition of the set of state-dependent control constraints
\[ \mathcal{U}(n, x) = \{ a \in U \mid \mathcal{T}_n(x,a,\mathcal{E}_{n+1}) \text{ satisfies the state constraint}\},\]
\(n=0,\ldots,N\text{ and}~ x\in\mathcal{X}\).
This definition is intuitive but requires additional clarification. Now, we show how this intuitive definition can be formulated mathematically in a rigorous way.
Recall that from the recursion \eqref{Trans_operator}, we have $Q_{n+1}=\mathcal{T}^Q_n(X_n, \alpha_n,\mathcal{E}_{n+1})$ and $G_{n+1}=\mathcal{T}^G_n(X_n, \alpha_n,\mathcal{E}_{n+1})$. In addition, the conditional distribution of the battery level $Q_{n+1}$ and the fuel tank level $G_{n+1}$ given $X_n$ is Gaussian. This procedure prevents us from satisfying the box constraint $Q_{n+1} \in [0, 1]$ and $G_{n+1} \in [0, 1]$ with certainty and requires a relaxation. As a realization of a Gaussian random variable, these values are potentially unbounded, making it impossible to define reasonable relaxed constraints to $Q_{n+1}$ and $G_{n+1}$.
Therefore, for the battery state of charge, we allow over- and undershooting; that is, when $Q_{n+1} >1$, charging is allowed, and when $Q_{n+1} <0$, discharging is allowed. Similarly, for the fuel tank, we allow undershooting; that is, when meeting the demand using the generator, $G_{n+1}<0$ is allowed. However, we constrain the probabilities for these events by some ``small'' tolerance value $0<\epsilon \ll 1$.

In view of state discretization, this relaxation appears to be acceptable and reasonable, since the grid points on the boundaries of the truncated state space, in particular the points on $Q_{n+1}=1$, $Q_{n+1}=0$, and $G_{n+1}=0$, will represent all points in the state space with $Q_{n+1} \geq 1$, $Q_{n+1} \leq 0$, and $G_{n+1} \leq 0$, respectively. Summarizing, the desired set of feasible actions is then given by 
\begin{align}
    \mathcal{U}(n,x) = \mathcal{U}_Q(n,x) \cup \mathcal{U}_G(n,x),
    \label{union_actions}
\end{align}
where
\begin{align}
\mathcal{U}_Q(n,x) &= \{a\in U~|~~\mathbb{P}(\mathcal{T}_n^Q(x,a,\mathcal{E}_{n+1})<0)<\epsilon, \text{ and } ~\mathbb{P}(\mathcal{T}_n^Q(x,a,\mathcal{E}_{n+1})>1) < \epsilon \},\\
\mathcal{U}_G(n,x) &= \{a \in U~|~ ~\mathbb{P}(\mathcal{T}_n^G(x,a,\mathcal{E}_{n+1})<0)<\epsilon  \} .
\end{align}
\begin{figure}[h!]
	\centering
    \includegraphics[width=0.49\linewidth,height=0.24\linewidth]{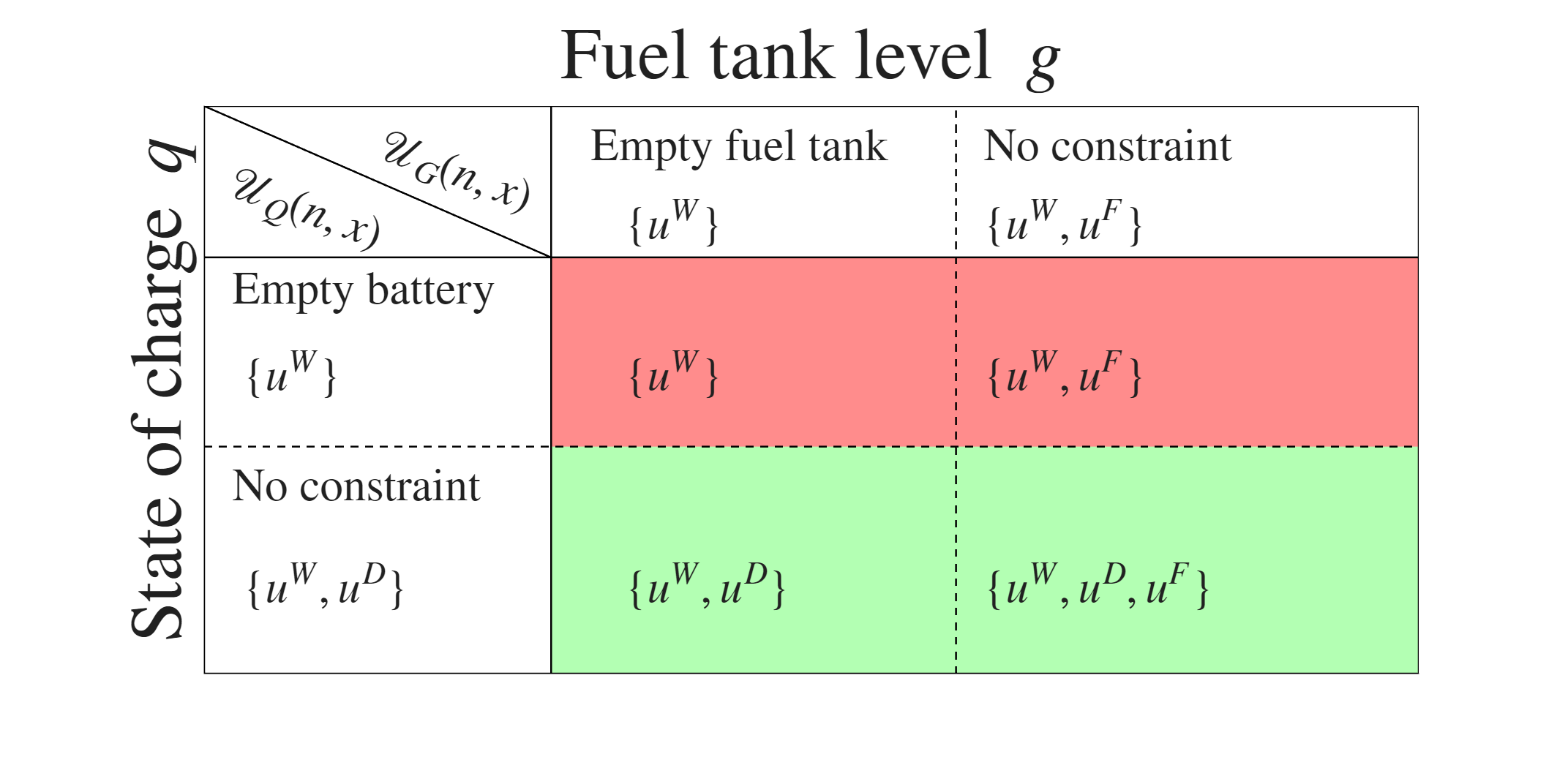}
\includegraphics[width=0.49\linewidth,height=0.24\linewidth]{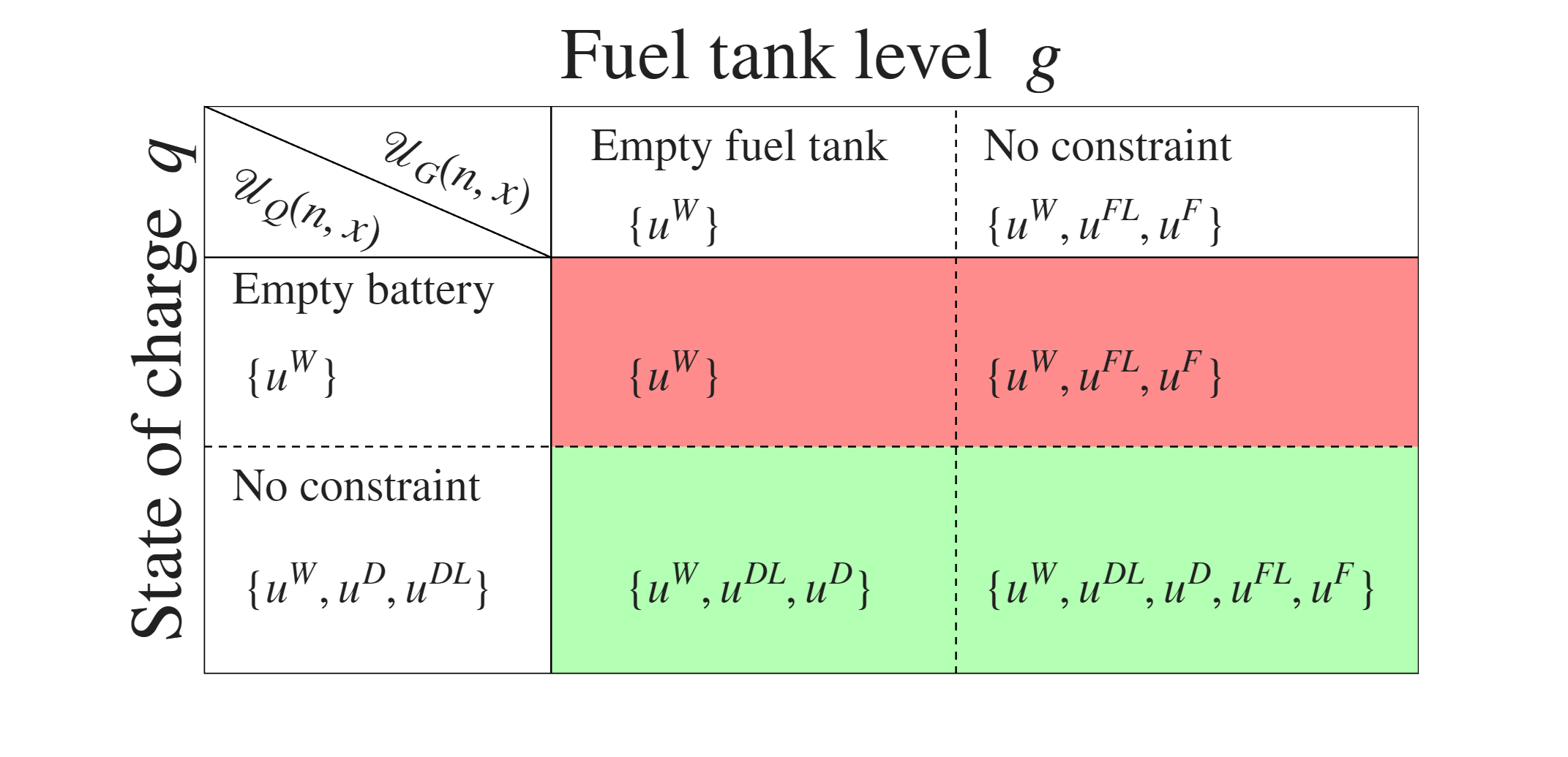}
	\caption{Set of feasible controls $\mathcal{U}(n,x)=\mathcal{U}_Q(n,x) \cup\mathcal{U}_G(n,x)$ for a positive residual demand $(r>0)$ and $R_{Q0}=R_{G0}$. Left: $r< R_{Q0}$.  Right: $r\geq R_{Q0}$}
	
	\label{Fig_actionspace}
\end{figure}
 Furthermore, the recursion \eqref{Trans_operator} for $Q$ and $G$ shows that  the Gaussian distributions of $Q_{n+1}$ and $G_{n+1}$ depend on the residual demand $R_n$ at time $n$. 
Therefore, based on the sign of the residual demand, the set $\mathcal{U}(n,x)$ can be explicitly characterized. For example, when the residual demand at time $n$ is negative (overproduction), starting the generator or discharging the battery is not feasible. Then the  set  $\mathcal{U}_G (n,x)=\varnothing$ and  the set $\mathcal{U}_Q(n,x)\subset \{u^C,u^O\}$. Therefore, $ \mathcal{U}(n,x) \subset \{u^C,u^O\}$. 
Similarly, when the residual demand at time step $n$ is positive (unsatisfied demand), charging the  battery and over-spilling are no longer feasible. Then, the sets $\mathcal{U}_Q(n,x)\subset \{u^W,u^{DL},u^D\}$ and $\mathcal{U}_G(n,x)\subset \{u^W,u^{FL},u^F\}$. Summarizing, the set of feasible controls for $r>0$ is illustrated in the right panel of Fig.~\ref{Fig_actionspace}.
\begin{remark}
    In the above description of the set of feasible controls, it is assumed that the sign of the residual demand does not change in a small time interval $[t_n, t_{n+1})$. When the residual demand at time $t_n$ is strongly negative or positive above the threshold $R_{Q0}$ (resp. $R_{G0}$), it is more likely that its sign will not change in the interval $[t_n, t_{n+1})$. However, when the residual demand is close to zero at time $t_n$, it is likely that the sign changes in the interval $[t_n, t_{n+1})$. In this case, the set of feasible controls is reduced to doing nothing; that is, we set $\mathcal{U}(n,x)=\{u^W\}$.
\end{remark}

\subsection{Markov Decision Process}
\label{MDP}
We consider a Markov decision process with a finite time horizon $T$, finite action space $\mathcal{U}(n,x)$, and the state $\Statespace \subset \R^3$ described above.  We now want to derive the discrete-time version of the performance criterion from the continuous-time performance given in \ref{performance}. This summarizes the expected total discounted costs from the operation of the system and the expected discounted terminal  costs.  

 \paragraph{Admissible Controls}  Let \(\mathbb{G} = (\mathcal{G}_n)_{n=0,\ldots,N}\) denote the filtration generated by the sequence of independent identically distributed random vectors \((\mathcal{E}_n)_{n=1, \ldots, N}\). We denote by $\mathcal{G}_n:=\sigma(\mathcal{E}_1,\ldots,\mathcal{E}_n)$ a \(\sigma\)-algebra generated by the first $n$ random vectors $\mathcal{E}_1,\ldots,\mathcal{E}_n$, and $\mathcal{G}_0=\{\varnothing,\Omega\}$ the trivial \(\sigma\)-algebra. \\  
 We define the set of admissible controls \(\alpha = (\alpha _0, \dots, \alpha _{N-1})\), denoted by \(\mathcal{A}\), for which we want to define the performance criterion below. Since we want to apply dynamic programming methods to solve the optimization problem, we restrict ourselves to Markov or feedback controls defined by $\alpha_n=\tilde{\alpha}(n,x)$ with a measurable function   $\tilde{\alpha}:\{0,\ldots,N-1\}\times \Statespace\to \mathcal{U}$, which is called a \textit{decision rule}. This decision rule is selected such that the controlled state process \(X=X^{\alpha}\) remains within the state space \(\mathcal{X}\) with high probability  for all \(n=0,\ldots,N\). Formally, the set of admissible controls is defined as
    \begin{align*}
        \mathcal{A} = \bigg\{ \alpha = (\alpha _0, \dots, \alpha _{N-1}) \mid \alpha_n = \tilde{\alpha}(n, X_{n}) \text{ for all } n = 0, \dots, N-1, \\
        \tilde{\alpha }(n,x) \in \mathcal{U}(n,x) \text{ for all } (n, x) \in \{0, \dots, N-1\} \times \mathcal{X} \bigg\}.
    \end{align*}
    
\paragraph{Performance criterion}
The performance criterion $J: \{0,\ldots,N\} \times \mathcal{X} \times \mathcal{A} \to \R$ given in \eqref{performance} can be rewritten in terms of the sequence of  values $(X_n^{\alpha})_{n=0,\ldots,N}$ obtained from sampling of the continuous-time state process $(X^u(t))_{t\in[0,T]}$. Since the admissible controls are of the feedback type, the state process is also a Markov process. 
Consider a control \(\alpha = (\alpha_0, \dots, \alpha_{N-1})  \in \mathcal{A}\) and $x=(r,q,g) \in \Statespace$. 
The discrete-time performance criterion
is given by the following lemma:
\begin{lemma} Let $x=(r,q,g)$ and $a=\alpha_n$.
The discrete-time version of the performance criterion \eqref{performance} can be written as
    \begin{align}
 J{^D}(n,x; \alpha)  =\mathbb{E}_{n,x}\Bigg[\sum_{k=n}^{N-1} \Psi^D(k,X_k^{\alpha},\alpha_k) +{\termcostD}(X_N^{\alpha}) \bigg],
    \label{performance_D}
\end{align}
where for all  $k=n,\ldots, N-1, ~ x \in \Statespace$, and $a \in \mathcal{U}$ 
\begin{align}
    \qquad ~\Psi^D(k,x,a)=\mathrm{e}^{-\rho (k-n)\Delta_N}\Psi(k,x,a) ~~\text{ and }~~  {\termcostD}(x) = \mathrm{e}^{-\rho (N-n)\Delta_N}\phi_N(x)
    \label{phi_d}
\end{align}
with
\begin{align}
     \Psi(k,x, a) &= \mathbb{E}\left[\int_{t_k}^{t_{k+1}} \mathrm{e}^{-\rho (s-t_k)}\psi(s,X^u(s),a)ds \mid \mathcal{F}_k\right], ~\text{with } \mathcal{F}_k = \mathcal{F}_{t_k}. 
        \label{one-step}
\end{align}
\end{lemma}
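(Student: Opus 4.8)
The plan is to evaluate the continuous-time criterion \eqref{performance} at the grid point $t=t_n$ with $X(t_n)=x$, decompose the running-cost integral over the subintervals $[t_k,t_{k+1})$, use that the control is held constant on each such interval by Assumption~\ref{Ass-co}, and then collapse each inner time integral into a one-step cost by conditioning on $\mathcal{F}_{t_k}$ and invoking the Markov property of the feedback-controlled state process $X=X^{\alpha}$.

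Concretely, I would write $\int_{t_n}^{T}=\sum_{k=n}^{N-1}\int_{t_k}^{t_{k+1}}$, factor the discount on the $k$-th piece as $\mathrm{e}^{-\rho(s-t_n)}=\mathrm{e}^{-\rho(k-n)\Delta_N}\,\mathrm{e}^{-\rho(s-t_k)}$ using $t_k-t_n=(k-n)\Delta_N$, and note that $T=t_N$, so the terminal contribution is $\mathrm{e}^{-\rho(N-n)\Delta_N}\phi(X_N)=\phi^D_N(X_N)$ in the notation of \eqref{phi_d}. This turns \eqref{performance} into
\[ J(t_n,x;u)=\mathbb{E}_{n,x}\!\left[\sum_{k=n}^{N-1}\mathrm{e}^{-\rho(k-n)\Delta_N}\int_{t_k}^{t_{k+1}}\mathrm{e}^{-\rho(s-t_k)}\psi(s,X(s),u(s))\,ds+\phi^D_N(X_N)\right], \]
where $\psi$ is the continuous-time running cost (the integrand $\Psi$ of \eqref{performance}). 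Since $\psi\ge 0$, Tonelli's theorem lets me interchange the finite sum, the inner time integral and the expectation without integrability concerns.

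Next, for each fixed $k\ge n$ I would condition on $\mathcal{F}_{t_k}$ through the tower property. By Assumption~\ref{Ass-co}, $u(s)\equiv\alpha_k$ on $[t_k,t_{k+1})$, and because the admissible control is of feedback type, $\alpha_k=\tilde{\alpha}(k,X_k)$ is $\mathcal{F}_{t_k}$-measurable; moreover, as noted in the text, $X=X^{\alpha}$ is a Markov process since the coefficients of the dynamics \eqref{dyn-Z}, \eqref{dyn-Q}, \eqref{dyn-G} and the feedback law depend on $(s,X(s))$ only. Hence the inner conditional expectation depends on $\mathcal{F}_{t_k}$ only through $X_k$ and the chosen action, and it equals exactly the one-step cost $\Psi(k,X_k,\alpha_k)$ of \eqref{one-step}:
\[ \mathbb{E}\!\left[\int_{t_k}^{t_{k+1}}\mathrm{e}^{-\rho(s-t_k)}\psi(s,X(s),\alpha_k)\,ds\;\bigg|\;\mathcal{F}_{t_k}\right]=\Psi(k,X_k,\alpha_k). \]
Pulling the deterministic factor $\mathrm{e}^{-\rho(k-n)\Delta_N}$ out of and back into the outer expectation and recalling $\Psi^D(k,x,a)=\mathrm{e}^{-\rho(k-n)\Delta_N}\Psi(k,x,a)$ from \eqref{phi_d}, I obtain $J(t_n,x;u)=\mathbb{E}_{n,x}\big[\sum_{k=n}^{N-1}\Psi^D(k,X_k^{\alpha},\alpha_k)+\phi^D_N(X_N^{\alpha})\big]=J^D(n,x;\alpha)$, which is \eqref{performance_D}.

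The interval splitting, the discount factorisation and the identification of the terminal term are routine. The only genuinely delicate point I anticipate is the rigorous justification that the inner conditional expectation is a deterministic function of $(X_k,\alpha_k)$, i.e.\ the Markov property of the feedback-controlled process together with the attendant measurability bookkeeping for Tonelli and the tower property. Equivalently, one may run the whole argument directly on the discrete recursion \eqref{Trans_operator}: under a Markovian decision rule the controlled chain generated by $\mathcal{T}_n$ is immediately Markov, and the computation above then reduces to a purely finite, telescoping manipulation.
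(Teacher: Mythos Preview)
Your proposal is correct and follows essentially the same route as the paper: split the integral over $[t_n,T]$ into the subintervals $[t_k,t_{k+1})$, factor the discount as $\mathrm{e}^{-\rho(s-t_n)}=\mathrm{e}^{-\rho(k-n)\Delta_N}\mathrm{e}^{-\rho(s-t_k)}$, insert the inner conditioning on $\mathcal{F}_{t_k}$ via the tower property, and identify the resulting one-step cost with $\Psi(k,X_k,\alpha_k)$. The paper's proof is terser and does not spell out the Tonelli/Markov-property bookkeeping you mention, but the argument is the same; one small caveat is that the running cost $\psi$ is not globally nonnegative (e.g.\ for $a=u^C$ it equals $-\gamma_{\text{deg}}(\mu_R(t)+Z(t))$), so your appeal to Tonelli should instead rely on the finiteness of the sum and integrability of $\psi$ on each compact subinterval.
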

Here, $\mathbb{E}_{n,x}(\cdot) =  \mathbb{E}(\cdot| \mathcal{F}_n )=\mathbb{E}(\cdot| X_n = x) $ denotes the conditional expectation given that at time $t_n$ the state is $X_n=x$, and $\mathbb{E}(\cdot| \mathcal{F}_n )$ denotes the conditional expectation given the available information up to time n. $ \rho > 0$ denotes the discount rate, \(\psi\) represents the running cost, and \(\phi\) is the terminal cost given in Subsection~\ref{Stoc-OPtCont}. 
\begin{proof}
The discrete-time performance criterion is given as
    \begin{equation}
    J(n, x; \alpha ) = \mathbb{E}_{n,x}\left[\int_{t_n}^{t_N} \mathrm{e}^{-\rho (s-t_n)}\psi(s,X^u(s),u(s))ds + \mathrm{e}^{-\rho (t_N-t_n)}\phi_N(X_N^{\alpha})\right].
\end{equation} 

Let \({ \termcostD}(x)= \mathrm{e}^{-\rho (N-n)\Delta_N}\phi_N(x)\). By applying the tower property of conditional expectation, we can rewrite the performance criterion as follows:
\begin{align*}
	J(n, x; \alpha ) &=\mathbb{E}_{n,x}\bigg[
	\sum_{k=n}^{N-1} 	\mathbb{E}\Big[\int_{t_k}^{t_{k+1}}\mathrm{e}^{-\rho (s-t_n)}\psi(X^u(s),u(s))\mathrm{d}s\Big| \mathcal{F}_{t_k}\Big] + \mathrm{e}^{-\rho (t_N-t_n)}\phi_N(X_N^{\alpha})\bigg] \\
     &= \mathbb{E}_{n,x}\bigg[\sum_{k=n}^{N-1} \mathrm{e}^{-\rho(t_k - t_n)}  \mathbb{E} \bigg[ \int_{t_k}^{t_{k+1}} e^{-\rho(s-t_k)} \psi(s, X^u(s), u(s)) ds  | \mathcal{F}_{t_k}\bigg] + \termcostD(X_N^{\alpha})\bigg],\notag \\
   &= \mathbb{E}_{n,x}\left[\sum_{k=n}^{N-1} \mathrm{e}^{-\rho (k-n)\Delta_N}\Psi(k, X^{\alpha}_k, \alpha_k)+ \termcostD(X^{\alpha}_N)\right].
\end{align*}
 For  $\Psi^D(k,x,a)=\mathrm{e}^{-\rho (k-n)\Delta_N}\Psi(k,x,a)$ with  $\Psi$ given by \eqref{one-step}, the result follows.
 
\end{proof}
 
The next lemma shows that the conditional expectation appearing in Equation \eqref{one-step} can be written in closed-form. Therefore, the discrete-time approximation of the performance criterion \eqref{performance} does not have additional discretization errors.

\begin{lemma}
\label{Conditional_C}
     For $k=0,\ldots, N-1$, $x=(r,q,g)$, and $a=\alpha_k$, the conditional expectation appearing in \eqref{one-step} can be computed in closed-form as follows:
     \begin{align}
     \Psi(k,x,a)&=\begin{cases}
        \left[(c_0+c_1\mu_{R,k})\zeta_1+ c_1 z\zeta_2\right]F_0, & \text{ } \quad a= u^{F}, \\[1ex]
        \left( (c_0 + c_1 R_{G0})F_0 + k_0(\mu_{R,k} - R_{G0})^2 + \frac{\sigma_R^2k_0}{2\beta_R} \right)\zeta_1\\ + 2z k_0(\mu_{R,k} - R_{G0})\zeta_2+ k_0\left(z^2 - \frac{\sigma_R^2}{2\beta_R}\right) \zeta_3, & \text{ } \quad a = u^{FL}, \\[1ex]
       +\gamma_{deg} \left( \mu_{R,k}\zeta_1 +  z\zeta_2\right),& \text{}~\quad a= u^{D}, \\
        \left(\gamma_{\text{deg}}R_{Q0}  + k_0(\mu_{R,k} - R_{G0})^2 + \frac{\sigma_R^2k_0}{2\beta_R} \right)\zeta_1 \\\quad+ 2z k_0(\mu_{R,k} - R_{G0}) \zeta_2+k_0\left(z^2 - \frac{\sigma_R^2}{2\beta_R}\right) \zeta_3, & \text{ }\quad  a= u^{DL}, \\[1ex]
       -\gamma_{deg} \left( \mu_{R,k}\zeta_1 +  z\zeta_2 \right).& \text{ } \quad a= u^{C}, \\[1ex]
       k_0\bigg[\zeta_1\left(\mu^2_{R,k}+ \frac{\sigma_R^2}{2\beta_R} \right) + 2z \zeta_2\mu_{R,k}+\left(z^2 - \frac{\sigma_R^2}{2\beta_R}\right) \zeta_3\bigg] & \text{ } \quad a = u^{W}, \\[1ex]
        0, & \text{} ~\quad a = u^{O},
     \end{cases}
         \label{conditional-psi}
     \end{align}    
 where
  \begin{equation}\label{zeta}
 	\zeta_1= \frac{1 - \mathrm{e}^{-\rho \Delta_N}}{\rho},~~\zeta_2=\frac{1 - \mathrm{e}^{-(\rho + \beta_R)\Delta_N}}{\rho + \beta_R}, ~~ \text{ and }  ~~\zeta_3  =\frac{1 - \mathrm{e}^{-(\rho + 2\beta_R)\Delta_N}}{\rho + 2\beta_R}. 
 \end{equation}
\end{lemma}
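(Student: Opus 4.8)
The plan is to compute the conditional expectation in \eqref{one-step} directly for each admissible action $a \in \controlset$ by substituting the explicit form of the running cost $\psi(s,X^u(s),a) = \Psi_F(s,X^u(s),a) + \Psi_Q(s,X^u(s),a) + \Psi_P(s,X^u(s),a)$ from Subsection~\ref{Stoc-OPtCont} and using Assumption~\ref{Ass-par} so that $\mu_R(s) = \mu_{R,k}$ is constant on $[t_k,t_{k+1})$. The key observation is that, since the control is piecewise constant (Assumption~\ref{Ass-co}), on each interval $[t_k,t_{k+1})$ the integrand is a polynomial of degree at most two in the state variables evaluated at intermediate times, multiplied by the discount factor $\mathrm{e}^{-\rho(s-t_k)}$. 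Because the cost terms only depend on the state through $\mu_R(s) + Z(s)$, and the dependence on $Q$ and $G$ enters only through the indicator of which branch of the piecewise definition applies (which is fixed once $a$ is fixed), the only random object that matters is $Z(s)$ for $s \in [t_k,t_{k+1})$.

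The first step is to recall from Lemma~\ref{lem-Z} that, conditionally on $Z_k = z$, one has $Z(s) = z\,\mathrm{e}^{-\beta_R(s-t_k)} + \sigma_R \int_{t_k}^{s} \mathrm{e}^{-\beta_R(s-\tau)}\,dW(\tau)$, so that $\mathbb{E}[Z(s)\mid \mathcal{F}_k] = z\,\mathrm{e}^{-\beta_R(s-t_k)}$ and, by the It\^o isometry, $\mathbb{E}[Z(s)^2\mid \mathcal{F}_k] = z^2 \mathrm{e}^{-2\beta_R(s-t_k)} + \frac{\sigma_R^2}{2\beta_R}\bigl(1 - \mathrm{e}^{-2\beta_R(s-t_k)}\bigr)$. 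The second step is to integrate these against $\mathrm{e}^{-\rho(s-t_k)}$ over $[t_k,t_{k+1})$: the constant term yields $\int_0^{\Delta_N}\mathrm{e}^{-\rho u}\,du = \zeta_1$, the linear-in-$Z$ term yields $\int_0^{\Delta_N}\mathrm{e}^{-(\rho+\beta_R)u}\,du = \zeta_2$, and the quadratic-in-$Z$ term yields $\int_0^{\Delta_N}\mathrm{e}^{-(\rho+2\beta_R)u}\,du = \zeta_3$, with the $\frac{\sigma_R^2}{2\beta_R}$ pieces recombining into the $\zeta_1$ and $\zeta_3$ coefficients exactly as displayed.

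The third step is bookkeeping: for each of the seven actions I would expand the relevant cost expression. For $a = u^F$ the cost $(c_0 + c_1(\mu_{R,k}+Z(s)))F_0$ is affine in $Z(s)$, producing the $[(c_0+c_1\mu_{R,k})\zeta_1 + c_1 z \zeta_2]F_0$ term. For $a = u^C$ and $a = u^D$ the degradation cost is affine (using that on the relevant branch $\mu_R + Z$ has a fixed sign so $|\mu_R+Z| = \pm(\mu_R+Z)$), giving the $\mp\gamma_{deg}(\mu_{R,k}\zeta_1 + z\zeta_2)$ terms. For $a \in \{u^{FL}, u^{DL}, u^W\}$ there is a quadratic penalty $k_0(\mu_{R,k}+Z(s) - c)^2$ (with $c = R_{G0}$, $R_{Q0}$, or $0$), whose expansion $k_0[(\mu_{R,k}-c)^2 + 2(\mu_{R,k}-c)Z(s) + Z(s)^2]$ integrates term by term; substituting $\mathbb{E}[Z(s)^2\mid\mathcal{F}_k]$ and collecting the $\frac{\sigma_R^2}{2\beta_R}$ contributions gives the displayed forms (for $u^{FL}$ and $u^{DL}$ one also adds the affine fuel/degradation cost times $\zeta_1$). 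For $a = u^O$ all cost terms vanish. I expect the main obstacle to be purely organizational rather than mathematical: one must be careful that on each branch the sign of $\mu_R + Z$ is indeed constant (so the absolute value in $\Psi_Q$ for $u^C$ simplifies) — this is justified because action $u^C$ is only admissible when residual demand is negative and $u^D, u^{DL}$ only when it is positive, per Subsection~\ref{state-depend} — and that the $\frac{\sigma_R^2}{2\beta_R}$ terms from the variance are attributed to $\zeta_1$ minus $\zeta_3$ correctly, which is where sign errors are easiest to make.
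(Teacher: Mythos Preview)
Your proposal is correct and follows essentially the same approach as the paper's own proof: both compute the conditional mean and second moment of $Z(s)$ given $Z_k=z$, integrate these against the discount factor over $[t_k,t_{k+1})$ to produce $\zeta_1,\zeta_2,\zeta_3$, and then handle each action case by case. Your treatment is in fact slightly more explicit about the role of the It\^o isometry in obtaining $\mathbb{E}[Z(s)^2\mid\mathcal{F}_k]$ and about why the absolute value in $\Psi_Q$ for $a=u^C$ disappears, points the paper's appendix leaves implicit.
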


\begin{proof}
    The proof of this lemma can be found in Appendix \ref{appen_Proof_condiC}.
\end{proof}
 \paragraph{Optimal Control Problem} 
 The objective of the system's manager is to find an admissible control process $\alpha$ defined by the associated decision rule $\widetilde{\alpha}$ that minimizes the expected total discounted costs arising from the operation of the system and the evaluation of the stored electricity in the battery and the fuel in the tank at the terminal time.
In other words, we want to minimize the performance criterion \(J(0, x, \alpha)\), as defined in \eqref{performance_D}, on the set of all admissible controls \(\mathcal{A}\).  The value function \(V (n, x)\) for all \(x \in \mathcal{X}\) and \(n = 0, \dots, N-1\) is defined as:

\begin{align}
\label{Val_function}
    V (n,x) = \inf_{\alpha \in \mathcal{A}} J{^D}(n, x, \alpha).
\end{align}

An admissible control \(\alpha^* = (\alpha_0^*, \dots, \alpha_{N-1}^*) \in \mathcal{A}\) is called an \emph{optimal control strategy} if it satisfies
\(V(n,x) = J{^D}(n, x, \alpha^*)\) for $(n,x) \in \{0,\ldots,N-1\}\times \Statespace$.

\paragraph{Dynamic programming equation}
The Bellman equation, also known as the dynamic programming equation, provides a necessary condition for optimality in dynamic programming problems.  It characterizes the value function and forms the basis for a backward recursion algorithm to compute an optimal control policy. For more details on the MDP theory, we refer to Bäuerle and Rieder \cite{bauerle2011markov}, Puterman \cite{puterman2014markov}, Hern{\'{a}}ndez-Lerma and Lasserre \cite{HernandezLerma1996}, and the references therein.

The Bellman equation is formally stated in the following theorem.
\begin{theorem}[Bellman Equation]
\label{Bellman}
For all $x \in \mathcal{X}$, the value function \(V\) satisfies the Bellman equation:
\begin{align}
     \label{dyn}
       V(N,x) &= \Phi(x), &  \\[1ex]
        V(n,x)& = \inf_{a \in \mathcal{U}(n,x)} \left\{ \Psi^D(n,x,a) + \mathbb{E}[V(n+1,\mathcal{T}_n(x,a,\mathcal{E}_{n+1})] \right\}, &~ \, n = 0, \dots, N-1.
\end{align}
The optimal decision rule \(\tilde{\alpha}^*\) is defined as:
\begin{equation} \label{opt_rule}
    \tilde{\alpha}^*(n,x) = \arg\min_{a \in \mathcal{U}(n,x)} \left\{ \Psi^D(n,x,a) + \mathbb{E}[V(n+1,\mathcal{T}_n(x,a,\mathcal{E}_{n+1})] \right\}.
\end{equation}
\end{theorem}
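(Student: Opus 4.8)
The plan is to prove the Bellman equation \eqref{dyn} by the standard dynamic programming argument for finite-horizon Markov decision processes, following the framework of \cite{bauerle2011markov}. The two ingredients that make this routine are already in place: first, by Lemma~\ref{Conditional_C} the one-step cost $\Psi^D(n,x,a)$ is available in closed form and in particular is a well-defined measurable function of $(n,x,a)$; second, by the Transition operator proposition the controlled state process is a genuine Markov process under any feedback control, with $X_{n+1}=\mathcal{T}_n(X_n,\alpha_n,\mathcal{E}_{n+1})$ where the $\mathcal{E}_{n+1}$ are i.i.d.\ and independent of $\mathcal{G}_n$. The terminal identity $V(N,x)=\Phi(x)$ is immediate from the definition \eqref{Val_function} of $V$ together with \eqref{performance_D}, since for $n=N$ the sum over $k$ is empty and $J^D(N,x;\alpha)=\termcostD(X_N^\alpha)=\Phi(x)$ with no dependence on $\alpha$.

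First I would fix $n\in\{0,\dots,N-1\}$ and $x\in\mathcal{X}$ and establish the inequality $V(n,x)\ge \inf_{a\in\mathcal{U}(n,x)}\{\Psi^D(n,x,a)+\mathbb{E}[V(n+1,\mathcal{T}_n(x,a,\mathcal{E}_{n+1}))]\}$. Take an arbitrary admissible $\alpha=(\alpha_0,\dots,\alpha_{N-1})\in\mathcal{A}$ and split the cost in \eqref{performance_D} into the $k=n$ term and the remainder. Conditioning on $\mathcal{G}_{n+1}$ (equivalently on $X_{n+1}$, by the Markov property and the feedback structure) and using the tower property, the tail $\sum_{k=n+1}^{N-1}\Psi^D(k,X_k^\alpha,\alpha_k)+\Phi(X_N^\alpha)$ has conditional expectation equal to $J^D(n+1,X_{n+1}^\alpha;\alpha)$, which is $\ge V(n+1,X_{n+1}^\alpha)$ by definition of the value function; here one must be careful that the discount prefactor in \eqref{phi_d} is referenced to the \emph{current} initial time, so the time-homogeneity used to pass from $J^D(n+1,\cdot;\alpha)$ evaluated with base time $n$ to the one with base time $n+1$ contributes the factor $\mathrm{e}^{-\rho\Delta_N}$ — this bookkeeping is the one place that needs attention. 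Taking expectations and then the infimum over $a=\alpha_n\in\mathcal{U}(n,X_n)=\mathcal{U}(n,x)$ gives the desired lower bound on $J^D(n,x;\alpha)$, and hence on $V(n,x)$.

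Next I would prove the reverse inequality by constructing a near-optimal control. Given $\varepsilon>0$, for each $(n+1,y)$ choose (by definition of the infimum, or exactly via a measurable selector if one wants the optimal decision rule \eqref{opt_rule} to exist) an admissible control that is $\varepsilon$-optimal for $J^D(n+1,y;\cdot)$; prepend to it the action $a^\star\in\mathcal{U}(n,x)$ achieving the outer minimum up to $\varepsilon$. The resulting policy is admissible, is of feedback type, and by the same conditioning computation as above satisfies $J^D(n,x;\alpha)\le \Psi^D(n,x,a^\star)+\mathbb{E}[V(n+1,\mathcal{T}_n(x,a^\star,\mathcal{E}_{n+1}))]+C\varepsilon$ for a constant $C$ independent of $\varepsilon$; letting $\varepsilon\downarrow 0$ yields $V(n,x)\le \inf_{a\in\mathcal{U}(n,x)}\{\cdots\}$. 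Combining the two inequalities gives \eqref{dyn}, and the characterization of the optimal decision rule \eqref{opt_rule} follows by a downward induction on $n$: if $\tilde\alpha^\star$ is chosen as the argmin at every step, a backward induction shows $J^D(n,x;\alpha^\star)=V(n,x)$ for all $n$.

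The main obstacle is not conceptual but technical: ensuring the measurability of the conditional-expectation map $y\mapsto \mathbb{E}[V(n+1,\mathcal{T}_n(y,a,\mathcal{E}_{n+1}))]$ and the existence of a measurable minimizing selector $\tilde\alpha^\star(n,\cdot)$, so that the constructed policies are genuinely admissible and \eqref{opt_rule} defines an admissible control. Since the action set $U$ in \eqref{Controlset} is finite and the state-dependent constraint sets $\mathcal{U}(n,x)$ are described by the Gaussian tail conditions defining $\mathcal{U}_Q$ and $\mathcal{U}_G$ — which depend continuously (indeed measurably) on $x$ through $\meanQ,\CondevQ,\meanG,\CondevG$ — a measurable selection theorem for finite action spaces applies directly, and $V(n+1,\cdot)$ is measurable by the backward induction hypothesis. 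I would invoke the corresponding result from \cite{bauerle2011markov} (verification theorem / existence of optimal Markov policies for finite-action MDPs with Borel state space) rather than reprove it, so the remaining work is just the two inequality estimates above together with the induction that propagates optimality of $\tilde\alpha^\star$ backward from $n=N$.
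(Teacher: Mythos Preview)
The paper does not actually prove Theorem~\ref{Bellman}; it states the Bellman equation and the characterisation of the optimal decision rule as a standard result from the theory of Markov decision processes, referring the reader to B\"auerle and Rieder~\cite{bauerle2011markov}, Puterman~\cite{puterman2014markov}, and Hern\'andez-Lerma and Lasserre~\cite{HernandezLerma1996}, and then moves directly to the backward recursion algorithm. Your proposal supplies precisely the standard two-inequality dynamic programming argument that those references contain, together with the observation that measurable selection is trivial here because the action set $U$ is finite. In that sense your write-up is more than the paper itself provides, and the approach is the expected one.

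One remark: you are right to flag the discount bookkeeping as the delicate point. In the paper's notation $\Psi^D(k,x,a)=\mathrm{e}^{-\rho(k-n)\Delta_N}\Psi(k,x,a)$ carries the \emph{base time} $n$ implicitly, so when you shift from $J^D(n,\cdot)$ to $J^D(n+1,\cdot)$ an extra factor $\mathrm{e}^{-\rho\Delta_N}$ appears in front of the continuation value. Strictly speaking the Bellman recursion as written in \eqref{dyn}, with $\Psi^D(n,x,a)=\Psi(n,x,a)$ and no explicit discount on $\mathbb{E}[V(n+1,\cdot)]$, is only literally correct if one absorbs that factor into the definition of $V(n+1,\cdot)$ or interprets $\Psi^D$ with a fixed global base time; your parenthetical comment already catches this, and it is worth making explicit in a formal proof.
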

The dynamic programming equation \eqref{dyn} can be solved using a backward recursion algorithm  starting at the terminal time $N$.
 
The algorithm is presented below:

\begin{algorithm}[H]
	\SetAlgoLined
    \KwData{ Given the terminal cost $\Phi(x)$  for all $x \in \Statespace$
}
	\KwOut{Find the value function $V$ for all $(n,x)  \in \{N, \ldots,0\} \times \Statespace$and the optimal strategy $\mathbf{\alpha}^*$ for all $(n,x)  \in \{N-1, \ldots,0\} \times \Statespace$.} 
	\begin{enumerate}
		\item [$\mathbf{1.}$] Compute for all $x \in \mathcal{X}$
		$$V(N,x)=\Phi(x).$$
		\item [$\mathbf{2.}$] For $n:=N-1,\ldots, 0$ compute for all $x \in \mathcal{X}$
		$$V(n,x)=\inf_{a \in  \mathcal{U}(n,x)}\bigg\{\Psi(n,x,a)+\mathbb{E} \big[V(n+1,\mathcal{T}_n(x,a,\mathcal{E}_{n+1})\big]\bigg\}.$$
		Compute the minimizer $\alpha_n^*$  given by
		$$\widetilde{\alpha}^*(n,x)=\underset{a \in  \mathcal{U}(n,x)}{\mathrm{argmin}}\bigg\{\Psi(n,x,a)+\mathbb{E} \big[V(n+1,\mathcal{T}_n(x,a,\mathcal{E}_{n+1})\big]\bigg\}.$$
	\end{enumerate}
	\caption{Backward recursion algorithm}
	\label{bellman_backward_algo}
\end{algorithm}

\label{State-discrete}
\subsection{State-Discretization}
Solving MDP with continuous state spaces can be computationally expensive, and it becomes numerically intractable when the dimension is high.  Note that, due to state-dependent control constraints, the closed-form expressions of expectation $\mathbb{E} \big[V(n+1,\mathcal{T}_n(x,a,\mathcal{E}_{n+1}))\big] $, appearing in the Bellman equation \eqref{dyn}, cannot be expected.	\\
To overcome these problems, we describe below a computationally usable approximation of this expectation based on state discretization.
We must first truncate the unbounded state into a bounded domain.
The interval \(\R\) of values of the deseasonalized residual demand is reduced to a closed interval \(\overline{\stZ} = [\underline{z},\overline{z}]\in \mathbb{R}\), within which the values of the random process \(Z\) lie with high probability. Since the closed-form solution of the SDE governed by the deseasonalized residual demand is a Gaussian Ornstein-Uhlenbeck process for which the marginal distribution of $Z(t)$ converges asymptotically for $t\to\infty$ to the stationary distribution $\mathcal{N}(0, \sigma_R^2/(2\beta_R))$, where \(\beta_R\) is the mean reversion speed and \(\sigma_{R}\) is the standard deviation. We apply the $3\sigma$-rule and choose the truncated interval, which carries $99.97\%$ of the probability mass of this distribution as follows:
 \[ [\underline{z},\overline{z}]=\bigg[ -\frac{3\sigma_{R}}{\sqrt{2\beta_R}},  ~\frac{3\sigma_{R}}{\sqrt{2\beta_R}}\bigg].\]
The continuous state space is divided into a subset using a set number of points \(N_Z,N_Q\) and \(N_G\), where we define \(z_i=i\Delta_z,\)  \(i\in \{0,\cdots,N_Z\}\), \(q_j=j\Delta_q\), \(j\in \{0,\cdots,N_Z\}\), and \(g_k=k\Delta_g\), \(k\in \{0,\cdots,N_G\}\), where \(\Delta_z = \frac{\overline{z}-\underline{z}}{N_Z}\), \(\Delta_q = \frac{1}{N_Q}\), and \(\Delta_g = \frac{1}{N_G}\) are equidistant step sizes in the \(z,~q\), and \(g\) directions, respectively.
Then, the 3-dimensional discretized state space is given by
\begin{align*}
	\widetilde{\mathcal{X}} = \widetilde{\stZ}\times\widetilde{\mathcal{Q}}\times\widetilde{\mathcal{G}} =  \{z_0,\cdots,z^{}_{N_Z}\}\times\{q_0,\cdots,q^{}_{N_Q}\}\times \{g_0,\cdots,g^{}_{N_G}\}.
\end{align*}
Now, let us denote by \( \mathcal{N}_Z = \{0,\cdots,N_Z\}, ~~~\mathcal{N}_Q = \{0,\cdots,N_Q\}, \text{ and}~~~\mathcal{N}_G = \{0,\cdots,N_G\},\)
the set of indices for \(Z,Q\) and \(G\), respectively. Let $\widetilde{\mathcal{N}}$ be the set of 3-tuples of multi-indices defined by $\widetilde{\mathcal{N}} = \mathcal{N}_Z\times \mathcal{N}_Q\times \mathcal{N}_G = \{(i,j,k), i \in \mathcal{N}_Z, j \in \mathcal{N}_Q, k \in \mathcal{N}_G\}.$
A point \(x_m \in \widetilde{\mathcal{X}}\), with \(m = (i,j,k) \in \widetilde{\mathcal{N}}\) is defined by
\(x_m = (z_i,q_j,g_k),\) with \(z_i \in \widetilde{\stZ},q_j \in \widetilde{\mathcal{Q}}, \text{ and } g_k \in \widetilde{\mathcal{G}}\).\\
\(\mathcal{X}\) is then converted into a finite-state MDP with state space \(\widetilde{\mathcal{X}}\), inheriting the Markov property from \(\mathcal{X}\), and becomes a discrete-state Markov process. \\ Let us denote by $\mathcal{N}_n = \{0,\cdots,N\}$ the set of time indices for discrete time points \(t_0,\cdots,t_{N}\). For \((n,x_m)\in \mathcal{N}_n\times\widetilde{\mathcal{X}}\), we define the approximation of the value function and the decision rule at a point \(x_m =(z_i,q_j,g_k) \) at time \(n \in \mathcal{N}_n\) by
\[V^D(n,x_m) \simeq V(t_n,z_i,q_j,g_k) \text{ and } \alpha_n = \widetilde{\alpha} (n,x_m) \simeq \widetilde{\alpha}(t_n,z_i,q_j,g_k),\]
respectively. \\ 
We define \(X^{\alpha,D}_{n} = (Z_{n}^D, Q_{n}^D, G_{n}^D) \in \widetilde{\mathcal{X}}\) as the discrete-state process at time \(n\). Assuming that at time \(n\in \mathcal{N}_n\) the state is at the grid point \(x_{m_1} \in \tilde{\mathcal{X}}\) and that action \(a \in \mathcal{U}\) is performed, the state moves to the grid point \(x_{m_2} \in \tilde{\mathcal{X}}\) at time \(n + 1\) with some probability \(\mathcal{P}_{x_{m_1},x_{m_2}}^a\). This probability is called transition probability, which is the probability that the state moves from \(x_{m_1} \) at time \(n\) to \(x_{m_2}\) at time \(n + 1\) under the action $a$, and it is defined by \[ \mathcal{P}_{x_{m_1},x_{m_2}}^a= \mathbb{P}(X_{n+1}^{\alpha,D}= x_{m_2} ~|~ X_n^{\alpha,D} = x_{m_1}, \alpha_n = a)=\mathbb{P}(T_n(x_{m_1},a,\mathcal{E}_{n+1})= x_{m_2}).\]

Then the expectation of the value function present in algorithm \ref{bellman_backward_algo} is approximated by
\begin{align*}
    \mathbb{E}[V^D(n+1,T_n(x_{m_1},a,\mathcal{E}_{n+1})] 
    &=\sum_{x_{m_2}\in \widetilde{\mathcal{X}}}\mathcal{P}^a_{x_{m_1},x_{m_2}} V^D(n+1,x_{m_2}).
\end{align*}
\paragraph{Transition Probabilities} 
 We recall that given the state \(X_n\) and the decision rule \(\alpha_n\) at time \(n\), the deseasonalized residual demand \(Z_{n+1}\), the battery's state of charge \(Q_{n+1}\), and the fuel tank level \(G_{n+1}\) are Gaussian random variables. In Addition, (\(G_{n+1}, Z_{n+1}\)) for \(a \in \{u^F,u^{FL}\}\) and (\(Q_{n+1}, Z_{n+1}\)) for \(a \in \{u^C,u^D,u^{DL}\}\) are bivariate Gaussian. In order to practically compute the transition probabilities, we define the neighborhood of \(z_{i}\), \(q_j\), and \(g_k\) as follows: \\ 
 For the inner grid points:
\begin{align*}
	S_{z_i} &= \left(z_i -\frac{1}{2} \Delta_{z-1} , z_i +\frac{1}{2} \Delta_z\right] ~~= \left(\frac{1}{2}(z_i + z_{i-1}) , \frac{1}{2} (z_i + z_{i+1})\right], \qquad ~i = 1, \ldots, N_Z - 1,\\
	S_{q_j}&= \left(q_j -\frac{1}{2} \Delta_{q-1} , q_j +\frac{1}{2} \Delta_q\right] = \left(\frac{1}{2}(q_j + q_{j-1}) , \frac{1}{2} (q_j + q_{j+1})\right], \quad j = 1, \ldots, N_Q - 1,\\
	S_{g_k}&= \left(g_k -\frac{1}{2} \Delta_{g-1} , g_k +\frac{1}{2} \Delta_g\right] = \left(\frac{1}{2}(g_k + g_{k-1}) , \frac{1}{2} (g_k + g_{k+1})\right], \quad k = 1, \ldots, N_G - 1.
\end{align*}
Similarly, we define the neighborhood of \(z_0\) and \(z^{}_{N_Z}\) for the boundary grid points as follows:
\begin{align*}
	S_{z_0} &= \left(-\infty, z_0 +\frac{1}{2}\Delta_z\right] \qquad ~= \left(-\infty, \frac{1}{2} (z_0 + z_{1})\right],\\
	S_{z^{}_{N_Z}} &= \left(z^{}_{N_Z}-\frac{1}{2}\Delta_{z^{}_{N_Z}-1},+\infty\right)= \left(\frac{1}{2}(z^{}_{N_Z}+z^{}_{N_Z-1}),+\infty\right).
    \end{align*}
    For \(n \in \mathcal{N}_n\), the following relations between the discrete-state process and the continuous-state process hold true:
\begin{align*}
 Z^D_n &= z_i \longleftrightarrow Z_n \in S_{z_i},& \forall i = 0, \ldots, N_Z ,\\
      Q^D_n &= q_j \longleftrightarrow Q_n \in S_{q_j},&\forall  j = 0, \ldots, N_Q ,\\
      G^D_n &= g_k \longleftrightarrow G_n \in S_{g_k}, & \forall  k = 0, \ldots, N_G .
\end{align*}
According to Assumption \ref{ass}, it is not possible to operate the battery and the generator simultaneously. Therefore, we will consider three cases: first, the battery is being used; second, the generator is being operated; and third, neither the battery nor the generator is being operated. All the computations below are for the inner points \(z_{i_2},q_{j_2},g_{k_2}\), where \(i_2 = 1,\cdots,N_Z-1, ~j_2 = 1,\cdots,N_Q-1, \text{ and } k_2 = 1,\cdots,N_G-1.\)

\begin{itemize}

\item \textbf{Case 1: Operating the battery with the generator in idle mode.}
\par 
Recall that from Remark~\ref{Rem-Tkernel}, the battery and the generator cannot operate simultaneously.  Then, for \(\alpha_n=a \in \{u^C,u^D,u^{DL}\}\), \(Q_{n+1}\) and \(Z_{n+1}\) are correlated and \(G_{n+1}=G_n\) is independent of (\(Q_{n+1},Z_{n+1}\)).
	Hence, given two points \(x_{m_1} = (z_{i_1},q_{j_1},g_{k_1}) \in \widetilde{\mathcal{X}}\)  and \(x_{m_2} = (z_{i_2},q_{j_2},g_{k_2}) \in \widetilde{\mathcal{X}}\), the transition probability that the state moves from \(x_{m_1}\) at time \(n\) to \(x_{m_2}\) at time \(n+1\) under the action \(a \in \{u^C,u^D,u^{DL}\}\) is given by
\begin{align*}
&\mathcal{P}^a_{x_{m_1},x_{m_2}}= \mathbb{P}(\mathcal{T}_n(x_{m_1},a,\mathcal{E}_{n+1})=x_{m_2})
    = \mathbb{P}(\mathcal{T}_n(x_{m_1},a,\mathcal{E}_{n+1})= (z_{i_2},q_{j_2},g_{k_2}))\\
    &=  \mathbb{P}(( \mathcal{T}_n^Z(x_{m_1},a,\mathcal{E}_{n+1}),\mathcal{T}_n^Q(x_{m_1},a,\mathcal{E}_{n+1}))\in S_{z_{i_2}}\times S_{q_{j_2}}  )\times \mathbb{P}(\mathcal{T}_n^G(x_{m_1},a,\mathcal{E}_{n+1})\in S_{g_{k_2}}). 
\end{align*}
According to Proposition~\eqref{prop-G}, the generator is in idle mode, this implies that \(G_{n+1}\) is Dirac, therefore the probability  that at time \(n+1\) the state \(G_{n+1}\) is in the neighborhood \(S_{g_k}\) of \(g_k\) given that at time \(n\), \(X_n^{\alpha,D} = x_{m_1}\) and the action \(\alpha_n=a\) is taken, is given by
\[ \mathbb{P}(\mathcal{T}_n^G(x_{m_1},a,\mathcal{E}_{n+1})\in S_{g_{k_2}}) = 
\begin{cases}
	1&\text{ if } \mathcal{T}_n^G(x_{m_1},a,\mathcal{E}_{n+1})\in S_{g_{k_2}},\\
	0 &\text{otherwise}.
\end{cases}\]
The conditional probability that at time \(t_{n+1}\) the pair \((Q_{n+1},Z_{n+1}) \in S_{z_{i_2}}\times S_{q_{j_2}}\) given the state process \(X_n^{\alpha,D} = (z_{i_1},q_{j_1},g_{k_1})\) and the action \(\alpha_n = a\) is given by
\begin{align*}
	\mathbb{P}(( \mathcal{T}_n^Z(x_{m_1},a,\mathcal{E}_{n+1}),\mathcal{T}_n^Q(x_{m_1},a,\mathcal{E}_{n+1}))\in S_{z_{i_2}}\times S_{q_{j_2}} )
 = \int_{S_{z_{i_2}}}\int_{S_{q_{j_2}}}f_{QZ}(q,z)dqdz,
\end{align*}
where \(f_{ZQ}(z,q)\) is the density of the standard normal distribution.

	\item \textbf{Case 2: Operating the generator with a battery in idle mode.}

\par When the generator operates, the battery cannot be used. Therefore, for \(\alpha_n = a \in \{u^F,u^{FL}\}\), \(G_{n+1}\) and \(Z_{n+1}\) are correlated, and \(Q_{n+1}\) is a deterministic function, which is independent of \((G_{n+1},Z_{n+1})\). \\
Now, given two points \(x_{m_1} = (z_{i_1},q_{j_1},g_{k_1}) \in \widetilde{\mathcal{X}}\) and \(x_{m_2} = (z_{i_2},q_{j_2},g_{k_2}) \in \widetilde{\mathcal{X}}\), the transition probability that the state moves from \(x_{m_1}\) at time \(n\) to \(x_{m_2}\) at time \(n+1\) under action \( a\in \{ u^F,u^{FL}\}\) is given by:
\begin{align*}
	&\mathcal{P}^a_{x_{m_1},x_{m_2}} 
= \mathbb{P}(\mathcal{T}_n(x_{m_1},a,\mathcal{E}_{n+1})=x_{m_2})
    = \mathbb{P}(\mathcal{T}_n(x_{m_1},a,\mathcal{E}_{n+1})= (z_{i_2},q_{j_2},g_{k_2}))\\
    &=  \mathbb{P}(( \mathcal{T}_n^Z(x_{m_1},a,\mathcal{E}_{n+1}),\mathcal{T}_n^G(x_{m_1},a,\mathcal{E}_{n+1}))\in S_{z_{i_2}}\times S_{g_{k_2}}  )\times \mathbb{P}(\mathcal{T}_n^Q(x_{m_1},a,\mathcal{E}_{n+1})\in S_{q_{j_2}}). 
	\end{align*}
 	According to Proposition \eqref{prop-Q}, when the battery is in idle mode, the conditional variance of \(Q_{n+1}\) is zero; this implies that \(Q_{n+1}\) is degenerated (Dirac). Therefore, the probability that at time \(n+1\) the state \(Q_{n+1}\) is in the neighborhood \(S_{q_{j_2}}\) of \(q_{j_2}\) given that at time \(n\), \(X_n^{\alpha,D} = x_{m_1}\) and the action \(\alpha_n=a\) is taken is given by
	\[\mathbb{P}(\mathcal{T}_n^Q(x_{m_1},a,\mathcal{E}_{n+1})\in S_{q_{j_2}}) = 
\begin{cases}
	1&\text{ if } \mathcal{T}_n^Q(x_{m_1},a,\mathcal{E}_{n+1})\in S_{q_{j_2}},\\
	0 &\text{otherwise}.
\end{cases}\]
	The conditional probability that \((G_{n+1},Z_{n+1}) \in S_{g_{j_2}} \times S_{z_{k_2}}\) given the state process \(X_n^{\alpha,D} = (z_{i_1},q_{j_1},g_{k_1})\) and the action \(\alpha_n = a\) is
\begin{align}
	\mathbb{P}( \mathcal{T}_n^Z(x_{m_1},a,\mathcal{E}_{n+1}),\mathcal{T}_n^G(x_{m_1},a,\mathcal{E}_{n+1}))\in S_{z_{i_2}}\times S_{g_{k_2}} )
	 = \int_{S_{z_{i_2}}}\int_{S_{g_{k_2}}} f_{ZG}(z,g )dgdz,\\
     \label{cond_proba}
\end{align}
where \(f_{ZG}(z,g)\) is the conditional density function of \(Z_{n+1}\) and \(G_{n+1}\). The double integral appearing in equation ~\eqref{cond_proba} can be computed using Matlab built-in functions or can further be simplified to a single integral, see \cite{takam_PhD_2023}.

\item \textbf{Case 3: Battery and generator in idle mode.}
\par When the battery and the generator are both in idle mode, the variables \(Z_{n+1}\), \(Q_{n+1}\), and \(G_{n+1}\) are independent of each other. In addition, \(Q_{n+1}\) and \(G_{n+1}\) are deterministic functions. Therefore, given two points \(x_{m_1} = (z_{i_1},q_{j_1},g_{k_1}) \in \widetilde{\mathcal{X}}\) and \(x_{m_2} = (z_{i_2},q_{j_2},g_{k_2}) \in \widetilde{\mathcal{X}}\), the transition probability that the state moves from \(x_{m_1}\) at time \(n\) to \(x_{m_2}\) at time \(n+1\) under action \(a \in \{u^W,u^O\}\) is just the product of the probabilities, that is,
	\begin{align*}
	&\mathcal{P}^a_{x_{m_1},x_{m_2}} 
= \mathbb{P}(\mathcal{T}_n(x_{m_1},a,\mathcal{E}_{n+1})=x_{m_2})
    = \mathbb{P}(\mathcal{T}_n(x_{m_1},a,\mathcal{E}_{n+1})= (z_{i_2},q_{j_2},g_{k_2}))\\
    &=  \mathbb{P}(\mathcal{T}_n^Z(x_{m_1},a,\mathcal{E}_{n+1})\in S_{z_{i_2}}) \times \mathbb{P}(\mathcal{T}_n^Q(x_{m_1},a,\mathcal{E}_{n+1})\in  S_{q_{j_2}}  )\times \mathbb{P}(\mathcal{T}_n^G(x_{m_1},a,\mathcal{E}_{n+1})\in S_{g_{k_2}}). 
	\end{align*}
 \end{itemize}
Here, \(Q_{n+1}\) and \(G_{n+1}\) are degenerate (Dirac), so 
the above transition probability
can be written as follows
\begin{align*}
	\mathbb{P}^a_{x_{m_1},x_{m_2}} =\begin{cases}
		 \mathbb{P}( \mathcal{T}_n^Z(x_{m_1},a,\mathcal{E}_{n+1})\in S_{z_{i_2}}),& \text{ if }~~ \mathcal{T}_n^Q(x_{m_1},a,\mathcal{E}_{n+1}) \in S_{q_{j_2}} \text{ and }~~G_{n+1}\in S_{g_{k_2}},\\
		 0, &\text{otherwise}.
	\end{cases}
\end{align*}
\begin{remark}
	Similar reasoning can be adopted for the computations of the transition probabilities for the
	boundary points.
\end{remark}

\section{Numerical Results}
\label{Num-rsult}
\subsection{Experimental Setting}
 To gain a concrete understanding of the optimal control and the behavior of the value function for microgrid operation, we now turn to the numerical solution of the optimal control problem. This section presents a visualization of the optimal decision rules over a time horizon of \(T=7\) days and the corresponding numerically approximated value function computed using the backward recursion approach, presented in Algorithm \ref{bellman_backward_algo}.
\\
  For numerical simulations, we consider the residual demand denoted by $R(t)= \mu_R(t) +Z(t)$, where the seasonality function \(\seasonality(t) = \mu_0^R + \kappa_1^{R} \cos\left(\frac{2\pi(t-t_1^R)}{\delta_1}\right) +  \kappa_2^{R} \cos\left(\frac{2\pi(t-t_2^R)}{\delta_2}\right) \) has parameters $t_1^R =t_2^R=0,~ \delta_1 = 365 \text{ days},~ \delta_1 = 1 \text{ day},~\mu_0^R = 0.1$, \(\kappa_1^{R} =0.1,\) and \( \kappa_2^{R} =1\). The deseasonalized residual demand $Z$ is modeled as a mean-reverting to zero OU process with a mean reversion speed $\beta_R = 0.2$ and a volatility of $\sigma_R = 0.45$. This results in a practical operating range for the deseasonalized residual demand within the interval $[ \underline{z}, \overline{z}]=[-2.13, 2.13]$, chosen using the 3-sigma rule, that is, $\underline{z}=-\frac{3\sigma_{R}}{\sqrt{2\beta_R}}=-2.13$ and $\overline{z}=\frac{3\sigma_{R}}{\sqrt{2\beta_R}}=2.13$. Then, the operating range of the residual demand is chosen using the worst-case scenario as follows: $\underline{r}=\displaystyle\min_{t \in [0,T]}\mu_{R}(t)+\underline{z}=-3$ and $\overline{r}=\displaystyle\max_{t \in [0,T]}\mu_{R}(t)+\overline{z}=3$. The finite time interval $[0,T]$ is subdivided into $N = 168$ sub-intervals of length \(\Delta_N = 1\) hours. In the direction of r, the state is discretized such that the critical value 0 and the threshold $R_{Q0}$ (resp. $R_{G0}$) are not grid points but midpoints of two consecutive grid points. The case of the critical value 0 is achieved by choosing $N_r=N_Z$ odd and $\Delta_r=6/N_r$. The threshold $R_{Q0}$ (resp. $R_{G0}$) above which the economical mode can be activated is chosen as the midpoint of two consecutive grid points around $\frac{\overline{r}}{2}$. The continuous state space \(\Statespace\) is discretized with sub-intervals $N_Z=N_R= 17$, $N_Q = 10$, and $N_G = 10$. Hence, for $\overline{r}=3$ and $N_R=17$, the threshold is given by $R_{Q0}=R_{G0}=1.4118$. The battery's capacity is calibrated so that a full battery can meet demand all night and an empty battery can store excess production all day; the details are in Appendix~\ref{Calib_batC}. $C_Q = 18$ kWh is the proper capacity given the residual demand parameters mentioned above.
 Furthermore, we presume that the battery self-discharges using a parameter calibrated so that, in idle mode, a full battery's state of charge drops by 2\% in 4 days; Appendix~\ref{Calib_self} provides details. Consequently, the self-discharge rate is determined to be $\eta_0 = 2.1044 \times 10^{-4}$. We assign a penalty to the remaining unsatisfied demand, called the discomfort cost. To emphasize the severity of the discomfort cost, we model it using a pure quadratic function, $f_p(x)=k_0x^2$, with $k_0=0.575~€/(kWh)^2$.\\ The fuel tank capacity is chosen so that th
e generator can meet the average demand throughout the simulation period. According to the calibration of the fuel tank parameters given in Appendix \ref{calib_fuelC},  $C_G=20$ liters is the appropriate fuel tank capacity for a  simulation period of seven days.
The average price of diesel in Germany is used to parameterize the fuel price at the time of analysis \(F_0 = 1.5\,€/\ell\).  At the terminal time, we charge a penalty price $\gamma_{pen}^Q = 0.8 ~€/kW$ for every kilowatt of electricity needed to compensate for the deficit when the battery level is below the reference level $q_{ref} = 80\%$, and there is no reward for the surplus, that is, $\gamma_{liq}^Q =0$. Finally, a liquidation price of $\gamma_{liq}^G = 1.25$ €/$\ell$ is fixed for the remaining fuel in the tank at the terminal time. The parameters are summarized in table~\ref{Tab_paramet}.
Given this parameter, the running cost as a function of the residual demand $r$ is illustrated in Fig. ~\ref{fig_cost}. Here, the solid blue line represents the cost to operate the generator in the full mode, given by $\Psi_{F}(r)=F_0(c_0 + c_1 r)$, while the black dotted line represents the cost to operate the generator in the limited mode, given by $\Psi_{FL}(r)=F_0(c_0 + c_1 R_{G0})+k_0 (r-R_{G0})^2$. The solid purple line represents the cost to operate the battery in full mode, or the degradation cost, given by $\Psi_{D}=\gamma_{deg}r$, while the dotted green line shows the cost to operate the battery in limited mode, given by $\Psi_{DL}=\gamma_{deg}r+ k_0 (r-R_{Q0})^2$. The solid orange line is the cost of discomfort due to unmet demand, defined as $k_0 r^2$.  The red dotted line at $R_{Q0}$ is the threshold above which the limited mode can be activated.
\begin{figure}[htbp!]
    \centering 
    \hspace*{-1.cm}
    \includegraphics[width=0.9\textwidth,height=0.4\linewidth]{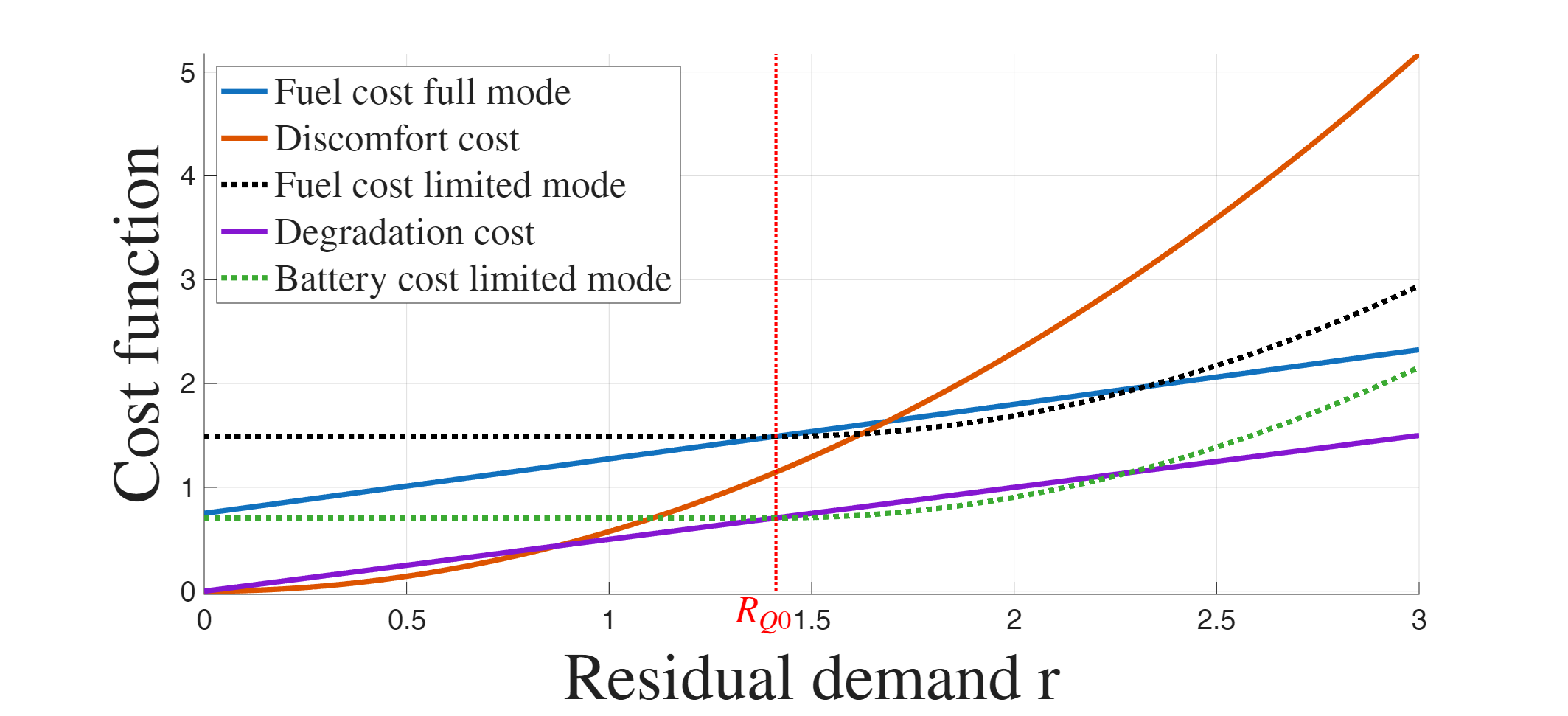}
    \caption{Running cost as a function a function of a positive residual demand $r>0$.}
 \label{fig_cost}
\end{figure}
\begin{table}[h!]
	\centering	
	\begin{tabular}[T]{|lc|rrr|} \hline
		Parameters&&&Values& Units\\
		\hline
		\textbf{Discretization} &&& & \\
        
		\hspace*{1em} Time horizon, sub-intervals & $T, N$ && $7$ days, $168$ & \\
		\hspace*{1em} Time step&  $\Delta_N$ && $1$&$ ~h$\\	
		\hspace*{1em}	Number of grid points in $r$- direction &$N_R$&& $17$ &\\ 
		\hspace*{1em}	\qquad\qquad\qquad\qquad\qquad $q$- direction &$N_Q$&& $15$ &\\ 
		\hspace*{1em}	\qquad\qquad\qquad\qquad\qquad$g$- direction &$N_{G}$&& $10$ &\\ 
		\hspace*{1em}	Discretization step size in  $r$- direction &$\Delta_{R}$&& $0.353$ &$kW$\\ 
		\hspace*{1em}	\qquad\qquad\qquad\qquad\qquad ~ $q$- direction &$\Delta_{Q}$&& $0.1$ &\\
        \hspace*{1em}	\qquad\qquad\qquad\qquad\qquad  ~~$g$- direction &$\Delta_{G}$&& $0.1$ &\\
	\hspace*{1em}	Maximum residual demand  & $\overline{r}$ &&$3$ & $kW$\\
        \hspace*{1em}	Minimum residual demand  & $\underline{r}$ &&$-3$ & $kW$\\
       \hline		
		\textbf{Residual demand} & && &\\
		\hspace*{1em}	Volatility coefficient & $\sigma_R$  &&$0.45$ & $kW/\sqrt{h}$\\
		\hspace*{1em}	Mean reversion speed & $\beta_R$  &&$0.2$ & $1/h$\\
        \hspace*{1em}	Long term mean & $\mu_0^R$  &&$0.1 $ & $kW$ \\
	\hspace*{1em}	Amplitudes of the yearly and daily seasonality& $\kappa_1,\kappa_2$ &&$0.1, ~1$ & $kW/h$ \\
		\textbf{Battery} & && &\\
		\hspace*{1em}  Capacity  & $C_Q$  &&$18$ & $kWh$ \\
		\hspace*{1em} Self-discharging rate  & $\eta_0$  &&$    2.1044\times 10^{-4}
$ & $1/h$ \\
		\hspace*{1em}	Threshold limited mode  & $R_{Q0}$  &&$1.4118$ & $kW$\\
		\hspace*{1em} Threshold terminal time & $q_{ref}$  &&$0.8$ & \\
		\textbf{Fuel level} & && &\\
		\hspace*{1em}   Capacity & $C_G$  &&$20$ & $\ell$\\
		\hspace*{1em} Threshold limited mode & $R_{G0}$  &&$1.4118$ & $kW$ \\
		\hspace*{1em}	Fuel consumption in idle mode& $c_0$  &&$0.5$ & $\ell/h$ \\
		\hspace*{1em}	Load-dependent fuel consumption & $c_1$  &&$0.35$ & $\ell/kWh$ \\ 
         \hline		
		\textbf{Cost parameters} & && &\\
			\hspace*{1em}  Battery degradation cost &$\gamma_{deg}^Q$&& $0.05$&$\texteuro/kWh$ \\
		\hspace*{1em} Discomfort cost coefficient&$k_0$&& $0.575$&$\texteuro/(kWh)^2$ \\
			\hspace*{1em} Penalty price for battery  &$\gamma_{pen}^Q$&& $0.8$&$\texteuro/kWh$ \\
	\hspace*{1em} Liquidation price for fuel &$\gamma_{pen}^G$&& $1.25$&$\texteuro/\ell$ \\
		\hspace*{1em}	Fuel price &$F_0$&&$ 1.5$& $\texteuro/\ell$\\
		\hspace*{1em}	Discount rate &$\rho$&&$ 0.03$& \\
		\hline
	\end{tabular}
\\

	\caption{ Parameters of the numerical simulations}	
	\label{Tab_paramet}
\end{table}

\subsection{Optimal Decision Rule and Value Function}
\label{sec: Decision}
Here, we present the value function and the optimal decision rule that includes a penalty for deviating from the reference state of charge (\(q_{ref}\)) at the terminal time,  and there is no reward for the surplus. The control actions are visualized using a color-coded system, where dark blue indicates over-spilling, light blue the battery charging, green for waiting or \emph{doing nothing}, yellow for discharging the battery in limited mode, orange for discharging the  battery in full mode, and light red and dark red indicate firing fuel in limited and full modes, respectively. In the optimal decision rule graph, the black dotted vertical lines at 0 and $R_{Q0}=1.4118$ represent the critical values of residual demand. The red dotted horizontal line at $q_{ref}=0.8$ represents the penalty threshold for the state of charge. In the visualizations, we plot the value function and the optimal decision rules as a function of one variable, while the others remain fixed.  For visualization with respect to the state of charge ($q$), the red dotted, blue dotted, green dashed, and magenta dashed lines represent $r=\overline{r}$, $r=2.29, r=1.23$, and $r=\underline{r}$, respectively. Similarly, for visualization with respect to $r$, the red dashed, blue dotted, and green dashed lines represent a full battery, a battery filled to $20\%$ capacity, and an empty battery, respectively.\\

\paragraph{Terminal value function (midnight of the last day)}
Fig.~\ref{fig_Term} illustrates the value function at terminal time \(N = 168\) as a function of residual demand $r$ and battery state of charge $q$ (left) and  as a function of battery level and fuel tank level $g$ (right).
The left panel shows three states of the fuel tank at terminal time; the top plot shows the case where the fuel tank is empty, the middle plot shows the case where the fuel tank is at 20\%, and the bottom plot shows the case where no fuel is used during the entire time horizon. 
We observe that these surfaces increase linearly with decreasing state of charge $q$, but are shifted downward (below zero) from empty to full fuel tank due to liquidation of the remaining fuel in the tank. The increase in the value function is due to the penalty for unmet demand, that is, when \(Q(N) < q_{\text{ref}}\). However, it remains constant with respect to $r$ or when \(Q(N) \geq  q_{\text{ref}}\) as there is no reward for the surplus. The right panel shows that the value function is above zero when the fuel tank is empty and decreases linearly as the fuel tank or the state of charge decreases.

\begin{figure}[htbp!]
    \centering 
    \hspace*{-1.cm}
    \includegraphics[width=0.49\textwidth,height=0.25\linewidth]{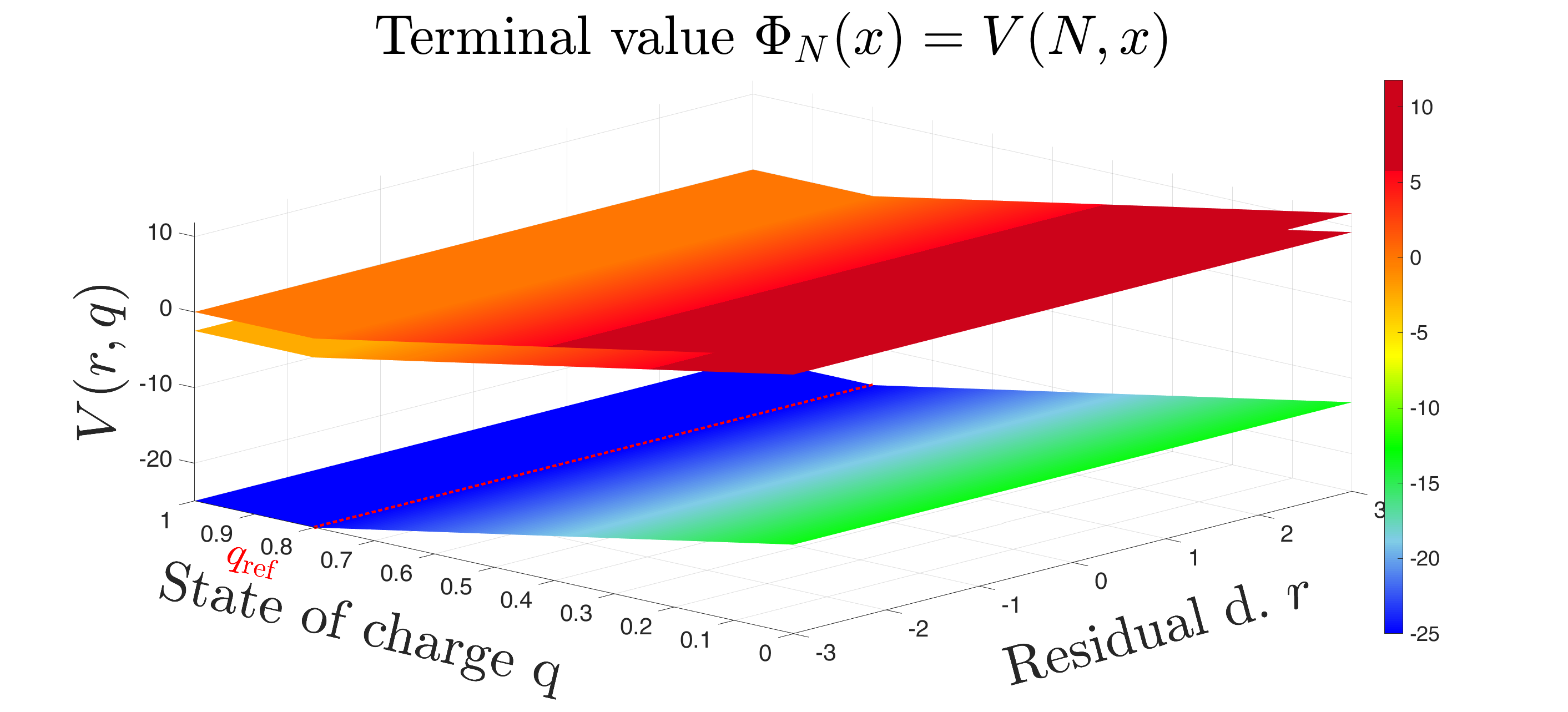}
      \includegraphics[width=0.49\textwidth,height=0.25\linewidth]{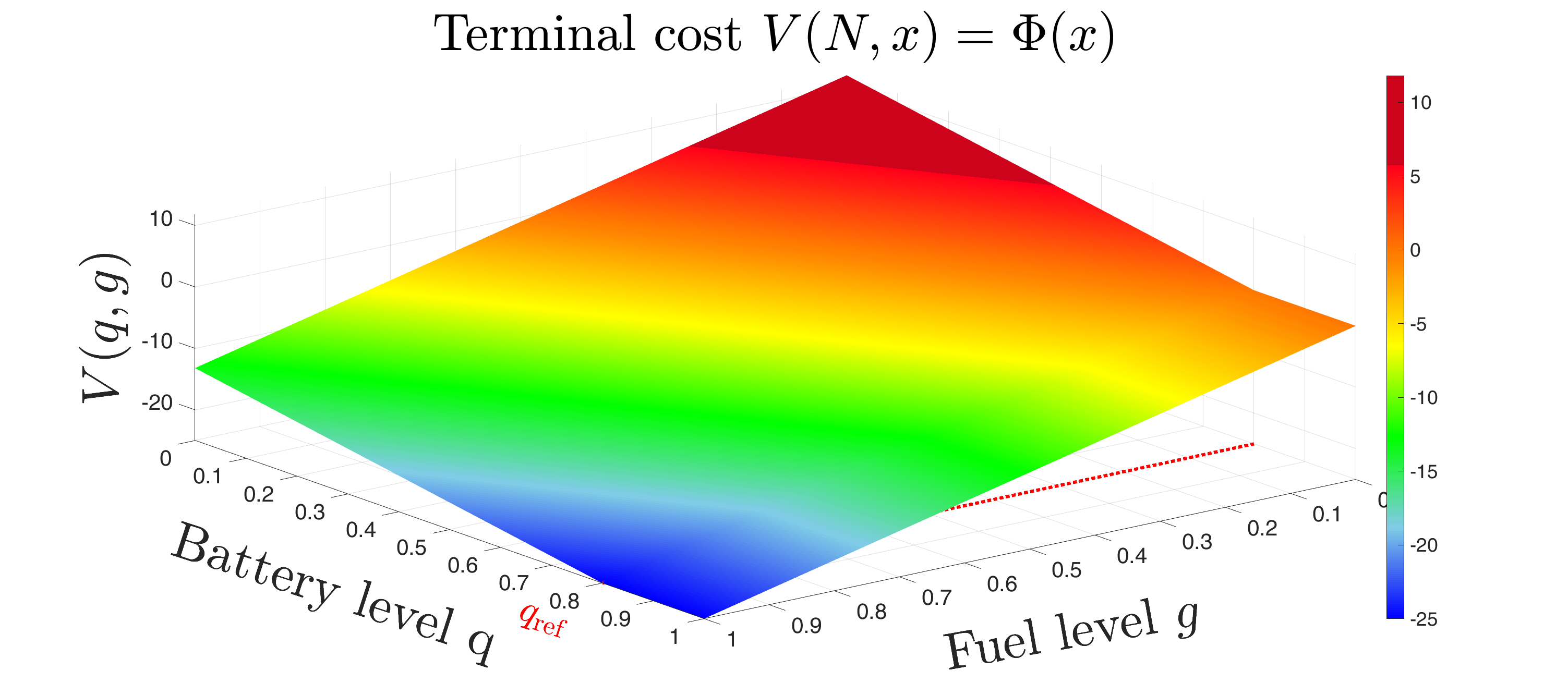}
    \caption{Terminal value function $V(N,x)=\Phi(x)$.\\
    Left:  Value function as a function of $r$ and $q$ for empty fuel tank (top graph),  fuel tank at $20\%$ (middle graph) and full fuel tank (bottom graph).  Right:  Value function as a function of $q$ an $g$.}
 \label{fig_Term}
\end{figure}

In the following, Figures~\ref{fig_Val_n-1}, \ref{fig_Val_n-12}, and \ref{fig_Val_0} show the value function and the optimal decision rule in terms of \(r, q)\) one hour before the terminal time ($n=N-1$), at noon on the last day ($n=N-12$), and at the initial time ($n=0$), respectively. The top left graph illustrates the value function for an empty fuel tank, while the middle graph depicts the value function for a tank at 20\% capacity, and the bottom graph shows the value function for a full fuel tank. The optimal decision rule is represented for an empty fuel tank in the lower left panel, for a fuel tank filled to 20\% in the upper right panel, and for a full fuel tank in the lower right panel.  Figures~\ref{fig_VFr-N-1} and \ref{fig_VFr-N-12} show the visualization of the value function (left panel) and the optimal decision rule (right panel) in terms of the state of charge $q$ for a fixed residual demand $r=\{-3, 1.23, 2.29, 3\}$, while Figures~\ref{fig_VFq-N-1} and \ref{fig_VFq-N-12} show the visualization in terms of the residual demand $r$ for a fixed $q=\{0, 0.2, 1\}$ one hour before the terminal time, at noon on the last day, and at the initial time, respectively. 

\newpage
\section*{Optimal decision rule and value function 1 hour before the terminal time}

\begin{figure}[htbp!]
    \centering 
     \includegraphics[width=0.49\linewidth,height=0.22\linewidth]{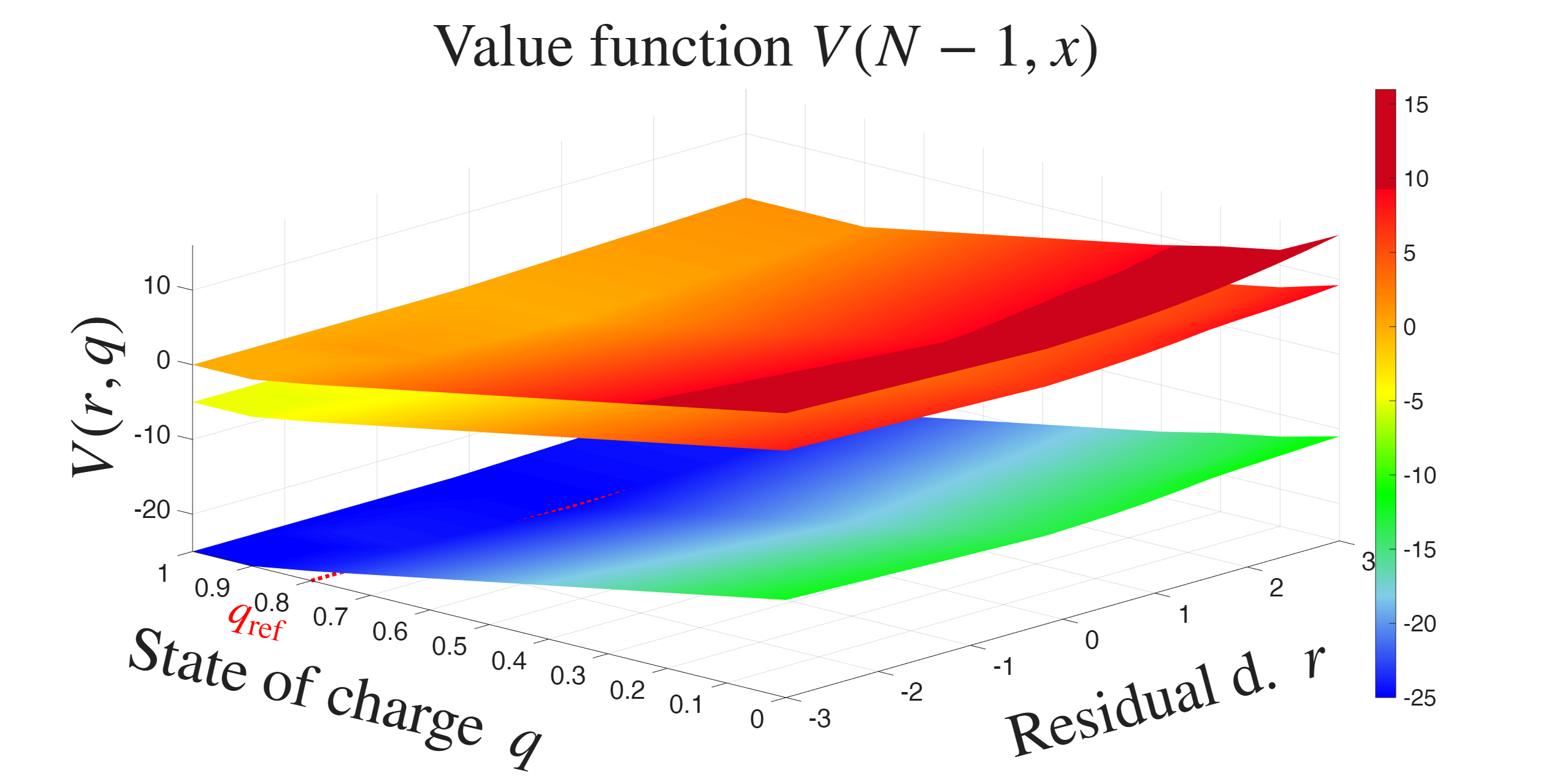} \hspace*{-0.75cm}
     \includegraphics[width=0.52\linewidth,height=0.22\linewidth]{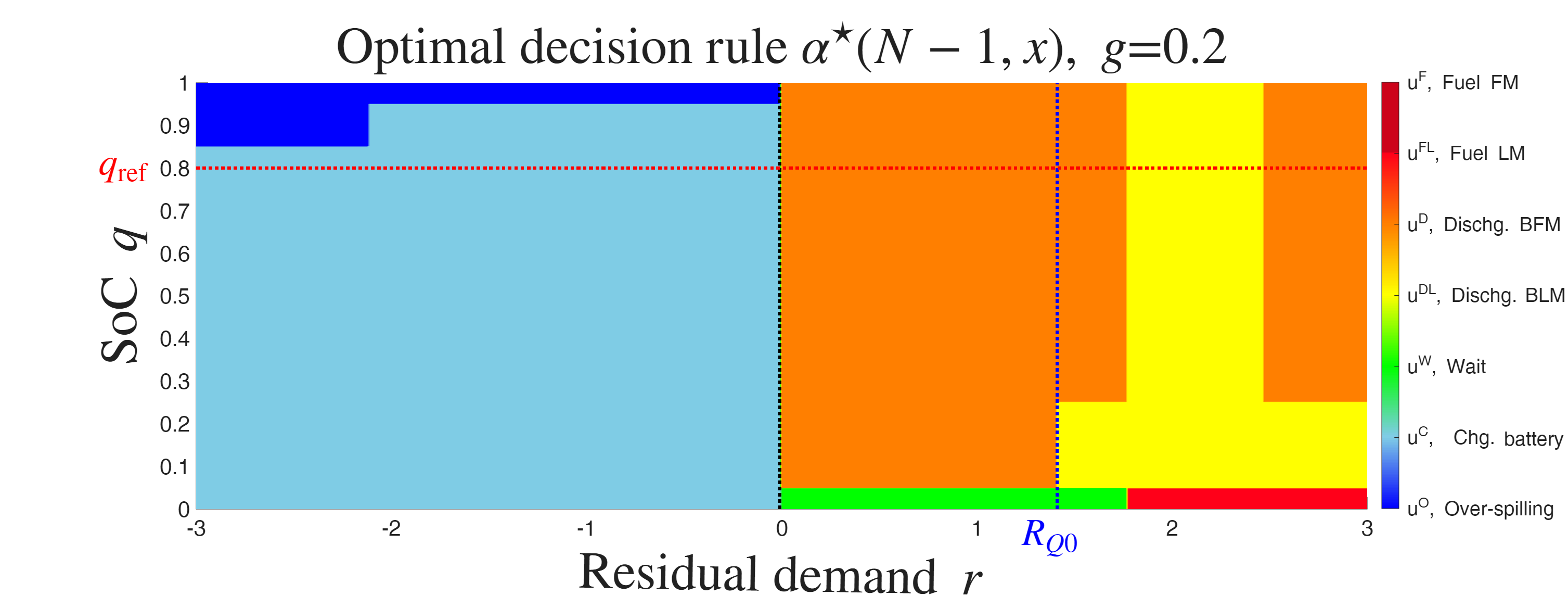}
\vspace*{0.5cm}

\includegraphics[width=0.49\textwidth,height=0.22\linewidth]{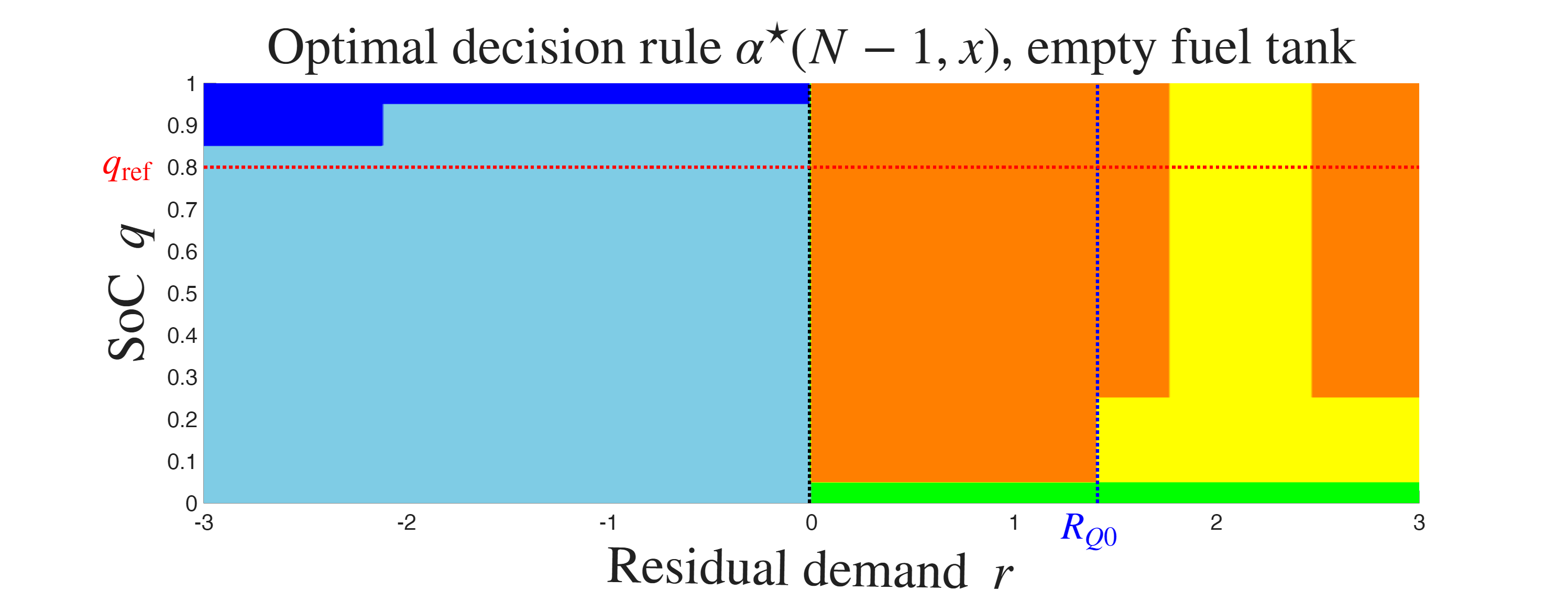}\hspace*{-0.75cm}
     \includegraphics[width=0.52\textwidth,height=0.22\linewidth]{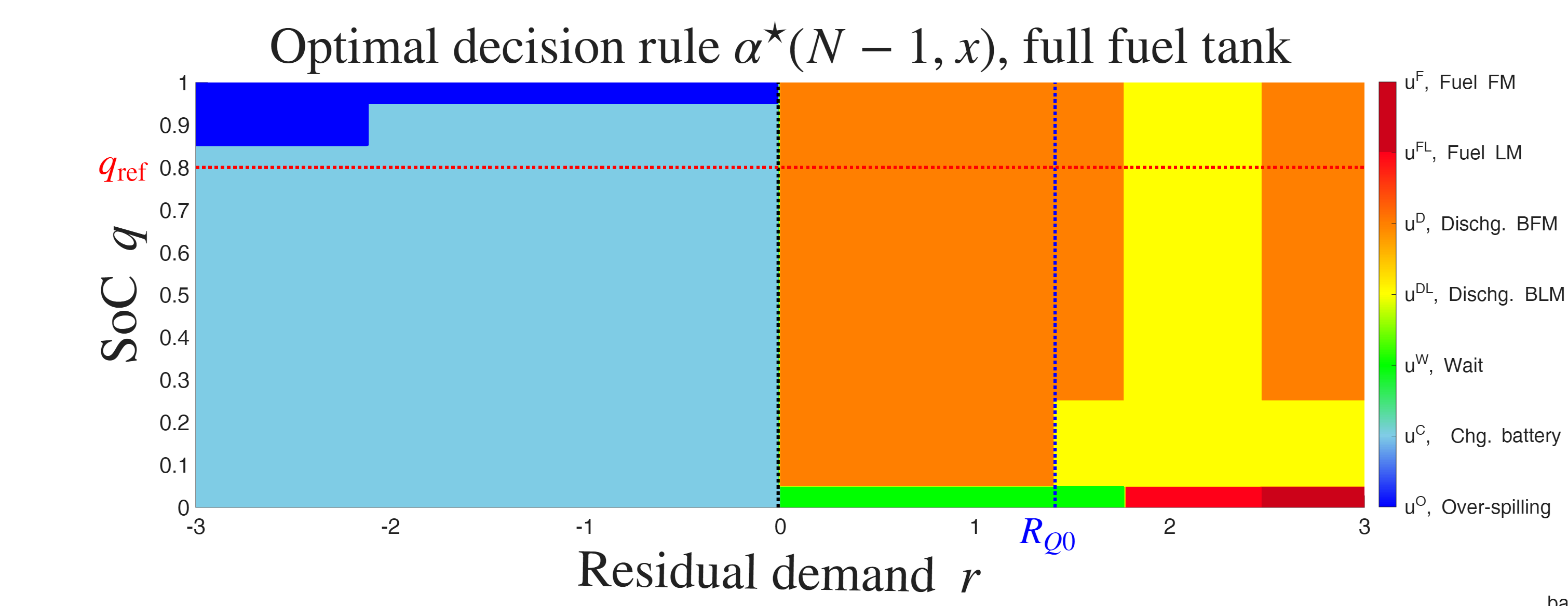}
    \caption{Value functions \(V\) and optimal decision rule \(\alpha^\star\) one hour before the terminal time ($n = N-1$) in terms of $r$ and $q$.
    Upper left panel: Value function for an empty fuel tank (upper graph), a fuel tank filled to 20$\%$ (middle graph), and a full fuel tank (bottom graph). Lower left panel: Optimal decision rule for an empty fuel tank. Lower right panel: Optimal decision rule for a full fuel tank. Upper right panel: Optimal decision rule for a fuel tank filled to 20$\%$ capacity.
    }\label{fig_Val_n-1}
\end{figure}
\begin{figure}[b!]
    \centering   
    \includegraphics[width=0.49\linewidth,height=0.22\linewidth]{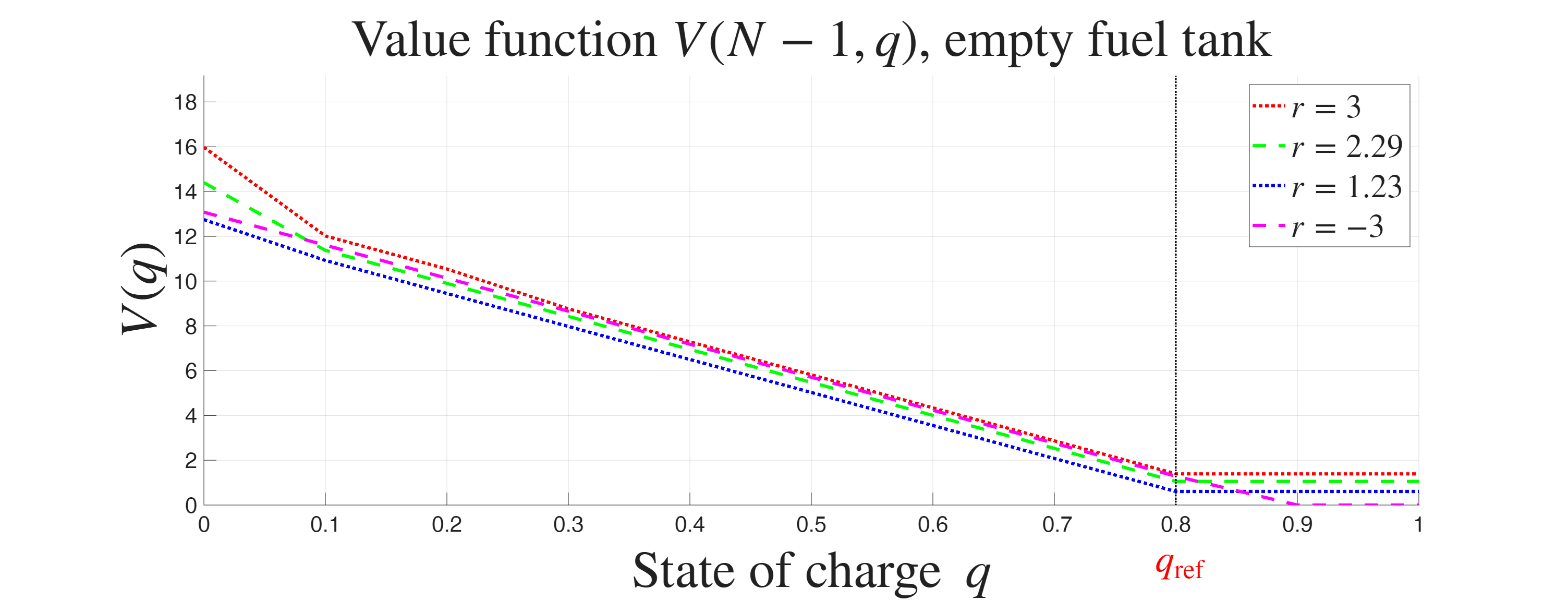}
\includegraphics[width=0.49\linewidth,height=0.22\linewidth]{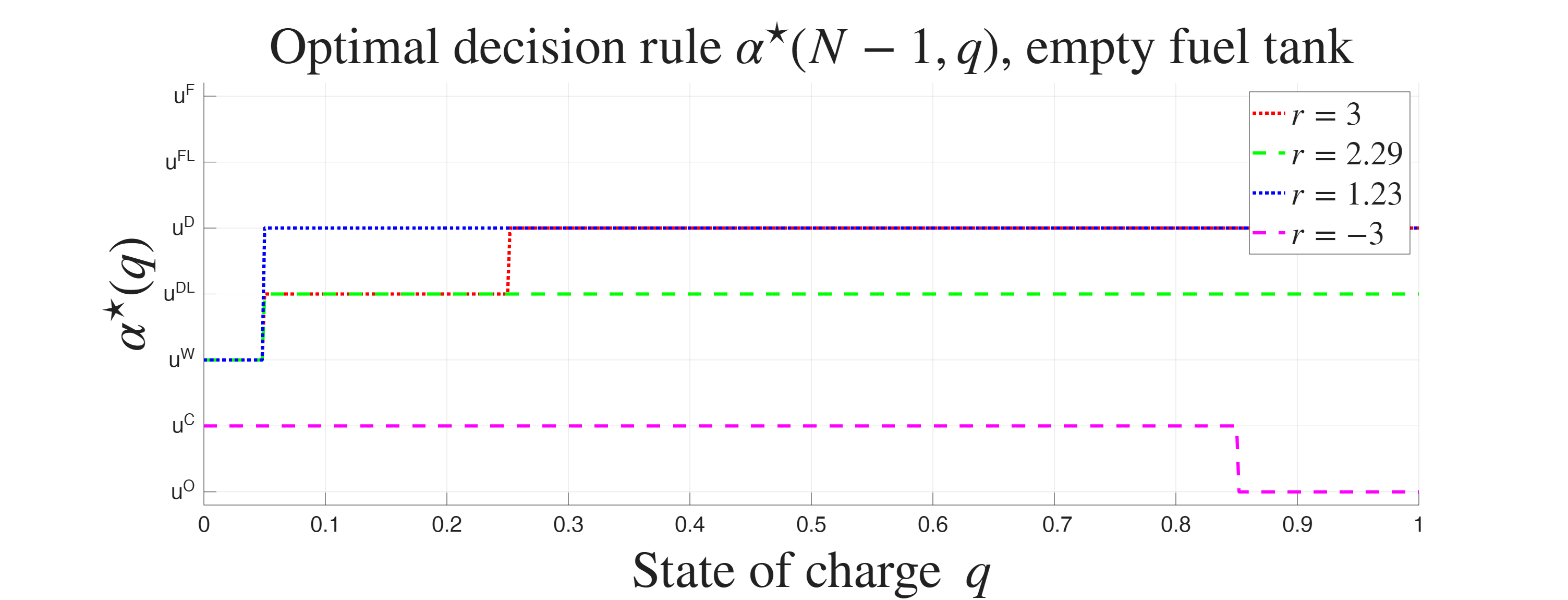}  

\vspace*{0.3cm}
 \includegraphics[width=0.49\linewidth,height=0.22\linewidth]{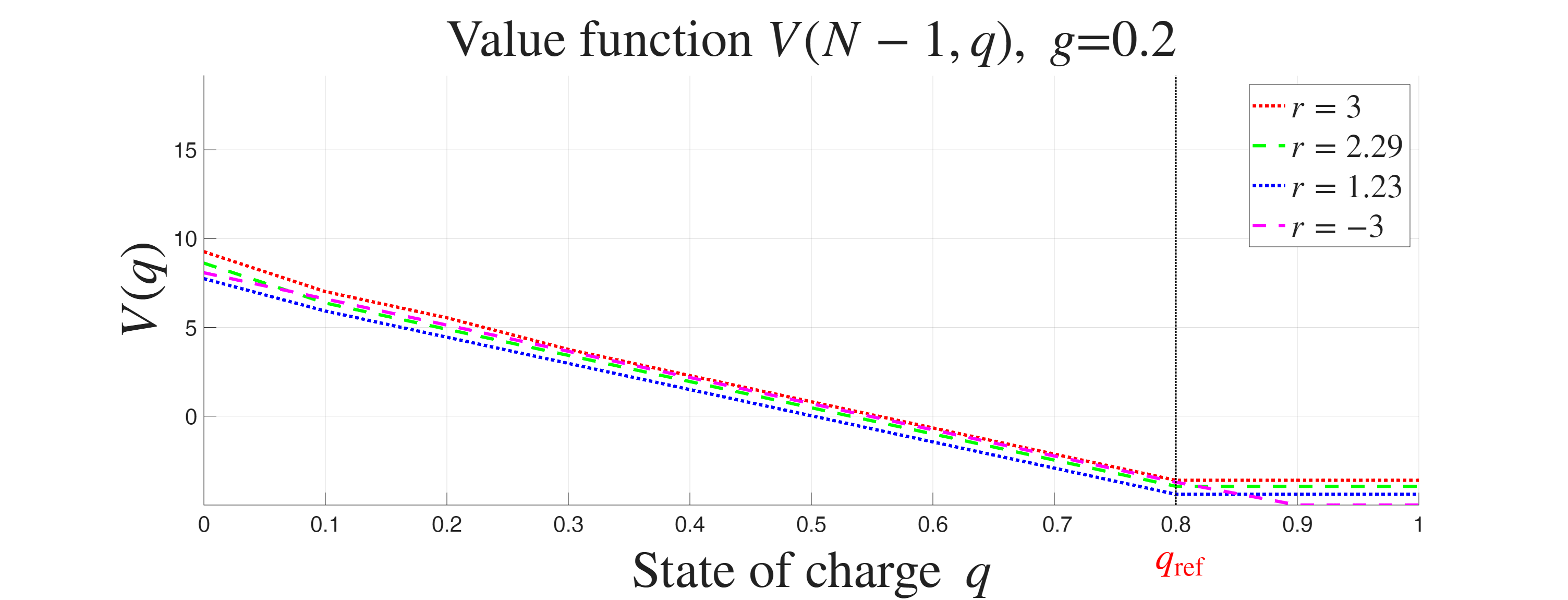}
\includegraphics[width=0.49\linewidth,height=0.22\linewidth]{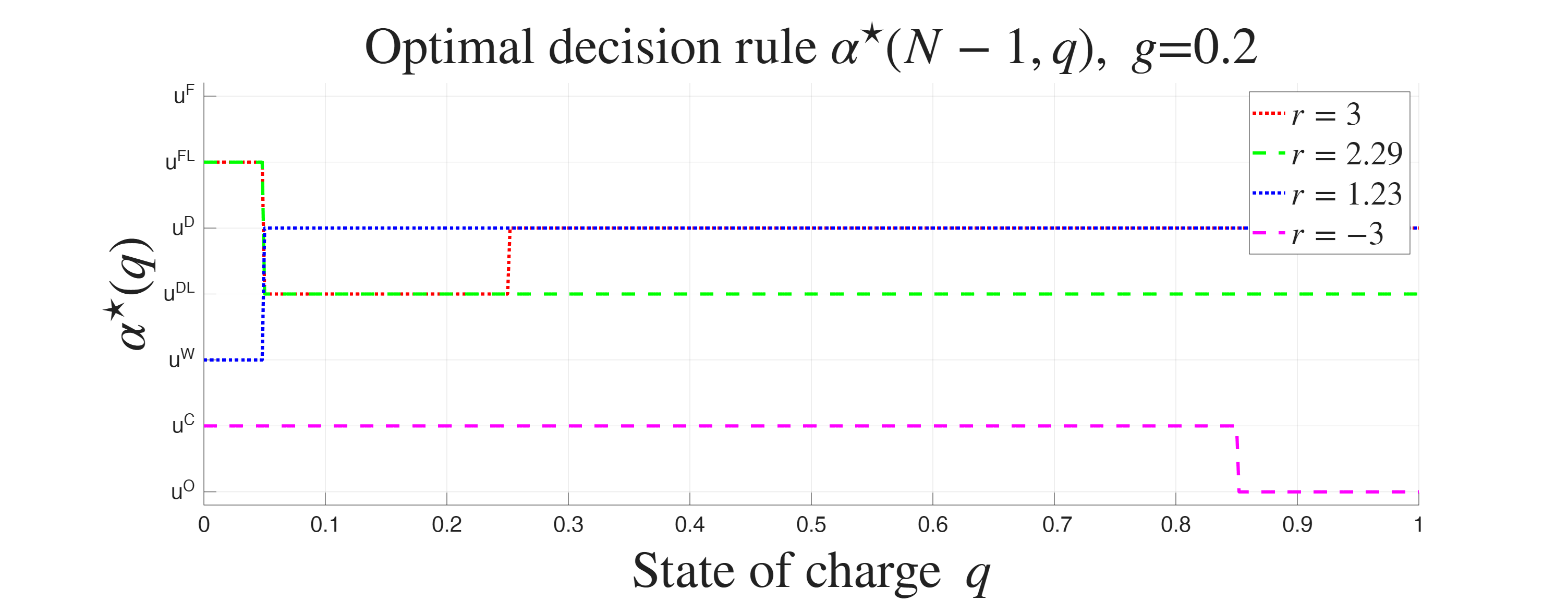}

\vspace*{0.3cm}
 \includegraphics[width=0.49\linewidth,height=0.22\linewidth]{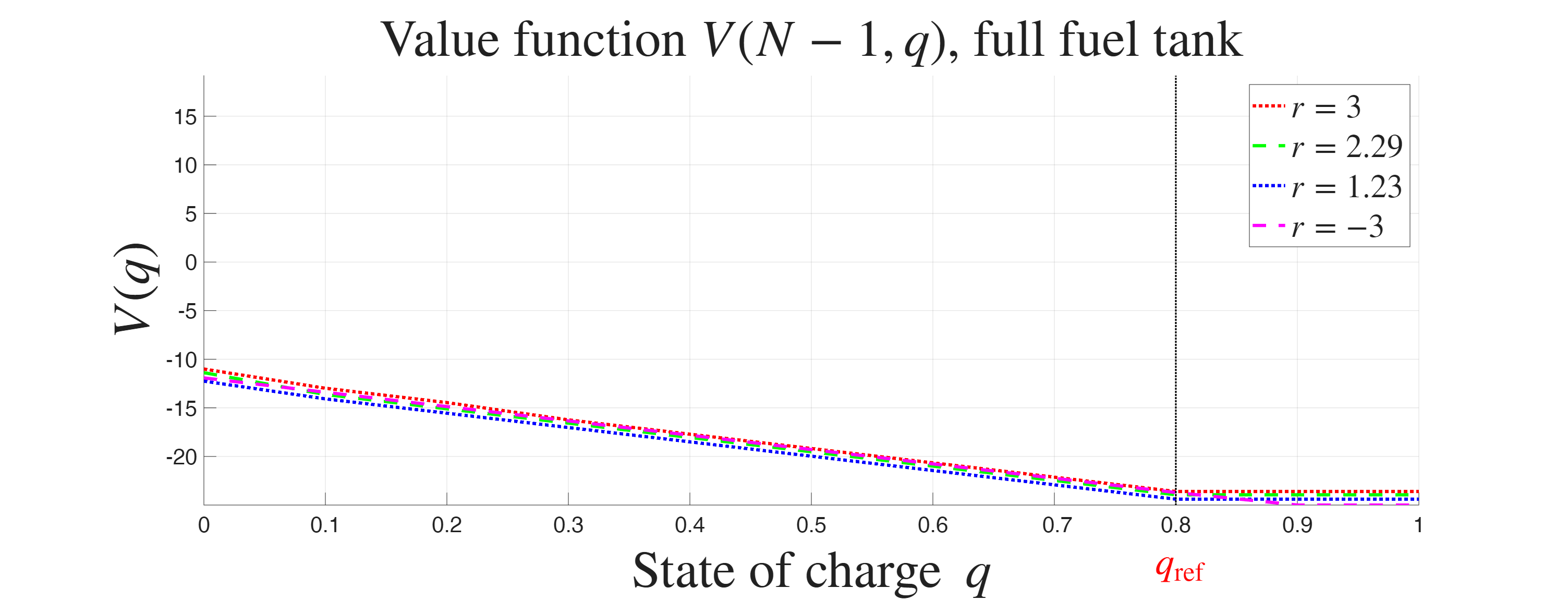}
\includegraphics[width=0.49\linewidth,height=0.22\linewidth]{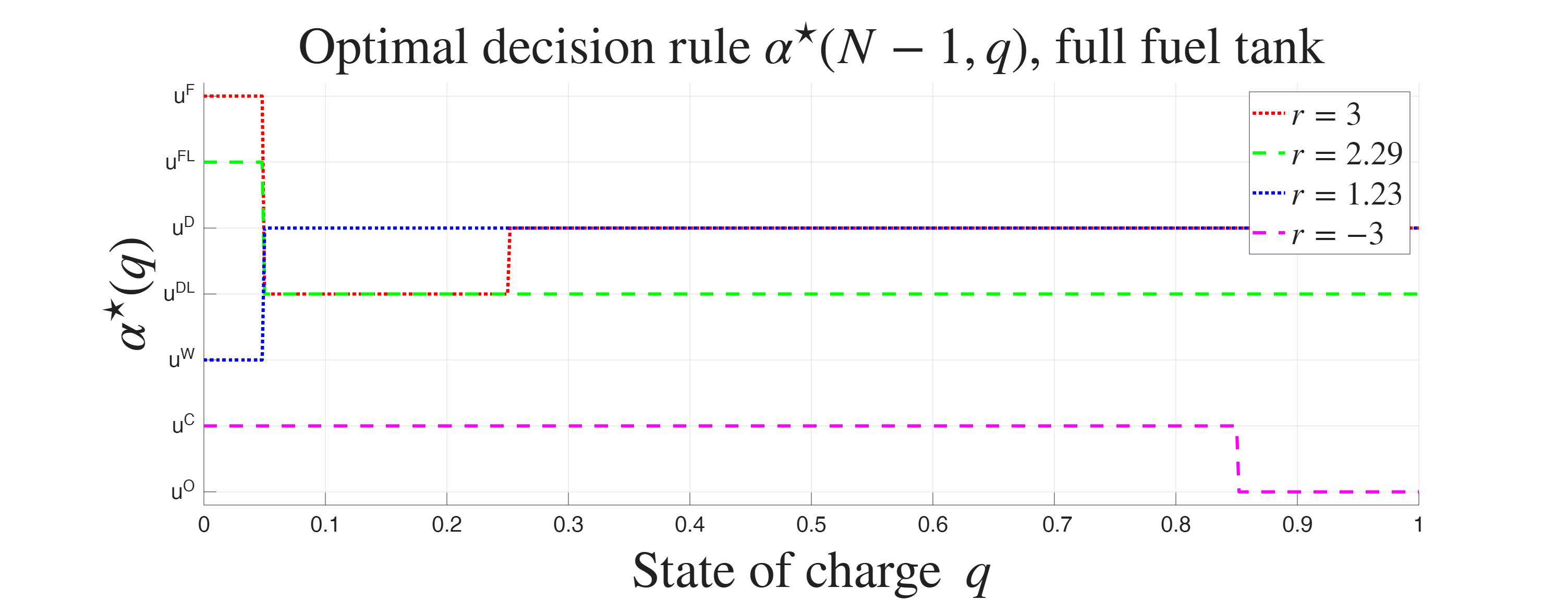} 
    \caption{Visualization of the value function (left) and optimal decision rule (right) one hour before the terminal time ($n=N-1$) as a function of the state of charge $q$ for a fixed residual demand \(r=\{-3, 1.23, 2.29, 3\}\). Upper panel: Empty fuel tank. Middle panel:  Fuel tank filled to 20$\%$ of its capacity. lower panel: Full fuel tank.}\label{fig_VFr-N-1}
\end{figure}

We observe in the upper left panel of Fig.~\ref{fig_Val_n-1} that the value function decreases sharply as the state of charge $q$ increases and remains constant when it is greater than the threshold ($q_{\text{ref}} = 80\%$). The left panel of Fig.~\ref{fig_VFq-N-1} shows that the value function increases very slowly (almost constant) when the residual demand increases. This behavior results from the trend of the terminal value function observed in Fig.~\ref{fig_Term}. The optimal strategies in  Fig.~\ref{fig_Val_n-1}  and the right panel of Fig.~\ref{fig_VFr-N-1} indicate that, for a positive residual demand below the threshold \(R_{Q0}\), it is optimal to discharge the battery in full mode as long as it is not empty and wait if it is empty. When the residual demand exceeds the threshold and the state of charge $q\ge 30\%$, it is preferable to discharge the battery in full mode if the residual demand $R_{Q0}\leq r<1.8$ or $r \geq 2.5$; otherwise, it is optimal to discharge the battery in limited mode. This helps avoid a high penalty for unmet demand. However, for $q \le 30\%$, it is optimal to discharge the battery in a limited mode. The optimal decision rule illustrated in Fig.~Fig.~\ref{fig_Val_n-1} also shows that when the residual demand is above the threshold and the state of charge $q<30\%$, it is preferable to wait if the fuel tank is empty or the residual demand $r<1.8$ and use the generator in limited mode for a residual demand $1.8<r<2.5$ or a fuel tank $g<20\%$; otherwise, it is preferable to use the generator in full mode.\\ 
Similarly, the right panel of Figures ~\ref{fig_VFr-N-1} and ~\ref{fig_VFq-N-1} shows that for negative residual demand, charging the battery is optimal as long as the latter is not almost full; otherwise, it is preferable to over-spill. The bottom left panel of Fig.\ref{fig_Val_n-1} shows that when there is a positive residual demand and both the fuel tank and the battery are empty, we have no other option than to wait and pay a penalty (discomfort cost). This results in overall higher system costs, and this justifies the fact that the value function for an empty fuel tank is all-time greater than the case of a full fuel tank; see the upper left panel of Fig.~\ref{fig_Val_n-1}.

\begin{figure}[htbp!]
    \centering
\includegraphics[width=0.49\linewidth,height=0.22\linewidth]{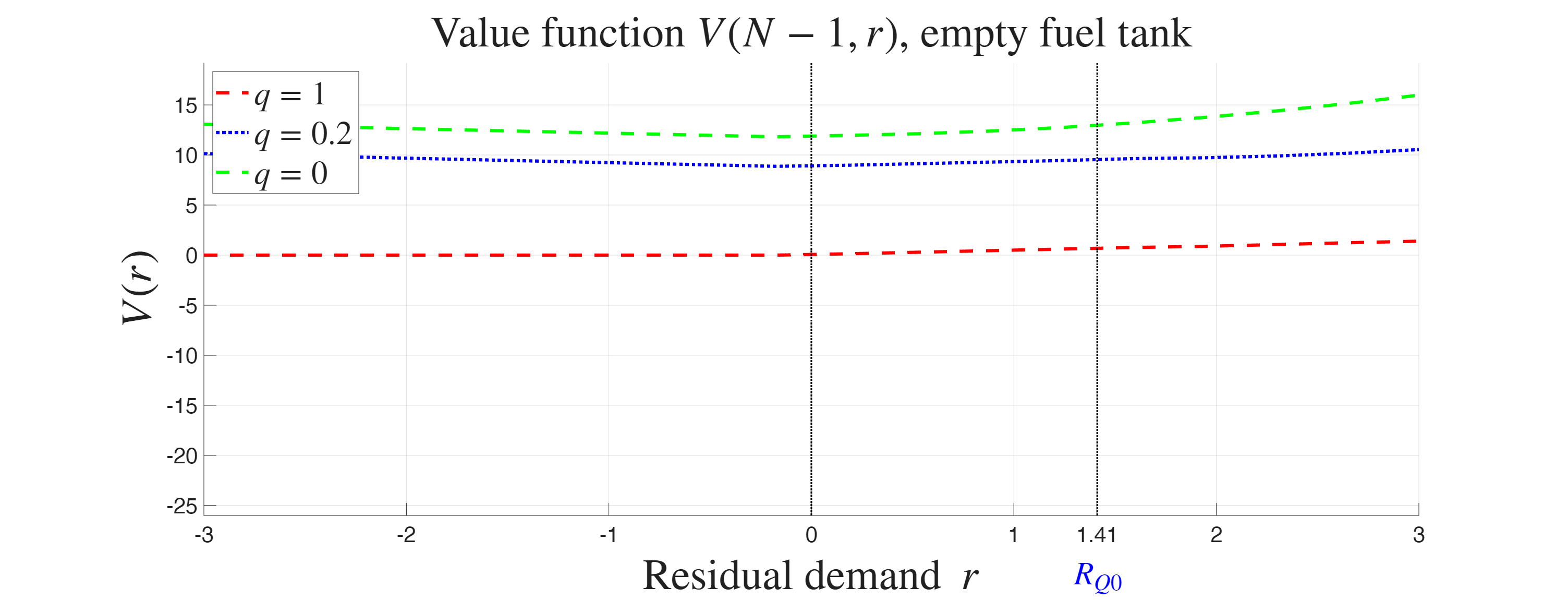}
\includegraphics[width=0.49\linewidth,height=0.22\linewidth]{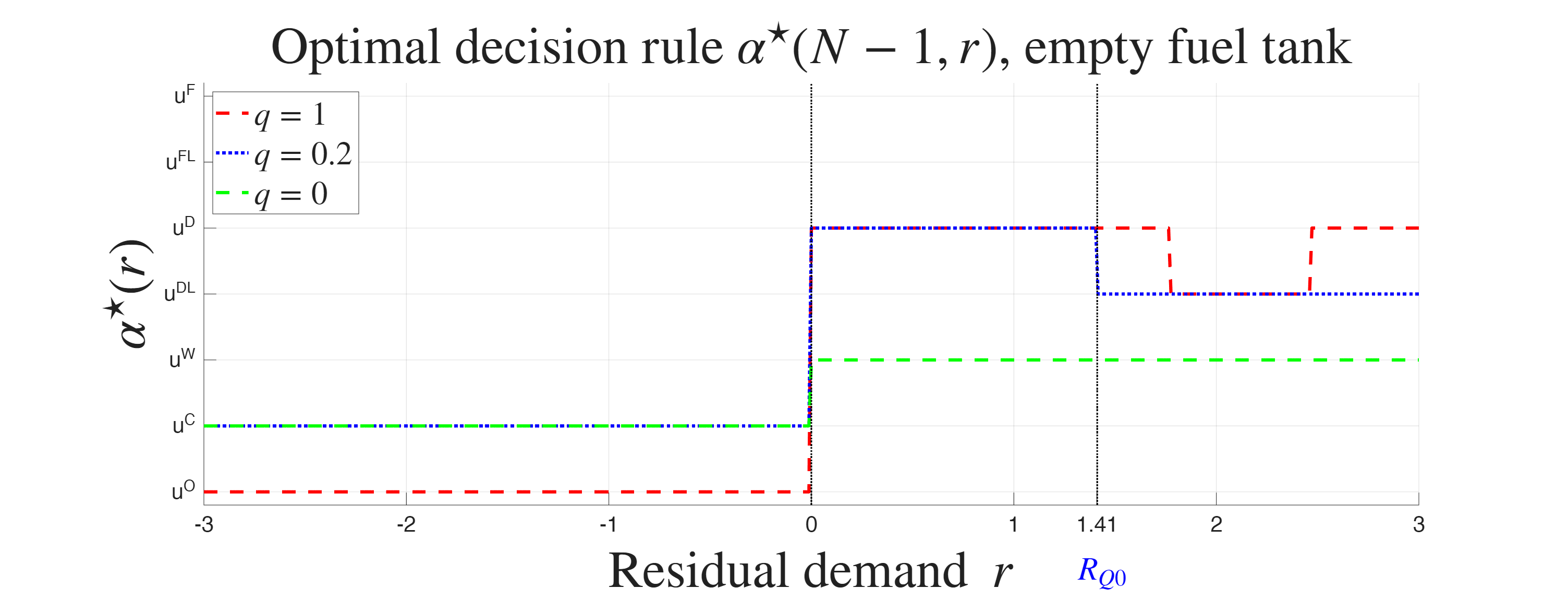}  
\vspace*{0.3cm}
\includegraphics[width=0.49\linewidth,height=0.22\linewidth]{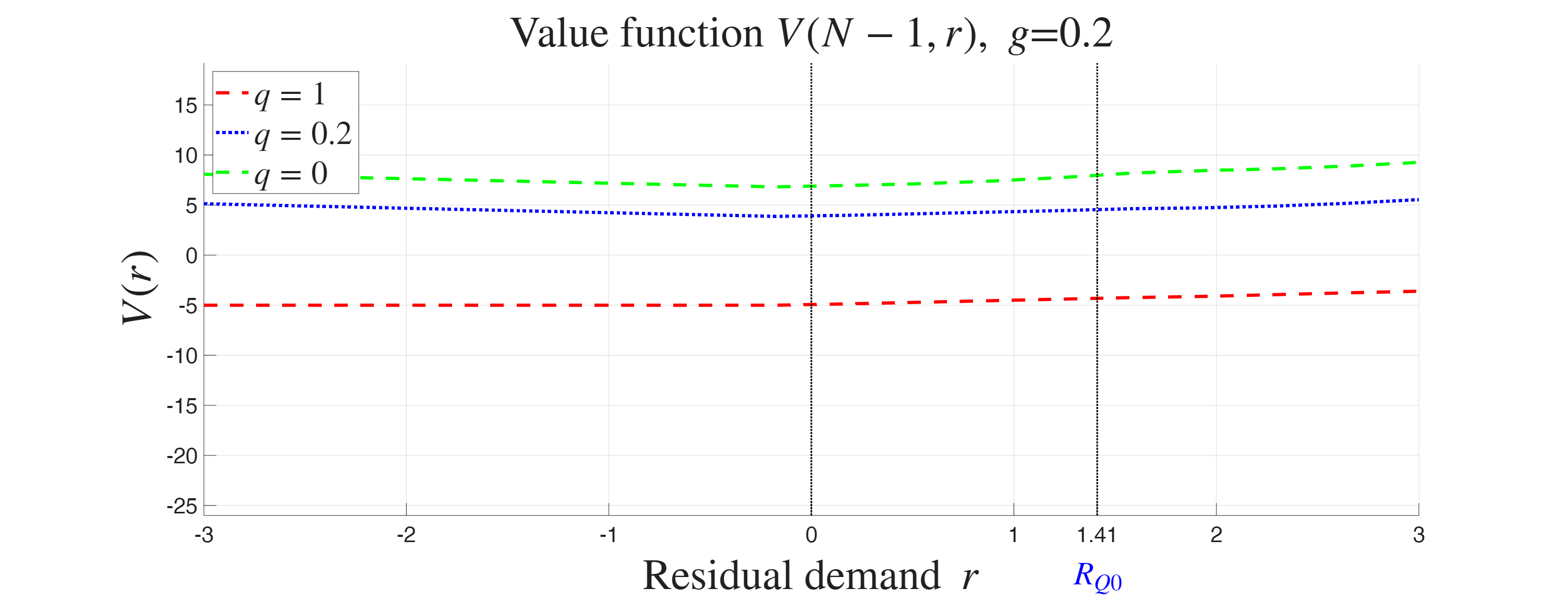}
\includegraphics[width=0.49\linewidth,height=0.22\linewidth]{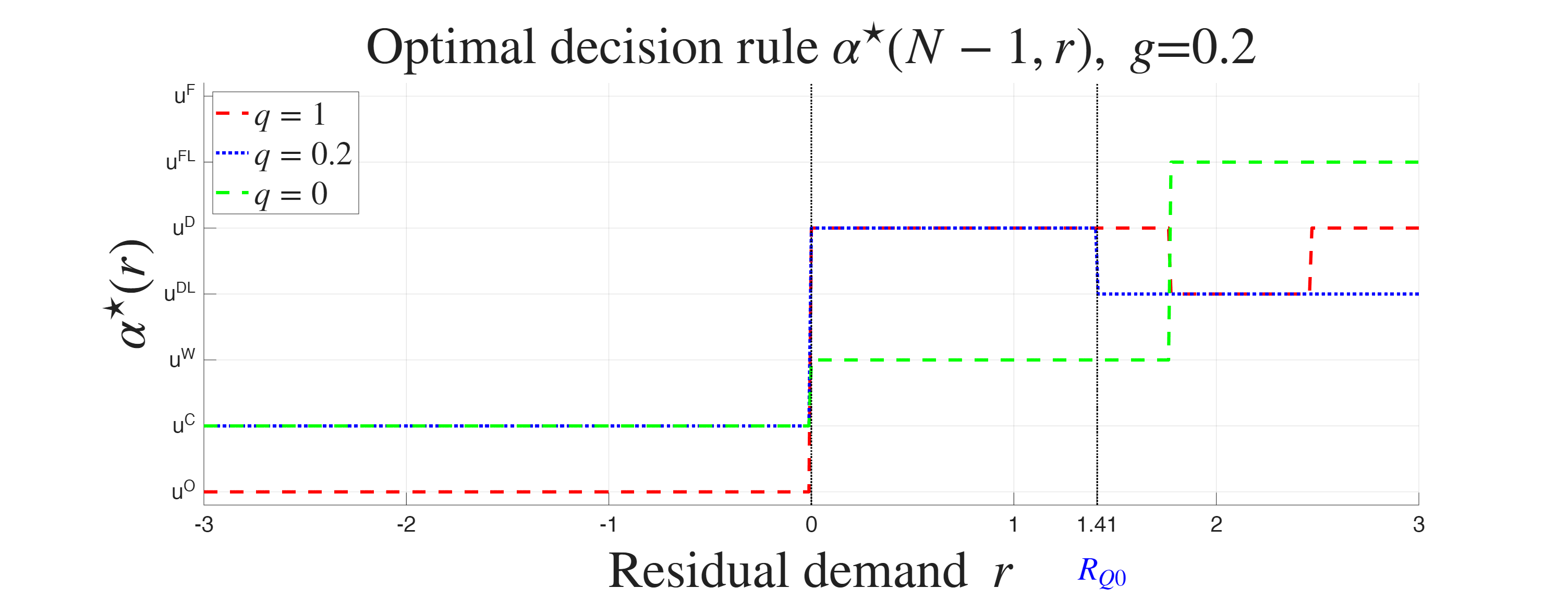}
\vspace*{0.3cm}
\includegraphics[width=0.49\linewidth,height=0.22\linewidth]{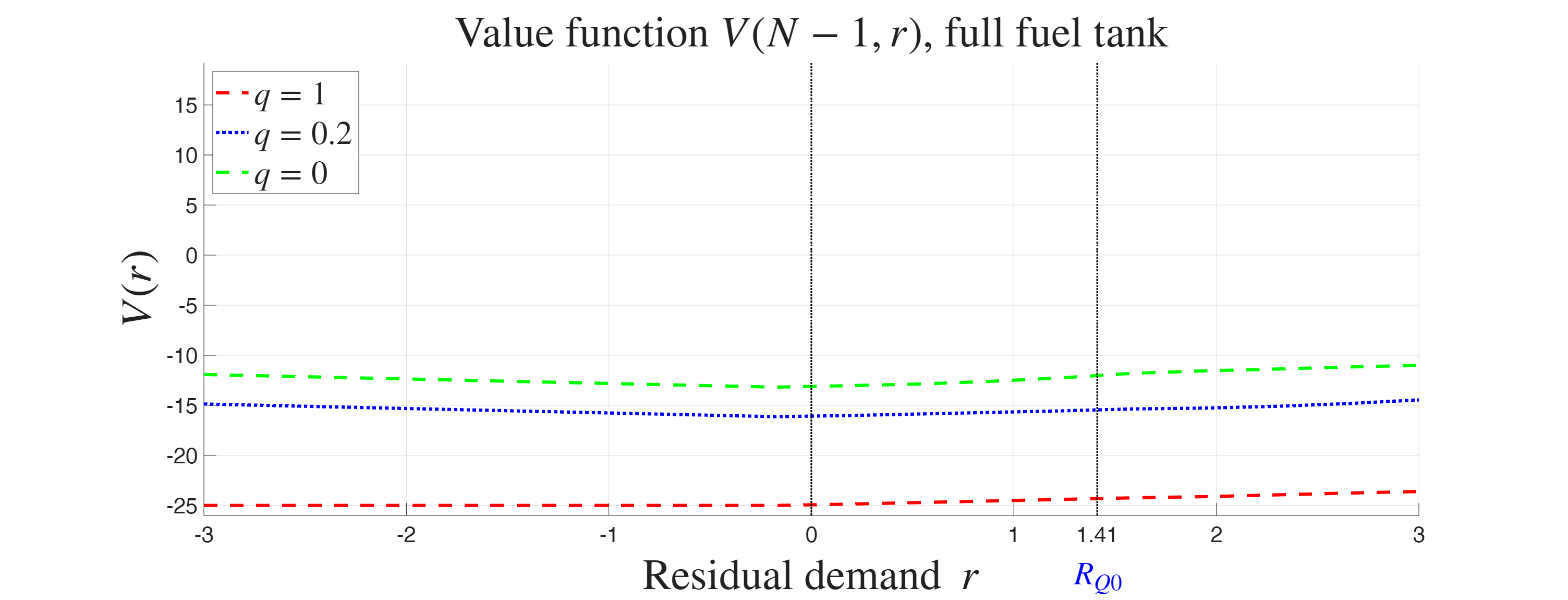}
\includegraphics[width=0.49\linewidth,height=0.22\linewidth]{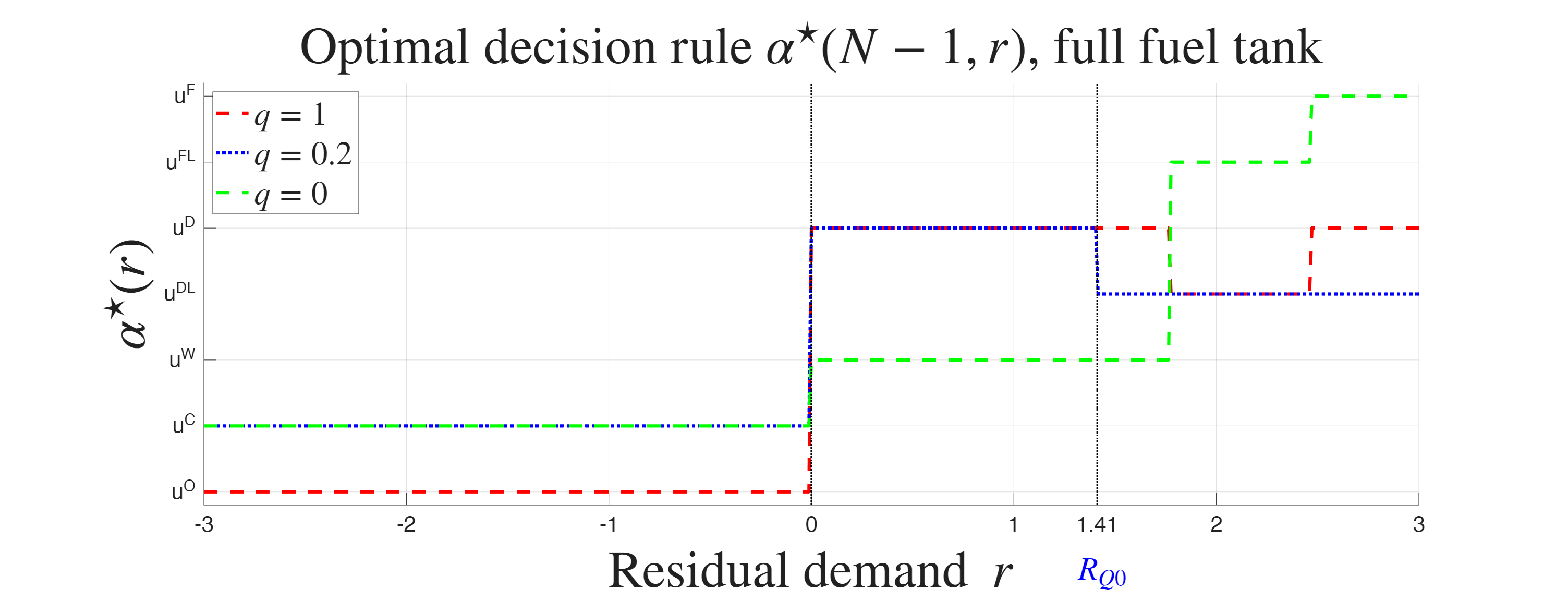} 
  \caption{Visualization of the value function (left) and optimal decision rule (right) one hour before the terminal time ($n=N-1$) as a function of of the  residual demand $r$ for a fixed  state of charge \(q=\{0,20\%,100\%\}\). Upper panel: Empty fuel tank. Middle panel:  Fuel tank filled to 20$\%$ of its capacity. lower panel: Full fuel tank.}\label{fig_VFq-N-1}
\end{figure}

  \newpage
 
\paragraph{Optimal decision rule and value function 12 hours before the terminal time}

Figures ~\ref{fig_Val_n-12}, \ref{fig_VFq-N-12}, and  \ref{fig_VFr-N-12} indicate that when the residual demand is negative, it is preferable to charge the battery as long as it is not almost full ($q>80\%$); otherwise, it is optimal to over-spill. However, when residual demand is strongly positive ($r > 2$), it is preferable to discharge the battery in limited mode as long as it is not empty; otherwise, use the generator in limited mode if the fuel tank is not empty or wait if it becomes empty. Furthermore, when residual demand is above the threshold but not too high ($R_{Q0} \leq r < 2$), it is preferable to discharge the battery in full mode if $q> 30\%$; otherwise, discharge the battery in limited mode if it is not empty and wait if it becomes empty. We also observe that when the residual demand is positive and below the threshold ($0<r \leq R_{Q0}$), it is preferable to wait for smaller values ($r<1$); otherwise, discharge the battery in full mode as long as it is not empty and wait if it becomes empty. Similarly to time $N-1$, the upper left panel of Figure ~\ref{fig_Val_n-12} and the left panels of Figures ~\ref{fig_VFq-N-12} and ~\ref{fig_VFr-N-12} show that the value function increases linearly with decreasing state of charge. However, the value function decreases linearly for negative residual demand ($r<0$) and increases quadratically when the residual demand is positive and the battery state of charge is low ($q<0.3$). This behavior arises from the discomfort cost associated with unmet demand, as well as the operation of the battery and generator in a limited mode.

  \begin{figure}[htbp!]
    \centering 
\includegraphics[width=0.49\linewidth,height=0.22\linewidth]{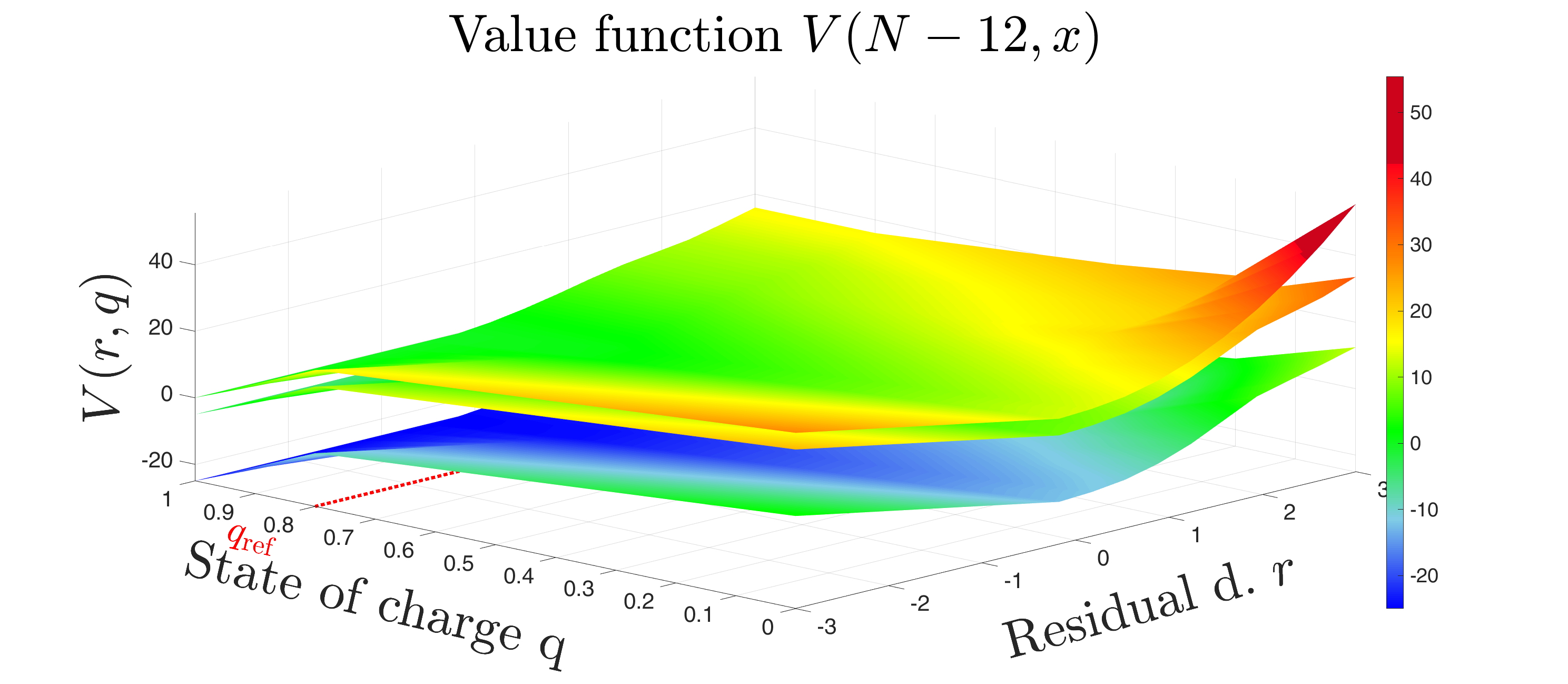} \hspace*{-0.75cm}
\includegraphics[width=0.52\linewidth,height=0.22\linewidth]{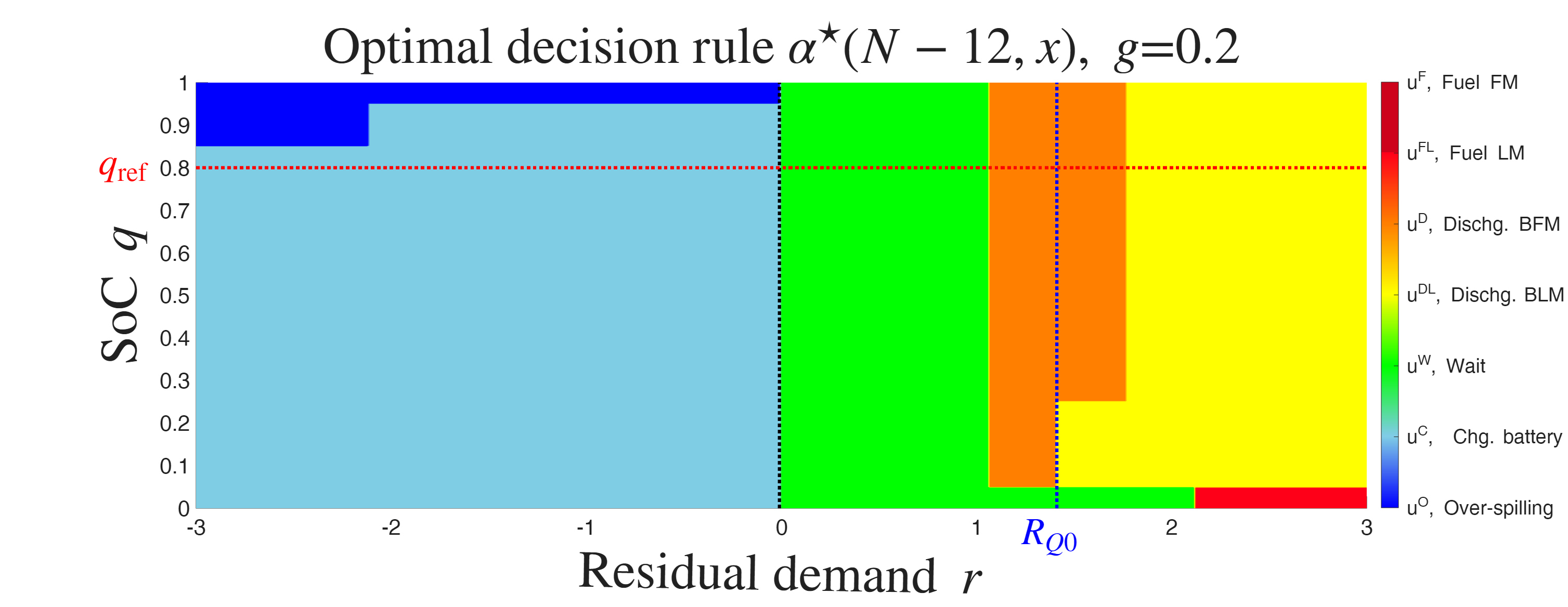}
\vspace*{0.5cm}
\includegraphics[width=0.49\textwidth,height=0.22\linewidth]{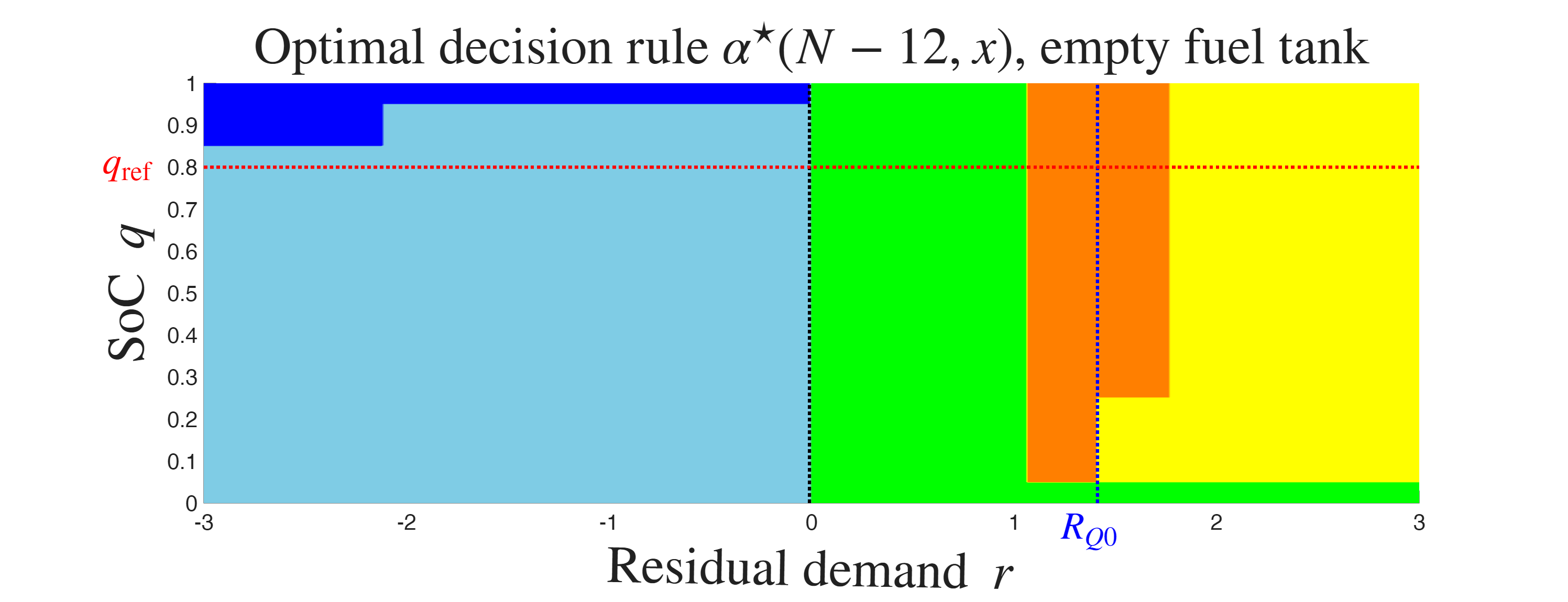}\hspace*{-0.75cm}
\includegraphics[width=0.52\textwidth,height=0.22\linewidth]{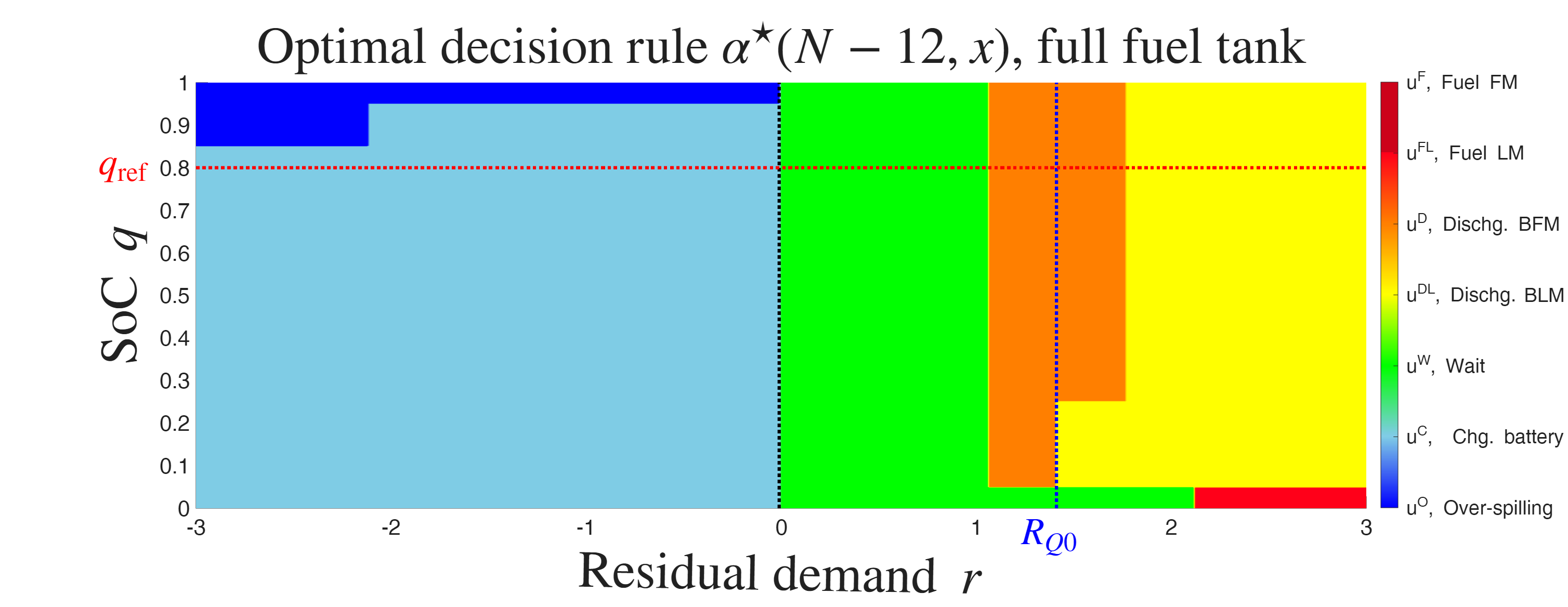}
    \caption{Value functions \(V\) and optimal decision rule \(\alpha^\star\) 12 hours before the terminal time ($n = N-12$) as a function of $r$ and $q$. 
    Upper left panel: Value function for an empty fuel tank (upper graph), a fuel tank filled to 20$\%$ (middle graph), and a full fuel tank (bottom graph). Lower left panel: Optimal decision rule for an empty fuel tank. Lower right panel: Optimal decision rule for a full fuel tank. Upper right panel: Optimal decision rule for a fuel tank filled to 20$\%$ capacity.}\label{fig_Val_n-12}
\end{figure}

\begin{figure}[b!]
    \centering   
\includegraphics[width=0.49\linewidth,height=0.22\linewidth]{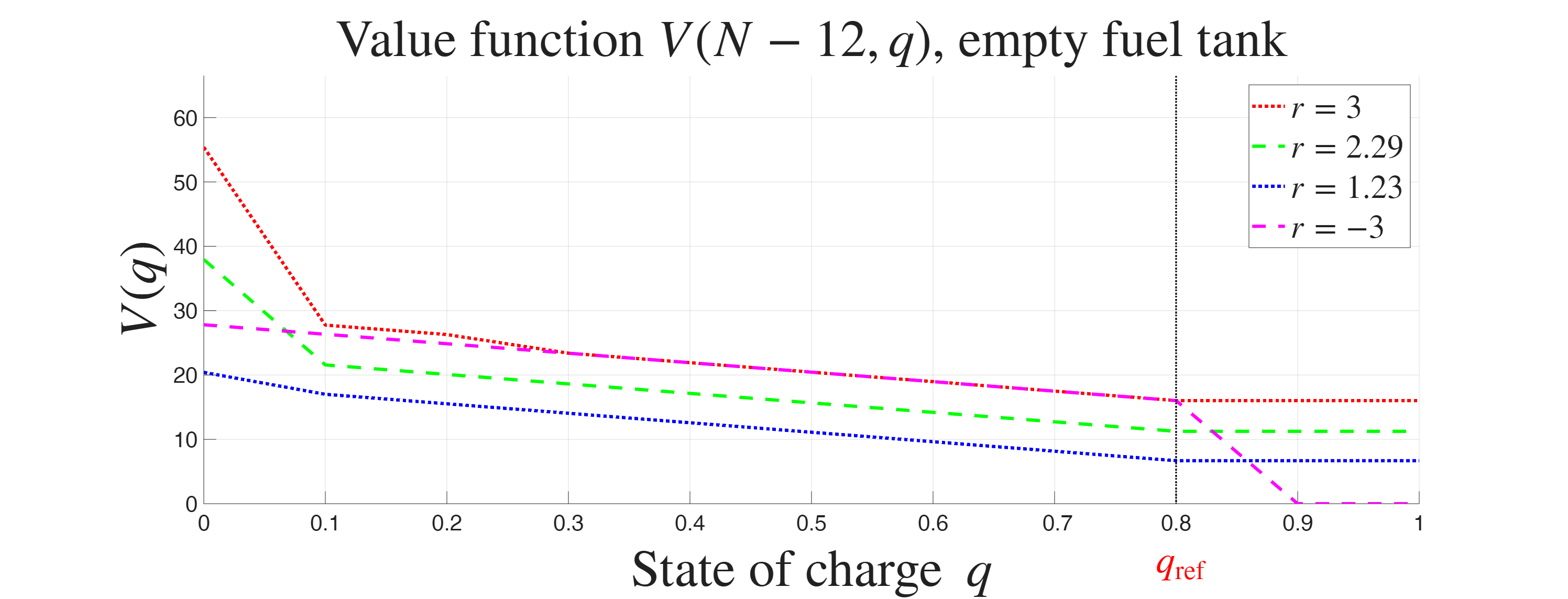}
\includegraphics[width=0.49\linewidth,height=0.22\linewidth]{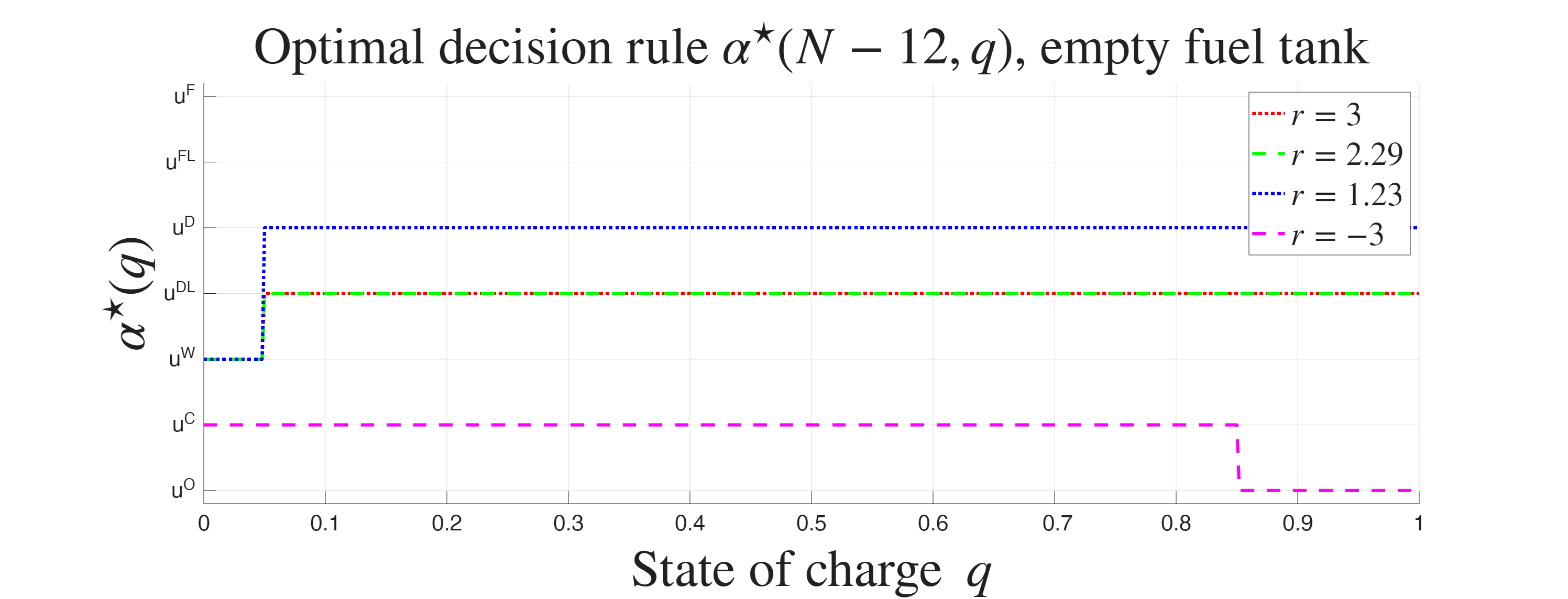}  
\vspace*{0.3cm}
\includegraphics[width=0.49\linewidth,height=0.22\linewidth]{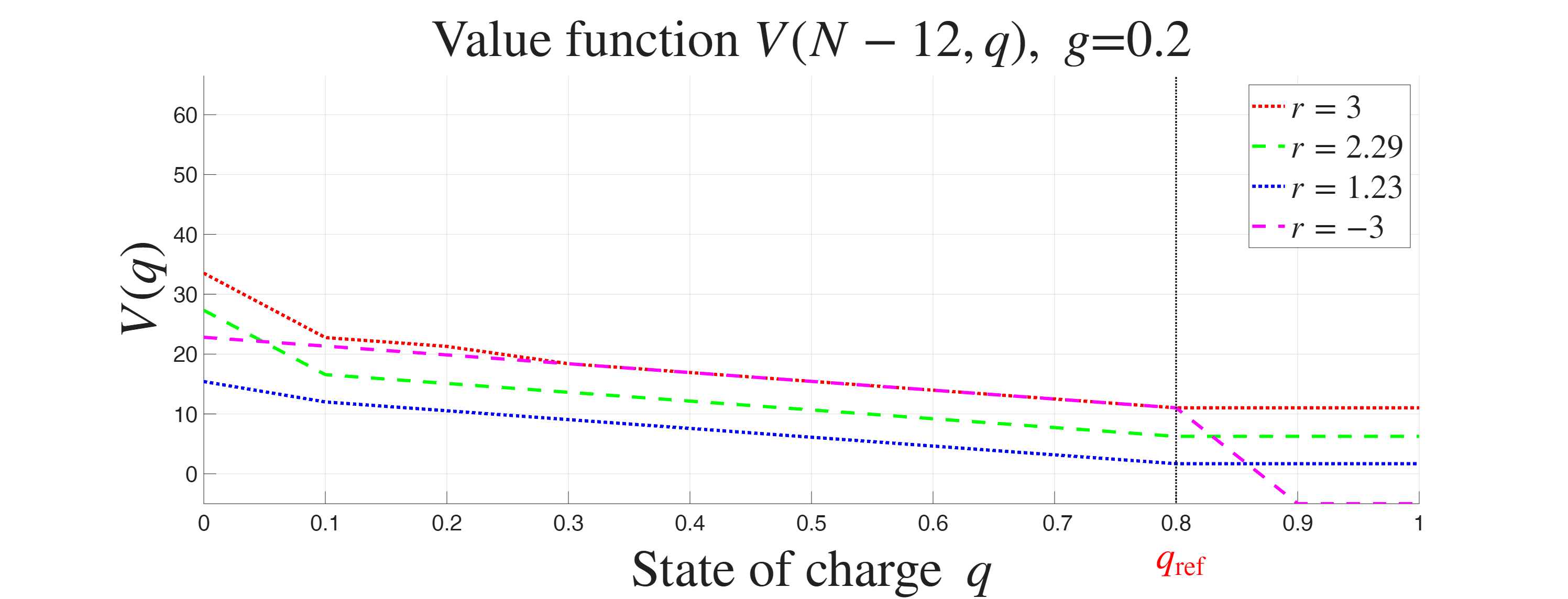}
\includegraphics[width=0.49\linewidth,height=0.22\linewidth]{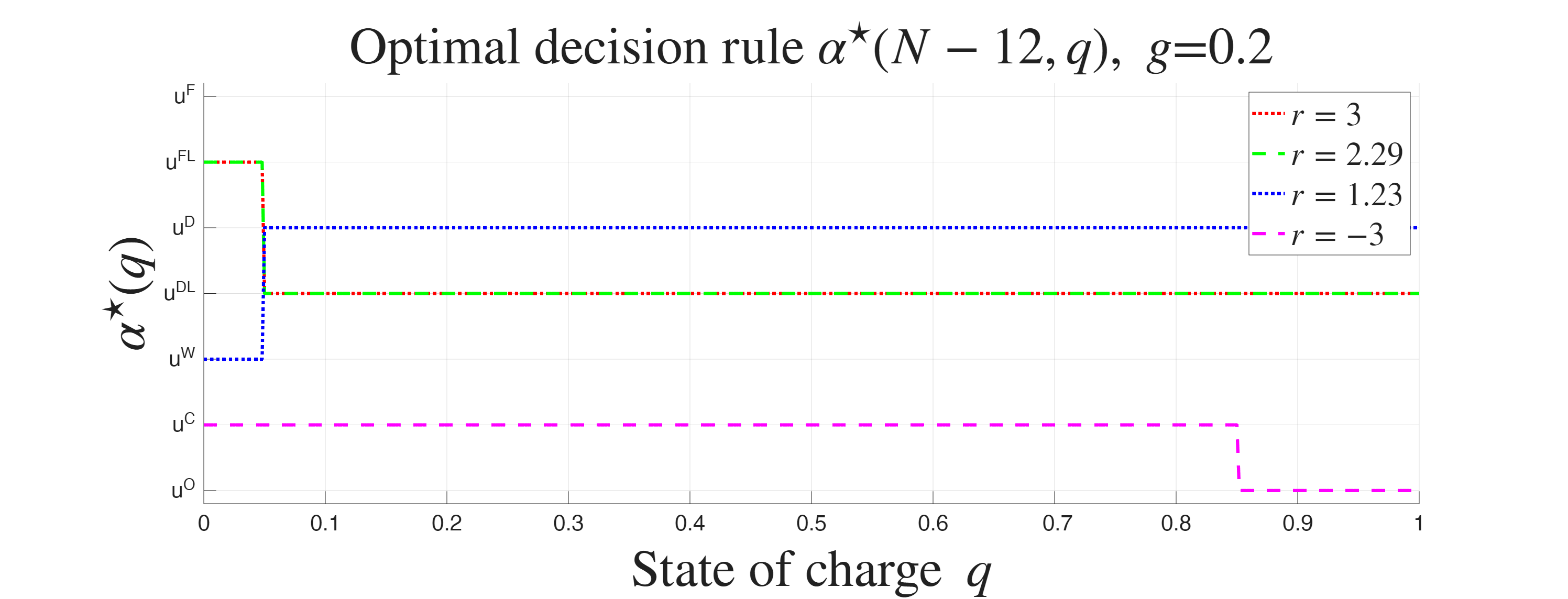}
\vspace*{0.3cm}
\includegraphics[width=0.49\linewidth,height=0.22\linewidth]{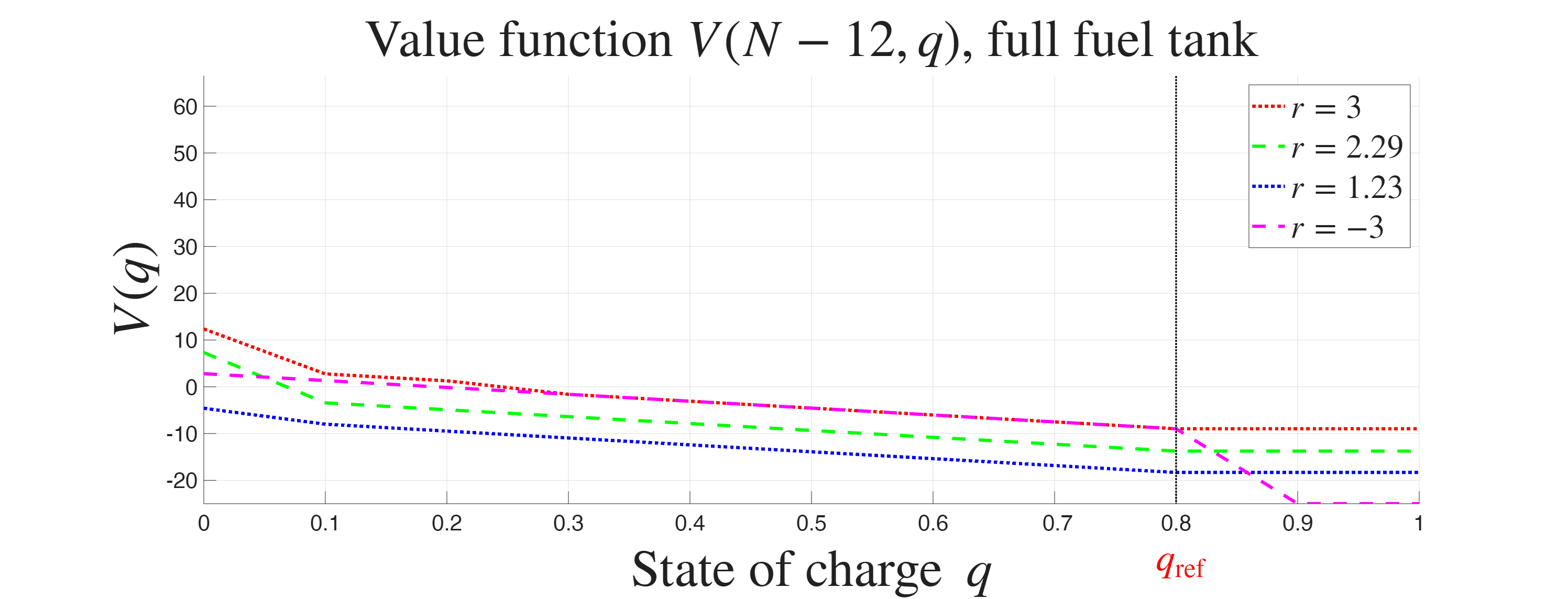}
\includegraphics[width=0.49\linewidth,height=0.22\linewidth]{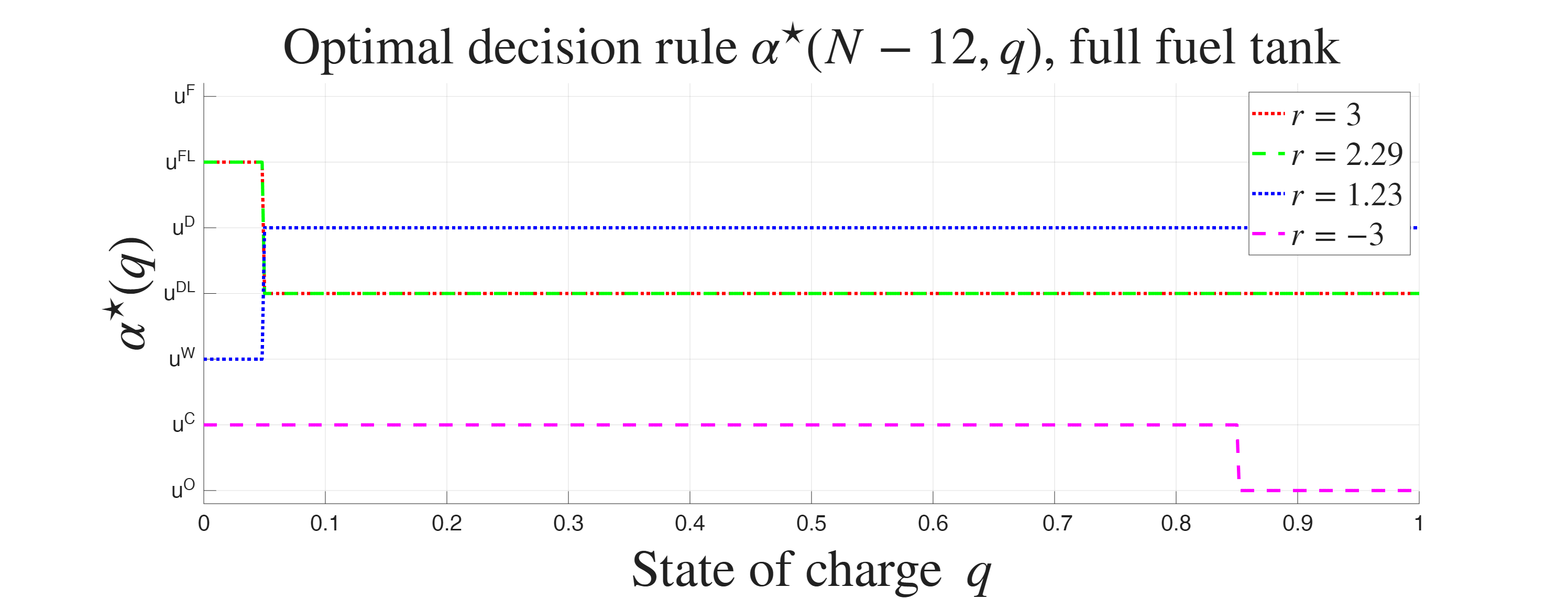} 
    \caption{Visualization of the value function (left) and optimal decision rule (right) 12 hours before the terminal time ($n=N-12$) as a function of the state of charge $q$ for a fixed residual demand \(r=\{-3, 1.23, 2.29, 3\}\). Upper panel: Empty fuel tank. Middle panel:  Fuel tank filled to 20$\%$ of its capacity. lower panel: Full fuel tank.}\label{fig_VFr-N-12}
\end{figure}

\begin{figure}[htbp!]
    \centering
    \includegraphics[width=0.49\linewidth,height=0.22\linewidth]{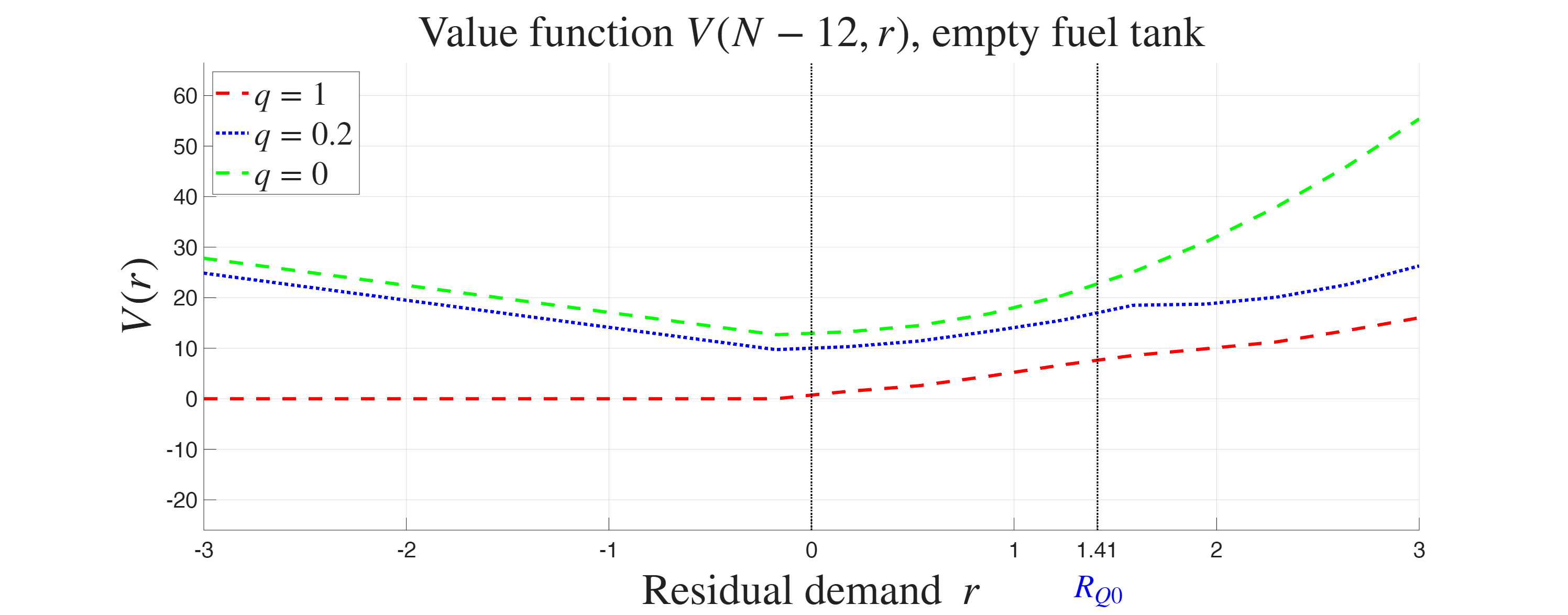}
\includegraphics[width=0.49\linewidth,height=0.22\linewidth]{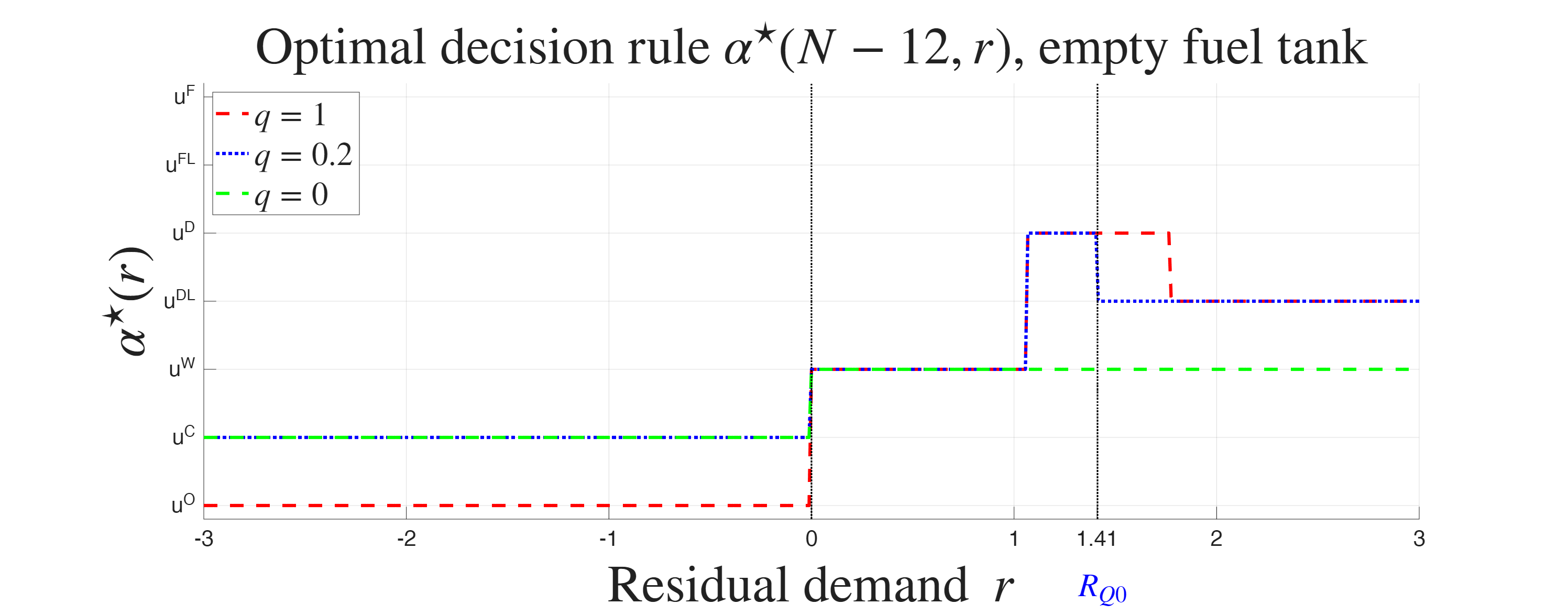}  

\vspace*{0.3cm}
 \includegraphics[width=0.49\linewidth,height=0.22\linewidth]{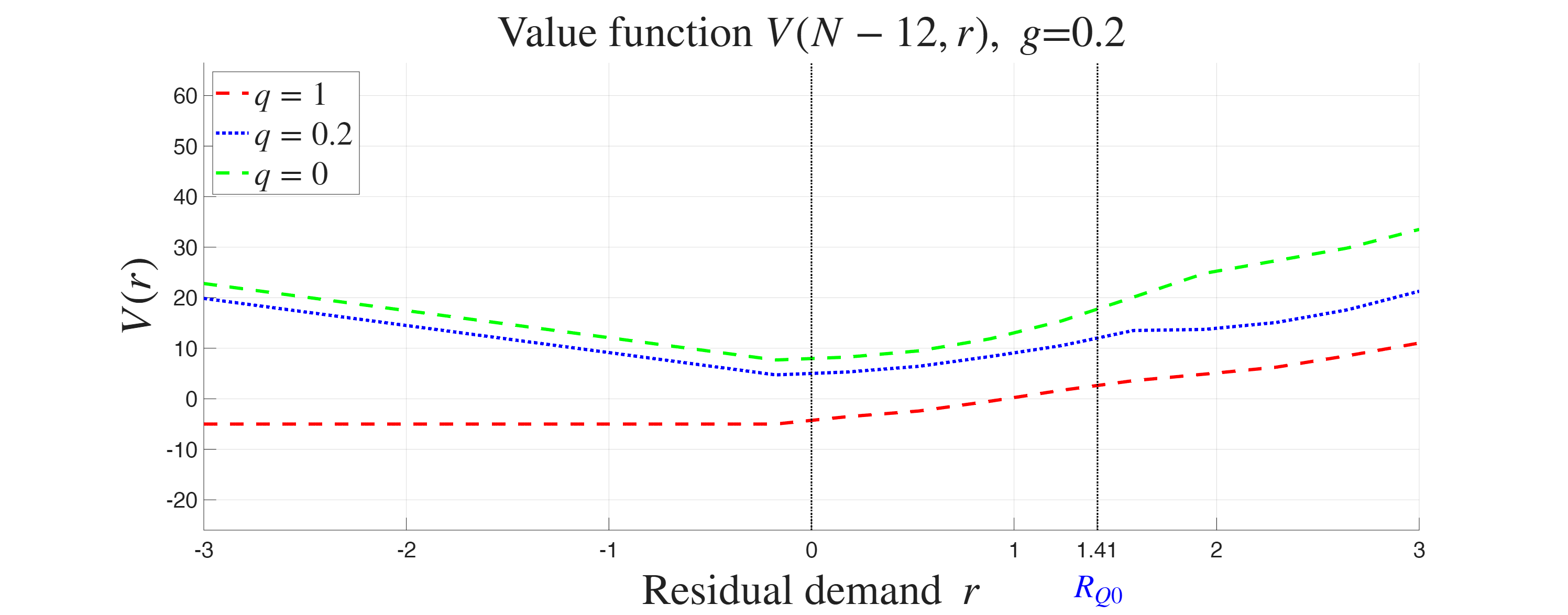}
\includegraphics[width=0.49\linewidth,height=0.22\linewidth]{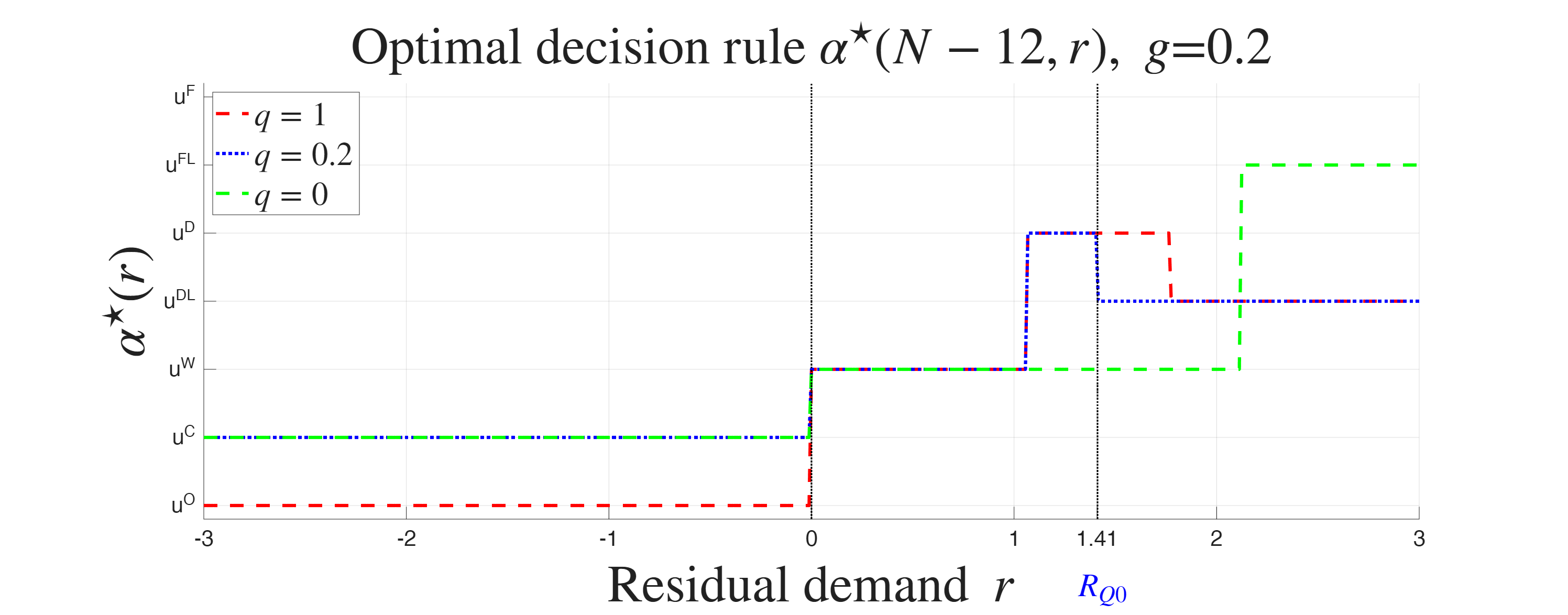}

\vspace*{0.3cm}
 \includegraphics[width=0.49\linewidth,height=0.22\linewidth]{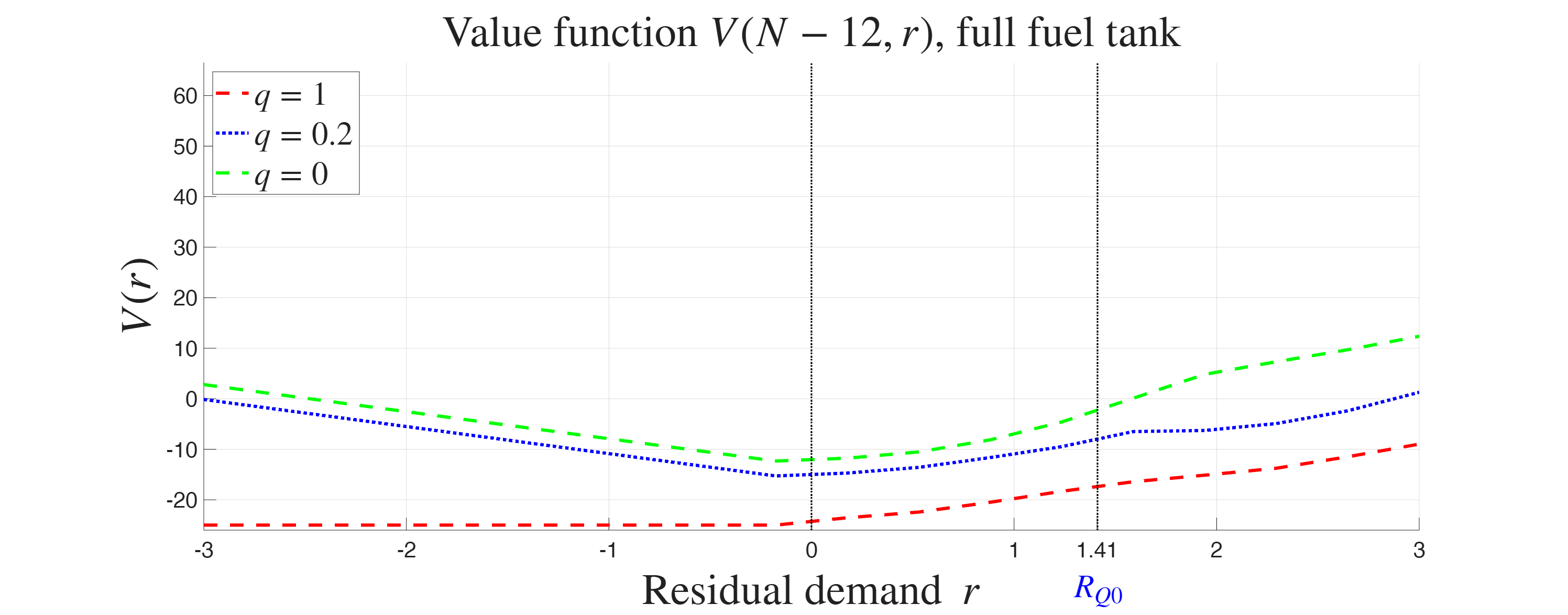}
\includegraphics[width=0.49\linewidth,height=0.22\linewidth]{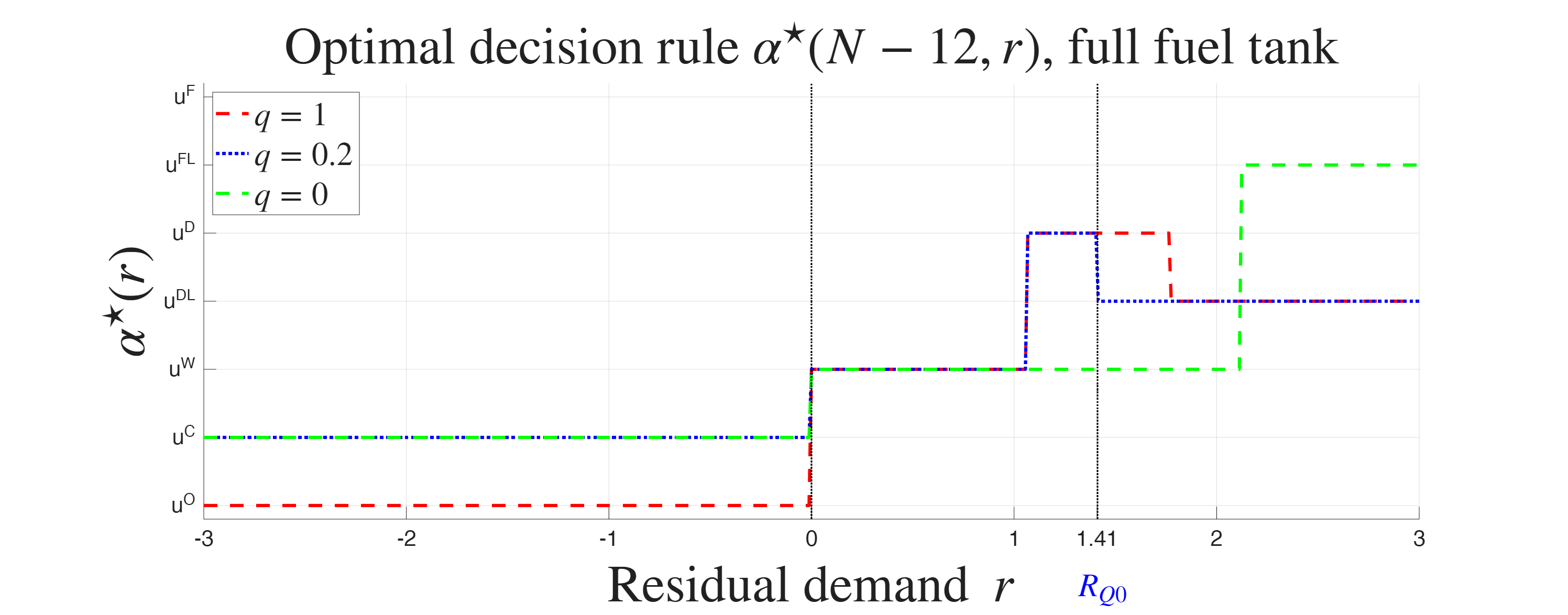} 
 \caption{Visualization of the value function (left) and optimal decision rule (right) 12 hours before the terminal time ($n=N-12$) as a function of of the  residual demand $r$ for a fixed  state of charge \(q=\{0,20\%,100\%\}\). Upper panel: Empty fuel tank. Middle panel:  Fuel tank filled to 20$\%$ of its capacity. lower panel: Full fuel tank.}\label{fig_VFq-N-12}
\end{figure}

\section*{Value function and optimal decision rule at initial time ($\mathbf{n=0})$}

 \begin{figure}[htbp!]
    \centering 
     \includegraphics[width=0.49\linewidth,height=0.22\linewidth]{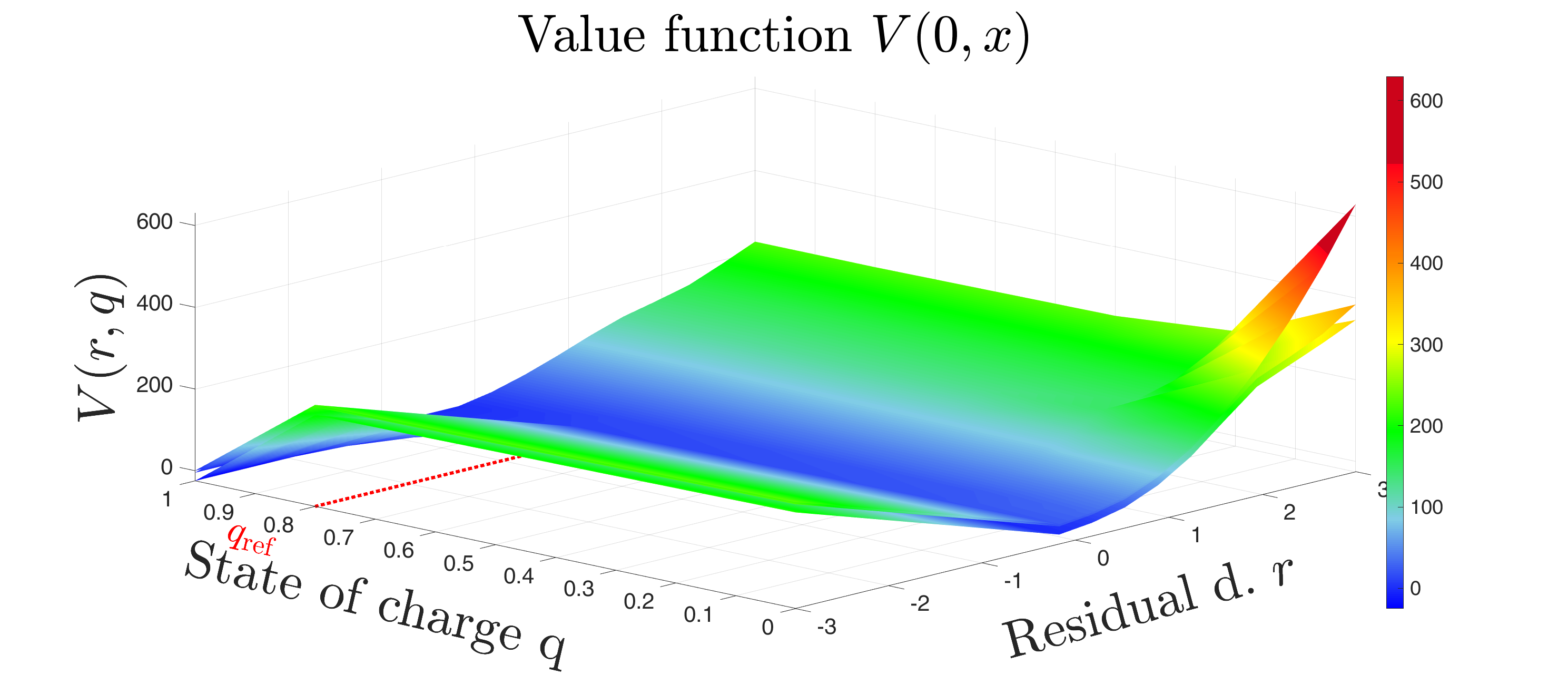} \hspace*{-0.75cm}
     \includegraphics[width=0.52\linewidth,height=0.22\linewidth]{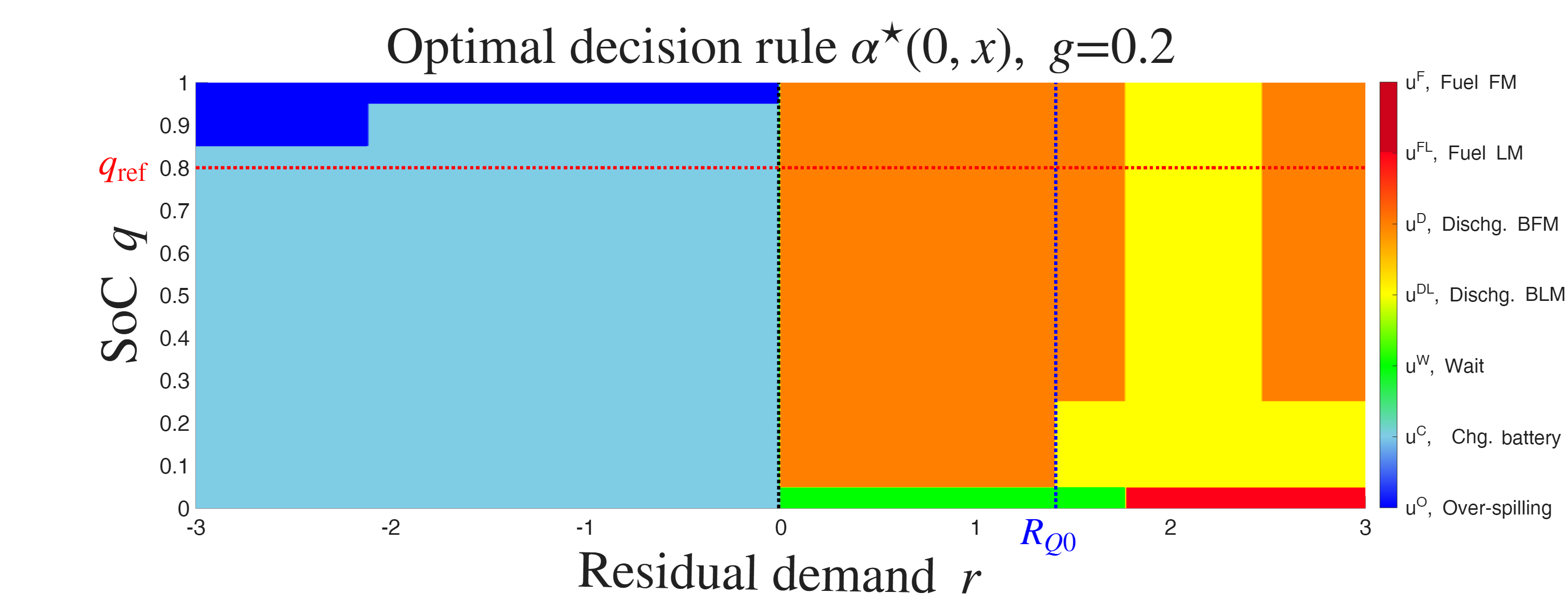}
\vspace*{0.5cm}
\includegraphics[width=0.49\textwidth,height=0.22\linewidth]{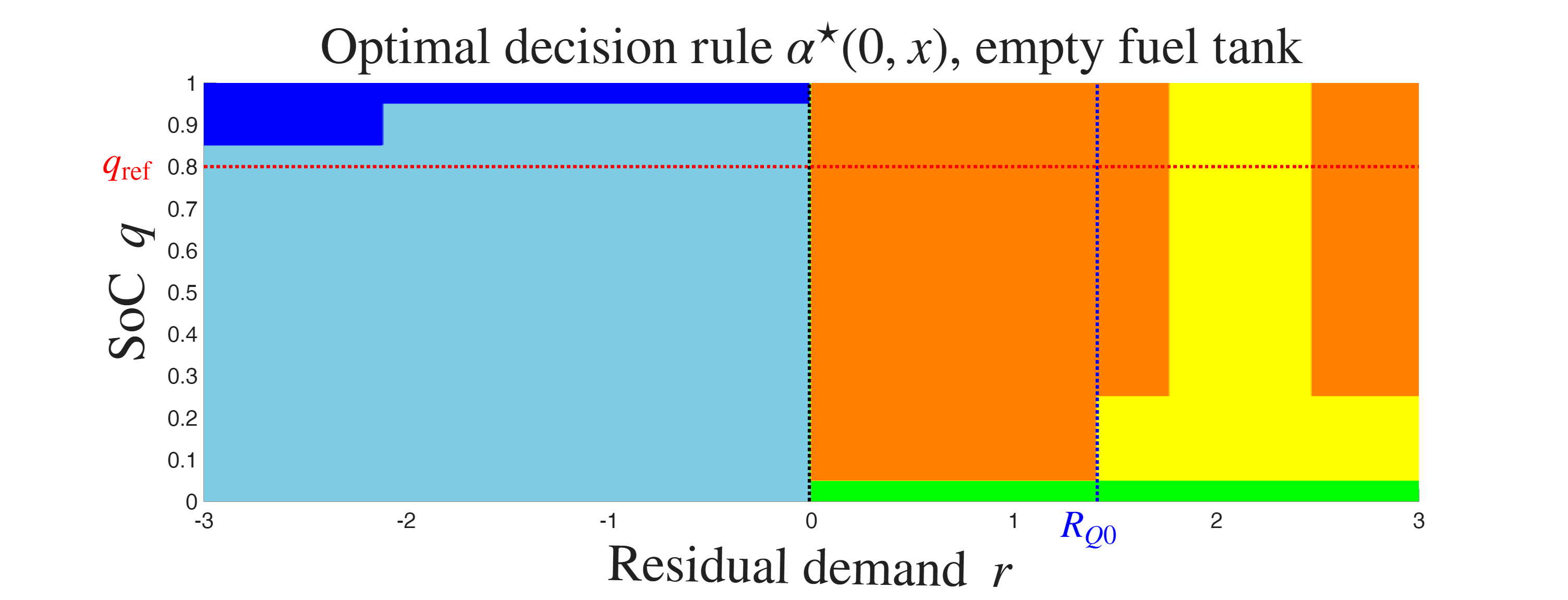}\hspace*{-0.75cm}   \includegraphics[width=0.52\textwidth,height=0.22\linewidth]{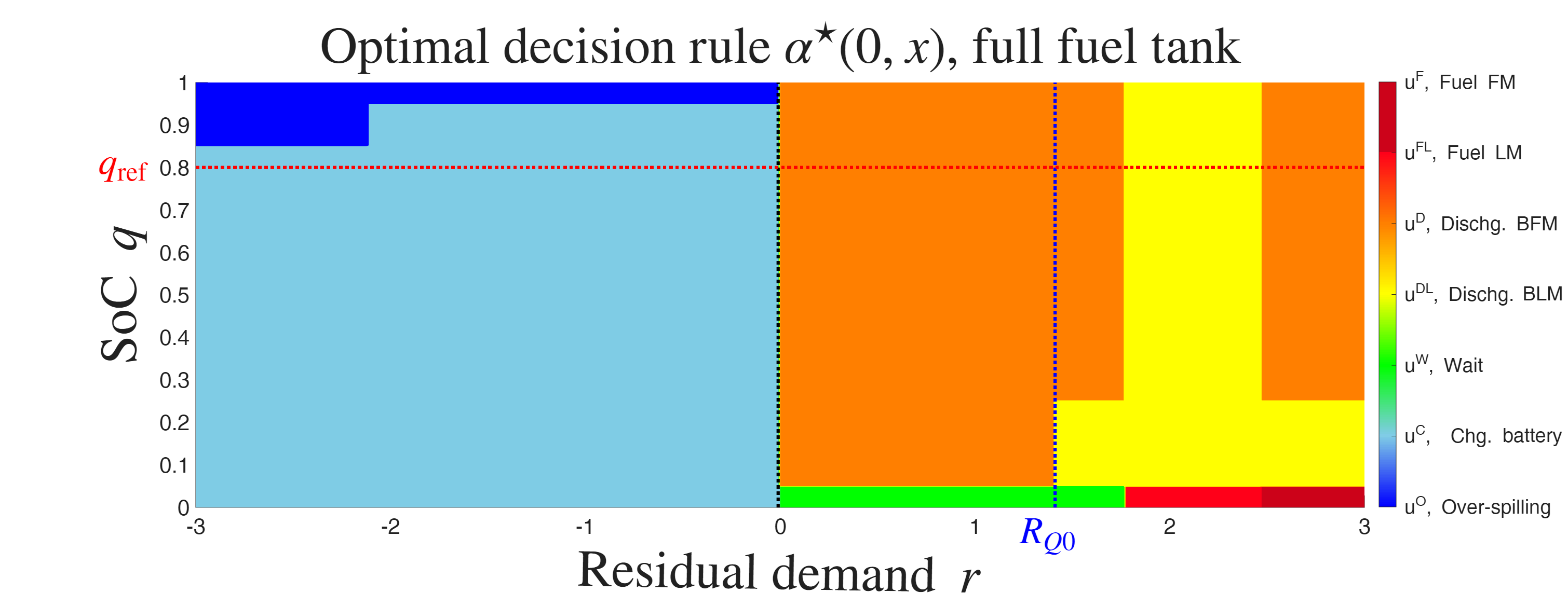}
    \caption{Value functions \(V\) and optimal decision rule \(\alpha^\star\) in terms of $r$ and $q$ at the initial time ($n= 0$). 
    Upper left panel: Value function for an empty fuel tank (upper graph), a fuel tank filled to 20$\%$ (middle graph), and a full fuel tank (bottom graph). Lower left panel: Optimal decision rule for an empty fuel tank. Lower right panel: Optimal decision rule for a full fuel tank. Upper right panel: Optimal decision rule for a fuel tank filled to 20$\%$ capacity.}\label{fig_Val_0}
\end{figure}
\begin{figure}[b!]
    \centering   
    \includegraphics[width=0.49\linewidth,height=0.22\linewidth]{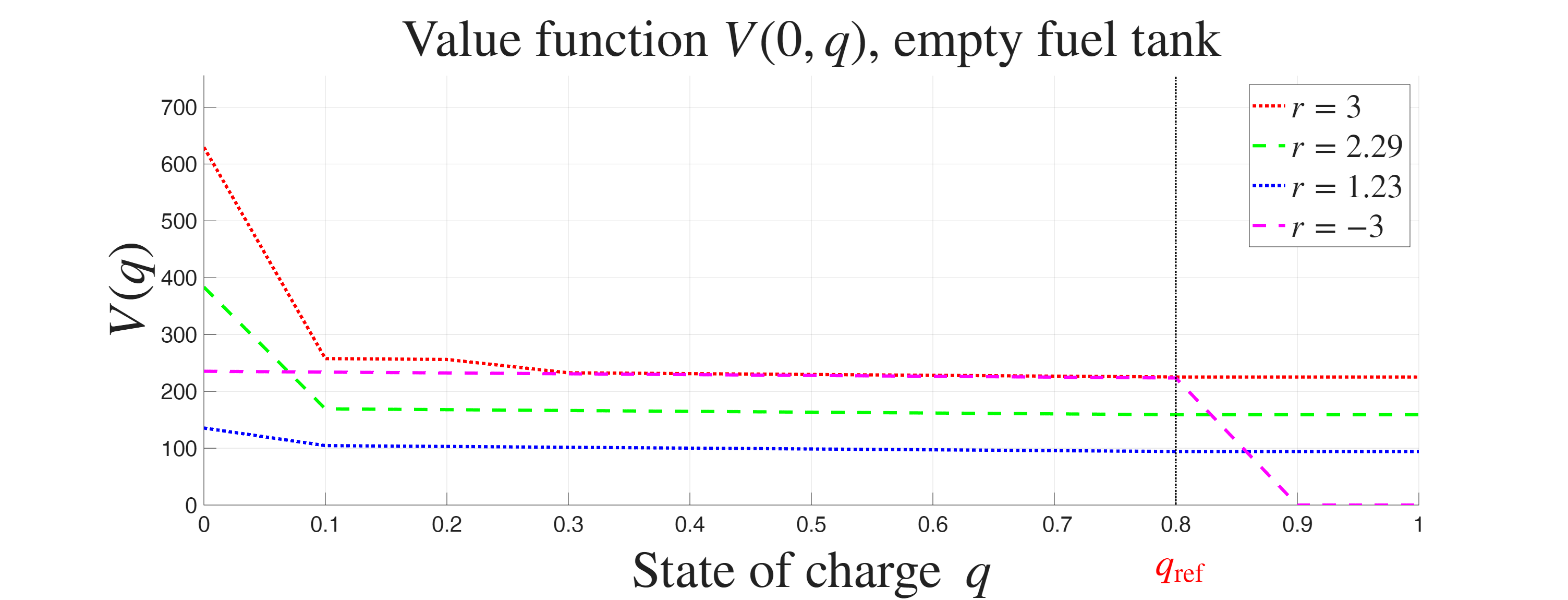}
\includegraphics[width=0.49\linewidth,height=0.22\linewidth]{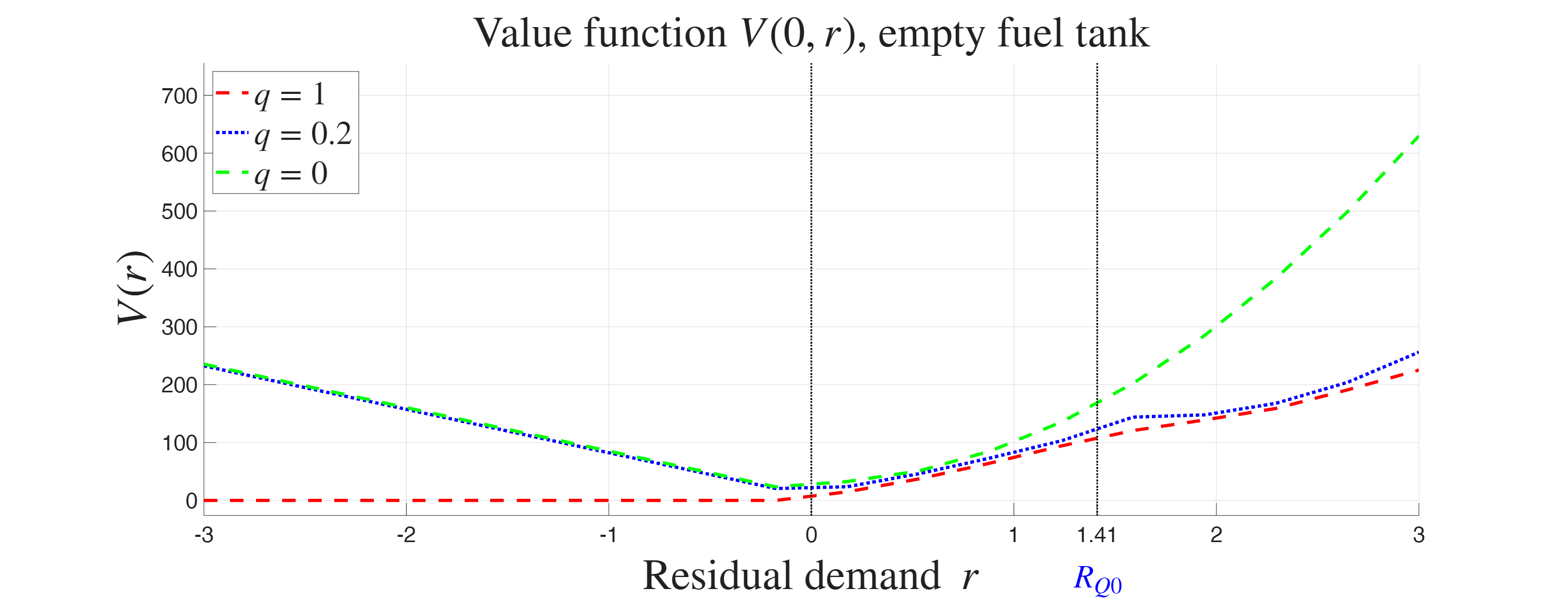}  

\vspace*{0.3cm}
 \includegraphics[width=0.49\linewidth,height=0.22\linewidth]{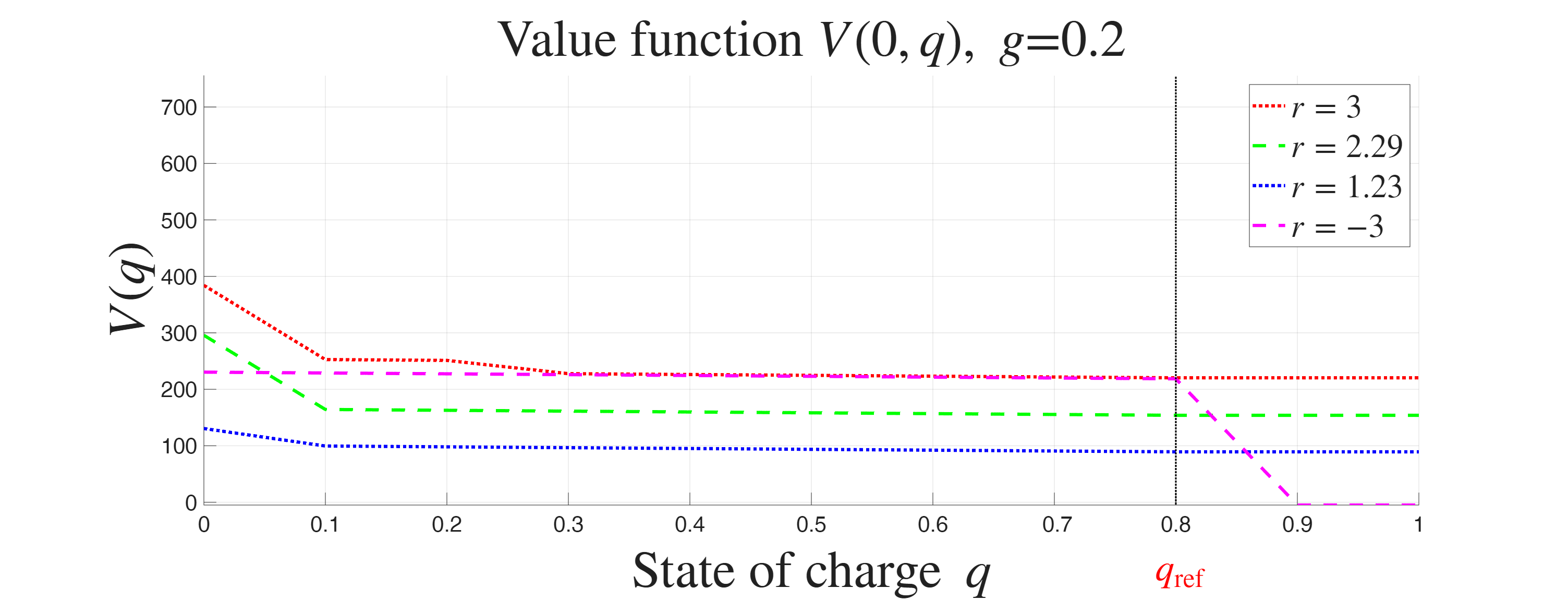}
\includegraphics[width=0.49\linewidth,height=0.22\linewidth]{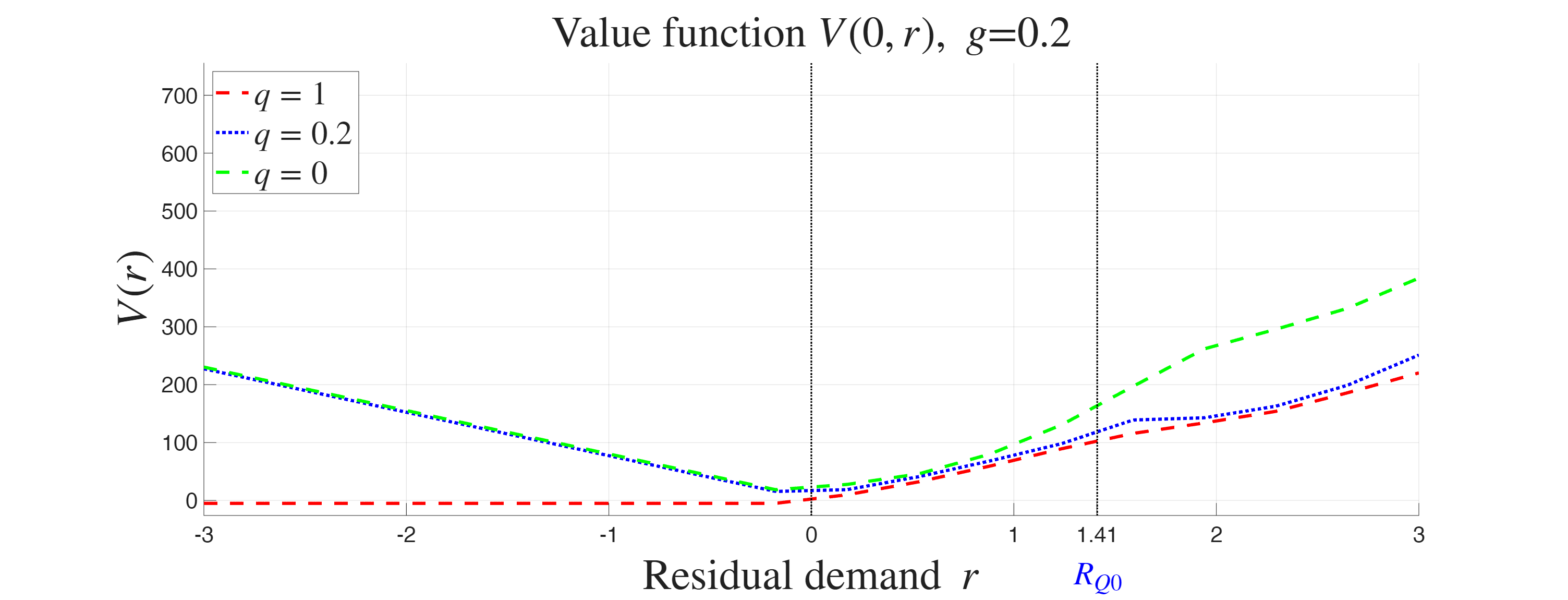}

\vspace*{0.3cm}
 \includegraphics[width=0.49\linewidth,height=0.22\linewidth]{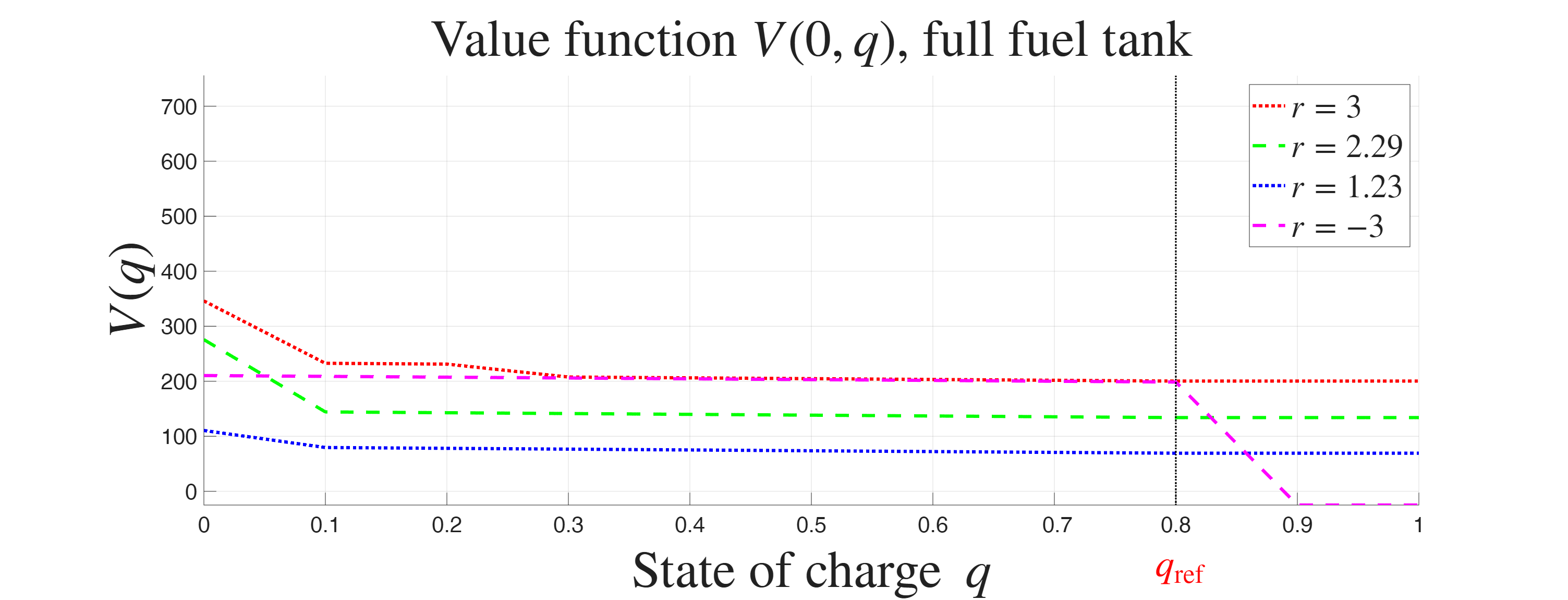}
\includegraphics[width=0.49\linewidth,height=0.22\linewidth]{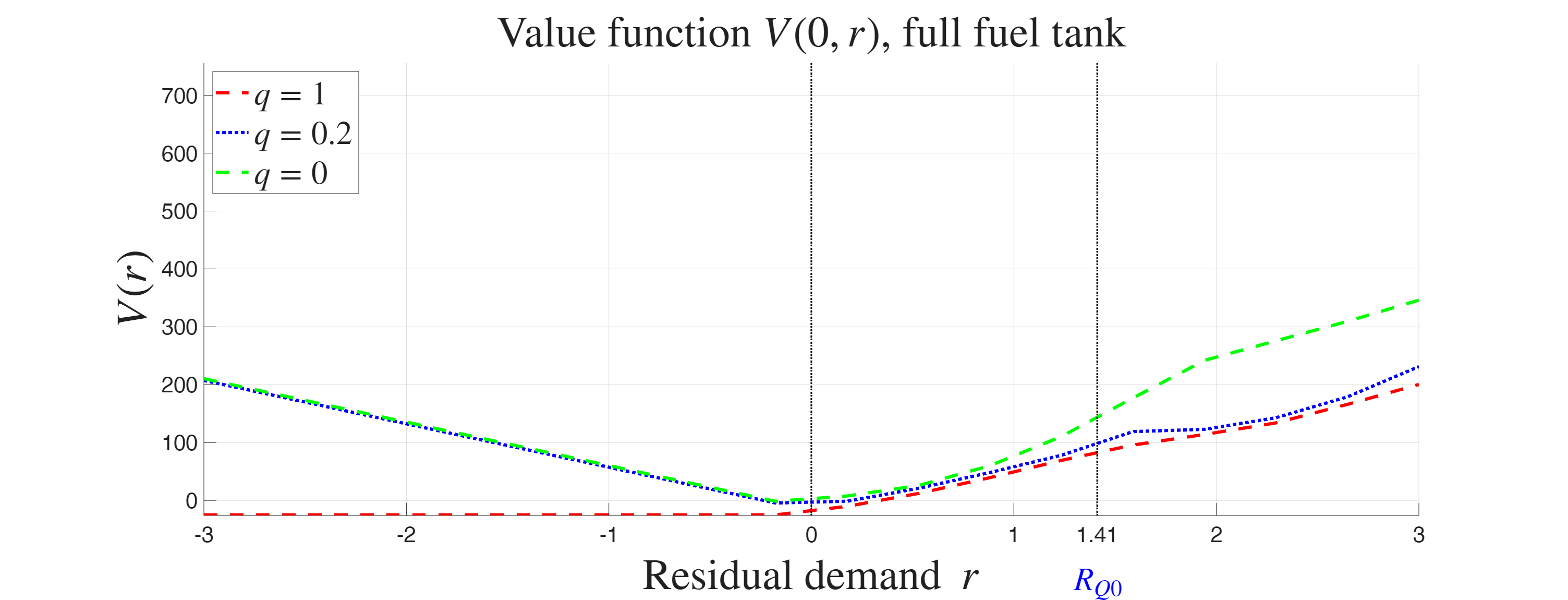} 
    \caption{Visualization of the value function at the initial time ($n=0$). 
    Left panel:  Value function as a function of the state of charge $q$ for a fixed residual demand \(r=\{-3, 1.23, 2.29, 3\}\).\\
    Right panel: Value function as a function of of the  residual demand $r$ for a fixed  state of charge \(q=\{0,20\%,100\%\}\).\\
    Upper panel: Empty fuel tank. Middle panel:  Fuel tank filled to 20$\%$ of its capacity. lower panel: Full fuel tank.}\label{fig_VFr-0}
\end{figure}

Fig.~\ref{fig_Val_0} shows that the optimal decision rule at the initial time ($n=0$) is similar to the optimal decision rule at time $N-1$. This observation might result from the fact that time 0 corresponds to midnight on the first day and time $N-1$ corresponds to 11 p.m. on the last day, that is, one hour before midnight. Given that the optimal decision rule at the initial time is similar to that at time (N-1), we will now concentrate on visualizing the value function.  In the left panel of Fig.\ref{fig_VFr-0} we illustrate the value function as a function of the state of charge $q$ for a fixed residual demand \(r=\{-3, 1.23, 2.29, 3\}\), while the right panel shows the value function as a function of the residual demand $r$ for a fixed state of charge \(q=\{0,20\%,100\%\}\).
In contrast to the results for $n=N-1$, Figures ~\ref{fig_Val_0} and  ~\ref{fig_VFr-0} show that the value function now takes higher values for the battery level above $q_\text{ref}$, since there are $167$ periods in which there is a potential imbalance between supply and demand, for which ongoing costs could be incurred.  Similarly to times $n=N-1$ and $n=12$, the value function for the empty battery dominates. This is due to the quadratic discomfort cost for unmet demand. \\
The right panel of Fig.~\ref{fig_VFr-0} shows that the value function decreases very strongly when the residual demand is negative, compared to time $N-12$; however, it increases quadratically when the residual demand is positive.


\subsection{Optimal Paths of the State Process} 
This subsection illustrates the impact of the optimal decision rule on battery state of charge (SoC) and fuel level over a period of seven days. We consider four different scenarios with uncertainty on the residual demand. In addition, we assume in all scenarios that at the initial time (shortly after midnight) the fuel tank is full, the residual demand is at the maximum, and the state of charge is at $80\%$. We recall that the background colors represent the optimal decision rule, where dark blue indicates over-spilling, light blue the battery charging, green for waiting or \emph{doing nothing}, yellow for discharging the battery in limited mode, orange for discharging the  battery in full mode, and light red and dark red indicate using fuel in limited and full modes, respectively. In all scenarios, the magenta dashed line, the red solid line, and the blue solid line represent the fuel tank level, residual demand, and battery state of charge, respectively. The black dotted line at $R_{Q0}$ represents the threshold above which limited modes can be activated, the black dashed line at 0 corresponds to the zero level of residual demand, and the red dashed line represents the seasonality function of residual demand.

By taking a closer look at the paths of the state process, illustrated through four distinct cases, we highlight, on the one hand, the correlation between residual demand and the battery state of charge for charge and discharge control actions and, on the other hand, the correlation between residual demand and fuel consumption in full or limited mode. In all these scenarios, a consistent trend emerges for a positive residual demand below the threshold ($0\leq r < R_{Q0}$), which means that it is always preferable to discharge the battery in full mode as long as it is not empty and wait when it is empty, which corresponds perfectly to the decision rule stated above in Subsection~\ref{sec: Decision}. Figures~\ref{path1} to \ref{path4} show that the generator and the battery do not operate simultaneously. In fact, when the residual demand is negative, the generator is off; therefore, excess production is stored in the battery, and over-spilling is applied when the battery is full (see, for example, Figure~\ref {path4}). On the other hand, when the residual demand exceeds the threshold ($r\geq R_{Q0}$) without being too high ($r<2.5$), battery discharge or fuel use in limited modes is preferred over full mode to extend battery or fuel lifespan, especially when we are far from terminal time. However, when the residual demand is too high ($r\geq 2.5$), the full mode is preferred over the limited mode, avoiding paying high costs of discomfort. This results in a sharp decrease in battery and fuel levels, as shown in Figures~\ref{path1} through \ref{path4}. In general, we observe that the generator is only activated when the battery is almost empty and the residual demand is above the threshold $R_{Q0}$.

\paragraph{Path of the state  process for favorable weather conditions during the first 3 days and adverse weather conditions during the remaining days}
\begin{figure}[htbp!]
    \centering \hspace*{-1.2cm}
     \includegraphics[width=1.1\textwidth,height=0.4\linewidth]{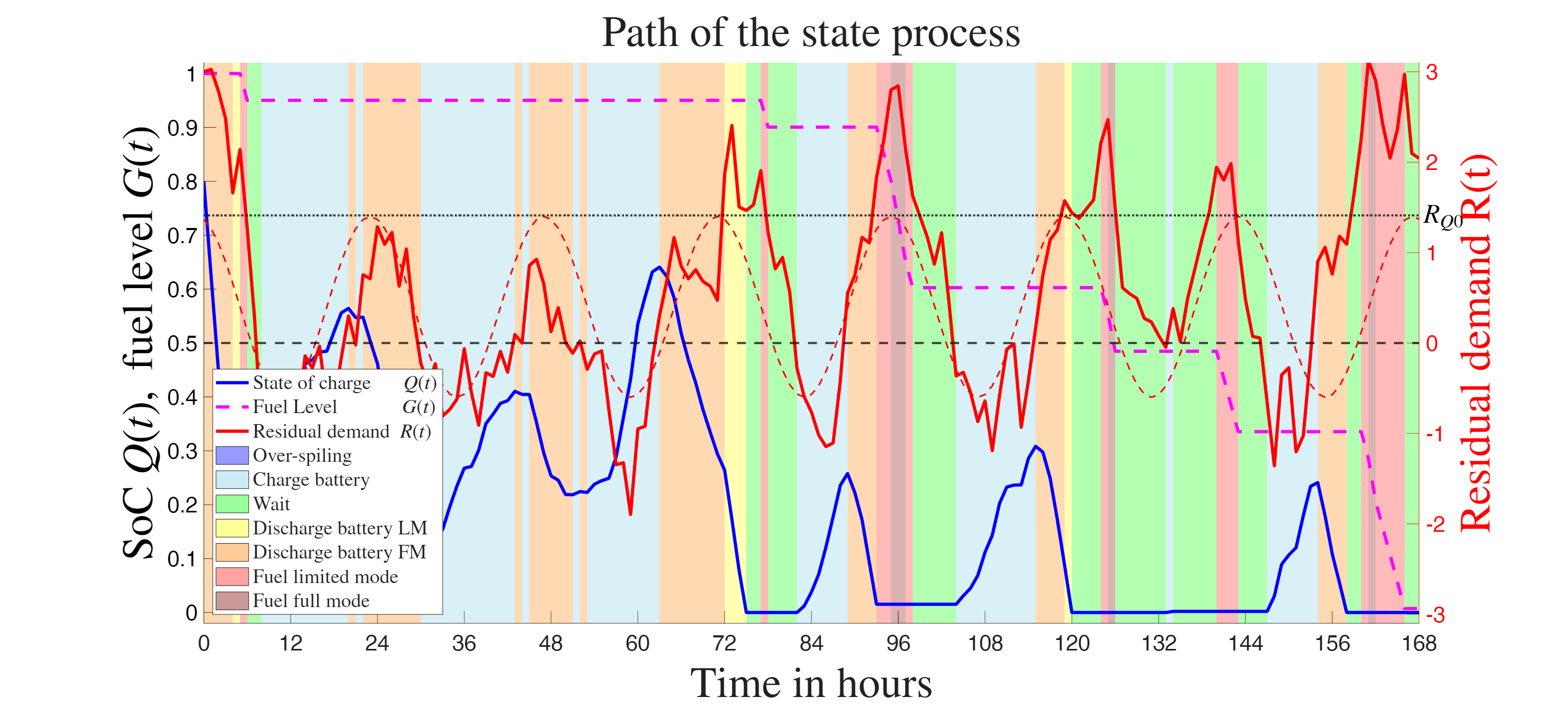}
    \caption{Path of the state process in response to optimal decision rule for favorable weather during the first 3 days and unfavorable weather during the remaining days.}\label{path1}
\end{figure}
 
 Fig.~\ref{path1} shows the paths of the state variables over a period of seven days, characterized by favorable weather conditions for the first three days and adverse weather for the subsequent days.  We observe that when the residual demand is strongly positive ($r\geq R_{Q0}$ and close to maximum) and the state of charge is above $30\%$, the battery is optimally discharged in full mode, resulting in a sharp drop in battery level as it meets the demand. When the state of charge falls below $30\%$, it is recommended to operate the battery in a limited mode. This decision might incur a slight penalty due to unmet demand, and the generator should only be used when the battery is nearly depleted.  Fig.~\ref{path1} illustrates that, beginning with a battery level of $80\%$ and a maximum positive residual demand that is expected to decrease over time due to favorable weather conditions, it is not necessary to use fuel, as the battery is capable of fully meeting the demand.  This indicates that during the first three days, favorable weather conditions led to excess production during the day, allowing the battery to recharge. As a result, it has enough capacity to meet the demand the next night without the need for fuel.
However, during the subsequent four days, adverse weather conditions meant that the overproduction is not sufficient to fully recharge the battery to meet the positive residual demand during the night. Therefore, the generator has to be used to meet demand on the remaining nights when the battery becomes empty. This explains why the fuel tank is empty at the terminal.

\paragraph{Path of the state for adverse weather except one days in the middle of the week and the last day}

\begin{figure}[htbp!]
    \centering \hspace*{-1.2cm}
     \includegraphics[width=1.1\textwidth,height=0.4\linewidth]{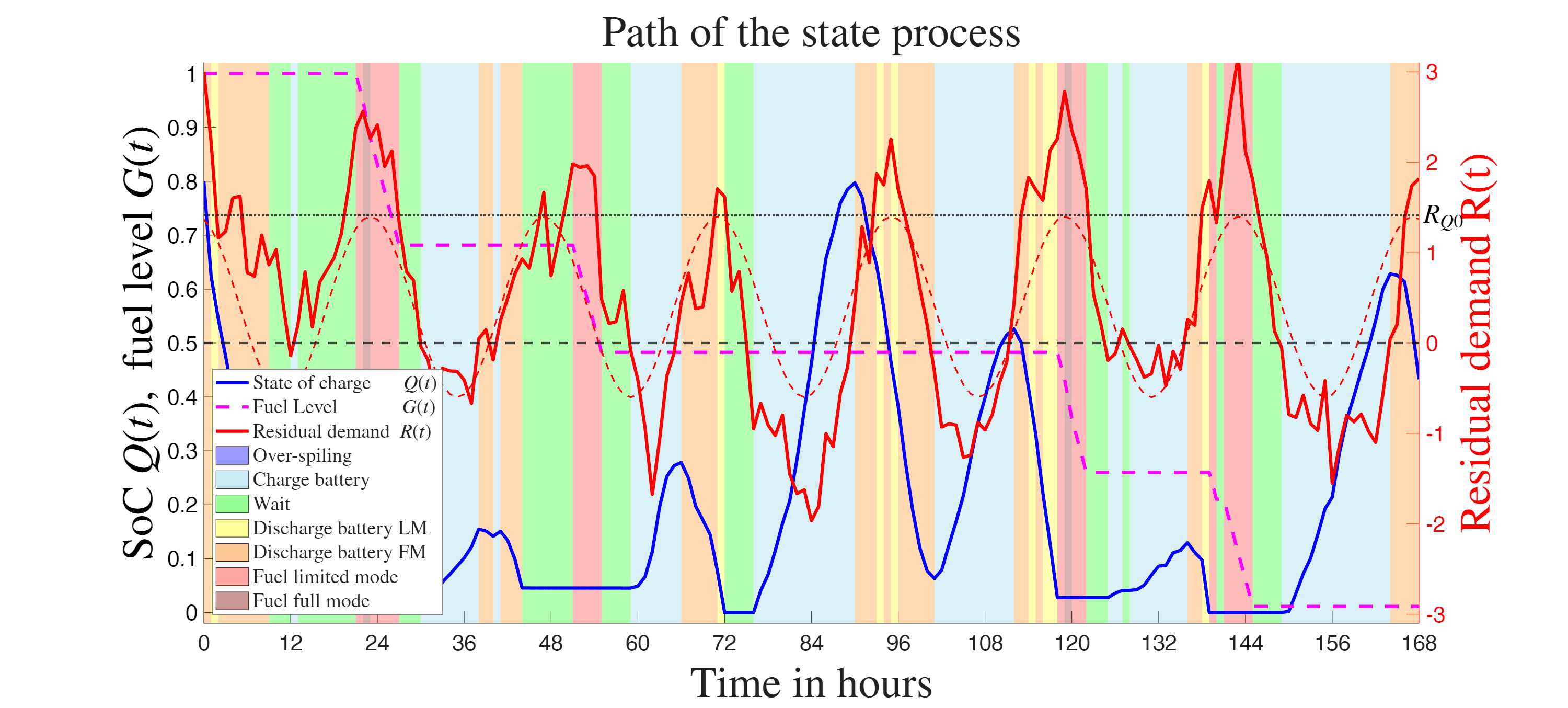}
    \caption{Path of the state process in response to optimal decision rule for adverse weather conditions, with the exception of two days (one midweek day and the last day).}\label{path2}
\end{figure}

Figure ~\ref{path2} shows the evolution of the state variables over a week, beginning at night under adverse weather conditions, with the exception of one midweek day and the last day.
We observe that starting at night with a battery level of $80\%$, it is not necessary to use fuel on the first night. Due to the adverse weather conditions of the first two days and the gradual increase in demand at night, the battery is not sufficient to meet demand at night; therefore, we have to rely mainly on the generator to meet demand during the next nights. This explains why the fuel tank level drops below $50\%$ after the second night and to zero after the sixth day, despite the generator being inactive on the third and fourth days.  
 Due to the overproduction generated by the favorable weather conditions on the fourth and last day, the battery is capable of recharging up to $80\%$ of its capacity, which is enough to serve the following night, and we only have to pay a slight penalty at the end of the week.

\paragraph{Path of the state process over 7 days for favorable
weather conditions on the last 3 days and adverse weather conditions on the remaining days}

\begin{figure}[htbp!]
    \centering \hspace*{-1.2cm}
     \includegraphics[width=1.1\textwidth,height=0.35\linewidth]{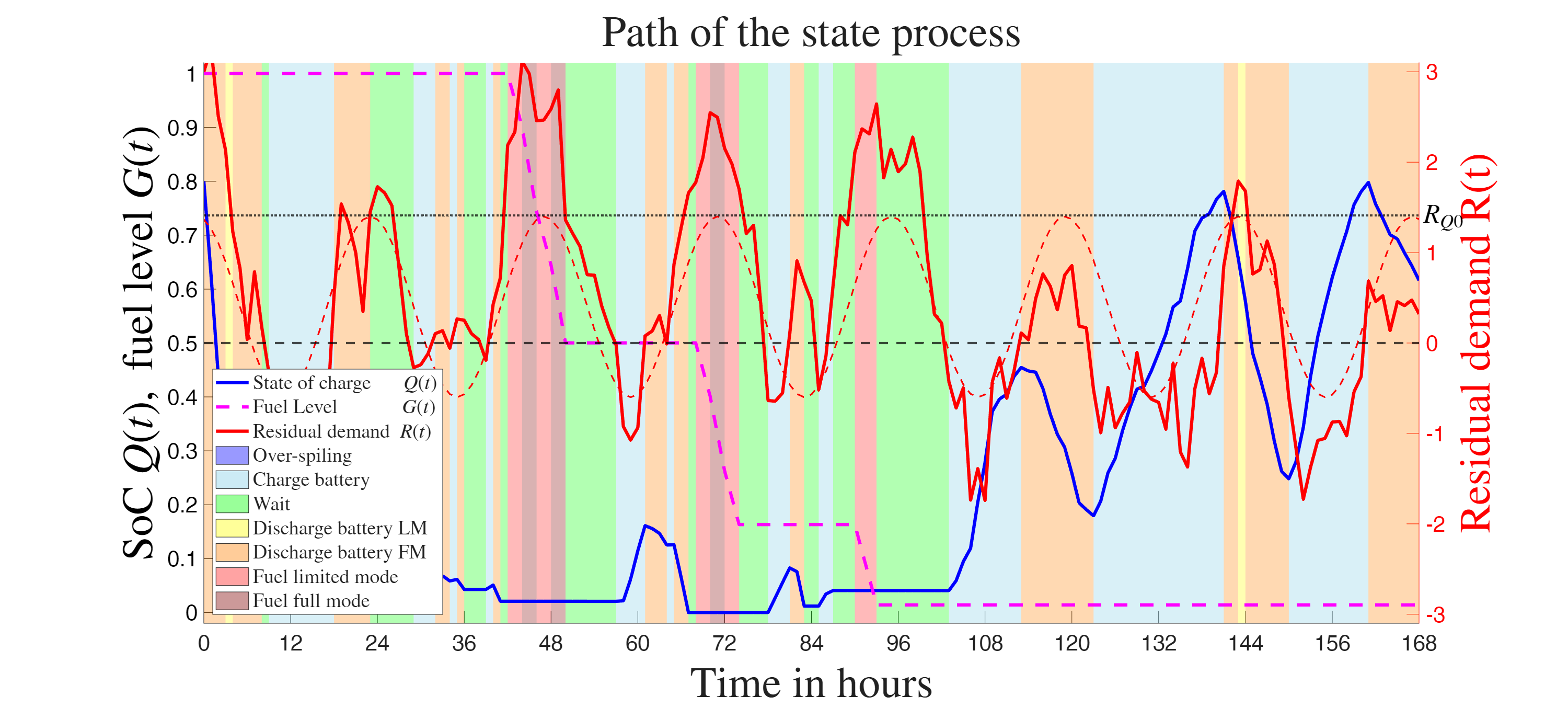}
    \caption{Path of the state process over 7 days in response to the optimal decision rule for favorable weather conditions on the last 3 days and adverse weather conditions on the remaining days. }\label{path3}
\end{figure}

Figure~\ref{path3} shows the state process over 7 days in response to the optimal decision rule for adverse weather conditions on the first four days and favorable weather conditions on the last 3 days. On the first 4 days, the excess production generated each day is not sufficient to charge the battery to $30\%$ due to adverse weather conditions. Therefore, we must rely on the generator to meet the high demand at night. This explains the sharp drop in the fuel tank level each night, leading to an almost empty fuel tank level after 4 days. However, due to favorable weather conditions on the last three days and the small variation in residual demand, the overproduction generated each day is sufficient to charge the battery to a level that allows it to survive the night.  This allows us to pay a small penalty at the end of the period, compared to Fig.~\ref{path1}, where the battery is empty at the end of the simulation period. 

\paragraph{Path of the state process over 7 days for adverse
weather conditions throughout the week, with the exception of the last day}
\begin{figure}[htbp!]
    \centering \hspace*{-1.2cm}
     \includegraphics[width=1.1\textwidth,height=0.35\linewidth]{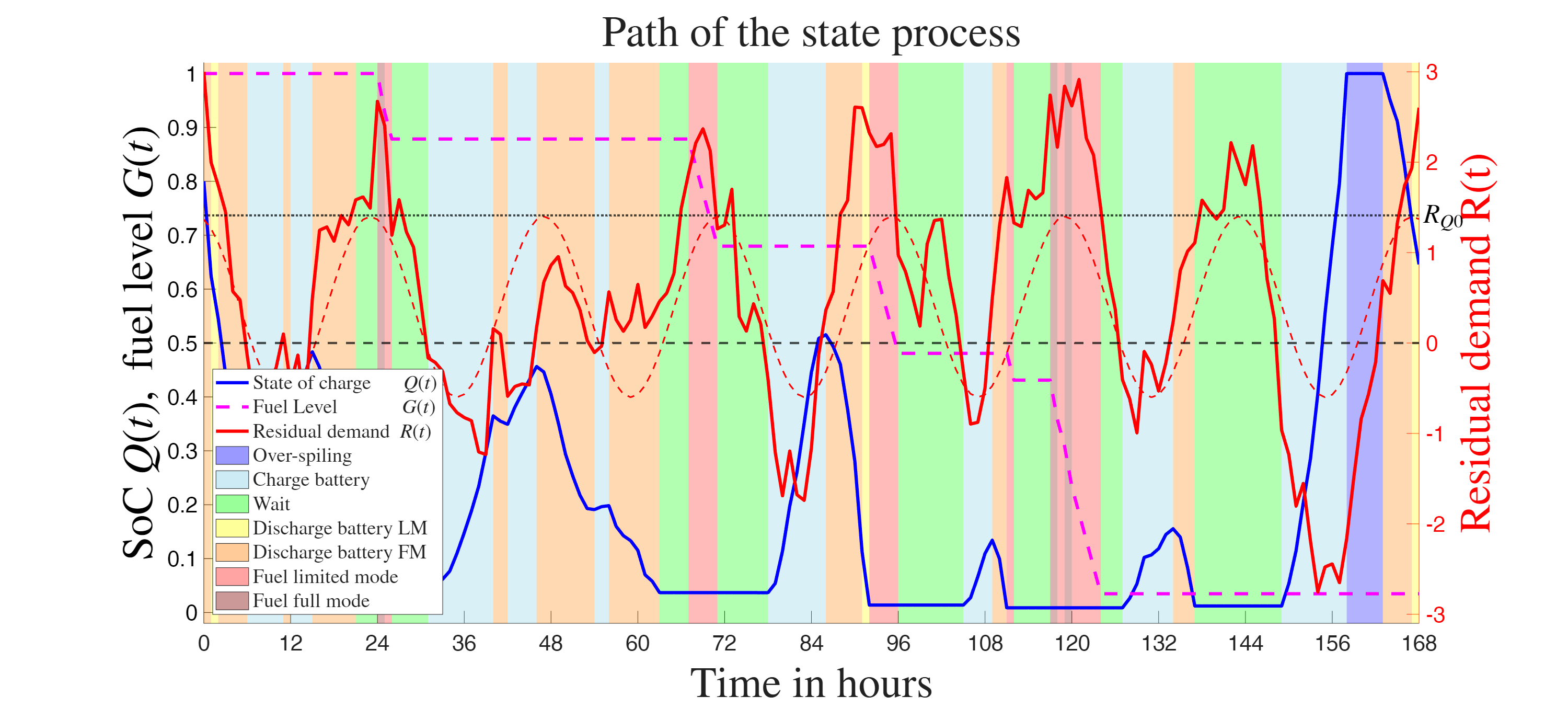}
    \caption{Path of the state process over 7 days in response to the optimal decision rule for adverse weather conditions throughout the week, with the exception of the last day.}\label{path4}
\end{figure}

Figure~\ref{path4} shows that due to adverse weather conditions throughout the week, the excess production generated each day is not sufficient to charge the battery to $50\%$. Therefore, we must rely on the generator to meet the demand every night, leading to an almost empty fuel tank level after 5 days. However, we observe that on the last day the excess production is able to fully recharge the battery, leading to over-spilling. Therefore, the battery alone is sufficient to meet the demand at night and avoid a high penalty at the end of the period. \\

\section{Conclusion }
\label{conclus}
This paper investigated stochastic optimal control for the cost-optimal management of standalone microgrids, particularly relevant for applications in remote areas. Addressing the inherent uncertainties associated with renewable energy production and fluctuating demand, the microgrid operation was formulated as a Markov decision process. 
We solved this problem using the discrete-time dynamic programming approach and derived the associated Bellman equation. Further, a continuous-state Markov decision process is approximated by a Markov decision process for a finite-state Markov chain for which we characterized the transition probabilities.  In this way, the curse of dimensionality is overcome and the optimal control problem is efficiently solved by numerical computation via backward recursion. 
The first contribution of this work is a systematic approach to managing operational uncertainties and resource limitations within standalone microgrids. The numerical results provide clear guidance for making decisions about the operation of the batteries and generators within the system. In addition, these results also highlight the overall energy management system and the effective minimization of operating costs.  Finally, these findings offer practical insights for the design and operation of microgrid systems, particularly in remote areas where access to the grid remains a challenge.
The successful implementation of this framework is crucial to the advancement of rural electrification, especially in developing countries. Microgrids optimization enables efficient management of renewable resources and offers a viable and sustainable alternative to traditional grid expansion. This improves living standards, education, healthcare, and economic opportunities, while promoting cleaner energy.

This work was carried out under the assumption that the economical mode can only be activated when the residual demand is above a certain threshold. However, this assumption can be relaxed, allowing the economical mode to be activated at any time. In this case, to derive the MDP, one has to find a good approximation of the minimum between the residual demand whose dynamics is governed by an SDE and a certain target. 
The residual demand can be divided into two parts: supply and demand.  Furthermore, the energy supply generated by the solar panel can be modeled as a stochastic process whose drift is governed by a finite-state Markov chain. The states of this Markov chain correspond to the states of cloudiness. This will increase the dimension of the state space, which leads to the curse of dimensionality, and the numerical solution via Markov chain becomes intractable. Therefore, alternative optimization techniques for the numerical solution are necessary, including approximate dynamic programming, optimal quantization, model predictive control, and reinforcement learning. Finally,  we can also consider a more realistic battery model, taking into account its degradation, voltage, and the evolution of its capacity over time. This will result in a more complex model whose solution required sophisticated techniques.  
 
\begin{appendix}

\section{Nomenclature}
\begin{longtable}{p{0.3\textwidth}p{0.68\textwidth}l}
$T$ & Finite time horizon&\\
$R,Z$ & Residual demand, deseasonalized residual demand &\\
\(\beta_R\)& Mean reversion speed for residual demand\\
\(\sigma_R\)& Volatility for the residual demand&\\
$\mu_R $ & Seasonality function &\\
$\mu_0^R$ & Long-term mean of seasonality function&\\
$\kappa_1^R, \kappa_2^R$ & Amplitude constants for yearly, daily seasonality, &\\
$t_1^R, t_2^R$ & Reference time component for yearly,  daily seasonality&\\
$\delta_1, \delta_2$ & Length of yearly,  daily seasonal period  &\\
$Q,G$ & State of charge of the battery, fuel tank level &\\
\(C_Q, C_G\)& Battery, fuel tank capacity&\\
\(\eta_E,~\eta_E^C,~\eta_E^D\)& Self-discharging rate of the battery &\\
$F_0$ & Fixed fuel price &\\
 $c_0,c_1$ & Idle and load dependent consumption rate &\\
$X=(Z,Q,G)$ & Continuous state variable &\\
$(\Omega,\mathcal{F},\mathbb{F},\mathbb{P})$ & Filtered probability space for stochastic processes &\\
$W$ & Wiener process&\\
$\Omega$ & Sample space&\\
$\mathbb{F}=(\mathcal{F})_{t\in[0,T]}$ & Filtration generated by the Wiener process $(W(t))_{t\in[0,T]}$ &\\
$\mathbb{P} $ & Probability measure on a measurable space on \((\Omega,\mathcal{F})\)&\\
\(\stZ\)& Continuous state space of $Z$&\\
\(\mathcal{Q},\mathcal{G}\)& Continuous state space of state of charge and fuel tank&\\
$\mathcal{X}=\stZ\times \stQ \times \stG$ & Continuous state space&\\
$u^W,u^O$ & Wait/do nothing, overspilling &\\
$u^C$ & Charge the battery  &\\
$u^D,u^{DL}$ & Discharge the battery in full, limited mode&\\
$u^F,u^{FL}$ & Generator in full, limited mode&\\
$u=(u(t))_{t\in[0,T]}$ & Continuous-time control process&\\
  \( \controlset \)& Set of feasible control &\\

\(\mathcal{K}(X)\)& State constraint set&\\
  \(J,J^D\) & Continuous and discrete time performance criterion &\\
\(\Psi,\phi,\Psi^D,\phi^D\)& Continuous and discrete running time, terminal cost&\\
 $\gamma_{degr}$ &  Degradation cost per unit of residual demand &\\
$\gamma_{pen}^Q$ & Penalty cost for battery at terminal time &\\
$q_{pen}$ & Penalty reference for the battery at terminal time &\\
$\gamma_{liq}^Q, \gamma_{liq}^G$ & Liquidation price for battery, fuel tank at terminal time&\\
  \(t_n, N, \Delta_N \)&  Discrete time points with $N$ number of time steps and $\Delta_N $ step size&\\
\(\desresdemand_n = Z(t_n) \in \stZ\)& Dseasonilized residual demand sampled at discrete-time \(n\)&\\
\(\SoC_n = \SoC(t_n) \in \stQ\) & State of charge  sampled at discrete-time \(n\)&\\
\(\Fuellevel_n = \Fuellevel(t_n) \in \stG\) & Fuel tank level sampled  at discrete-time \(n\)&\\
\(\State_n = (\desresdemand_n, \SoC_n, \Fuellevel_n)\)&  State process sampled  at discrete-time \(n\)&\\
\(\mathcal{N}_\dagger\)& Set of indices for  \(\dagger=Z,Q,G\)&\\
\(\overline{\stZ}=[\underline{z},\overline{z}]\)& Continuous state space for $Z$ truncated at $[\underline{z},\overline{z}]$ &\\
\(\discstZ\)& Discrete state space of the deseasonalized residual demand&\\
\(\discstQ,\discstG\)& Discrete state space of the state of charge and fuel level&\\
$\widetilde{\mathcal{X}}=\discstZ \times \discstQ \times \discstG$ & Discrete state space of the state variable&\\
\(u(t_n)=:\alpha_n\) & Constant control between two consecutive time points &\\
\( \alpha = (\alpha_1,\cdots \alpha_{N-1})\) & Discrete time control process &\\
\(m_R(n), \meanZ\)& Conditional mean of $R,~Z$ at time $n$&\\
\(\meanQ,\meanG\)&Conditional mean of the state of charge and the fuel level&\\
\(\Sigma_\dagger^2\)& Conditional variance of \(\dagger=Z,Q,G\)&\\
\(\Sigma_\dagger\) & Standard deviation of  \(\dagger=Z,Q,G\)&\\
\(\CovZQ,\CorrZQ\) & Conditional covariance and correlation coefficient of \(Z_{n+1}\) and \(Q_{n+1}\)&\\
\(\CovZG,\CorrZG\) & Conditional covariance and correlation coefficient of \(Z_{n+1}\) and \(G_{n+1}\)&\\
  \(\mathcal{T}_n=(\mathcal{T^Z}_n,\mathcal{T^Q}_n,\mathcal{T^G}_n)\)& Transition operator  at time \(n\)&\\
    \( \mathcal{T}^\dagger_n \)& Transition operator of \(\dagger\) at time \(n\), with \(\dagger=Z,Q,G\)&\\
\(\mathcal{E} = (\mathcal{E}_n)_{n=1, \dots, N}\)& Sequence of standard normal random vectors in \(\mathbb{R}^3\)&\\
\( \mathcal{G}_n = \sigma(\mathcal{E}_1, \dots, \mathcal{E}_n) \)&
Sigma-algebra generated by the first \(\mathcal{E}_1, \mathcal{E}_2, \dots, \mathcal{E}_n\)&\\
\(\mathbb{G} = (\mathcal{G}_n)_{n=0, 1, \dots, N}\)& Discrete-time filtration with \(\mathcal{G}_0 = \{\emptyset, \Omega\}\) a trivial sigma-algebra&\\
\(\mathcal{U}_\dagger(n,x)\)& Discrete-time set of feasible actions related to \(\dagger=Q,G\)&\\
\(\mathcal{U}(n,x)=\consQ(n,x)\cup\consG(n,x)\)& Discrete-time state-dependent set of feasible controls &\\
 \(\admiss\)& Set of admissible controls &\\
\(V\)& Discrete time value function&\\
\(\mathcal{P}^a_{x_{m_1},x_{m_2}}\)& Transition probability from state \(x_{m_1}\) to \(x_{m_2}\) under action $a$\\[2ex]
 \textbf{Abbreviations}&&\\[1ex]
 ODE & Ordinary differential equation&\\
 SDE & Stochastic differential equation\\
 MDP & Markov decision process&\\
 SoC& State of charge of the battery&\\
 LM & Limited mode&\\
 FM & Full mode&\\
 BLM & Battery in limited mode&\\
 BFM &Battery in full mode&
\end{longtable}

 \section{Calibration of Parameters}
\subsection{Self-discharging Rate of the Battery }
\label{Calib_self}
In idle mode, that is, when we neither charge nor discharge, the dynamics of the battery's SoC is mainly influenced by the self-discharging parameter. To calibrate this parameter, we assume that a fully charged battery loses its capacity after a long period of inactivity.  We assume that at time $t_0$ the SoC is \(Q(t_0) = 1\), and after a period of inactivity $t_1$, the battery's SoC is now \(Q(t_1)=q^*\). In idle mode, the change in the SoC is described by the following ODE:
\[dQ(s) = -\eta_0Q(s)\diff s.\]
  Solving this under conditions 
\(Q(t_0)=1\) and \(Q(t_1)=q^*\) leads to 
\(\eta_0 =- \frac{\ln{(q^*)}}{t_1-t_0}.\)\\
For numerical results, we set the initial time (\(t_0 = 0\)) and assume that after 4-days (\(t_1 = 4 \times 24 \)) hours, the state of charge has dropped by 2\% (\(q^* = 0.98\)) due to self discharge. Then, a straightforward calculation gives \(\eta_0 =  0.0002104 ~[\text{ h}^{-1}]\).
\subsection{Battery's Capacity }
\label{Calib_batC}
A battery is a key component of our energy system. Its capacity \(C_Q\) must be calibrated so that it can meet the energy demand overnight, that is, from time \(t^D_1\) to time $t^D_2$. In addition, we ensure that the battery capacity is large enough to store overproduction throughout the day, that is, from time \(t^C_1\) to time $t^C_2$.  Therefore, the battery capacity must exceed the total cumulative overproduction during the day, given by \(I_C = \displaystyle\int_{t^C_1}^{t^C_2}\eta_E^C(\mu_R(s)+Z(s))ds\), and the total cumulative unmet demand during the night, given by \(I_D = \displaystyle \int_{t^D_1}^{t^D_{2}}\frac{1}{\eta_E^D}(\mu_R(s)+Z(s))\diff s\), with high probability. Here, \(\eta_E^C\) and \(\eta_E^D\) are the charging and discharge efficiencies to account for energy losses while charging and discharging, respectively.
Now, let \(p\) be a predefined confidence level.
Then, the capacity $C_Q$ is chosen so that \(\mathbb{P}(I_C\le C_Q) = p\) and \(\mathbb{P}(I_D\le C_Q) = p\). 
Since the deseasonalized residual demand \(Z(s)\)  is an OU process, then, the integral \(I_\dagger,~\dagger=C,D\) is a Gaussian random variable, that is, \(I_\dagger\sim \mathcal{N}(\mu^{}_{I_\dagger},\Sigma_{I_\dagger}^2)\), where
\begin{align*}
\mu^{}_{I_\dagger} &= \mu_0^R (t^\dagger_2 - t^\dagger_1) + \frac{\kappa_1^{R}\delta_1}{2\pi} \left[ \sin\left(\frac{2\pi(t^\dagger_{2}-t_1^R)}{\delta_1}\right) - \sin\left(\frac{2\pi(t^\dagger_1-t_1^R)}{\delta_1}\right) \right] \\
&\quad + \frac{\kappa_2^{R}\delta_2}{2\pi} \left[ \sin\left(\frac{2\pi(t^\dagger_{2}-t_2^R)}{\delta_2}\right) - \sin\left(\frac{2\pi(t^\dagger_1-t_2^R)}{\delta_2}\right) \right]  + \frac{z_1}{\beta_R} \left(1 - \mathrm{e}^{-\beta_R(t^\dagger_{2}-t^\dagger_1)}\right)
\\
\Sigma_{I_\dagger}^2 &= \frac{\sigma_R^2}{2\beta_R^3} \left(2\beta_R (t^\dagger_{2} - t^\dagger_1) - 3 + 4\mathrm{e}^{-\beta_R (t^\dagger_{2}-t^\dagger_1)} - \mathrm{e}^{-2\beta_R (t^\dagger_{2}-t^\dagger_1)}\right).
\end{align*}

Given  that \(I_\dagger\) is normally distributed, the condition \(\mathbb{P}(I_\dagger\le C_Q) = p\) leads to the following: 
\[ C_{Q,\dagger} = \eta^\dagger (\mu^{}_{I_\dagger} + z_p \Sigma^{}_{I_\dagger}) \quad \text{ with  } \quad \eta^\dagger=\begin{cases}
   \eta_E^C ,\qquad \dagger=C,\\
    \frac{1}{\eta_E^D}, \qquad \dagger=D,
\end{cases}\]
 where \(z_p =\Phi^{-1}(p) \) with \(\Phi\) the cumulative distribution function of the normal random variable.\\
Finally, a battery capacity must not be too large or too small, but must be such that it can store all overproduction throughout the day and can survive all night. Therefore, the minimal  battery capacity required is then given by
\[\capacitybat = \max(C_{Q,C},C_{Q,D}).\]

\subsection{Calibration of Fuel Tank Parameters}
\label{calib_fuelC}
Recall that the fuel consumption of a generator in idle mode or on load varies significantly based on the generator's specific model, engine type, engine size, fuel type, and efficiency. For example, a 3000-watt generator is a generator ( small power station) that produces a continuous output of 3000 watts (3 kW) of power when operating in full mode. It can run devices and appliances requiring a maximum of 3 kW of electrical power. Note that such a  generator uses approximately 0.3 to 0.5 liters  of diesel fuel per kilowatt-hour (kWh). In idle mode, a medium-sized diesel generator may consume approximately 0.5 to 1 liter of fuel per hour. For example, a 3 kW diesel generator's fuel consumption at idle is typically around 0.75 to 1.0 liters per hour. Therefore, $c_0 \in [0.5,1]$ and $c_1 \in [0.3,0.5]$

\subsection{Calibration of the Degradation Cost} 
\label{Calib_deg}
Let $T_0$ be the battery's life span, that is, the time it will need to be replaced in our energy system. We assume that the battery stores the surplus energy produced during the day and helps to meet demand at night. Then, the degradation cost $\gamma_{\text{degr}}$ should be calibrated such that the discounted cumulative cost during the operational period $[0,T_0]$ is equal to the discounted future price of the new battery, indicated by $P_b$. We consider the worst-case scenario, where we satisfy the maximum demand or store the maximum overproduction throughout the operational period. Therefore, $\gamma_{\text{degr}}$ satisfies:
\[\int_0^{T_0}\mathrm{e}^{-\rho t}\gamma_{\text{degr}}\underset{t\in [0,T_0]}{\text{max}}|R(t)|dt=\mathrm{e}^{-\rho T_0}P_b.\]
Thus,
\[\gamma_{\text{degr}}= \frac{\rho P_b\mathrm{e}^{-\rho T_0}}{(1-\mathrm{e}^{-\rho T_0})\underset{t\in [0,T_0]}{\text{max}}|R(t)|}.\]
\section{Time-Discretization Details}
\subsection{Proof of Lemma \ref{lem-Z}} 
\begin{proof}\label{proof-lemZ}

	The dynamics of the deseasonalized residual demand \(Z\) over the interval \(s \in [t_n, t_{n+1})\) is given by 
\begin{align}
   \diff Z(s) = -\beta_R Z(s) ds + \sigma_R dW(s), \quad Z(t_n)=Z_n. 
\end{align} 
 Multiplying the SDE by an integrating factor \(\mathrm{e}^{\beta_Rs}\) yields: $\mathrm{e}^{\beta_Rs}dZ(s) + \beta_R \mathrm{e}^{\beta_Rs} Z(s) ds = \sigma_R \mathrm{e}^{\beta_Rs} dW(s)$. Integrating both sides of the equation from \(t_n\) to \(t_{n+1}\) yields
\begin{align}
    Z(t_{n+1}) =Z_n \mathrm{e}^{-\beta_R(t_{n+1}-t_n)}+ \int_{t_n}^{t_{n+1}}\sigma_{R}\mathrm{e}^{-\beta_R(t_{n+1}-s)}dW_R(s).
    \label{Z_solution}
\end{align}
\end{proof}

\subsection{Proof of Lemma \ref{lem-Q}}
\begin{proof}\label{proof-lemQ}
 The continuous-time dynamics of the state of charge at time $s \in [t_n, t_{n+1})$ is given by Equation \eqref{dyn-Q}, which can be written as:
\begin{equation*}
dQ(s) = \mathcal{H}(Z(s),Q(s),u(s))ds,
\end{equation*}
where
\begin{equation}\label{eq_H}
	\mathcal{H}(t,z,q,\nu) = \begin{cases}
		-\frac{1}{C_Q}(\mu_{R}(t)+z ) \efficiency(t,z,q) - \eta_0q ,&\nu \in \{\charging,\discharging\},\\
		-\frac{1}{C_Q}R_{Q0}  \efficiency(t,z,q) - \eta_0q ,&\nu= \ecodischarge,\\
		-\eta_0(q)q, & \text{otherwise}.
	\end{cases}
\end{equation}
Using Assumptions~\ref{Ass-co} and \ref{Ass-par} on the control and parameters, that is, $\eta_E(s) \approx \eta_E^n = \eta_E(t_n)$, $\mu_R(s) \approx \muRn = \mu_R(t_n)$, and $a=\alpha_n=u(t_n)$ for $s\in [t_n, t_{n+1})$, for $Q(t_n)=q$, we obtain an approximate dynamics of the state of charge considering each control mode $a$:

\begin{enumerate}
 \item[(a)]For $a \in \{u^C,u^D\}$ (charging/ or discharging the battery in full mode), the dynamics of the state of charge is given by
\begin{align}
dQ(s) = -\frac{\eta_E(s)}{C_Q}(\mu_{R}(s)+Z(s))\diff s - \eta_0Q(s) \diff s.
\label{chargin_a}
\end{align}
Integrating the ordinary differential equation (ODE) yields
\begin{align}
Q(t_{n+1}) &= q\mathrm{e}^{-\eta_0\Delta_N} -\frac{\eta_E^n}{C_Q} I_1(t_n,t_{n+1}),~~\text{with} ~~ I_1(t_n,t_{n+1})=\int_{t_n}^{t_{n+1}}\mathrm{e}^{-\eta_0(t_{n+1}-s)}(\mu_R(s) +Z(s))\diff s.
\label{Int_I1}
\end{align}
Substituting $Z(s)$ given by Equation \eqref{Z_solution} in the above integral and integrating, taking into account Assumption \ref{Ass-par} on the parameter yields
\begin{align*}
 I_1(t_n,t_{n+1})&=  \int_{t_n}^{t_{n+1}}\mathrm{e}^{-\eta_0(t_{n+1}-s)}\bigg( \mu_R(s)+ z \mathrm{e}^{-\beta_R(s-t_n)}+ \int_{t_n}^{s}\sigma_{R}\mathrm{e}^{-\beta_R(s-u)}dW_R(u)\bigg)ds\\
&=\frac{\muRn}{\eta_0}(1 - \mathrm{e}^{-\eta_0\Delta_N})+\frac{z}{\eta_0-\beta_R}\left(\mathrm{e}^{-\beta_R\Delta_N}-\mathrm{e}^{-\eta_0\Delta_N}\right) +\Upsilon_Q, 
\end{align*}
where
\begin{align}
\Upsilon_Q = \sigma_R \int_{t_n}^{t_{n+1}} \mathrm{e}^{-\eta_0(t_{n+1} - s)} \left( \int_{t_n}^s \mathrm{e}^{-\beta_R(s - u)} dW(u) \right) ds.
\label{A-B-C}
\end{align} 
\item[(b)]For  $a= u^{DL}$ (discharging the battery in limited mode), the dynamics of the state of charge is given by
\[
\diff Q(s) = -\bigg(\frac{\eta_E(s)}{C_Q}R_{QO} +\eta_0Q(s)\bigg)\diff s, ~ \text{ which implies } ~ Q_{n+1}= q\mathrm{e}^{-\eta_0\Delta_N}-\frac{\eta_E^n R_{Q0}}{ \eta_0 C_Q} \left(1-\mathrm{e}^{-\eta_0\Delta_N}\right).
\]
\item[(c)] For $a \in \{u^W,u^O,u^F,u^{FL}\}$ (other control modes), the dynamics reduces to
\[
\diff Q(s) = - \eta_0Q(t)\diff s, \quad \text{ which implies } \quad Q_{n+1} =q\mathrm{e}^{-\eta_0\Delta_N}.
\]
\end{enumerate}
\end{proof}

\subsection{Proof of Lemma \ref{lem-G}}
\begin{proof}\label{proof-lemg}
We recall that the continuous-time dynamics of the fuel level is given for  $ s\in [t_n,t_{n+1}).$ by
\begin{equation}
\diff G(s) =\mathcal{J}(s,\desresdemand(s),\Concontrol(s)), \qquad G(0)=G_0, 
\end{equation}
where $\mathcal{J}$ is defined as
\begin{equation}
	\mathcal{J}(t,z,\nu) = \begin{cases}
		c_0 + c_1(\mu_{R}(t)+z) & \nu=\fueluse,\\
		c_0 + c_1R_{G0} & \nu=\ecofueluse,\\
		0 &\text{otherwise}.
	\end{cases}
\end{equation}
The approximate solution of the dynamics of \(G\) with initial condition $G(t_n) = g$ at time \(t_{n+1}\) is given in terms of control as follows:
\begin{enumerate}
    \item[(a)] For $a = u^{F}$ (generator in full mode), the dynamics of the fuel tank level is given by
\begin{align}
    \diff G(s) =\bigg(-\frac{c_0}{C_G} - \frac{c_1}{C_G}(\mu_{R}(s)+Z(s))\bigg) \diff s.
    \label{G_full}
\end{align}
    Substituting $Z$ given by \eqref{Z_solution} and  integrating from $t_n$ to $t_{n+1}$, we obtain
\begin{align}
    G(t_{n+1})  &= G(t_n)-\frac{c_0}{C_G}\Delta_N - \tilde{c_1}\int_{t_n}^{t_{n+1}} \bigg( \mu_R(s)+ Z_n \mathrm{e}^{-\beta_R(s-t_n)}+ \int_{t_n}^{s}\sigma_{R}\mathrm{e}^{-\beta_R(s-u)}dW_R(u)\bigg)ds.\\
    &=g-\frac{c_0}{C_G}\Delta_N  - \frac{c_1}{C_G} (\muRn \Delta_N+\frac{z}{\beta_R}(1-\mathrm{e}^{-\beta_R\Delta_N})+\Upsilon_G),
\end{align}  
    where
   \begin{align}       
  \Upsilon_G&=\sigma_R\int_{t_n}^{t_{n+1}}\int_{t_n}^{s}\mathrm{e}^{-\beta_R(s-u)}dW(u)ds.
   \label{Up-1}
   \end{align}
\item[(b)] For $a = u^{FL}$ (generator in limited mode), the  dynamics is given by
    \begin{align}
    dG(s) =\bigg(-\frac{c_0}{C_G} - \frac{c_1}{C_G}R_{G0}\bigg)\diff s,\label{g} \quad \text{ which implies } \quad G(n+1)=g-\frac{1}{C_G}(c_0+c_1 R_{G0})\Delta_N. 
  \end{align}
    \item[(c)] For $a \in \{u^W, u^O, u^D, u^{DL}\}$), 
 $\mathcal{J}(s, z, a) = 0$, and the result follows.
\end{enumerate}
\end{proof}

\section{Details on Conditional Distributions}
\subsection{Distribution of Residual Demand}
 \begin{proof}\label{proof-propZ}
From Equation \eqref{Rec-R}, the value of $Z(t_{n+1})$ is given by
\begin{equation}\label{a}
Z_{n+1} = Z_n \mathrm{e}^{-\beta_R\Delta_N} + \sigma_{R}\int_{t_n}^{t_{n+1}} \mathrm{e}^{-\beta_R(t_{n+1}-s)} \, dW(s).
\end{equation}
Taking the conditional expectation in \eqref{a} and using the fact that $\displaystyle\int_{t_n}^{t_{n+1}} \mathrm{e}^{-\beta_R(t_{n+1}-s)} \, dW(s)$ is a martingale, we have
\begin{align*}
\meanZ
&= \mathbb{E}\left[ Z_n \mathrm{e}^{-\beta_R\Delta_N} + \sigma_{R}\int_{t_n}^{t_{n+1}} \mathrm{e}^{-\beta_R(t_{n+1}-s)} \, dW(s) \, \bigg| \, Z_n=z \right] 
= z \mathrm{e}^{-\beta_R\Delta_N}.
\end{align*}
To compute the conditional variance, we first determine the conditional covariance of $Z(t)$ and $Z(s)$ given $Z_n=z$. Under Assumption~\ref{Ass-par}, applying the It\^o isometry property, the closed-form expression \eqref{a} yields
\begin{align}
\text{Cov}(Z(t), Z(s)) &= \mathbb{E}\left[\left(Z(t)-\mathbb{E}[Z(t)|Z_n=z]\right)\left(Z(s)-\mathbb{E}[Z(s)|Z_n=z]\right) \, \bigg| \, Z_n=z\right] \\
&= \mathbb{E}\left[\left(\int_{t_n}^{t}\sigma_R \mathrm{e}^{-\beta_R(t-u)} \, dW(u)\right)\left(\int_{t_n}^{s}\sigma_R \mathrm{e}^{-\beta_R(s-v)} \, dW(v)\right) \, \bigg| \, Z_n=z\right] \\
&=\sigma_{R}^2 \mathrm{e}^{-\beta_R(t+s)} \mathbb{E}\left[\left(\int_{t_n}^{\min(t,s)}\mathrm{e}^{\beta_Ru} \, dW(u)\right)^2 \, \bigg| \, Z_n=z\right] \\
&= \sigma_{R}^2 \mathrm{e}^{-\beta_R(t+s)}\mathbb{E}\left[\int_{t_n}^{\min(t,s)}\mathrm{e}^{2\beta_Ru} \, du\right]
=\frac{\sigma_{R}^2}{2\beta_R} \mathrm{e}^{-\beta_R(t+s)} \left(\mathrm{e}^{2\beta_R\min(t,s)}-\mathrm{e}^{2\beta_Rt_n}\right).\\
\label{Cov_z}
\end{align}
For $s = t = t_{n+1}$, the result follows.
\end{proof}

\subsection{Distribution of Battery State of Charge}
\begin{proof}
\label{proof-propQ}
 We aim to compute the distribution of \(Q_{n+1} = Q(t_{n+1})\)  given that the state is $(Q_n, Z_n=(q,z)$. We proceed by looking at different control actions \(a\):

 \begin{enumerate}
    \item For $a \in \{u^C,u^D\}$,
    the solution of the ODE describing the stat of charge is given for $Q_n=q$ and $Z_n=z$ by
    \begin{align*} 
Q(t_{n+1}) &=q\mathrm{e}^{-\eta_0\Delta_N} -\frac{\eta_E^n}{C_Q}\frac{\muRn}{\eta_0}(1 - \mathrm{e}^{-\eta_0\Delta_N})+\frac{z}{\eta_0-\beta_R}\left(\mathrm{e}^{-\beta_R\Delta_N}-\mathrm{e}^{-\eta_0\Delta_N}\right) +\Upsilon_q,  
\end{align*}
where $\Upsilon_q$ is  given above by \eqref{A-B-C}.\\
 Since  $\Upsilon_q$ is a stochastic integral with respect to a Wiener process, $\mathbb{E}[\Upsilon_q] = 0$, then,
    \begin{align*}
    \meanQ&= q\mathrm{e}^{-\eta_0\Delta_N} -\frac{\eta_E^n}{C_Q}\frac{\muRn}{\eta_0}(1 - \mathrm{e}^{-\eta_0\Delta_N})+\frac{z}{\eta_0-\beta_R}\left(\mathrm{e}^{-\beta_R\Delta_N}-\mathrm{e}^{-\eta_0\Delta_N}\right).
    \end{align*}
  Using the covariance of $Z(t)$ given by \eqref{Cov_z}, the conditional covariance $\text{Cov}(Q(t),Q(s))$ is given by
    \begin{align}
    \text{Cov}(Q(t),Q(s)& = \frac{(\eta_E^n)^2}{C_Q^2}\int_{t_n}^{t} \int_{t_n}^{s}\mathrm{e}^{-\eta_0 (t-u)}\mathrm{e}^{-\eta_0 (s-v)}\text{Cov} (Z(u), Z(v))\diff v\diff u \\
    &= \frac{(\eta_E^n)^2\sigma_{R}^2 }{2\beta_RC_Q^2} \left(\mathrm{e}^{-\eta_0(s+t)}J_1(s,t)-\mathrm{e}^{-\eta_0(s+t)+2\beta_Rt_n}J_2(s,t) \right),
    \label{Cov-QQ}
    \end{align} 
    where 
    $J_1(t,s) =\displaystyle\int_{t_n}^{t}\int_{t_n}^{s} \mathrm{e}^{\eta_0(u+v)-\beta_R|u-v|}dvdu. $ and $J_2(t,s) =\displaystyle\int_{t_n}^{t}\int_{t_n}^{s} \mathrm{e}^{(\eta_0 - \beta_R)(u+v)}\diff v\diff u$.\\
   Now, we investigate the double integrals $J_1$ and $J_2$. We start by $J_2$, we have
    \begin{align*}
     J_2(s,t) &= \int_{t_n}^{t}\int_{t_n}^{s} \mathrm{e}^{(\eta_0 - \beta_R)(u+v)}\diff v\diff u \\  &= \frac{1}{(\eta_0 - \beta_R)^2} \left( \mathrm{e}^{(\eta_0 - \beta_R)(t+s)} - \mathrm{e}^{(\eta_0 - \beta_R)(t+t_n)} - \mathrm{e}^{(\eta_0 - \beta_R)(s+t_n)} + \mathrm{e}^{2(\eta_0 - \beta_R)t_n} \right).
    \end{align*}
    Note that the integrand in $J_1$ contains an absolute value , which must be investigated. Without loss of generality, we assume that $s\leq t$. 
	Then, the domain of integration can be subdivided into 3 subsets as follows: $\Delta=\{(u,v) \in [t_n,s]\times [t_n,t], t_n\leq s\leq t \leq t_{n+1}\}=\Delta_1 \cup \Delta_2 \cup \Delta_3$, where $\Delta_1=\{(u,v) \in \Delta, t_n\leq u\leq s, t_n\leq v\leq u\}$, $\Delta_2=\{(u,v) \in \Delta, t_n\leq u\leq s, u\leq v\leq s\}$, and $\Delta_3=\{(u,v) \in \Delta, t_n\leq u\leq s, s\leq v\leq t\}$.
    Then,  the double integral $J_1$ can be written as follows:
  \begin{align*}
			J_1(s,t) &=\int_{t_n}^{t}\int_{t_n}^{s} \mathrm{e}^{\eta_0(u+v)-\beta_R|u-v|}dvdu\\
			&= \int_{t_n}^{s}\int_{t_n}^{u} \mathrm{e}^{\eta_0(u+v)-\beta_R|u-v|}dvdu + \int_{t_n}^{s}\int_{u}^{s} \mathrm{e}^{\eta_0(u+v)-\beta_R|u-v|}dvdu + \int_{s}^{t}\int_{t_n}^{s} \mathrm{e}^{\eta_0(u+v)-\beta_R|u-v|}dvdu\\
            &= \frac{1}{\eta_0^2-\beta_R^2} \left[ \frac{-\beta_R}{\eta_0} \mathrm{e}^{2s\eta_0} + \frac{\eta_0+\beta_R}{\eta_0} \mathrm{e}^{2t_n\eta_0}  - \mathrm{e}^{t_n(\eta_0+\beta_R)+s(\eta_0-\beta_R)} \right.+ \mathrm{e}^{t(\eta_0-\beta_R)+s(\eta_0+\beta_R)}\\ & ~~~~~~~~~~~~~~~~~~~~~~~~~~~~~~~~~~~~~~~~~~~~~~~~~~~~~~~~~~~~~~~~~~~~~~~~~~~~~~~~~~~~~~~~~~~~ \quad \left. - \mathrm{e}^{t(\eta_0-\beta_R)+t_n(\eta_0+\beta_R)} \right].
		\end{align*}

      Substituting $J_1$ and $J_2$ in the above conditional covariance and setting  \(s=t=t_{n+1}\), we obtain the conditional variance given by Equation \eqref{cond_covQ} of Proposition \ref{prop-Q}.
         \item For $a =u^{DL}$, the ODE for the state of charge becomes deterministic. Therefore, and the conditional  variance \(\ConVarQ = 0\) and the mean is exactly the solution of the ODE given by
      \begin{align*}
      	\meanQ= q\mathrm{e}^{-\eta_0\Delta_N}-\frac{\eta_E^n R_{Q0}}{ \eta_0 C_Q} \left(1-\mathrm{e}^{-\eta_0\Delta_N}\right),
      \end{align*}
       \item For $a \in \{u^W,u^O,u^F,u^{FL}\}$, the dynamics is given by $\diff Q= \eta_0 Q \diff s$. Therefore, a straightforward calculation gives the conditional mean and variance as \( \meanQ = q \mathrm{e}^{\eta_0\Delta_N}\)  and \(  \ConVarQ = 0\), respectively.
  \end{enumerate}
\end{proof}

\subsection{Distribution of Fuel Tank Level}
\begin{proof}
\label{proof-propG}
 We aim to compute the conditional mean and the conditional variance of $G_{n+1} = G(t_{n+1})$  given the state $(G_n, Z_n)=(g,z)$.
To simplify the notation, we let $\tilde{c}_i = c_i/\capacityfuel,~i=0,1$. As in the case of battery state of charge, we proceed by looking at different control actions \(a\).

\begin{enumerate}
    \item For $ a = u^{F}$, the solution of the fuel tank level \(G(t)\) is given for \(G_n = g\) and \(Z_n = z\)  by
    \[ G(t_{n+1}) = g - (\tilde{c_0} + \tilde{c}_1 \muRn) \Delta_N - \tilde{c}_1 \int_{t_n}^{t_{n+1}} Z(s) \diff s,\]
     where $Z$ is the deseasonalized residual demand given by \eqref{Z_solution}. Knowing that $\Upsilon_G$ is a stochastic integral with respect to the Wiener process, one has $\mathbb{E}[\Upsilon_G]=0$. Then, the conditional mean is given by $$ \meanQ = g-\tilde{c}_0\Delta_N  - \tilde{c}_1(\muRn \Delta_N+\frac{z}{\beta_R}(1-\mathrm{e}^{-\beta_R\Delta_N}).$$  
 The conditional covariance of $G$ is computed as follows:
    \begin{align}
    \Cov(G(t),G(s)) &= \Cov\left(- \tilde{c}_1 \int_{t_n}^{t} Z(u)\diff u, - \tilde{c}_1 \int_{t_n}^{s} Z(v)\diff v\right) 
    = \frac{\tilde{c}_1^2 \sigma_{R}^2}{2\beta_R}\left[J_{1}(t,s)-J_{2}(t,s)\right],\\ \label{Con-GG}
    \end{align}
    where $ ~J_{1}(t,s)=\displaystyle\int_{t_n}^{t}\int_{t_n}^{s}\mathrm{e}^{-\beta_R|u-v|} \, \diff v \, \diff u
     =  \frac{2}{\beta_R^2} (\beta_R(t-t_n) + \mathrm{e}^{-\beta_R(t-t_n)} - 1)$\\
and $J~_{2}(t,s)=\displaystyle\int_{t_n}^{t}\int_{t_n}^{s}\mathrm{e}^{-\beta_R(u+v-2t_n)} \, \diff v\diff u = \frac{1}{\beta_R^2} \left(1 - 2\mathrm{e}^{-\beta_R(t-t_n)} + \mathrm{e}^{-2\beta_R(t-t_n)}\right).$ \\
    Substituting $J_1$ and $J_2$ in Equation \eqref{Con-GG} and setting $t=s=t_{n+1}$, we obtain the conditional variance  given by
    \begin{align}
    \ConVarQ&= \left(\frac{c_1}{C_G}\right)^2 \frac{\sigma_{R}^2}{2\beta_R^3} \left[ 2\beta_R\Delta_N - 3 + 4\mathrm{e}^{-\beta_R\Delta_N} - \mathrm{e}^{-2\beta_R\Delta_N} \right]. \label{eq:var_G_final_a}
    \end{align}

    \item For $a = u^{FL}$, the dynamics of the fuel tank level is deterministic with the solution given by  $G(t_{n+1}) = g - (\tilde{c_0} + \tilde{c_1}R_{G0})\Delta_N$. Therefore, the conditional mean and variance are given by \\
    \(\meanQ = g - (\tilde{c_0} + \tilde{c}_1 R_{G0})\Delta_N\) and
    \(\ConVarQ = 0\), respectively.
        \item[(c)] For $a \notin \{u^F, u^{FL}\}$, the dynamics of the fuel tank level is given by $dG(t) = 0$, which implies $G(t_{n+1}) = g$. Therefore, the conditional mean and variance are given by   $\meanG  = g$ and $\ConVarG = 0$, respectively.
\end{enumerate}

\end{proof}

\subsection{Covariance Between Residual Demand and State of Charge}
 \begin{proof}{\label{proof-CovZQ}}
 Let \(t\in[t_n,t_{n+1}]\).  We aim to compute the conditional covariance between \(Z_{n+1} \text{ and } Q_{n+1}\)  given that $(Q_n, Z_n)=(q,z)$.
 \begin{enumerate}[a)]
    \item For \(a \in \{u^C, u^D\}\), the conditional covariance is computed using the bilinearity of the covariance function as follows:
	\begin{align}
 \CovZQ &= \Cov\left( Z(t_{n+1}), -\frac{\eta_E(t_n)}{C_Q}\int_{t_n}^{t_{n+1}}\mathrm{e}^{-\eta_0(t_{n+1}-u')} Z(u')du' \right) \\
    &= -\frac{\eta_E(t_n)}{C_Q}\int_{t_n}^{t_{n+1}}\mathrm{e}^{-\eta_0(t-u')}\Cov(Z(t_{n+1}),Z(u'))du'\\
    &= -\frac{\eta_E(t_n)}{C_Q}\frac{\sigma_{R}^2}{2\beta_R}
    \int_{t_n}^{t_{n+1}}\mathrm{e}^{-\eta_0(t_{n+1}-u')} \left(\mathrm{e}^{-\beta_R(t_{n+1}-u')}-\mathrm{e}^{-\beta_R(t_{n+1}+u'-2t_n)}\right) du'\\
    &= \frac{1 - \mathrm{e}^{-(\eta_0+\beta_R)\Delta_N}}{\eta_0+\beta_R}  - \frac{\mathrm{e}^{-(\eta_0+\beta_R)t_{n+1} + 2\beta_R t_n}}{\eta_0-\beta_R} (\mathrm{e}^{(\eta_0-\beta_R)t_{n+1}} - \mathrm{e}^{(\eta_0-\beta_R)t_n}).
    \label{G_q}
\end{align}
    \item For \(a \notin \{u^C, u^D\}\), the dynamics of $Q(t)$ is a deterministic ODE. Therefore, \(Z_{n+1}\) and \(Q_{n+1}\) are independent, which implies that
\( \CovZQ = 0\)
  \end{enumerate}
\end{proof}
\subsection{Covariance Between Residual Demand and Fuel Tank Level}
\begin{proof}{\label{proof-CovZG}}

We aim to compute the conditional covariance between \(Z_{n+1} \text{ and } G_{n+1}\)  given the state $(G_n, Z_n)=(g,z)$. As for the state of charge, the covariance depends on controls as follows:
\begin{enumerate}[a)]
    \item For $a = u^{F}$ (generator in full mode),
the conditional covariance is given by
\begin{align*}
    \CovZG &= \Cov\left( Z(t), - \frac{c_1}{C_G}\int_{t_n}^{t_{n+1}} Z(u)du \right) \\&
    = - \frac{c_1}{C_G}\frac{\sigma_{R}^2}{2\beta_R} \int_{t_n}^{t_{n+1}} \left(\mathrm{e}^{-\beta_R(t_{n+1}-u)}-\mathrm{e}^{-\beta_R(t_{n+1}+u-2t_n)}\right) du= - \frac{\tilde{c_1} \sigma_R^2}{2  \beta_R^2} (1-\mathrm{e}^{-\beta_R\Delta_N})^2.
\end{align*}
\item  For $a\neq u^{F}$, the dynamics of $G(t)$ is a deterministic ODE. Therefore, \(Z_{n+1}\) and \(G_{n+1}\) are independent, which implies that
\( \CovZQ = 0\) 
\end{enumerate}
\end{proof}

\section{Details on Performance Criterion}
\label{appen_Proof_condiC}
\begin{proof}
 We recall that the running cost \(\psi\) is defined as:
\begin{equation}
    \psi(t,x,a)=\begin{cases}
    F_0(c_0 +c_1(\mu_R(t)+Z(t)) & a=u^{F},\\
    F_0(c_0 +c_1 R_{G0})+k_0(\mu_R(t)+Z(t)-R_{G0})^2 & a=u^{FL},\\
    \gamma_{deg}(\mu_R(t)+Z(t))& a = u^D,\\   
     \gamma_{deg} R_{Q0}+k_0(\mu_R(t)+Z(t)-R_{Q0})^2& a= u^{DL},\\
    -\gamma_{deg}(\mu_R(t)+Z(t)) & a = u^C,\\    k_0(\mu_R(t)+Z(t))^2&a=u^W,\\
    0&a=u^O.
    \end{cases}
\end{equation}
Let \(\Delta_N = t_{k+1} - t_k\) be the time step, \(Z(t)\) be given in closed form by \eqref{Z_solution} and considering Assumption \ref{Ass-par} on the constant parameters, we obtain:
\begin{enumerate}
    \item \text{For the control action \(a = u^F\),}  the conditional expectation of the discounted running cost is given by
 \begin{align*}
        \Psi_{F}(k,x) &= \mathbb{E}\left[\int_{t_k}^{t_{k+1}} F_0 \mathrm{e}^{-\rho(s-t_k)}\left(c_0 +c_1(\mu_R(s)+Z(s))\right)ds \mid X(t_k) = x\right] \\
        &= F_0\mathbb{E}\left[\int_{t_k}^{t_{k+1}} \left(c_0\mathrm{e}^{-\rho(s-t_k)}+\mathrm{e}^{-\rho(s-t_k)}c_1\mu_R(s)+\mathrm{e}^{-\rho(s-t_k)}c_1Z(s)\right)ds \mid X(t_k) = x\right]\\
      & = F_0\left(\frac{(c_0+c_1\mu_{R,k}}{\rho}\left(1-\mathrm{e}^{-\rho\Delta_N}\right) + \frac{zc_1}{(\rho+\beta_R)}\left(1-\mathrm{e}^{-(\rho+\beta_R)\Delta_N}\right)\right).
    \end{align*}

\item \text{ For the control action \(a= u^{FL}\),}  
the conditional expectation can be  evaluated analogously to the case of control $a=u^F$,  we obtain
\begin{align*}
    &\Psi_{FL}(k,x)
    \approx \int_{t_k}^{t_{k+1}}\mathrm{e}^{-\rho(s-t_k)} \left[ F_0 (c_0 + c_1 R_{G0}) + k_0\mathbb{E}[(R(s)-R_{G0})^2] \right] \diff s \\
  &= \int_{t_k}^{t_{k+1}} \mathrm{e}^{-\rho(s-t_k)} F_0 (c_0 + c_1 R_{G0}) \diff s + k_0 \int_{t_k}^{t_{k+1}} \mathrm{e}^{-\rho(s-t_k)} \mathbb{E}[(R(s)-R_{G0})^2] \diff s = J_1(k)+k_0J_2(k) 
\end{align*}
with 
\begin{align*}
    J_1(k) = \int_{t_k}^{t_{k+1}} \mathrm{e}^{-\rho(s-t_k)} F_0 (c_0 + c_1 R_{G0}) \diff s = \frac{F_0 (c_0 + c_1 R_{G0})}{\rho} \left(1 - \mathrm{e}^{-\rho\Delta_N}\right)
\end{align*}
and 
\begin{align*}
    &J_2(k) = \int_{t_k}^{t_{k+1}} \mathrm{e}^{-\rho(s-t_k)} \mathbb{E}\left[(\mu_R(t_k) + Z(s) - R_{G0})^2\right]\diff s  \\
&= \int_{t_k}^{t_{k+1}} \mathrm{e}^{-\rho(s-t_k)} \left((\mu_R(s) - R_{G0})^2 + 2(\mu_R(s) - R_{G0})z \mathrm{e}^{-\beta_R (s-t_k)} + \frac{\sigma_R^2}{2\beta_R} (1 - \mathrm{e}^{-2\beta_R(s-t_k)})\right.\\&\left.\qquad + z^2 \mathrm{e}^{-2\beta_R (s-t_k)}\right)\diff s\\
&=\zeta_1\left( (\mu_{R,k} - R_{G0})^2 + \frac{\sigma_R^2}{2\beta_R} \right) + 2z(\mu_{R,k} - R_{G0}) \zeta_2 + \left(z^2 - \frac{\sigma_R^2}{2\beta_R}\right) \zeta_3,
\end{align*}
where \begin{equation}\label{zeta}
    \zeta_1 = \frac{1 - \mathrm{e}^{-\rho \Delta_N}}{\rho},~~\zeta_2 =\frac{1 - \mathrm{e}^{-(\rho + \beta_R)\Delta_N}}{\rho + \beta_R},  ~~ \text{ and }  ~~\zeta_3  =\frac{1 - \mathrm{e}^{-(\rho + 2\beta_R)\Delta_N}}{\rho + 2\beta_R}. 
\end{equation}
Substituting $J_1$ and $J_2$ in the expression of $\Psi_{FL}$ above yields the result.
Therefore, \begin{align*}
\Psi_{FL}(k,x)
    & \approx \zeta_1 \left(F_0 (c_0 + c_1 R_{G0}) + k_0(\mu_{R,k} - R_{G0})^2 + \frac{\sigma_R^2k_0}{2\beta_R} \right) + 2z k_0(\mu_{R,k} - R_{G0})\zeta_2\\&+ k_0\left(z^2 - \frac{\sigma_R^2}{2\beta_R}\right) \zeta_3.
\end{align*}

    \item \text{ For the control action \(a=u^D\)}, the conditional expectation is given by
    \begin{align}
        \Psi_{D}(k,x)&= \mathbb{E}\left[\int_{t_k}^{t_{k+1}}\gamma_{deg}\mathrm{e}^{-\rho(s-t_k)}(\mu_R(s)+Z(s))\diff s \mid X(t_k)=x\right]\\
        &=\gamma_{deg} \left( \int_{t_k}^{t_{k+1}}\mathrm{e}^{-\rho(s-t_k)}\mu_R(s)ds+\int_{t_k}^{t_{k+1}}\mathrm{e}^{-\rho(s-t_k)}\mathbb{E}\left[Z(s) \mid X(t_k)=x\right]ds\right)\\
        &=\gamma_{deg} \left( \frac{\muRn}{\rho}\left(1-\mathrm{e}^{-\rho\Delta_N}\right) +  \frac{z}{\rho+\beta_R}\left(1-\mathrm{e}^{-(\rho+\beta_R)\Delta_N}\right)\right).
    \end{align}

\item \text{ For the control action \(a=u^{DL}\)}, we have:
\begin{align}
  \Psi_{DL}(k,x) &= \gamma_{deg}\int_{t_n}^{t_{n+1}} \mathrm{e}^{-\rho(s-t_n)}R_{Q0}ds+ k_0\int_{t_n}^{t_{n+1}} \mathrm{e}^{-\rho(s-t_n)}\mathbb{E}\left[((\mu_R(s)+Z(s)-R_{Q0}))^2\right]ds.
\end{align}
Similarly to the case of control \(a=u^{FL}\), we have
\begin{align*}
     \Psi_{DL}(k,x) &\approx \int_{t_k}^{t_{k+1}} \mathrm{e}^{-\rho(s-t_k)}\left[ \gamma_{\text{deg}}R_{Q0} + k_0\mathbb{E}[(R(s)-R_{G0})^2 \right] ds \\
    &= \zeta_1\left(\gamma_{\text{deg}}R_{Q0}  + k_0(\mu_{R,k} - R_{G0})^2 + \frac{\sigma_R^2k_0}{2\beta_R} \right) + 2z k_0(\mu_{R,k} - R_{G0}) \zeta_2+k_0\left(z^2 - \frac{\sigma_R^2}{2\beta_R}\right) \zeta_3,
\end{align*}
where $\zeta_1, \zeta_2,$ and $\zeta_3$ are given by \eqref{zeta}.

\item \text{For the control action \(a=u^C\)}, we have:
\begin{align}
   \Psi_C(k,x) &= -\gamma_{deg}\int_{t_k}^{t_{k+1}} \mathrm{e}^{-\rho(s-t_k)}(\mu_R(s)+\mathbb{E}(Z(s)~|~ Z_k=z))ds \\
    &= -\gamma_{deg}\left(\frac{\mu_{R,k}}{\rho}\left(1-\mathrm{e}^{-\rho\Delta_N}\right)+ \frac{z}{\rho+\beta_R}\left(1-\mathrm{e}^{-(\rho+\beta)\Delta_N}\right)\right).
\end{align}
\item \text{For the control action \(a = u^W\),} the conditional expectation is computed as follows:
\begin{align*}
    \Psi_W(k,x) &= k_0\int_{t_n}^{t_{n+1}} \mathrm{e}^{-\rho(s-t_n)}\mathbb{E}\left[(\mu_R(s)+Z(s))^2\right]ds\\
    &= k_0\bigg[\zeta_1\left(\mu^2_{R,k}+ \frac{\sigma_R^2}{2\beta_R} \right) + 2z \zeta_2\mu_{R,k}+\left(z^2 - \frac{\sigma_R^2}{2\beta_R}\right) \zeta_3\bigg],
\end{align*}
where $\zeta_1, \zeta_2,$ and $\zeta_3$ are given by \eqref{zeta}.
\item \text{For the control action \(\nu = u^O\),}  the running cost is zero and the result follows:
\hfill $\square$

\end{enumerate}
\end{proof}

\end{appendix}
\bigskip
\begin{footnotesize}	 
	\noindent\textbf{Acknowledgments~}
	The authors thank  
 Ralf wunderlich (BTU Cottbus-Senftenberg), Maalvladedon Ganet Some (University of Rwanda),  and Florent Onana Assouga (Hamburg University of Technology), 
	for valuable discussions that improved this paper.	\\
                
	
    \smallskip\noindent    
	\textbf{Funding~} { Nathalie Fruiba  gratefully acknowledges the mobility support by ERASMUS+, award number KA171 (2023).
	   Paul Honore Takam gratefully acknowledge the support by the Deutsche Forschungsgemeinschaft (DFG), award number 541347508.	
    }\\
	\smallskip\noindent
	\textbf{Code availability~} The MATLAB R2025b source code of the implementations used to compute the presented
	results  is available from the corresponding author upon reasonable request.
\end{footnotesize}
\begin{footnotesize}
\let\oldbibliography\thebibliography
\renewcommand{\thebibliography}[1]{%
	\oldbibliography{#1}%
    \setlength{\itemsep}{-1.0ex plus .05ex}%
}
\bibliography{template-bibliography}
\end{footnotesize}
\end{document}